\numberwithin{equation}{section}
\providecommand{\lyxadded}[3]{}
\renewcommand{\lyxadded}[3]{
  {\protect\cbstart\color{lyxadded}{}#3\protect\cbend}
}
\theoremstyle{plain}
\newtheorem{lem}{\protect\lemmaname}
\theoremstyle{definition}
\newtheorem{defn}{\protect\definitionname}
\newcommand\thmsname{\protect\theoremname}
\newcommand\nm@thmtype{theorem}
\theoremstyle{plain}
\newenvironment{namedthm}[1][Undefined Theorem Name]{
  \ifx{#1}{Undefined Theorem Name}\renewcommand\nm@thmtype{theorem*}
  \else\renewcommand\thmsname{#1}\renewcommand\nm@thmtype{namedtheorem}
  \fi
  \begin{\nm@thmtype}}
  {\end{\nm@thmtype}}
\theoremstyle{plain}
\newtheorem{cor}{\protect\corollaryname}
\theoremstyle{plain}
\newtheorem{thm}{\protect\theoremname}
\theoremstyle{plain}
\newtheorem{conjecture}{\protect\conjecturename}
\theoremstyle{remark}
\newtheorem{rem}{\protect\remarkname}
\theoremstyle{plain}
\newtheorem{prop}{\protect\propositionname}
\providecommand{\conjecturename}{Conjecture}
\providecommand{\corollaryname}{Corollary}
\providecommand{\definitionname}{Definition}
\providecommand{\lemmaname}{Lemma}
\providecommand{\propositionname}{Proposition}
\providecommand{\remarkname}{Remark}
\providecommand{\theoremname}{Theorem}
\begin{document}
\selectlanguage{english}
\global\long\def\set#1#2{\left\{  #1\, |\, #2\right\}  }%

\global\long\def\cyc#1{\mathbb{Q}\!\left[\zeta_{#1}\right]}%

\global\long\def\mat#1#2#3#4{\left(\begin{array}{cc}
 #1  &  #2\\
 #3  &  #4 
\end{array}\right)}%

\global\long\def\Mod#1#2#3{#1\equiv#2\, \left(\mathrm{mod}\, \, #3\right)}%

\global\long\def\inv{^{\,\textrm{-}1}}%

\global\long\def\pd#1{#1^{+}}%

\global\long\def\fix#1{\mathtt{Fix}\!\left(#1\right)}%

\global\long\def\map#1#2#3{#1\!:\!#2\!\rightarrow\!#3}%

\global\long\def\Map#1#2#3#4#5{\begin{split}#1:#2  &  \rightarrow#3\\
 #4  &  \mapsto#5 
\end{split}
 }%

\global\long\def\partlat{\boldsymbol{\sqcap}}%

\global\long\def\fact#1#2{#1\slash#2}%

\global\long\def\Gal#1{\mathtt{Gal}\!\left(#1\right)}%

\global\long\def\fixf#1{\mathbb{Q}\!\left(#1\right)}%

\global\long\def\gl#1#2{\mathsf{GL}_{#2}\!\left(#1\right)}%

\global\long\def\SL{\mathrm{SL}_{2}\!\left(\mathbb{Z}\right)}%

\global\long\def\zn#1{\left(\mathbb{Z}/\!#1\mathbb{Z}\right)^{\times}}%

\global\long\def\sn#1{\mathbb{S}_{#1}}%

\global\long\def\aut#1{\mathrm{Aut\mathit{\left(#1\right)}}}%

\global\long\def\FA#1{\vert#1\vert}%

\global\long\def\FB#1{\mathtt{Z}^{2}\!(#1)}%

\global\long\def\FC#1{#1^{{\scriptscriptstyle \flat}}}%

\global\long\def\FD#1{#1^{{\scriptscriptstyle \times}}}%

\global\long\def\FE#1{\mathtt{x}_{#1}}%

\global\long\def\FF#1#2{\mathrm{Fix}_{#1}\left(#2\right)}%

\global\long\def\FI#1{#1_{{\scriptscriptstyle \pm}}}%

\global\long\def\cl#1{\mathscr{C}\negthinspace\ell\!\left(#1\right)}%

\global\long\def\bl#1{\mathcal{B}\ell\!\left(#1\right)}%

\global\long\def\bbl#1#2{\mathcal{B}\ell_{#2}\!\left(#1\right)}%

\global\long\def\FJ#1#2#3{\uptheta\!\Bigl[\!{#1\atop #2}\!\Bigr]\!\left(#3\right)}%

\global\long\def\ex#1{\mathtt{e}^{2\mathtt{i}\pi#1}}%

\newcommandx\exi[3][usedefault, addprefix=\global, 1=, 2=]{\mathtt{e}^{\mathtt{{\scriptscriptstyle \textrm{#1}}}\frac{#2\pi\mathtt{i}}{#3}}}%

\global\long\def\tw#1#2#3{\mathfrak{#1}_{#3}^{{\scriptscriptstyle \left(\!#2\!\right)}}}%

\global\long\def\ver#1{\mathtt{Ver}_{#1}}%

\global\long\def\stab#1#2{#1_{#2}}%

\global\long\def\cft{\mathscr{C}}%

\global\long\def\gal#1{\upsigma_{#1}}%

\global\long\def\galpi#1{#1}%

\global\long\def\ann#1{\ker\mathfrak{#1}}%

\global\long\def\cent#1#2{\mathtt{C}_{#1}\!\left(#2\right)}%

\global\long\def\ch#1{\boldsymbol{\uprho}_{#1}}%

\global\long\def\chb#1{\xi_{#1}}%

\global\long\def\chg{\mathfrak{g}}%

\global\long\def\tgr{G}%

\global\long\def\qd#1{\mathtt{d}_{#1}}%

\global\long\def\cw#1{\mathtt{h}_{#1}}%

\global\long\def\tcl{\textrm{trivial class}}%

\global\long\def\ccl{\mathtt{C}}%

\global\long\def\zcl{\mathtt{z}}%

\global\long\def\om#1{\omega\!\left(#1\right)}%

\global\long\def\rami#1{\mathtt{e}_{#1}}%

\global\long\def\zm{\mathtt{Z}}%

\global\long\def\cs#1{\left\llbracket #1\right\rrbracket }%

\global\long\def\coc#1{\vartheta_{\mathfrak{#1}}}%

\global\long\def\sp#1#2{\bigl\langle#1,#2\bigr\rangle}%

\global\long\def\irr#1{\mathtt{Irr}\!\left(#1\right)}%

\global\long\def\stb#1{\mathtt{I}{}_{#1}}%

\global\long\def\sqf#1{#1^{\circ}}%

\global\long\def\CHG{\textrm{twister}}%

\global\long\def\fc{\textrm{FC set}}%

\global\long\def\to#1{\boldsymbol{\upsigma}\!\left(#1\right)}%

\global\long\def\v{\mathtt{{\scriptstyle 0}}}%

\global\long\def\wm#1{\mathcal{P}\!\left(#1\right)}%

\global\long\def\fm{\mathtt{N}}%

\global\long\def\du#1{#1^{{\scriptscriptstyle \perp}}}%

\global\long\def\dhg{\du{\chg}}%

\global\long\def\spr{\textrm{spread}}%

\global\long\def\gcl{\textrm{class}}%

\global\long\def\dcl{\textrm{block}}%

\global\long\def\vorb{\mathfrak{o}}%

\global\long\def\lat{\mathcal{\mathscr{L}}}%

\global\long\def\vera{\mathcal{V}}%

\global\long\def\svera#1{\vera_{#1}}%

\global\long\def\bd{\mathcal{D}_{\mathfrak{b}}}%

\global\long\def\rd#1{\boldsymbol{\updelta}_{#1}}%

\global\long\def\m#1{\mathfrak{\upmu}_{\mathfrak{#1}}}%

\global\long\def\zent#1{\mathtt{Z}\!\left(#1\right)}%

\global\long\def\voa{\mathcal{\mathbb{V}}}%

\global\long\def\bfm{\Delta}%

\global\long\def\cech#1#2{\boldsymbol{\varpi}_{#1}\!\left(#2\right)}%

\global\long\def\clchar#1#2{\updelta_{#1}\!\left(#2\right)}%

\global\long\def\vb{\mathfrak{b}_{{\scriptscriptstyle 0}}}%

\global\long\def\bch#1#2{\boldsymbol{\upchi}_{\mathfrak{#1}}\!\left(#2\right)}%

\global\long\def\prim{\mathtt{X}}%

\global\long\def\sig{\FA{\cft}}%

\global\long\def\beq#1{\boldsymbol{\Lambda}_{#1}}%

\global\long\def\twa#1{\boldsymbol{\nabla}\!{}_{#1}}%

\global\long\def\dg#1{\hat{#1}}%

\global\long\def\loc{\lat_{\mathtt{loc}}}%
\global\long\def\intlat{\lat_{\mathtt{int}}}%

\global\long\def\spp#1#2{\mathcal{P}_{#1}\!\left(#2\right)}%

\title{FC sets and twisters: the basics of orbifold deconstruction}
\author{Peter Bantay}
\curraddr{Institute for Theoretical Physics, E\" otv\" os Lor\' and University,  H-1117 Budapest, P\' azm\' any P\' eter s. 1/A}
\keywords{conformal symmetry, orbifold models, modular tensor categories, character
rings}
\begin{abstract}
We present a detailed account of the properties of $\CHG$s and their
generalizations, $\fc$s, which are essential ingredients of the orbifold
deconstruction procedure aimed at recognizing whether a given conformal
model may be obtained as an orbifold of another one, and if so, to
identify the twist group and the original model. The close analogy
with the character theory of finite groups is discussed, and its origin
explained.
\end{abstract}

\maketitle

\section{Introduction}

Orbifold deconstruction, i.e. the procedure aimed at recognizing whether
a given 2D conformal model is an orbifold \cite{Dixon_orbifoldCFT,FLM1}
of another one, and if so, to identify (up to isomorphism) the relevant
twist group and the original model, is an effective tool to better
understand both the general properties of conformal models and the
precise structure of their orbifolds. The basic ideas have been described
in \cite{Bantay2018a,Bantay2018b}, focusing on conceptual issues
without going into the mathematical details. The purpose of the present
paper is to fill this gap by giving a formal treatment of the concepts
underlying the deconstruction procedure.

The starting point of orbifold deconstruction is the observation \cite{Bantay2018a,Bantay2018b}
that every orbifold has a distinguished set of primaries, the so-called
vacuum block, consisting of the descendants of the vacuum, and that
this vacuum block has quite special properties: it is closed under
the fusion product, and all its elements have integral conformal weight
and quantum dimension. Such sets of primaries were termed 'twisters'
because of their relation to twist groups and twisted boundary conditions.
Twisters provide the input for the deconstruction procedure: to each
different twister corresponds a different deconstruction, with possibly
different twist groups and/or deconstructed models.

It turns out that most properties of twisters can be understood in
the more general context of $\fc$s, which are those sets of primaries
that are closed under the fusion product. As we shall see, these show
deep analogies with character rings of finite groups, especially the
so-called integral $\fc$s, all of whose elements have integral squared
quantum dimension. In case of twisters, this analogy with character
theory is of course far from being accidental, for it stems from their
relation with the twist group of the corresponding orbifold, and it
allows the generalization of several important group theoretic notions
(like nilpotency, solubility, etc.) to general $\fc$s. In this respect,
a most interesting question is: to what extent do classical results
about groups generalize to selected classes of $\fc$s? We shall encounter
several such conjectural generalizations on the way, e.g. of Lagrange's
and Ito's celebrated theorems.

It should be pointed out that a special class of $\fc$s, the Abelian
ones (cf. \prettyref{def:Abel}) have been well-known for quite some
time \cite{SY1,SY2,Intriligator}, their elements running under the
name of simple currents, while the corresponding deconstructions are
known as simple current extension \cite{Fuchs1996a,Bantay1998}. From
this point of view it is fair to say that $\fc$s could be viewed
as non-Abelian generalizations of simple current groups, and their
theory bears the same relationship with that of simple currents as
the representation theory of groups \cite{fulton-harris,Alperin-Bell,Lux-Pahlings}
with that of Abelian ones.

In \prettyref{sec:Preliminaries} we'll review those standard results
about the fusion ring of rational conformal models that form the basis
of most of the subsequent arguments. \prettyref{sec:FC} develops
the basic theory of $\fc$s, introducing such fundamental notions
as classes, blocks and their overlaps, and proving the modularity
of the lattice of $\fc$s. \prettyref{sec:The-center} introduces
the center of an $\fc$, and describes its basic properties, while
the next section deals with central quotients and extensions. \prettyref{sec:The-Galois-action}
is concerned with the arithmetic properties of $\fc$s, while \prettyref{sec:Local-sets-and}
describes the structure of local $\fc$s and twisters, with a view
towards their role in orbifold deconstruction. In our opinion, the
highlights include, besides the orthogonality relations Eqs.\prettyref{eq:ortho1}
and \prettyref{eq:ortho2}, the product rule \prettyref{thm: Product rule},
\prettyref{thm:modlat} on the modularity of the lattice of $\fc$s,
\prettyref{thm:zentextstruct} on the structure of central quotients,
and \prettyref{thm:main} on the integrality of quantum dimensions
in local $\fc$s.

\section{Preliminaries\label{sec:Preliminaries}}

Let's consider a rational unitary conformal model \cite{DiFrancesco-Mathieu-Senechal,Ginsparg1988,Moore.1990}.
We'll denote by $\qd p$ and $\mathtt{h}_{p}$ the quantum dimension
and conformal weight of a primary $p$, and by $\fm\!\left(p\right)$
the associated fusion matrix, whose matrix elements are given by the
fusion rules
\begin{equation}
\left[\fm\!\left(p\right)\right]_{q}^{r}=N_{pq}^{r}\label{eq:fmdef}
\end{equation}
We'll denote by $\v$ the vacuum primary, for which $\qd{\v}\!=\!1$,
$\cw{\v}\!=\!0$ and $\fm\!\left(\v\right)$ is the identity matrix.
Note that, since
\begin{equation}
\fm\!\left(p\right)\fm\!\left(q\right)=\sum_{r}N_{pq}^{r}\fm\!\left(r\right)\label{ver1}
\end{equation}
the fusion matrices generate a commutative matrix algebra over $\mathbb{C}$,
the Verlinde algebra $\vera$, whose irreducible representations are
all of dimension $1$. According to Verlinde's famous formula \cite{Verlinde1988}
\begin{equation}
N_{pq}^{r}=\sum_{w}\frac{S_{pw}S_{qw}\overline{S_{rw}}}{S_{\v w}}\label{eq:ver2}
\end{equation}
relating the fusion rules to the modular $S$-matrix, to each primary
$p$ corresponds an irrep $\ch p$ of $\vera$ that assigns to the
fusion matrix $\fm\!\left(q\right)$ the complex number
\begin{equation}
\ch p\!\left(q\right)\!=\!\frac{S_{qp}}{S_{\v p}}\label{eq:chdef}
\end{equation}
In view of Eq.\eqref{ver1} this gives
\begin{equation}
\sum_{r}N_{pq}^{r}\ch w\!\left(r\right)\!=\!\ch w\!\left(p\right)\ch w\!\left(q\right)\label{eq:verrep}
\end{equation}
which is equivalent to
\begin{equation}
\sum_{r}N_{pq}^{r}S_{rw}=\frac{S_{pw}S_{qw}}{S_{\v w}}\label{eq:ver4}
\end{equation}
The quantum dimensions of the primaries, i.e. the common Perron-Frobenius
eigenvector \cite{Gantmacher1959} of the fusion matrices are given
by $\qd p\!=\!\ch{\v}\!\left(p\right)$, and one has the inequality
\begin{equation}
\FA{\ch p\!\left(q\right)}\!\leq\!\qd q\label{eq:ineq}
\end{equation}
Since the fusion matrices have integer matrix elements, it follows
that their eigenvalues, i.e. the $\ch p\!\left(q\right)$ are all
algebraic integers; in particular, all quantum dimensions $\qd p$
are algebraic integers, that may be shown to divide the algebraic
integer $\sqrt{\sum_{p}\qd p^{2}}\!=\!S_{\v\v}\inv$.

Note that the matrix $S$ used above is actually an auxiliary quantity,
since it can be determined fully from the fusion rules and conformal
weights through the formula
\begin{equation}
\frac{S_{pq}}{S_{\v\v}}=\sum_{r}N_{pq}^{r}\qd r\frac{\om p\om q}{\om r}\label{eq:ver3}
\end{equation}
where
\begin{equation}
\om p=\exp\!\left(2\pi\mathtt{i}\cw p\right)\label{eq:omdef}
\end{equation}
is the exponentiated conformal weight of the primary $p$.
\begin{lem}
\begin{singlespace}
\label{lem:omch}For primaries $p$ and $q$ such that $\FA{\ch p\!\left(q\right)\!}\!=\!\qd q$,
$N_{pq}^{r}\!>\!0$ iff
\begin{equation}
\frac{\om p\om q}{\om r}\!=\!\frac{\ch p\!\left(q\right)}{\qd q}\label{eq:omch}
\end{equation}
\end{singlespace}
\end{lem}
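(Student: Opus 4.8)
The plan is to read the statement off the Verlinde-type identity \eqref{eq:ver3}, whose left-hand side I first rewrite in terms of $\ch p$. Using \eqref{eq:chdef} together with the symmetry of the $S$-matrix and $S_{\v p}\!=\!S_{\v\v}\qd p$, one has $S_{pq}/S_{\v\v}\!=\!\qd p\,\ch p\!\left(q\right)$, so that \eqref{eq:ver3} reads
\[
\qd p\,\ch p\!\left(q\right)=\sum_{r}N_{pq}^{r}\,\qd r\,\frac{\om p\om q}{\om r}.
\]
Two elementary facts feed the argument: first, $\FA{\om p}\!=\!1$ for every primary (since $\cw p$ is real), so each factor $\om p\om q/\om r$ has unit modulus; second, specialising \eqref{eq:verrep} to $w\!=\!\v$ and using $\ch{\v}\!\left(r\right)\!=\!\qd r$ gives $\sum_{r}N_{pq}^{r}\qd r\!=\!\qd p\qd q$.

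For the forward implication I argue by saturation of the triangle inequality. Taking absolute values above and using $\FA{\om p\om q/\om r}\!=\!1$ together with $N_{pq}^{r}\qd r\!\geq\!0$ yields $\qd p\,\FA{\ch p\!\left(q\right)}\!\leq\!\sum_{r}N_{pq}^{r}\qd r\!=\!\qd p\qd q$, i.e. $\FA{\ch p\!\left(q\right)}\!\leq\!\qd q$, which is exactly \eqref{eq:ineq}. The hypothesis $\FA{\ch p\!\left(q\right)}\!=\!\qd q$ forces this chain to be an equality, hence the triangle inequality is saturated. Writing $\lambda\!:=\!\ch p\!\left(q\right)/\qd q$, a number of modulus $1$, the cleanest way to extract the consequence is to multiply the displayed identity by $\overline{\lambda}$ and take real parts, giving $\qd p\qd q\!=\!\sum_{r}N_{pq}^{r}\qd r\,\mathrm{Re}\!\left(\overline{\lambda}\,\om p\om q/\om r\right)$; since every summand is bounded by $N_{pq}^{r}\qd r$ and the two totals agree, each term with $N_{pq}^{r}\!>\!0$ must satisfy $\mathrm{Re}\!\left(\overline{\lambda}\,\om p\om q/\om r\right)\!=\!1$, i.e. $\om p\om q/\om r\!=\!\lambda$, which is precisely \eqref{eq:omch}.

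The converse is where the real work lies, and I expect it to be the main obstacle. The saturation argument constrains only those $r$ lying in the support of the fusion product, and by itself says nothing about a primary $r$ meeting the phase relation \eqref{eq:omch} for which $N_{pq}^{r}$ might a priori vanish. To handle this I would run the analysis in reverse: knowing that every supported $r$ shares the common value $\om p\om q/\om r\!=\!\lambda$, I would reinsert this into \eqref{eq:ver3} and combine it with the orthogonality relations for the $\ch$'s in order to recover the coefficient $N_{pq}^{r}$ directly from the phase of $\om r$. The delicate point is to show that the phase condition is not merely necessary but also sufficient, i.e. that no primary outside the support can accidentally satisfy it; this is exactly the step that must use the full strength of the equality $\FA{\ch p\!\left(q\right)}\!=\!\qd q$ rather than the mere inequality \eqref{eq:ineq}, and it is here that I would concentrate the effort.
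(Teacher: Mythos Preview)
Your saturation argument for the implication ``$N_{pq}^{r}\!>\!0$ and $\FA{\ch p\!\left(q\right)}\!=\!\qd q$ together force \eqref{eq:omch}'' is correct and is exactly what the paper does: rewrite \eqref{eq:ver3} as $\sum_r N_{pq}^{r}\qd r\bigl(1-\overline{\lambda}\,\om p\om q/\om r\bigr)=0$ with $\lambda=\ch p(q)/\qd q$, take real parts, and use non-negativity of each summand.

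Where you go astray is in the reading of the ``iff''. Look at the paper's own proof: the sentence it offers for the other direction is simply ``if \eqref{eq:omch} holds, then obviously $\FA{\ch p(q)}=\qd q$, since the left-hand side has modulus $1$''. In other words, the equivalence actually being established is between the modulus hypothesis $\FA{\ch p(q)}=\qd q$ and the phase relation \eqref{eq:omch}, \emph{under the standing assumption} $N_{pq}^{r}>0$; it is not an equivalence between $N_{pq}^{r}>0$ and \eqref{eq:omch} under the modulus hypothesis. The wording of the lemma is admittedly ambiguous, but the proof (and the way the lemma is invoked in \prettyref{lem:centralchar}) pins down the intended meaning.

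The implication you are gearing up to attack---``$\FA{\ch p(q)}=\qd q$ and \eqref{eq:omch} imply $N_{pq}^{r}>0$''---is not asserted by the paper and is in fact false. Take $q$ a simple current, so $\qd q=1$ and $\FA{\ch p(q)}=1$ for every $p$; the fusion $p\times q$ has a single primary $r_0$ in its support, yet any other primary $r$ with $\om r=\om{r_0}$ satisfies \eqref{eq:omch} while having $N_{pq}^{r}=0$. So there is nothing left to prove: your argument already covers the substantive half, and the remaining half is the one-line observation that $\om p\om q/\om r$ has modulus~$1$.
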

\begin{proof}
If Eq.\eqref{eq:omch} holds, then obviously $\FA{\ch p\!\left(q\right)\!}\!=\!\qd q$,
since the left-hand side has modulus $1$. Conversely, according to
Eq.\eqref{eq:ver3}
\[
\sum_{r}N_{pq}^{r}\qd r\frac{\om p\om q}{\om r}\!=\!\frac{S_{pq}}{S_{\v\v}}\!=\!\frac{S_{pq}}{S_{\v p}}\frac{S_{\v p}}{S_{\v\v}}\!=\!\ch p\!\left(q\right)\!\qd p\!=\!\frac{\ch p\!\left(q\right)}{\qd q}\sum_{r}N_{pq}^{r}\qd r
\]
hence for $\ch p\!\left(q\right)\!\neq\!0$ one obtains
\[
0\!=\!\sum_{r}N_{pq}^{r}\qd r\!\left(1\!-\!\frac{\om p\om q\qd q}{\om r\ch p\!\left(q\right)}\right)
\]
or, after taking real parts
\[
\sum_{r}N_{pq}^{r}\qd r\!\left(1\!-\!\mathtt{Re}\left(\frac{\om p\om q\qd q}{\om r\ch p\!\left(q\right)}\right)\right)=0
\]
Since the real part of a complex number cannot exceed its modulus
\[
\mathtt{Re}\left(\frac{\om p\om q\qd q}{\om r\ch p\!\left(q\right)}\right)\leq\left|\frac{\om p\om q\qd q}{\om r\ch p\!\left(q\right)}\right|=1
\]
for $\FA{\ch p\!\left(q\right)\!}\!=\!\qd q\!>\!0$, consequently
all terms of the sum are non-negative, hence they should all vanish,
proving the claim.
\end{proof}

\section{FC sets\label{sec:FC}}
\begin{defn}
A set $\mathfrak{g}$ of primaries is fusion closed (an $\mathcal{\fc}$
for short) if it contains the vacuum primary $\v$, and for all $\alpha,\beta\!\in\!\chg$
\begin{equation}
\sum_{\gamma\in\chg}N_{\alpha\beta}^{\gamma}\qd{\gamma}=\qd{\alpha}\qd{\beta}\label{eq:fccrit}
\end{equation}
\end{defn}
Taking into account that quantum dimensions are positive real numbers,
this is tantamount to the requirement that $N_{\alpha\beta}^{\gamma}\!>\!0$
and $\alpha,\beta\!\in\!\chg$ implies $\gamma\!\in\!\chg$. Notice
that
\begin{equation}
\fm\!\left(\alpha\right)\fm\!\left(\beta\right)=\sum_{\gamma\in\mathfrak{g}}\!N_{\alpha\beta}^{\gamma}\fm\!\left(\gamma\right)\label{eq:vgdef}
\end{equation}
for $\alpha,\beta\!\in\!\mathfrak{g}$ by Eq.\eqref{ver1}, hence
the fusion matrices $\fm\!\left(\alpha\right)$ generate a subalgebra
$\svera{\mathfrak{g}}$ of the Verlinde algebra. Since $\vera$ is
commutative, the irreps of the subalgebra $\svera{\mathfrak{g}}$
are among the different restrictions of the irreps $\ch p$ of $\vera$.
\begin{defn}
Given an $\fc$ $\mathfrak{g}$, a $\mathfrak{g}$-$\gcl$ $\ccl$
is the set of all those primaries $p$ whose associated irreps $\ch p$
coincide when restricted to the subalgebra $\svera{\mathfrak{g}}$;
we shall denote by $\ch{\ccl}$ this common restriction, and by $\alpha\!\left(\ccl\right)\!=\!\ch{\ccl}\!\left(\alpha\right)$
the value it assigns to an element $\alpha\!\in\!\chg$.

Clearly, the collection $\cl{\mathfrak{g}}$ of $\mathfrak{g}$-$\gcl$es
is a partition of the set of all primaries, and one has $S_{\alpha p}\!=\!\alpha\!\left(\ccl\right)\!S_{\v p}$
for $\alpha\!\in\!\mathfrak{g}$ if the primary $p$ belongs to the
class $\ccl\!\in\!\cl{\mathfrak{g}}$.
\end{defn}
\begin{lem}
\begin{singlespace}
The number of $\mathfrak{g}$-$\gcl$es equals the cardinality of
$\chg$: 
\begin{equation}
\FA{\cl{\mathfrak{g}}\!}=\FA{\mathfrak{g}}\label{eq:classno}
\end{equation}
\end{singlespace}
\end{lem}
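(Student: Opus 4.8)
The plan is to identify the $\mathfrak{g}$-classes with the one-dimensional characters of the subalgebra $\svera{\chg}\subseteq\vera$ and to show that this algebra has dimension exactly $\FA{\chg}$.

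First I would establish that $\svera{\chg}$ is, as a vector space, the linear span of the fusion matrices $\{\fm(\alpha):\alpha\in\chg\}$. Indeed, by Eq.\eqref{eq:vgdef} together with fusion-closedness, every product $\fm(\alpha)\fm(\beta)$ is a linear combination of matrices $\fm(\gamma)$ with $\gamma\in\chg$, so this span is already closed under multiplication and hence coincides with the generated algebra. Next I would check that these spanning matrices are linearly independent: since fusion with the vacuum is the identity, the column of $\fm(\alpha)$ indexed by $\v$ is the standard basis vector $e_\alpha$ (its entries are $N_{\alpha\v}^r=\delta_\alpha^r$), and distinct $\alpha$ yield distinct basis vectors. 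Consequently $\dim_{\mathbb C}\svera{\chg}=\FA{\chg}$.

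Second, because $\vera$ is commutative and its generators are simultaneously diagonalized by the unitary $S$-matrix, we have $\vera\cong\mathbb C^{n}$ with $n$ the number of primaries; in particular $\vera$ is reduced, so the subalgebra $\svera{\chg}$ is a reduced, hence semisimple, commutative $\mathbb C$-algebra. Such an algebra of dimension $d$ is isomorphic to $\mathbb C^{d}$ and therefore has exactly $d$ distinct characters. By definition a $\mathfrak{g}$-class is a fibre of the map $p\mapsto\ch p|_{\svera{\chg}}$, so $\FA{\cl{\chg}}$ is the number of characters of $\svera{\chg}$ that arise as restrictions of the ambient irreps $\ch p$; this at once gives $\FA{\cl{\chg}}\le\dim_{\mathbb C}\svera{\chg}=\FA{\chg}$.

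The main obstacle is the reverse inequality: one must verify that every character of $\svera{\chg}$ genuinely occurs as such a restriction, i.e. that no character of the subalgebra is lost. I would argue this through primitive idempotents. Writing $\svera{\chg}=\bigoplus_{j=1}^{d}\mathbb C\,e_j$ with the $e_j$ the minimal orthogonal idempotents summing to $1$, each $e_j$ is in particular an idempotent of $\vera\cong\mathbb C^{n}$, hence a nonzero $0/1$-vector supported on a nonempty set $I_j$ of primaries, and the disjoint supports $I_j$ partition the primaries. For any $p\in I_j$ the irrep $\ch p$ is the $p$-th coordinate projection, sending $e_j\mapsto1$ and $e_k\mapsto0$ for $k\ne j$, so $\ch p|_{\svera{\chg}}$ is precisely the $j$-th character; since every $I_j$ is nonempty, all $d$ characters are realized, and the fibres of $p\mapsto\ch p|_{\svera{\chg}}$ are exactly the sets $I_j$. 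Therefore $\FA{\cl{\chg}}=d=\FA{\chg}$, which is Eq.\eqref{eq:classno}.
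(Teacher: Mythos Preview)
Your proof is correct and follows essentially the same strategy as the paper: identify the number of $\mathfrak{g}$-classes with the number of irreducible characters of $\svera{\mathfrak{g}}$, which in turn equals $\dim_{\mathbb{C}}\svera{\mathfrak{g}}=\FA{\mathfrak{g}}$ by linear independence of the fusion matrices. The paper's proof is much terser: it simply invokes the earlier assertion that ``the irreps of the subalgebra $\svera{\mathfrak{g}}$ are among the different restrictions of the irreps $\ch p$ of $\vera$'' and the linear independence of the $\fm(\alpha)$, without spelling out either claim. Your version supplies the details the paper omits --- the vacuum-column argument for independence, semisimplicity of $\svera{\mathfrak{g}}$ via the ambient diagonalization, and the primitive-idempotent argument showing that every character of the subalgebra is actually realized by some $\ch p$ --- so there is no genuine methodological difference, only a difference in explicitness.
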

\begin{proof}
As explained above, the irreps of the subalgebra $\svera{\mathfrak{g}}$
are among the different restrictions of the irreps of $\vera$, i.e.
the irreps $\ch{\ccl}$ corresponding to the $\gcl$es. It follows
that the number of $\gcl$es equals the dimension (over $\mathbb{C}$)
of $\svera{\mathfrak{g}}$, and the later equals the cardinality of
$\chg$, because the fusion matrices are linearly independent.
\end{proof}
\begin{defn}
The extent $\cs{\ccl}$ of the class $\ccl\!\in\!\cl{\mathfrak{g}}$
is the algebraic number
\begin{equation}
\cs{\ccl}=\frac{1}{\sum\limits _{p\in\ccl}S_{\v p}^{2}}\label{eq:extdef}
\end{equation}
\end{defn}
\begin{lem}
\begin{singlespace}
\begin{equation}
\sum_{\ccl\in\cl{\mathfrak{g}}}\frac{1}{\cs{\ccl}}=1\label{eq:extsumrule}
\end{equation}
\end{singlespace}
\end{lem}
\begin{proof}
This follows at once from the unitarity of the matrix $S$.
\end{proof}
\begin{namedthm}[Orthogonality relations]
\begin{singlespace}
For $\alpha,\beta\!\in\!\mathfrak{g}$
\begin{equation}
\sum_{\ccl\in\cl{\mathfrak{g}}}\!\frac{\alpha\!\left(\ccl\right)\overline{\beta\!\left(\ccl\right)}}{\cs{\ccl}}=\begin{cases}
1 & \textrm{\emph{if} }\alpha\!=\!\beta;\\
0 & \mathcal{\textrm{\emph{otherwise}}}.
\end{cases}\label{eq:ortho1}
\end{equation}
\end{singlespace}
\end{namedthm}
\begin{proof}
\[
\sum_{\ccl\in\cl{\mathfrak{g}}}\frac{\alpha\!\left(\ccl\right)\overline{\beta\!\left(\ccl\right)}}{\cs{\ccl}}=\sum_{\ccl\in\cl{\mathfrak{g}}}\!\sum_{p\in\ccl}S_{\v p}^{2}\frac{S_{\alpha p}}{S_{\v p}}\frac{\overline{S_{\beta p}}}{S_{\v p}}=\sum_{p}S_{\alpha p}\overline{S_{p\beta}}=\delta_{\alpha,\beta}
\]
by the unitarity of the matrix $S$.
\end{proof}
\begin{namedthm}[Second orthogonality]
\begin{singlespace}
For any two $\gcl$es $\ccl_{1},\ccl_{2}\!\in\!\cl{\mathfrak{g}}$
\begin{equation}
\sum_{\alpha\in\chg}\alpha\!\left(\ccl_{1}\right)\overline{\alpha\!\left(\ccl_{2}\right)}=\begin{cases}
\cs{\ccl_{1}} & \textrm{if }\ccl_{1}\!=\!\ccl_{2};\\
0 & \textrm{otherwise}.
\end{cases}\label{eq:ortho2}
\end{equation}
\end{singlespace}
\end{namedthm}
\begin{proof}
Consider the square matrix
\[
X_{\alpha\ccl}=\frac{\alpha\!\left(\ccl\right)}{\sqrt{\cs{\ccl}}}
\]
with rows indexed by the elements $\alpha\!\in\!\chg$ and columns
by the $\gcl$es $\ccl\!\in\!\cl{\mathfrak{g}}$. But
\[
\sum_{\ccl\in\cl{\mathfrak{g}}}X_{\alpha\ccl}\overline{X_{\beta\ccl}}=\delta_{\alpha,\beta}
\]
by Eq.\eqref{eq:ortho1}, meaning that the matrix $X$ is unitary,
which implies at once
\[
\sum_{\alpha\in\chg}\frac{\overline{\alpha\!\left(\ccl_{1}\right)}\alpha\!\left(\ccl_{2}\right)}{\sqrt{\cs{\ccl_{1}}\cs{\ccl_{2}}}}=\sum_{\alpha\in\chg}\overline{X_{\alpha\ccl_{1}}}X_{\alpha\ccl_{2}}=\begin{cases}
1 & \textrm{if }\ccl_{1}\!=\!\ccl_{2};\\
0 & \textrm{otherwise}.
\end{cases}
\]
\end{proof}
\begin{cor}
\begin{singlespace}
The cardinality of the class $\ccl\!\in\!\cl{\mathfrak{g}}$ is given
by
\begin{equation}
\FA{\ccl}=\frac{1}{\cs{\ccl}}\sum_{\alpha\in\chg}\overline{\alpha\!\left(\ccl\right)}\mathrm{Tr}\,\fm\!\left(\alpha\right)\label{eq:classsize}
\end{equation}
\end{singlespace}
\end{cor}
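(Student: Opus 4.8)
The plan is to expand the trace $\mathrm{Tr}\,\fm\!\left(\alpha\right)$ in terms of the class values $\alpha\!\left(\ccl\right)$ and then collapse the resulting double sum by means of the second orthogonality relation \eqref{eq:ortho2}. The only ingredient beyond orthogonality is the observation that the fusion matrices are simultaneously diagonalized by the modular matrix $S$: by Verlinde's formula \eqref{eq:ver2} the eigenvalues of $\fm\!\left(\alpha\right)$ are precisely the numbers $\ch p\!\left(\alpha\right)\!=\!S_{\alpha p}/S_{\v p}$ of Eq.\eqref{eq:chdef} as $p$ ranges over all primaries, each index contributing one eigenvector (a column of $S$). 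Taking traces therefore gives
\[
\mathrm{Tr}\,\fm\!\left(\alpha\right)=\sum_{p}\ch p\!\left(\alpha\right).
\]

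First I would regroup the primaries according to the class they belong to. Since every $p\!\in\!\ccl'$ satisfies $\ch p\!\left(\alpha\right)\!=\!\alpha\!\left(\ccl'\right)$ by the very definition of the common restriction $\ch{\ccl'}$, the sum over primaries factors as
\[
\mathrm{Tr}\,\fm\!\left(\alpha\right)=\sum_{\ccl'\in\cl{\mathfrak{g}}}\FA{\ccl'}\,\alpha\!\left(\ccl'\right).
\]
Substituting this into the right-hand side of the claimed identity and interchanging the order of summation yields
\[
\frac{1}{\cs{\ccl}}\sum_{\alpha\in\chg}\overline{\alpha\!\left(\ccl\right)}\,\mathrm{Tr}\,\fm\!\left(\alpha\right)=\frac{1}{\cs{\ccl}}\sum_{\ccl'\in\cl{\mathfrak{g}}}\FA{\ccl'}\sum_{\alpha\in\chg}\alpha\!\left(\ccl'\right)\overline{\alpha\!\left(\ccl\right)}.
\]

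Finally I would invoke the second orthogonality relation \eqref{eq:ortho2} for the inner sum, which equals $\cs{\ccl}$ when $\ccl'\!=\!\ccl$ and vanishes otherwise; hence only the single term $\ccl'\!=\!\ccl$ survives and the two factors $\cs{\ccl}$ cancel, leaving exactly $\FA{\ccl}$. The argument is routine once the trace has been rewritten as a sum over classes, so I expect no genuine obstacle beyond two bookkeeping points: matching the complex conjugation in the inner sum against the convention of \eqref{eq:ortho2}, and making explicit that the trace is the sum of the $\ch p\!\left(\alpha\right)$ over all primaries counted with their multiplicities, which is precisely what the simultaneous diagonalization by $S$ guarantees.
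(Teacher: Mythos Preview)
Your proof is correct and follows essentially the same route as the paper: expand $\mathrm{Tr}\,\fm(\alpha)$ as the sum of its eigenvalues $\ch p(\alpha)$, regroup the primaries by $\mathfrak{g}$-class to obtain $\sum_{\ccl'}\FA{\ccl'}\,\alpha(\ccl')$, and then apply the second orthogonality relation \eqref{eq:ortho2}. The paper's version is just more compressed, writing the whole chain in a single displayed line.
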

\begin{proof}
\begin{singlespace}
Since the trace of a matrix equals the sum of its eigenvalues, 
\[
\mathrm{Tr}\,\fm\!\left(\alpha\right)=\sum_{p}\ch p\!\left(\alpha\right)=\sum_{\ccl\in\cl{\mathfrak{g}}}\sum_{p\in\ccl}\ch p\!\left(\alpha\right)=\sum_{\ccl\in\cl{\mathfrak{g}}}\!\!\FA{\ccl}\alpha\!\left(\ccl\right)
\]
\end{singlespace}

\noindent for $\alpha\!\in\!\mathfrak{g}$, and the result follows
at once from \prettyref{eq:ortho2}.
\end{proof}
\begin{lem}
\begin{singlespace}
The characteristic function of the $\gcl$ $\ccl\!\in\!\cl{\mathfrak{g}}$
reads
\begin{equation}
\clchar{\ccl}p=\frac{1}{\cs{\ccl}}\sum_{\alpha\in\chg}\overline{\alpha\!\left(\ccl\right)}\frac{S_{\alpha p}}{S_{\v p}}=\begin{cases}
1 & \textrm{\emph{if} }p\!\in\!\ccl;\\
0 & \textrm{\emph{otherwise}.}
\end{cases}\label{eq:classcharfun}
\end{equation}
\end{singlespace}
\end{lem}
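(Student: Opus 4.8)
The plan is to reduce the defining sum to the Second orthogonality relation Eq.~\eqref{eq:ortho2}. First I would fix the primary $p$ and let $\ccl'\!\in\!\cl{\mathfrak{g}}$ denote the unique class containing it, which exists because the $\mathfrak{g}$-$\gcl$es partition the set of all primaries. The key observation is that the ratio $S_{\alpha p}/S_{\v p}$ appearing in the sum is nothing but the character value $\ch p\!\left(\alpha\right)$ of Eq.~\eqref{eq:chdef}, and by the very definition of a $\mathfrak{g}$-$\gcl$ this value depends only on the class to which $p$ belongs; hence $S_{\alpha p}/S_{\v p}\!=\!\alpha\!\left(\ccl'\right)$ for every $\alpha\!\in\!\chg$.

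Substituting this identity into the definition of $\clchar{\ccl}p$ turns it into
\[
\clchar{\ccl}p=\frac{1}{\cs{\ccl}}\sum_{\alpha\in\chg}\overline{\alpha\!\left(\ccl\right)}\,\alpha\!\left(\ccl'\right),
\]
which is precisely the combination controlled by Eq.~\eqref{eq:ortho2}. Applying it (with $\ccl_{1}\!=\!\ccl'$ and $\ccl_{2}\!=\!\ccl$) shows that the inner sum vanishes unless $\ccl'\!=\!\ccl$, in which case it equals $\cs{\ccl'}\!=\!\cs{\ccl}$. Thus $\clchar{\ccl}p=\cs{\ccl}/\cs{\ccl}=1$ when $\ccl'\!=\!\ccl$, i.e. when $p\!\in\!\ccl$, and $\clchar{\ccl}p=0$ otherwise, which is the asserted case distinction.

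Since the whole argument is a single substitution followed by one invocation of Eq.~\eqref{eq:ortho2}, there is no genuine obstacle here; the only point requiring care is the bookkeeping of which class index is conjugated, so as to match the pattern of Eq.~\eqref{eq:ortho2} exactly, together with the trivial remark that $\cs{\ccl'}$ and $\cs{\ccl}$ coincide precisely when the two classes agree.
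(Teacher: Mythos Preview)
Your argument is correct and essentially identical to the paper's own proof: both name the class containing $p$ (the paper calls it $\mathtt{D}$, you call it $\ccl'$), rewrite $S_{\alpha p}/S_{\v p}$ as $\alpha(\mathtt{D})$ respectively $\alpha(\ccl')$, and then invoke the second orthogonality relation Eq.~\eqref{eq:ortho2} to conclude.
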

\begin{proof}
The primary $p$ belongs to the class $\mathtt{D}\!\in\!\cl{\mathfrak{g}}$
iff $S_{\alpha p}\!=\!\alpha\!\left(\mathtt{D}\right)\!S_{\v p}$
for $\alpha\!\in\!\mathfrak{g}$, hence
\[
\frac{1}{\cs{\ccl}}\sum_{\alpha\in\chg}\overline{\alpha\!\left(\ccl\right)}\frac{S_{\alpha p}}{S_{\v p}}\!=\!\frac{1}{\cs{\ccl}}\sum_{\alpha\in\chg}\overline{\alpha\!\left(\ccl\right)}\alpha\!\left(\mathtt{D}\right)\!=\!\begin{cases}
1 & \textrm{if }\mathtt{D}\!=\!\ccl;\\
0 & \textrm{otherwise.}
\end{cases}
\]

\noindent by Eq.\prettyref{eq:ortho2}, proving the claim.
\end{proof}
\begin{lem}
\begin{singlespace}
\label{lem:wmelms}For any $\gcl$ $\ccl\!\in\!\cl{\mathfrak{g}}$
one has
\begin{equation}
\sum_{w\in\ccl}S_{pw}\overline{S_{wq}}=\frac{1}{\cs{\ccl}}\sum_{\alpha\in\chg}\overline{\alpha\!\left(\ccl\right)}N_{\alpha p}^{q}\label{eq:wmatelms}
\end{equation}
\end{singlespace}
\end{lem}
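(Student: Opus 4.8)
The plan is to evaluate the right-hand side of \eqref{eq:wmatelms} directly and reduce it to the left-hand side, the two essential ingredients being Verlinde's formula \eqref{eq:ver2} and the second orthogonality relation \eqref{eq:ortho2}. First I would substitute the Verlinde expression for the fusion coefficient $N_{\alpha p}^{q}$ and interchange the two finite summations, so that the sum over $\alpha\!\in\!\chg$ becomes the inner one:
\[
\frac{1}{\cs{\ccl}}\sum_{\alpha\in\chg}\overline{\alpha\!\left(\ccl\right)}N_{\alpha p}^{q}=\frac{1}{\cs{\ccl}}\sum_{w}\frac{S_{pw}\overline{S_{qw}}}{S_{\v w}}\sum_{\alpha\in\chg}\overline{\alpha\!\left(\ccl\right)}S_{\alpha w}.
\]

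The key step is to collapse the inner sum over $\alpha$. Since the $\gcl$es partition the primaries, each $w$ lies in a unique class $\mathtt{D}\!\in\!\cl{\mathfrak{g}}$, so that $S_{\alpha w}\!=\!\alpha\!\left(\mathtt{D}\right)S_{\v w}$ for every $\alpha\!\in\!\chg$. Factoring out $S_{\v w}$ and applying \eqref{eq:ortho2} — whose value $\cs{\ccl}$, being a positive real, is unaffected by the complex conjugation — yields $\sum_{\alpha\in\chg}\overline{\alpha\!\left(\ccl\right)}S_{\alpha w}=S_{\v w}\cs{\ccl}\,\delta_{\ccl,\mathtt{D}}$. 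Thus the inner sum vanishes unless $w$ itself belongs to $\ccl$.

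Substituting this back restricts the outer summation to $w\!\in\!\ccl$, whereupon the factor $S_{\v w}$ cancels the denominator and the factor $\cs{\ccl}$ cancels the overall prefactor $\cs{\ccl}\inv$, leaving $\sum_{w\in\ccl}S_{pw}\overline{S_{qw}}$; by the symmetry of the modular matrix $S$ this coincides with the left-hand side $\sum_{w\in\ccl}S_{pw}\overline{S_{wq}}$, as claimed. I anticipate no genuine difficulty here: the argument is essentially the same insertion-and-orthogonality device already used to establish \eqref{eq:classcharfun}, and the only points demanding a little care are the regrouping of the $w$-summation according to classes and the bookkeeping of the complex conjugates when matching the inner sum to the precise form of \eqref{eq:ortho2}.
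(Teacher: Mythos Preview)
Your argument is correct and is essentially the paper's own proof run in the opposite direction: the paper starts from the left-hand side, inserts the characteristic function $\clchar{\ccl}w$ of Eq.~\eqref{eq:classcharfun} to extend the sum over $w\!\in\!\ccl$ to all primaries, interchanges summations, and then applies Verlinde's formula, whereas you start from the right-hand side, apply Verlinde first, and then use Eq.~\eqref{eq:ortho2} (which is exactly the content of $\clchar{\ccl}w$) to restrict the $w$-sum to $\ccl$. The two routes use the same ingredients and the same key cancellation, so there is no substantive difference.
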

\begin{proof}
\begin{singlespace}
It follows from Eqs. \eqref{eq:ver2} and \eqref{eq:classcharfun}
that
\begin{gather*}
\sum_{w\in\ccl}S_{pw}\overline{S_{wq}}=\sum_{w}\clchar{\ccl}wS_{pw}\overline{S_{wq}}=\sum_{w}S_{pw}\overline{S_{qw}}\Bigl\{\!\frac{1}{\cs{\ccl}}\!\sum_{\alpha\in\chg}\overline{\alpha\!\left(\ccl\right)}\frac{S_{\alpha w}}{S_{\v w}}\Bigr\}\\
=\frac{1}{\cs{\ccl}}\!\sum_{\alpha\in\chg}\overline{\alpha\!\left(\ccl\right)}\Bigl\{\sum_{w}\frac{S_{pw}\overline{S_{qw}}S_{\alpha w}}{S_{\v w}}\Bigr\}=\frac{1}{\cs{\ccl}}\!\sum_{\alpha\in\chg}\overline{\alpha\!\left(\ccl\right)}N_{\alpha p}^{q}
\end{gather*}
\end{singlespace}
\end{proof}
The $\gcl$ containing the vacuum primary $\v$ is of special importance:
we shall denote it by $\du{\chg}$, and call it the $\tcl$. According
to the previous definitions, $\alpha\!\left(\du{\chg}\right)\!=\!\ch{\v}\!\left(\alpha\right)\!=\!\qd{\alpha}$
for $\alpha\!\in\!\chg$.
\begin{lem}
\begin{singlespace}
\begin{equation}
\cs{\du{\chg}}=\sum_{\alpha\in\chg}\qd{\alpha}^{2}\label{eq:spread}
\end{equation}
\end{singlespace}
\end{lem}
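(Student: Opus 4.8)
The plan is to read this identity directly off the Second orthogonality relation \prettyref{eq:ortho2} by specializing both class arguments to the trivial class $\du{\chg}$. First I would recall the observation recorded immediately above the statement: because the vacuum primary $\v$ lies in $\du{\chg}$ and $\ch{\v}\!\left(\alpha\right)\!=\!\qd{\alpha}$, the values taken by the trivial class are precisely the quantum dimensions, that is, $\alpha\!\left(\du{\chg}\right)\!=\!\qd{\alpha}$ for every $\alpha\!\in\!\chg$.

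Next I would apply Eq.\prettyref{eq:ortho2} in the diagonal case $\ccl_{1}\!=\!\ccl_{2}\!=\!\du{\chg}$, which yields $\sum_{\alpha\in\chg}\alpha\!\left(\du{\chg}\right)\overline{\alpha\!\left(\du{\chg}\right)}\!=\!\cs{\du{\chg}}$. Substituting the identification from the previous step turns the left-hand side into $\sum_{\alpha\in\chg}\qd{\alpha}\,\overline{\qd{\alpha}}$, and since each $\qd{\alpha}$ is a positive real number the conjugation is harmless, leaving $\sum_{\alpha\in\chg}\qd{\alpha}^{2}$ exactly as claimed.

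There is essentially no obstacle to overcome here: the statement is an immediate corollary of the second orthogonality relation once one recognizes that the trivial class encodes the quantum dimensions. The only subtlety worth a remark is the reality of $\qd{\alpha}$, which holds because the quantum dimensions are the entries of the Perron--Frobenius eigenvector of the fusion matrices and are in particular positive reals.
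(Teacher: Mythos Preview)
Your proof is correct and follows exactly the same route as the paper: specialize the second orthogonality relation Eq.\eqref{eq:ortho2} to $\ccl_{1}\!=\!\ccl_{2}\!=\!\du{\chg}$ and use $\alpha\!\left(\du{\chg}\right)\!=\!\qd{\alpha}$. The only difference is that you spell out why $\overline{\qd{\alpha}}\!=\!\qd{\alpha}$, which the paper leaves implicit.
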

\begin{proof}
\noindent Since $\alpha\!\left(\du{\mathfrak{g}}\right)\!=\!\qd{\alpha}$
for $\alpha\!\in\!\mathfrak{g}$, one has
\[
\sum_{\alpha\in\mathfrak{g}}\qd{\alpha}^{2}\!=\!\sum_{\alpha\in\mathfrak{g}}\overline{\alpha\!\left(\du{\mathfrak{g}}\right)}\alpha\!\left(\du{\mathfrak{g}}\right)\!=\!\cs{\du{\chg}}
\]
according to Eq.\prettyref{eq:ortho2}.
\end{proof}
The trivial class maximizes the product of size and extent.
\begin{lem}
\label{lem:sizebound2} $\FA{\ccl}\!\cs{\ccl}\!\leq\!\FA{\du{\mathfrak{g}}}\!\cs{\du{\mathfrak{g}}}$
for every class $\ccl\!\in\!\cl{\mathfrak{g}}$.
\end{lem}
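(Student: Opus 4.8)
The plan is to recast the product $\FA\ccl\cs\ccl$ as a single sum over $\chg$ whose weights are independent of the class, so that an arbitrary class can be compared against the trivial one term by term. Clearing the denominator in the class-size formula \eqref{eq:classsize} gives at once
\[
\FA\ccl\cs\ccl=\sum_{\alpha\in\chg}\overline{\alpha\!\left(\ccl\right)}\,\mathrm{Tr}\,\fm\!\left(\alpha\right),
\]
and the very same identity specialised to the trivial class $\du\chg$, where $\alpha\!\left(\du\chg\right)\!=\!\qd\alpha$ is a positive real, reads $\FA{\du\chg}\cs{\du\chg}=\sum_{\alpha\in\chg}\qd\alpha\,\mathrm{Tr}\,\fm\!\left(\alpha\right)$. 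It therefore suffices to majorise the first sum by the second.

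Two observations supply the estimate. First, $\mathrm{Tr}\,\fm\!\left(\alpha\right)=\sum_{p}N_{\alpha p}^{p}$ is a sum of non-negative integers, hence a non-negative real; this is exactly what allows the modulus to be moved inside the sum without reversing an inequality. Second, for any $p$ in the class $\ccl$ one has $\alpha\!\left(\ccl\right)=\ch p\!\left(\alpha\right)$, so the eigenvalue bound \eqref{eq:ineq} (applied with $q=\alpha\in\chg$) yields $\FA{\alpha\!\left(\ccl\right)}\!\leq\!\qd\alpha$ for every $\alpha\in\chg$.

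Since $\FA\ccl\cs\ccl$ is a positive real it coincides with its own modulus, and the triangle inequality followed by the termwise bound $\FA{\alpha\!\left(\ccl\right)}\!\leq\!\qd\alpha$ — both steps relying on $\mathrm{Tr}\,\fm\!\left(\alpha\right)\!\geq\!0$ — gives
\[
\FA\ccl\cs\ccl=\Bigl|\sum_{\alpha\in\chg}\overline{\alpha\!\left(\ccl\right)}\,\mathrm{Tr}\,\fm\!\left(\alpha\right)\Bigr|\leq\sum_{\alpha\in\chg}\FA{\alpha\!\left(\ccl\right)}\,\mathrm{Tr}\,\fm\!\left(\alpha\right)\leq\sum_{\alpha\in\chg}\qd\alpha\,\mathrm{Tr}\,\fm\!\left(\alpha\right)=\FA{\du\chg}\cs{\du\chg},
\]
which is the assertion.

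In my view the only step carrying content is the first: realising that the class-size corollary already packages $\FA\ccl\cs\ccl$ as a Verlinde-trace sum whose weights $\mathrm{Tr}\,\fm\!\left(\alpha\right)$ are shared by all classes, so that the whole inequality collapses onto the scalar estimate $\FA{\alpha\!\left(\ccl\right)}\!\leq\!\qd\alpha$. After that the argument is routine, and one even sees why $\du\chg$ is extremal: there the triangle inequality becomes an equality, since every coefficient $\overline{\alpha\!\left(\du\chg\right)}=\qd\alpha$ is a non-negative real.
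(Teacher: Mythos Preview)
Your proof is correct and follows essentially the same approach as the paper's own argument: both start from the class-size formula \eqref{eq:classsize} to write $\FA{\ccl}\cs{\ccl}$ as $\sum_{\alpha\in\chg}\overline{\alpha(\ccl)}\,\mathrm{Tr}\,\fm(\alpha)$, then invoke the non-negativity of $\mathrm{Tr}\,\fm(\alpha)$ together with the bound $\FA{\alpha(\ccl)}\leq\qd{\alpha}$ and the triangle inequality. Your version is slightly more explicit about why the triangle inequality may be applied (namely that $\FA{\ccl}\cs{\ccl}$ is positive real), but the substance is identical.
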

\begin{proof}
\begin{singlespace}
By Eq.\prettyref{eq:classsize}
\[
\FA{\ccl}\!\cs{\ccl}=\sum_{\alpha\in\chg}\overline{\alpha\!\left(\ccl\right)}\mathrm{Tr}\,\fm\!\left(\alpha\right)
\]
Since the matrix $\fm\!\left(\alpha\right)$ is non-negative and $\FA{\alpha\!\left(\ccl\right)\!}\!\leq\!\qd{\alpha}\!=\!\alpha\!\left(\du{\mathfrak{g}}\right)$
\[
\FA{\ccl}\!\cs{\ccl}\!\leq\!\sum_{\alpha\in\chg}\FA{\overline{\alpha\!\left(\ccl\right)}}\,\mathrm{Tr}\,\fm\!\left(\alpha\right)\!\leq\!\sum_{\alpha\in\chg}\alpha\!\left(\du{\mathfrak{g}}\right)\mathrm{Tr}\,\fm\!\left(\alpha\right)\!=\!\FA{\du{\mathfrak{g}}}\!\cs{\du{\mathfrak{g}}}
\]
\end{singlespace}

\noindent by the triangle inequality, taking into account that $\FA{\ccl}\!\cs{\ccl}\!>\!0$.
\end{proof}
\begin{thm}[Product rule]
\label{thm: Product rule} If $N_{pq}^{r}\!>\!0$ for some $p\!\in\!\du{\chg}$,
then the primaries $q$ and $r$ belong to the same $\mathfrak{g}$-$\gcl$.
\end{thm}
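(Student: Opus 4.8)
The goal is to show that every $r$ with $N_{pq}^{r}>0$ lies in the same $\mathfrak{g}$-class as $q$; writing $\ccl_{s}$ for the class of a primary $s$, this means $\ccl_{r}=\ccl_{q}$. The plan is to use the hypothesis $p\in\du{\chg}$ only through the identity $S_{\alpha p}=\qd{\alpha}S_{\v p}$ (valid for $\alpha\!\in\!\chg$, since for the trivial class $\alpha(\du{\chg})=\qd{\alpha}$), and then to feed the class elements $\alpha\!\in\!\chg$ into the modular relation \eqref{eq:ver4} as the spectral label $w$, rather than an arbitrary primary.

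First I would specialise \eqref{eq:ver4} to $w=\alpha\!\in\!\chg$, obtaining $\sum_{r'}N_{pq}^{r'}S_{\alpha r'}=S_{\alpha p}S_{\alpha q}/S_{\v\alpha}$. Substituting $S_{\alpha p}=\qd{\alpha}S_{\v p}$ and $S_{\v\alpha}=\qd{\alpha}S_{\v\v}$, the right-hand side collapses to $(S_{\v p}/S_{\v\v})S_{\alpha q}=\qd{p}\,S_{\alpha q}$, so that
\[
\sum_{r'}N_{pq}^{r'}S_{\alpha r'}=\qd{p}\,S_{\alpha q}\qquad(\alpha\!\in\!\chg).
\]
Next I would rewrite each $S$-entry through its class value $S_{\alpha s}=\alpha(\ccl_{s})S_{\v s}$ and collect summands by class. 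Setting $w_{\ccl}=\sum_{r'\in\ccl}N_{pq}^{r'}S_{\v r'}\ge0$ for each $\ccl\!\in\!\cl{\mathfrak{g}}$, the identity becomes an equality of functions of $\alpha$:
\[
\sum_{\ccl\in\cl{\mathfrak{g}}}\alpha(\ccl)\,w_{\ccl}=\qd{p}\,S_{\v q}\,\alpha(\ccl_{q})\qquad(\alpha\!\in\!\chg).
\]

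To finish I would invoke the second orthogonality \eqref{eq:ortho2}: multiplying by $\overline{\alpha(\ccl_{0})}$ and summing over $\alpha\!\in\!\chg$ annihilates every term except $\ccl=\ccl_{0}$ on the left and forces $w_{\ccl_{0}}\cs{\ccl_{0}}=\qd{p}\,S_{\v q}\,\cs{\ccl_{q}}\,\delta_{\ccl_{0},\ccl_{q}}$. (Equivalently, \eqref{eq:ortho2} says the vectors $\left(\alpha(\ccl)\right)_{\alpha}$ are mutually orthogonal, hence the class characters $\ccl\mapsto\alpha(\ccl)$ are linearly independent, so the two displayed sides must carry identical coefficients.) Since $\cs{\ccl_{0}}\neq0$, this gives $w_{\ccl_{0}}=0$ whenever $\ccl_{0}\neq\ccl_{q}$. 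But $w_{\ccl_{0}}$ is a sum of terms $N_{pq}^{r'}S_{\v r'}$ with $N_{pq}^{r'}\ge0$ and $S_{\v r'}=\qd{r'}S_{\v\v}>0$, so a vanishing sum of non-negative terms forces $N_{pq}^{r'}=0$ for every $r'\!\in\!\ccl_{0}$. Hence $N_{pq}^{r}>0$ is possible only for $r\!\in\!\ccl_{q}$, which is the assertion.

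The two genuinely substantive moves are at the start: recognizing that $p\!\in\!\du{\chg}$ should enter solely as $S_{\alpha p}=\qd{\alpha}S_{\v p}$, and that the right specialisation of the modular relation is $w=\alpha\!\in\!\chg$; after that the computation is essentially forced. I expect the main obstacle to be the concluding positivity step: orthogonality (or linear independence) only delivers $w_{\ccl_{0}}=0$, and it is the strict positivity of the quantum dimensions, hence of $S_{\v r'}$, combined with $N_{pq}^{r'}\ge0$, that upgrades this to the vanishing of each individual fusion coefficient — without positivity one could conclude only a weighted cancellation, not actual class membership.
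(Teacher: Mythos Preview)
Your proof is correct and follows essentially the same route as the paper: both specialise the Verlinde relation to $w=\alpha\in\mathfrak{g}$, use $S_{\alpha p}=\qd{\alpha}S_{\v p}$ for $p\in\du{\chg}$, invoke the second orthogonality relation to isolate the contribution of a single class, and finish with positivity of quantum dimensions. The only difference is organisational---the paper packages the orthogonality step through the class characteristic function $\clchar{\ccl}{\cdot}$ of Eq.~\eqref{eq:classcharfun}, whereas you apply Eq.~\eqref{eq:ortho2} directly---but the underlying computation is the same.
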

\begin{proof}
\begin{singlespace}
Denoting by $\ccl$ the $\gcl$ of $q$, one has the obvious equality
\[
\sum_{r\notin\ccl}\!N_{pq}^{r}\qd r=\sum_{r}\!N_{pq}^{r}\qd r-\sum_{r\in\ccl}\!N_{pq}^{r}\qd r=\qd p\qd q-\sum_{r\in\ccl}\!N_{pq}^{r}\qd r
\]
On the other hand, by Eqs.\prettyref{eq:classcharfun} and \prettyref{eq:verrep}
\begin{gather*}
\sum_{r\in\ccl}\!N_{pq}^{r}\qd r=\sum_{r}\!\clchar{\ccl}r\!N_{pq}^{r}\qd r=\sum\!N_{pq}^{r}\frac{S_{\v r}}{S_{\v\v}}\!\left\{ \!\frac{1}{\cs{\ccl}}\!\sum_{\alpha\in\chg}\!\overline{\alpha\!\left(\ccl\right)}\frac{S_{\alpha r}}{S_{\v r}}\!\right\} \!\\
=\frac{1}{\cs{\ccl}}\!\sum_{\alpha\in\chg}\!\overline{\alpha\!\left(\ccl\right)}\frac{S_{\v\alpha}}{S_{\v\v}}\!\left\{ \!\sum_{r}\!N_{pq}^{r}\frac{S_{\alpha r}}{S_{\v\alpha}}\!\right\} =\frac{1}{\cs{\ccl}}\!\sum_{\alpha\in\chg}\!\overline{\alpha\!\left(\ccl\right)}\frac{S_{\v\alpha}}{S_{\v\v}}\frac{S_{\alpha p}}{S_{\v\alpha}}\frac{S_{\alpha q}}{S_{\v\alpha}}
\end{gather*}
Since $S_{\alpha p}\!=\!\qd{\alpha}S_{\v p}\!=\!\qd pS_{\v\alpha}$
for $p\!\in\!\du{\chg}$ and $S_{\alpha q}\!=\!\alpha\!\left(\ccl\right)\!S_{\v q}$,
this gives
\[
\sum_{r\in\ccl}\!N_{pq}^{r}\qd r\!=\!\frac{1}{\cs{\ccl}}\!\sum_{\alpha\in\chg}\!\overline{\alpha\!\left(\ccl\right)}\frac{S_{\v\alpha}}{S_{\v\v}}\frac{S_{\alpha p}}{S_{\v\alpha}}\frac{S_{\alpha q}}{S_{\v\alpha}}\!=\!\qd p\frac{S_{\v q}}{S_{\v\v}}\frac{1}{\cs{\ccl}}\!\sum_{\alpha\in\chg}\!\overline{\alpha\!\left(\ccl\right)}\alpha\!\left(\ccl\right)\!=\!\qd p\qd q
\]
from which one concludes
\[
\sum_{r\notin\ccl}\!N_{pq}^{r}\qd r=0
\]
\end{singlespace}

\noindent Since all terms on the left-hand side are non-negative,
it follows that they all have to vanish, i.e. $N_{pq}^{r}\!=\!0$
for $r\notin\ccl$.
\end{proof}
\begin{cor}
\label{cor:duality}$\du{\chg}$ is an $\fc$.
\end{cor}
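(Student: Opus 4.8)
The plan is to verify directly that $\du{\chg}$ satisfies the two clauses in the definition of an $\fc$. The first clause, that the set contain the vacuum primary $\v$, holds by construction: $\du{\chg}$ was introduced precisely as the $\gcl$ to which $\v$ belongs, so $\v\!\in\!\du{\chg}$ is definitional and requires no argument. It therefore remains only to establish fusion closure, i.e. that whenever $p,q\!\in\!\du{\chg}$ and $N_{pq}^{r}\!>\!0$, one necessarily has $r\!\in\!\du{\chg}$; recall that, quantum dimensions being positive, this combinatorial closure condition is equivalent to the defining identity $\sum_{\gamma\in\du{\chg}}N_{pq}^{\gamma}\qd{\gamma}\!=\!\qd p\qd q$.

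For the closure step I would simply feed the hypothesis into the Product rule. Given $p,q\!\in\!\du{\chg}$ and any $r$ with $N_{pq}^{r}\!>\!0$, apply \prettyref{thm: Product rule} with the element $p\!\in\!\du{\chg}$ playing the distinguished role reserved for members of the trivial class: it yields that $q$ and $r$ lie in one and the same $\mathfrak{g}$-$\gcl$. Since $q$ was assumed to belong to $\du{\chg}$, that common class can only be $\du{\chg}$ itself, and hence $r\!\in\!\du{\chg}$, as required.

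There is no real obstacle here once the Product rule is available; the content of the corollary is entirely contained in that theorem, and the present statement is just its natural repackaging. The only points demanding care are bookkeeping ones: one must invoke the Product rule for the element $p$ of $\du{\chg}$ rather than for $q$ (the roles are not symmetric in the hypothesis of that theorem), and one must remember that the vacuum clause of the $\fc$ definition is satisfied by the very definition of $\du{\chg}$ rather than by any computation. With these two observations the verification is complete and $\du{\chg}$ is an $\fc$.
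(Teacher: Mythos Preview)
Your proposal is correct and follows exactly the same route as the paper: both arguments deduce fusion closure of $\du{\chg}$ directly from the Product rule \prettyref{thm: Product rule}, with the vacuum clause being definitional. The paper's proof is simply a one-line compression of what you wrote.
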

\begin{proof}
If $p,q\!\in\!\du{\mathfrak{g}}$ and $N_{pq}^{r}\!>\!0$, then $r\!\in\!\du{\mathfrak{g}}$
by \prettyref{thm: Product rule}.
\end{proof}
\prettyref{cor:duality} implies that all notions and results about
an $\fc$ $\mathfrak{g}$ go over verbatim to its dual $\fc$ $\du{\chg}$.
In particular, the set of primaries is partitioned into $\du{\mathfrak{g}}$-$\gcl$es,
which we shall call $\mathfrak{g}$-blocks (or simply blocks) to avoid
confusion with $\mathfrak{g}$-$\gcl$es.
\begin{defn}
For an $\fc$ $\mathfrak{g}\!\in\!\lat$, a $\mathfrak{g}$-block
is a class of the dual $\fc$ $\du{\mathfrak{g}}$. We'll denote the
collection $\cl{\du{\mathfrak{g}}}$ of $\mathfrak{g}$-blocks by
$\bl{\mathfrak{g}}$.
\end{defn}
\begin{lem}
\label{lem:blcrit2}The primaries $p$ and $q$ belong to the same
$\mathfrak{g}$-$\dcl$ iff there exists $\alpha\!\in\!\chg$ such
that $N_{\alpha p}^{q}\!>\!0$.
\end{lem}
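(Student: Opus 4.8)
The plan is to reduce both directions to the positivity of the single real number $T_{pq}:=\sum_{w\in\dhg}S_{pw}\overline{S_{wq}}$, the sum of $S$-matrix products taken over the trivial class $\dhg$. Applying \prettyref{lem:wmelms} to the trivial class $\dhg$, for which $\alpha\!\left(\dhg\right)=\qd{\alpha}$ is real, I get
\[
T_{pq}=\frac{1}{\cs{\dhg}}\sum_{\alpha\in\chg}\qd{\alpha}N_{\alpha p}^{q}.
\]
Since $\cs{\dhg}>0$ and every $\qd{\alpha}>0$, this exhibits $T_{pq}$ as a non-negative real that is strictly positive precisely when $N_{\alpha p}^{q}>0$ for at least one $\alpha\in\chg$. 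The lemma is therefore equivalent to the assertion that $p$ and $q$ lie in a common $\chg$-$\dcl$ iff $T_{pq}>0$, and I would structure the proof entirely around this reformulation.

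For the forward implication I would assume $p$ and $q$ share a block, i.e. a $\dhg$-$\gcl$, and exploit that a class is exactly a set on which the restricted characters agree: $\ch p$ and $\ch q$ coincide on $\svera{\dhg}$, hence $S_{wp}/S_{\v p}=S_{wq}/S_{\v q}$ for every $w\in\dhg$. By symmetry of $S$ this reads $S_{pw}=\left(S_{\v p}/S_{\v q}\right)S_{qw}$ throughout $\dhg$, and substituting it collapses $T_{pq}$ to $\left(S_{\v p}/S_{\v q}\right)\sum_{w\in\dhg}\FA{S_{qw}}^{2}$, which is strictly positive because the term $w=\v\in\dhg$ alone contributes $S_{\v q}^{2}>0$. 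I expect this to be where the real content sits: it is the step that converts the spectral definition of a block into an honest statement about fusion coefficients, and it works only because membership in a common class forces the two rows $S_{p\bullet}$ and $S_{q\bullet}$ to be proportional on $\dhg$.

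For the reverse implication I would dualise the Product rule. By \prettyref{cor:duality} the set $\dhg$ is itself an $\fc$, so it has its own trivial class $\du{\dhg}$ and its own classes, which are exactly the blocks. I would first record the biduality inclusion $\chg\subseteq\du{\dhg}$: for $\alpha\in\chg$ and $\beta\in\dhg$ one computes $S_{\alpha\beta}=\qd{\alpha}S_{\v\beta}=\qd{\alpha}\qd{\beta}S_{\v\v}=\qd{\beta}S_{\v\alpha}$, which is precisely the condition that puts $\alpha$ into the trivial class $\du{\dhg}$ of $\dhg$. Given $N_{\alpha p}^{q}>0$ with $\alpha\in\chg\subseteq\du{\dhg}$, \prettyref{thm: Product rule} applied to the $\fc$ $\dhg$ then forces $p$ and $q$ into the same $\dhg$-$\gcl$, i.e. the same block.

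The main obstacle is the forward direction: the reverse one is essentially a verbatim dualisation of the Product rule once the inclusion $\chg\subseteq\du{\dhg}$ is available, whereas the forward one needs a genuine bridge from the block partition to fusion multiplicities, supplied by \prettyref{lem:wmelms}. The one point to handle with care is that $T_{pq}$ is a priori a complex sum; its reality and non-negativity — on which the whole reformulation rests — are not evident from the definition and become available only after the fusion-rule rewriting in the first step.
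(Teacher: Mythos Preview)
Your proof is correct but organised differently from the paper's. The paper invokes the second orthogonality relation Eq.\prettyref{eq:ortho2} for the dual $\fc$ $\dhg$, which in one stroke shows that $\sum_{w\in\dhg}S_{wp}\overline{S_{wq}}$ equals $S_{\v p}S_{\v q}\cs{\mathfrak{b}}$ when $p,q$ share a block $\mathfrak{b}$ and vanishes otherwise; combining this with \prettyref{lem:wmelms} gives both directions simultaneously. Your forward implication (same block $\Rightarrow T_{pq}>0$) is in effect a direct re-derivation of the relevant case of that orthogonality relation, so there is no real difference in content there. Your reverse implication, however, is genuinely different: instead of using the vanishing case of orthogonality, you invoke the Product rule for $\dhg$ together with the inclusion $\chg\subseteq\du{\dhg}$, which you verify by hand. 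This is a perfectly valid alternative and perhaps conceptually illuminating, since it makes explicit that block membership is governed by fusion with $\chg$ through the Product rule; the price is that your argument uses two separate mechanisms for the two directions, whereas the paper's single appeal to orthogonality handles both at once. Note also that your reverse direction does not actually need the $T_{pq}$ reformulation at all, so the structure ``reduce everything to positivity of $T_{pq}$'' is slightly overstated.
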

\begin{proof}
Since $\du{\chg}$ is an $\fc$ according to \prettyref{cor:duality},
the orthogonality relations apply to it. In particular, Eq.\eqref{eq:ortho2}
takes the form
\[
\sum_{w\in\dhg}\!\frac{S_{wp}}{S_{\v p}}\frac{\overline{S_{wq}}}{S_{\v q}}\!=\!\begin{cases}
\!\cs{\mathfrak{b}} & \!\!\textrm{if \ensuremath{p} and \ensuremath{q} belong to the same \ensuremath{\dcl} \ensuremath{\mathfrak{b}\!\in\!\bl{\mathfrak{g}}};}\\
\!0 & \!\!\textrm{otherwise. }
\end{cases}
\]
By \prettyref{lem:wmelms}, this means that $p$ and $q$ belong to
the same $\ensuremath{\dcl}$ precisely when
\[
\sum_{w\in\dhg}\!S_{wp}\overline{S_{wq}}\!=\!\frac{1}{\cs{\du{\mathfrak{g}}}}\sum_{\alpha\in\chg}\qd{\alpha}N_{\alpha p}^{q}>0
\]
Since the quantum dimensions $\qd{\alpha}$ are all positive, this
is equivalent to $N_{\alpha p}^{q}\!>\!0$ for some $\alpha\!\in\!\chg$.
\end{proof}
\begin{cor}
The $\mathfrak{g}$-block containing the vacuum is $\mathfrak{g}$
itself: $\du{\left(\du{\chg}\!\right)\!}\!=\!\chg$.
\end{cor}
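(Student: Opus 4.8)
The plan is to read off the statement directly from the definition of a block together with the block criterion \prettyref{lem:blcrit2}. By definition a $\mathfrak{g}$-$\dcl$ is a class of the dual $\fc$ $\du{\chg}$, so the block containing the vacuum is nothing but the trivial class of $\du{\chg}$, i.e. the $\du{\chg}$-$\gcl$ to which $\v$ belongs; and that trivial class is by notation $\du{\left(\du{\chg}\right)}$. Thus the corollary reduces to the purely descriptive task of identifying \emph{which} primaries lie in the same $\mathfrak{g}$-$\dcl$ as the vacuum.

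For this I would invoke \prettyref{lem:blcrit2} with $p\!=\!\v$: a primary $q$ shares the vacuum's block precisely when $N_{\alpha\v}^{q}\!>\!0$ for some $\alpha\!\in\!\chg$. The decisive observation is the defining property of the vacuum recalled in \prettyref{sec:Preliminaries}, namely that $\fm\!\left(\v\right)$ is the identity matrix; since the fusion rules are commutative this gives $N_{\alpha\v}^{q}\!=\!N_{\v\alpha}^{q}\!=\!\delta_{\alpha}^{q}$. Hence the condition ``$N_{\alpha\v}^{q}\!>\!0$ for some $\alpha\!\in\!\chg$'' holds exactly when $q$ itself is one of the elements $\alpha\!\in\!\chg$, that is, exactly when $q\!\in\!\chg$. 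Therefore the block of the vacuum, as a set of primaries, coincides with $\chg$, which is the asserted equality $\du{\left(\du{\chg}\right)}\!=\!\chg$.

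I do not expect any genuine obstacle here: the entire content is already packaged in \prettyref{lem:blcrit2}, and the only extra ingredient is the elementary fact that fusing with the vacuum acts as the identity. The one point worth stating carefully is the bookkeeping that ``the block containing $\v$'' is by definition the trivial class $\du{\left(\du{\chg}\right)}$ of the $\fc$ $\du{\chg}$ (legitimate by \prettyref{cor:duality}, which guarantees $\du{\chg}$ is itself an $\fc$ so that the block machinery applies to it), so that the set-theoretic computation above is exactly a computation of $\du{\left(\du{\chg}\right)}$.
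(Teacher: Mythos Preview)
Your proof is correct and follows essentially the same approach as the paper's: both invoke \prettyref{lem:blcrit2} with $p=\v$ and use $N_{\alpha\v}^{q}=\delta_{q,\alpha}$ to identify the vacuum's block with $\chg$. Your version is simply more explicit about the bookkeeping (why the vacuum's block is $\du{(\du{\chg})}$ by definition, and why both inclusions hold), whereas the paper compresses this into a single line.
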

\begin{proof}
Indeed, if $q$ belongs to the same block as the vacuum primary $\v$,
then there exist $\alpha\!\in\!\chg$ such that $\delta_{q,\alpha}\!=\!N_{\alpha\v}^{q}\!>\!0$
by \prettyref{lem:blcrit2}.
\end{proof}
\begin{lem}
\label{lem:spreadrecip}
\begin{equation}
\cs{\chg}\!\cs{\du{\chg}}=\sum_{p}\qd p^{2}\label{eq:recip}
\end{equation}
\end{lem}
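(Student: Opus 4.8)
The plan is to evaluate the two extents separately and then watch a common $\chg$-summation cancel. I would lean on just two earlier facts. On the one hand, \prettyref{eq:extdef} gives the extent of $\chg$ as $\cs{\chg}\!=\!\bigl(\sum_{p\in\chg}S_{\v p}^{2}\bigr)\inv$; strictly speaking this is an instance of \prettyref{eq:extdef} because, by \prettyref{cor:duality}, $\du{\chg}$ is an $\fc$ whose trivial class is $\chg$, so $\chg$ is a bona fide $\du{\chg}$-class. On the other hand, \prettyref{eq:spread} states outright that $\cs{\du{\chg}}\!=\!\sum_{\alpha\in\chg}\qd{\alpha}^{2}$.

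First I would rewrite the sum-rule using $\qd{\alpha}\!=\!S_{\v\alpha}/S_{\v\v}$, turning it into $\cs{\du{\chg}}\!=\!S_{\v\v}^{-2}\sum_{\alpha\in\chg}S_{\v\alpha}^{2}$. Multiplying this by the extent of $\chg$ then yields
\[
\cs{\chg}\,\cs{\du{\chg}}=\frac{1}{\sum_{p\in\chg}S_{\v p}^{2}}\cdot\frac{\sum_{\alpha\in\chg}S_{\v\alpha}^{2}}{S_{\v\v}^{2}}.
\]
Since the numerator sum over $\alpha\!\in\!\chg$ and the denominator sum over $p\!\in\!\chg$ are literally the same quantity, they cancel and the whole product collapses to $S_{\v\v}^{-2}$.

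The closing step is to identify $S_{\v\v}^{-2}$ with $\sum_{p}\qd p^{2}$, which is exactly the total quantum dimension recorded in \prettyref{sec:Preliminaries}, where $\sqrt{\sum_{p}\qd p^{2}}\!=\!S_{\v\v}\inv$. This gives \prettyref{eq:recip} and finishes the argument.

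I do not anticipate a genuine obstacle; the work is entirely bookkeeping. The one place to be careful is to make sure the summation variable runs over $\chg$ in both factors so the cancellation is legitimate, and to keep apart the two numerically equal readings of $\cs{\chg}$ — namely $\bigl(\sum_{p\in\chg}S_{\v p}^{2}\bigr)\inv$ versus $\sum_{\alpha\in\du{\chg}}\qd{\alpha}^{2}$ (the latter being \prettyref{eq:spread} applied to the dual). That the computation is symmetric under $\chg\!\leftrightarrow\!\du{\chg}$ is a handy consistency check.
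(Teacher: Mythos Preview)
Your argument is correct and, in fact, more direct than the paper's. Both proofs ultimately reduce $\cs{\chg}\cs{\du{\chg}}$ to $S_{\v\v}^{-2}$, but they get there by different means. The paper starts from \prettyref{eq:spread} applied to the dual, writing $\cs{\chg}=\sum_{w\in\du{\chg}}\qd w^{2}$, then replaces the restricted sum by an unrestricted one via the characteristic-function formula \prettyref{eq:classcharfun}, and finally collapses it using the unitarity relation $\sum_{w}S_{\alpha w}S_{\v w}=\delta_{\alpha\v}$. You instead pair the raw definition \prettyref{eq:extdef} of $\cs{\chg}$ with \prettyref{eq:spread} for $\cs{\du{\chg}}$ and observe that the two $\chg$-sums cancel on sight; no characteristic function and no appeal to unitarity are needed. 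The trade-off is that your route leans on the double-dual identity $\du{(\du{\chg})}=\chg$ (so that $\chg$ really is a $\du{\chg}$-class and \prettyref{eq:extdef} applies), whereas the paper's computation avoids invoking that explicitly. One small citation nit: the statement ``whose trivial class is $\chg$'' is the corollary immediately following \prettyref{cor:duality}, not \prettyref{cor:duality} itself.
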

\begin{proof}
\begin{singlespace}
\noindent By Eqs.\prettyref{eq:spread} and \prettyref{eq:classcharfun}
\begin{gather*}
\cs{\mathfrak{g}}\!=\!\sum_{w\in\du{\mathfrak{g}}}\!\qd w^{2}\!=\!\sum_{w}\clchar{\du{\mathfrak{g}}}w\qd w^{2}\!=\!\sum_{w}\frac{1}{\cs{\du{\mathfrak{g}}}}\sum_{\alpha\in\mathfrak{g}}\qd{\alpha}\frac{S_{\alpha w}}{S_{\v w}}\qd w^{2}\\
=\frac{1}{\cs{\du{\mathfrak{g}}}}\sum_{\alpha\in\mathfrak{g}}\qd{\alpha}\sum_{w}S_{\alpha w}\frac{S_{\v w}}{S_{\v\v}^{2}}=\frac{S_{\v\v}^{-2}}{\cs{\du{\mathfrak{g}}}}\sum_{\alpha\in\mathfrak{g}}\!\qd{\alpha}\delta_{\alpha\v}=\frac{1}{\cs{\du{\mathfrak{g}}}}\sum_{p}\qd p^{2}
\end{gather*}
\end{singlespace}

\noindent proving the assertion.
\end{proof}
The above results illustrate the inherent duality of $\fc$s: $\chg$
and $\du{\chg}$ determine each other, while their extents are, roughly
speaking, reciprocal. This duality means that any result about $\fc$s
holds simultaneously for $\mathfrak{g}$ and its dual $\du{\mathfrak{g}}$.
In particular, any result proven about classes gives a corresponding
result about blocks, and \emph{vice versa.} This seemingly trivial
observation turns out to be quite useful.
\begin{lem}
\label{lem:inclusion}If $\mathfrak{g}$ and $\mathfrak{h}$ are $\fc$s
such that $\mathfrak{h\!\subseteq\!\mathfrak{g}}$, then every $\mathfrak{h}$-class
is a union of $\mathfrak{g}$-classes, in particular $\du{\mathfrak{g}}\!\subseteq\!\du{\mathfrak{h}}$,
and every $\mathfrak{g}$-block is a union of $\mathfrak{h}$-blocks.
\end{lem}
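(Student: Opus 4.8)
The plan is to deduce everything from the single observation that the partition into $\mathfrak{g}$-classes becomes finer as the $\fc$ $\mathfrak{g}$ grows. First I would unwind the definition: two primaries $p,q$ lie in the same $\mathfrak{g}$-class precisely when $\ch p$ and $\ch q$ agree on the subalgebra $\svera{\mathfrak{g}}$. Since $\svera{\mathfrak{g}}$ is generated by the fusion matrices $\fm\!\left(\alpha\right)$ with $\alpha\!\in\!\chg$, and the restricted irreps are algebra homomorphisms to $\mathbb{C}$, agreement on the whole subalgebra is equivalent to agreement on these generators, i.e. to $\ch p\!\left(\alpha\right)\!=\!\ch q\!\left(\alpha\right)$ for every $\alpha\!\in\!\chg$. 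Granting $\mathfrak{h}\!\subseteq\!\mathfrak{g}$, agreement over all of $\mathfrak{g}$ forces agreement over the smaller set $\mathfrak{h}$, so any $p,q$ in a common $\mathfrak{g}$-class necessarily lie in a common $\mathfrak{h}$-class. Hence each $\mathfrak{g}$-class is contained in a single $\mathfrak{h}$-class, which is exactly the assertion that every $\mathfrak{h}$-class is a union of $\mathfrak{g}$-classes.

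For the inclusion $\du{\mathfrak{g}}\!\subseteq\!\du{\mathfrak{h}}$ I would simply specialize this refinement to the classes containing the vacuum. The trivial $\mathfrak{h}$-class $\du{\mathfrak{h}}$ is, by the previous step, a union of $\mathfrak{g}$-classes, and the one among them that contains $\v$ is by definition $\du{\mathfrak{g}}$; therefore $\du{\mathfrak{g}}\!\subseteq\!\du{\mathfrak{h}}$.

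Finally, for the statement about blocks I would invoke the duality principle. By \prettyref{cor:duality} both $\du{\mathfrak{g}}$ and $\du{\mathfrak{h}}$ are themselves $\fc$s, and we have just shown $\du{\mathfrak{g}}\!\subseteq\!\du{\mathfrak{h}}$. Applying the first paragraph with the pair $\du{\mathfrak{g}}\!\subseteq\!\du{\mathfrak{h}}$ in place of $\mathfrak{h}\!\subseteq\!\mathfrak{g}$ shows that every $\du{\mathfrak{g}}$-class is a union of $\du{\mathfrak{h}}$-classes. Since $\mathfrak{g}$-blocks are by definition the classes of $\du{\mathfrak{g}}$ and $\mathfrak{h}$-blocks the classes of $\du{\mathfrak{h}}$, this says precisely that every $\mathfrak{g}$-block is a union of $\mathfrak{h}$-blocks.

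The only point demanding care—the closest thing to an obstacle in an otherwise immediate argument—is keeping the direction of the inclusions straight: passing to duals reverses containment, so the block statement follows from the class statement applied to $\du{\mathfrak{g}}\!\subseteq\!\du{\mathfrak{h}}$ rather than to the original pair $\mathfrak{h}\!\subseteq\!\mathfrak{g}$. Everything else is a direct translation of the definition of a class as an equivalence class for the restricted irreps.
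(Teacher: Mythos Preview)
Your argument is correct and follows essentially the same route as the paper: both proofs observe that coincidence of the restricted irreps on $\svera{\mathfrak{g}}$ implies coincidence on the smaller $\svera{\mathfrak{h}}$, specialize to the vacuum-containing class to obtain $\du{\mathfrak{g}}\!\subseteq\!\du{\mathfrak{h}}$, and then re-apply the first step to this dual inclusion for the block statement. Your extra remark that it suffices to check agreement on the generators $\fm\!\left(\alpha\right)$ is a harmless elaboration of what the paper leaves implicit.
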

\begin{proof}
If the primaries $p$ and $q$ belong to the same $\mathfrak{g}$-class,
i.e. if the restrictions to $\svera{\mathfrak{g}}$ of the irreps
$\ch p$ and $\ch q$ coincide, then for $\mathfrak{h}\!\subseteq\!\mathfrak{g}$
their restrictions to $\svera{\mathfrak{h}}$ coincide as well, showing
that every $\mathfrak{g}$-class is contained in a unique $\mathfrak{h}$-class,
hence each $\mathfrak{h}$-class is the union of the $\mathfrak{g}$-classes
that it contains. Since the $\mathfrak{g}$-class containing the vacuum
primary $\v$ is $\du{\mathfrak{g}}$, while the $\mathfrak{h}$-class
containing it is $\du{\mathfrak{h}}$, this gives $\du{\mathfrak{g}}\!\subseteq\!\du{\mathfrak{h}}$,
and consequently every $\mathfrak{g}$-block (i.e. $\du{\mathfrak{g}}$-class)
is a union of $\mathfrak{h}$-blocks ($\du{\mathfrak{h}}$-classes)
by the above argument.
\end{proof}
It follows from \prettyref{lem:inclusion} that every $\mathfrak{g}$-class
is a union of $\mathfrak{g}$-blocks precisely when $\mathfrak{g}\!\subseteq\!\du{\mathfrak{g}}$.
It turns out that such $\fc$s play a basic role in orbifold deconstruction
\cite{Bantay2018a,Bantay2018b}, hence they deserve a special name.
\begin{defn}
\label{def:local}An $\fc$ $\mathfrak{g}$ is local if $\chg\!\subseteq\!\du{\chg}$.\smallskip{}
\end{defn}
We shall encounter local $\fc$s in the sequel on several occasions.
A major feature of this notion explaining its special standing is
that, as a consequence of \prettyref{lem:localcrit} and a result
of Deligne \cite{Deligne1990}, the corresponding subalgebra $\svera{\mathfrak{g}}$
may be identified with the character ring of some finite group, hence
results from character theory \cite{Isaacs,Lux-Pahlings,Serre} go
over to local $\fc$s. This observation allows the generalization
of many group theoretic notions to arbitrary $\fc$s, and provides
a host of non-trivial conjectural results that seem to hold in full
generality. As an example, consider the following notion.
\begin{defn}
The central character of a class $\ccl\!\in\!\cl{\mathfrak{g}}$ is
the complex valued function $\map{\boldsymbol{\varpi}_{\ccl}}{\mathfrak{g}}{\mathbb{C}}$
assigning to $\alpha\!\in\!\mathfrak{g}$ the value
\begin{equation}
\cech{\ccl}{\alpha}\!=\!\frac{\cs{\du{\mathfrak{g}}}}{\cs{\ccl}}\frac{\alpha\!\left(\ccl\right)}{\qd{\alpha}}\label{eq:cechardef}
\end{equation}
\end{defn}
It is clear that the values of the central character are always algebraic
numbers. For local $\fc$s this notion gives back the corresponding
classical one from character theory, and by well known results \cite{Isaacs,Lux-Pahlings},
the values taken in that case are actually algebraic integers. Surprisingly,
this seems to be true for generic $\fc$s.
\begin{conjecture}
\label{conj:algint}$\cech{\ccl}{\alpha}$ is always an algebraic
integer.
\end{conjecture}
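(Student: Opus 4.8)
The plan is to imitate the classical proof that the central characters of a finite group are algebraic integers, whose structural engine is that the class sums multiply with non-negative integer structure constants. Concretely, I would introduce, for classes $\ccl_{1},\ccl_{2},\ccl_{3}\!\in\!\cl{\chg}$, the numbers
\[
m_{\ccl_{1}\ccl_{2}}^{\ccl_{3}}=\frac{\cs{\du{\chg}}}{\cs{\ccl_{1}}\cs{\ccl_{2}}}\sum_{\alpha\in\chg}\frac{\alpha\!\left(\ccl_{1}\right)\alpha\!\left(\ccl_{2}\right)\overline{\alpha\!\left(\ccl_{3}\right)}}{\qd{\alpha}}
\]
which in the local case (\prettyref{def:local}, using the identification of $\svera{\chg}$ with a group character ring through Deligne's theorem \cite{Deligne1990}) are precisely the structure constants of the class algebra, hence non-negative integers. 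These are the natural candidates for a ``fusion rule on classes'' dual to the one carried by $\chg$ itself.

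First I would establish, purely formally, the multiplication law
\[
\cech{\ccl_{1}}{\alpha}\,\cech{\ccl_{2}}{\alpha}=\sum_{\ccl_{3}\in\cl{\chg}}m_{\ccl_{1}\ccl_{2}}^{\ccl_{3}}\,\cech{\ccl_{3}}{\alpha}\qquad(\alpha\!\in\!\chg).
\]
Substituting \eqref{eq:cechardef} and the definition of $m_{\ccl_{1}\ccl_{2}}^{\ccl_{3}}$, the inner sum over $\ccl_{3}$ collapses by the first orthogonality relation \eqref{eq:ortho1} (which forces the auxiliary summation index to coincide with $\alpha$), and both sides reduce to $\cs{\du{\chg}}^{2}\,\alpha\!\left(\ccl_{1}\right)\alpha\!\left(\ccl_{2}\right)/\bigl(\cs{\ccl_{1}}\cs{\ccl_{2}}\qd{\alpha}^{2}\bigr)$; a check on the trivial class using \eqref{eq:ortho2} gives $m_{\ccl_{1}\du{\chg}}^{\ccl_{3}}=\delta_{\ccl_{1}\ccl_{3}}$, so $\du{\chg}$ behaves as a unit. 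Reading this law for fixed $\ccl_{1}$ as a matrix identity, the vector $\bigl(\cech{\ccl}{\alpha}\bigr)_{\ccl\in\cl{\chg}}$ --- which is nonzero, since $\cech{\du{\chg}}{\alpha}\!=\!1$ --- is an eigenvector, with eigenvalue $\cech{\ccl_{1}}{\alpha}$, of the square matrix $\bigl[m_{\ccl_{1}\ccl_{2}}^{\ccl_{3}}\bigr]_{\ccl_{2},\ccl_{3}}$. Hence $\cech{\ccl_{1}}{\alpha}$ is a root of the characteristic polynomial of that matrix, and the whole conjecture reduces to the single assertion that the $m_{\ccl_{1}\ccl_{2}}^{\ccl_{3}}$ are rational integers (non-negativity is not even needed for the algebraic-integer conclusion).

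The hard part is exactly this integrality. I would attack it by rewriting the $\alpha$-sum defining $m_{\ccl_{1}\ccl_{2}}^{\ccl_{3}}$ --- a Verlinde-type sum \emph{restricted} to $\chg$ rather than extended over all primaries --- as a manifestly integral quantity. Writing $\alpha\!\left(\ccl_{i}\right)=S_{\alpha p_{i}}/S_{\v p_{i}}$ for representatives $p_{i}\!\in\!\ccl_{i}$ and $\qd{\alpha}\inv\!=\!S_{\v\v}/S_{\v\alpha}$, the restriction to $\chg$ can be enforced by the characteristic function of the vacuum block (recall $\chg\!=\!\du{(\du{\chg})}$ is a single $\du{\chg}$-class), whose expansion is the dual of \eqref{eq:classcharfun}; feeding this into \prettyref{lem:wmelms} should express $m_{\ccl_{1}\ccl_{2}}^{\ccl_{3}}$ as an integer combination of genuine fusion coefficients $N_{pq}^{r}$, settling integrality and non-negativity at once. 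I expect this step to be the real obstacle: in the non-local case there is no group behind $\svera{\chg}$, the combinatorial meaning of $m_{\ccl_{1}\ccl_{2}}^{\ccl_{3}}$ is lost, and the restricted sum need not telescope cleanly into fusion coefficients. A complementary, weaker route would be to first prove rationality of the $m_{\ccl_{1}\ccl_{2}}^{\ccl_{3}}$ via the Galois action of \prettyref{sec:The-Galois-action} (they ought to be fixed by the relevant Galois group, the defining sum being a symmetric function of a Galois orbit) and then establish their algebraic integrality separately, a rational algebraic integer being an ordinary integer; but the algebraic integrality of $m_{\ccl_{1}\ccl_{2}}^{\ccl_{3}}$ is itself essentially as deep as the conjecture, which is exactly why the statement is left as a conjecture rather than a theorem.
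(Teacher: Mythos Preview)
The paper does not prove this statement at all: it is stated as \prettyref{conj:algint}, an open conjecture, followed only by a remark on what its truth would imply (an analogue of Lagrange's theorem). There is therefore no proof in the paper to compare your proposal against.

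That said, your reduction is sound as far as it goes. The multiplication law
\[
\cech{\ccl_{1}}{\alpha}\,\cech{\ccl_{2}}{\alpha}=\sum_{\ccl_{3}\in\cl{\chg}}m_{\ccl_{1}\ccl_{2}}^{\ccl_{3}}\,\cech{\ccl_{3}}{\alpha}
\]
does follow from \eqref{eq:ortho1} exactly as you describe, and the eigenvalue argument is the standard one. You have correctly isolated the entire difficulty in the integrality of the $m_{\ccl_{1}\ccl_{2}}^{\ccl_{3}}$, and you are equally correct that this step does not go through by the manipulations you sketch: inserting the characteristic function of $\chg$ via the dual of \eqref{eq:classcharfun} and invoking \prettyref{lem:wmelms} does \emph{not} in general collapse the restricted Verlinde-type sum into an integer combination of fusion coefficients, because the weights $1/\qd{\alpha}$ and the extent factors $\cs{\du{\chg}}/\cs{\ccl_{i}}$ introduce denominators that nothing in the non-local setting is known to clear. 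Your own closing sentence --- that this integrality is ``essentially as deep as the conjecture'' --- is precisely the paper's implicit position in leaving the statement open.
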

\begin{rem}
\noindent The truth of \prettyref{conj:algint} would imply that the
ratios
\[
\frac{\cs{\du{\mathfrak{g}}}}{\cs{\ccl}}\!=\!\cech{\ccl}{\v}
\]
are algebraic integers for every $\ccl\!\in\!\cl{\mathfrak{g}}$,
leading to the following (conjectural) analogue of Lagrange's theorem:
if $\mathfrak{g}$ and $\mathfrak{h}$ are $\fc$s and $\mathfrak{h\!\subseteq\!\mathfrak{g}}$,
then $\cs{\du{\mathfrak{h}}}$ divides $\cs{\du{\mathfrak{g}}}$,
i.e. their ratio is an algebraic integer.
\end{rem}
The inclusion relation makes the collection $\lat$ of $\fc$s partially
ordered, with maximal element the set of all primaries, and minimal
element the trivial $\fc$ $\left\{ \v\right\} $ consisting of the
vacuum primary solely. Because the intersection of two $\fc$s is
obviously an $\fc$ again, $\lat$ is actually a finite lattice \cite{Crawley1973,Gratzer2011}.
\begin{prop}
Given $\fc$s $\mathfrak{g}$ and $\mathfrak{h}$, their join $\mathfrak{g}\!\vee\!\mathfrak{h}$
(the smallest $\fc$ that contains both of them) is given by
\begin{equation}
\mathfrak{g}\!\vee\!\mathfrak{h}=\du{\left(\du{\mathfrak{g}}\cap\du{\mathfrak{h}}\right)}\label{eq:joindef}
\end{equation}
hence the map that sends each $\fc$ $\mathfrak{g}$ to $\du{\mathfrak{g}}$
is an isomorphism between the lattice $\lat$ and its dual.
\end{prop}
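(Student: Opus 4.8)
The plan is to lean entirely on two structural facts already in hand: the duality $\mathfrak{g}\mapsto\du{\mathfrak{g}}$ is an involution, $\du{(\du{\mathfrak{g}})}=\mathfrak{g}$ (the corollary identifying the $\mathfrak{g}$-block of the vacuum with $\mathfrak{g}$ itself), and it reverses inclusions, $\mathfrak{h}\!\subseteq\!\mathfrak{g}\Rightarrow\du{\mathfrak{g}}\!\subseteq\!\du{\mathfrak{h}}$ (\prettyref{lem:inclusion}). Given these, the join formula is a purely lattice-theoretic, De Morgan-type identity, and I would write $K:=\du{(\du{\mathfrak{g}}\cap\du{\mathfrak{h}})}$ and verify that it is the least upper bound of $\mathfrak{g}$ and $\mathfrak{h}$.

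First I would confirm $K\!\in\!\lat$ and that it dominates both arguments. Since $\du{\mathfrak{g}}$ and $\du{\mathfrak{h}}$ are $\fc$s by \prettyref{cor:duality}, so is their intersection, and hence so is its dual $K$, again by \prettyref{cor:duality}. From $\du{\mathfrak{g}}\cap\du{\mathfrak{h}}\!\subseteq\!\du{\mathfrak{g}}$, applying \prettyref{lem:inclusion} and then the involution yields $\mathfrak{g}=\du{(\du{\mathfrak{g}})}\!\subseteq\!K$, and symmetrically $\mathfrak{h}\!\subseteq\!K$.

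For minimality, I would take an arbitrary $\fc$ $\mathfrak{m}$ with $\mathfrak{g}\!\subseteq\!\mathfrak{m}$ and $\mathfrak{h}\!\subseteq\!\mathfrak{m}$. Reversing each inclusion via \prettyref{lem:inclusion} gives $\du{\mathfrak{m}}\!\subseteq\!\du{\mathfrak{g}}$ and $\du{\mathfrak{m}}\!\subseteq\!\du{\mathfrak{h}}$, whence $\du{\mathfrak{m}}\!\subseteq\!\du{\mathfrak{g}}\cap\du{\mathfrak{h}}$; reversing once more and using the involution yields $K\!\subseteq\!\du{(\du{\mathfrak{m}})}=\mathfrak{m}$. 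Thus $K$ sits below every upper bound of $\{\mathfrak{g},\mathfrak{h}\}$, so $K=\mathfrak{g}\vee\mathfrak{h}$, which is the claimed formula.

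It remains to read off the lattice isomorphism. The duality map is a self-bijection of $\lat$ because it is an involution, and it is inclusion-reversing by \prettyref{lem:inclusion}; an inclusion-reversing bijection of a lattice onto itself is precisely an isomorphism onto the order dual. To see explicitly that meets and joins are interchanged, I would substitute $\du{\mathfrak{g}},\du{\mathfrak{h}}$ for $\mathfrak{g},\mathfrak{h}$ in the join formula just proved and apply the involution, obtaining $\du{(\mathfrak{g}\cap\mathfrak{h})}=\du{\mathfrak{g}}\vee\du{\mathfrak{h}}$, so that the meet of $\lat$ is carried to the join and, by symmetry, the join to the meet. I do not anticipate any real obstacle here---the argument is forced once the involution and order-reversal are available---and the only point demanding care is to invoke \prettyref{lem:inclusion} in the correct direction at each application.
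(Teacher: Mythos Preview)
Your proof is correct and follows essentially the same approach as the paper: both arguments rest on the involution $\du{(\du{\mathfrak{g}})}=\mathfrak{g}$ and the order-reversal from \prettyref{lem:inclusion}, deriving the join formula as a De Morgan-type identity. The only cosmetic difference is that the paper takes the existence of $\mathfrak{g}\vee\mathfrak{h}$ for granted (since $\lat$ was already observed to be a lattice) and verifies the double inclusion with $K$, whereas you verify directly that $K$ is the least upper bound; these amount to the same computation.
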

\begin{proof}
Since $\mathfrak{g,\mathfrak{h}\!\subseteq\!\mathfrak{g}\!\vee\!\mathfrak{h}}$
by definition, \prettyref{lem:inclusion} implies $\du{\left(\mathfrak{g}\!\vee\!\mathfrak{h}\right)}\!\subseteq\!\du{\mathfrak{g}},\du{\mathfrak{h}}$,
hence $\du{\left(\mathfrak{g}\!\vee\!\mathfrak{h}\right)}\!\subseteq\!\du{\mathfrak{g}}\!\cap\!\du{\mathfrak{h}}$,
that is $\du{\left(\du{\mathfrak{g}}\cap\du{\mathfrak{h}}\right)}\!\subseteq\!\mathfrak{g}\!\vee\!\mathfrak{h}$.
On the other hand, $\du{\mathfrak{g}}\!\cap\du{\mathfrak{h}}\!\subseteq\!\du{\mathfrak{g}},\du{\mathfrak{h}}$,
hence $\mathfrak{g},\mathfrak{h}\!\subseteq\!\du{\left(\du{\mathfrak{g}}\!\cap\du{\mathfrak{h}}\right)}$
again by \prettyref{lem:inclusion}, or in other words $\mathfrak{g}\!\vee\!\mathfrak{h}\!\subseteq\!\du{\left(\du{\mathfrak{g}}\cap\du{\mathfrak{h}}\right)}$,
proving the claim.
\end{proof}
\begin{thm}
\label{thm:modlat}The lattice $\lat$ of $\fc$s is modular (even
Arguesian), but usually not distributive.
\end{thm}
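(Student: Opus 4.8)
The plan is to realize $\lat$ as a lattice of pairwise \emph{permuting} equivalence relations on the set of primaries, and then to invoke the classical theorem of J\'onsson that every lattice of permuting equivalence relations satisfies the Arguesian identity (and is a fortiori modular). Concretely, I would attach to each $\fc$ $\mathfrak{g}$ the partition $\rho_{\mathfrak{g}}$ of the primaries into $\mathfrak{g}$-$\dcl$s, i.e. the equivalence relation with $p\,\rho_{\mathfrak{g}}\,q$ iff $p$ and $q$ lie in a common $\mathfrak{g}$-$\dcl$, and study the resulting map $\mathfrak{g}\mapsto\rho_{\mathfrak{g}}$ into the partition lattice $\Pi$ of the primaries.

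First I would check that this map is a lattice embedding. It is injective because, by the corollary following \prettyref{lem:blcrit2}, the $\mathfrak{g}$-$\dcl$ of the vacuum is $\mathfrak{g}$ itself, so $\mathfrak{g}$ is recovered from $\rho_{\mathfrak{g}}$; it is an order embedding by \prettyref{lem:inclusion}, since $\mathfrak{g}\!\subseteq\!\mathfrak{h}$ forces every $\mathfrak{h}$-$\dcl$ to be a union of $\mathfrak{g}$-$\dcl$s. Joins are preserved: by \prettyref{lem:blcrit2} a single step $p\,\rho_{\mathfrak{g}}\,q$ records the occurrence of $q$ in a fusion $\alpha\!\times\!p$ with $\alpha\!\in\!\mathfrak{g}$, so the transitive closure of $\rho_{\mathfrak{g}}\cup\rho_{\mathfrak{h}}$ is exactly the relation ``$q$ occurs in an iterated fusion of $p$ with elements of $\mathfrak{g}\cup\mathfrak{h}$''; since $\mathfrak{g}\!\vee\!\mathfrak{h}$ is the smallest $\fc$ containing $\mathfrak{g}$ and $\mathfrak{h}$, hence the fusion closure of $\mathfrak{g}\cup\mathfrak{h}$, this is $\rho_{\mathfrak{g}\vee\mathfrak{h}}$, so $\rho_{\mathfrak{g}}\vee_{\Pi}\rho_{\mathfrak{h}}=\rho_{\mathfrak{g}\vee\mathfrak{h}}$.

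The two remaining points are the heart of the matter. For permutability I would use that the fusion product is \emph{commutative}: the composite $\rho_{\mathfrak{g}}\circ\rho_{\mathfrak{h}}$ relates $p$ and $q$ precisely when $q$ occurs in some $\beta\!\times\!(\alpha\!\times\!p)$ with $\alpha\!\in\!\mathfrak{g}$, $\beta\!\in\!\mathfrak{h}$, and by commutativity and associativity of the $N_{\alpha\beta}^{\gamma}$ this is the same condition as $q$ occurring in $\alpha\!\times\!(\beta\!\times\!p)$, giving $\rho_{\mathfrak{g}}\circ\rho_{\mathfrak{h}}=\rho_{\mathfrak{h}}\circ\rho_{\mathfrak{g}}$. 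For the preservation of meets, which I expect to be the main obstacle, the naive claim that $q\!\in\!\mathfrak{g}\!\times\!p$ and $q\!\in\!\mathfrak{h}\!\times\!p$ together force $q\!\in\!(\mathfrak{g}\cap\mathfrak{h})\!\times\!p$ is exactly the delicate point at which a general partition lattice fails to be modular, so I would instead argue on the dual side. Writing $\mathfrak{a}=\du{\mathfrak{g}}$, $\mathfrak{b}=\du{\mathfrak{h}}$, a $\mathfrak{g}$-$\dcl$ is an $\mathfrak{a}$-$\gcl$, and the needed identity becomes the dual assertion that the $(\mathfrak{a}\!\vee\!\mathfrak{b})$-$\gcl$es are the common refinement of the $\mathfrak{a}$- and $\mathfrak{b}$-$\gcl$es. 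This follows from the multiplicativity of the characters $\ch p$: since $\mathfrak{a}\!\vee\!\mathfrak{b}$ is the fusion closure of $\mathfrak{a}\cup\mathfrak{b}$, the subalgebra $\svera{\mathfrak{a}\vee\mathfrak{b}}$ is generated by $\svera{\mathfrak{a}}$ and $\svera{\mathfrak{b}}$, so two characters agreeing on the latter two agree on the former. Transported back through \prettyref{cor:duality}, this yields $\rho_{\mathfrak{g}}\wedge_{\Pi}\rho_{\mathfrak{h}}=\rho_{\mathfrak{g}\cap\mathfrak{h}}$.

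With all this in place, the image of $\lat$ is a sublattice of $\Pi$ consisting of pairwise permuting equivalence relations, so J\'onsson's theorem shows it is Arguesian; being isomorphic to it, $\lat$ is Arguesian, hence modular. Finally, to see that $\lat$ is usually not distributive, I would exhibit a model whose group of simple currents contains a copy of $\mathbb{Z}_{p}\!\times\!\mathbb{Z}_{p}$: its subgroups are Abelian $\fc$s, and the bottom element $\left\{\v\right\}$, the three subgroups of order $p$, and the whole group form a sublattice isomorphic to the diamond $M_{3}$, which is modular but not distributive.
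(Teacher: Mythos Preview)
Your proposal is correct and follows essentially the same route as the paper: embed $\lat$ into the partition lattice of primaries via $\mathfrak{g}\mapsto\bl{\mathfrak{g}}$, verify the J\'onsson permutability condition, and exhibit an $M_3$ of simple-current subgroups for non-distributivity. The paper compresses your permutability argument into the equivalent statement that intersecting blocks $\mathfrak{b}_1\!\in\!\bl{\mathfrak{g}}$, $\mathfrak{b}_2\!\in\!\bl{\mathfrak{h}}$ lie in a common block of $\bl{\mathfrak{g}\!\vee\!\mathfrak{h}}$ (immediate from \prettyref{lem:inclusion}), and simply asserts that the map is a lattice embedding, whereas you supply the nontrivial meet-preservation step via the dual description of blocks as $\du{\mathfrak{g}}$-classes; your counterexample using $\mathbb{Z}_p\!\times\!\mathbb{Z}_p$ simple currents is the same idea as the paper's $SO(16)_1$ example (which has simple-current group $\mathbb{Z}_2\!\times\!\mathbb{Z}_2$).
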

\begin{proof}
The map that assigns to each $\fc$ $\mathfrak{g}$ the collection
$\bl{\mathfrak{g}}$ of its $\dcl$s is clearly an injective embedding
of $\lat$ into the partition lattice of the set of all primaries,
hence one has to prove the assertion for the image of this homomorphism.
To prove modularity (or the stronger Arguesian property) of the latter,
all we have to show is that for any pair $\mathfrak{g,\mathfrak{h}}\!\in\!\lat$,
if $\mathfrak{b}_{1}\!\in\!\bl{\mathfrak{g}}$ and $\mathfrak{b}_{2}\!\in\!\bl{\mathfrak{h}}$
are blocks such that $\mathfrak{b}_{1}\!\cap\!\mathfrak{b}_{2}\!\not=\!\emptyset$,
then there is a block $\mathfrak{b}\!\in\!\bl{\mathfrak{g}\!\vee\!\mathfrak{h}}$
that contains both of them \cite{Jonsson1953}.   But \prettyref{lem:inclusion}
implies that in case $\mathfrak{g}\!\subseteq\!\mathfrak{g}\!\vee\!\mathfrak{h}$
there exists for each $\mathfrak{b}_{1}\!\in\!\bl{\mathfrak{g}}$
a block $\mathfrak{B}_{1}\!\in\!\bl{\mathfrak{g}\!\vee\!\mathfrak{h}}$
such that $\mathfrak{b}_{1}\!\subseteq\!\mathfrak{B}_{1}$, and a
similar argument shows that for $\mathfrak{b}_{2}\!\in\!\bl{\mathfrak{h}}$
there exists $\mathfrak{B}_{2}\!\in\!\bl{\mathfrak{g}\!\vee\!\mathfrak{h}}$
such that $\mathfrak{b}_{2}\!\subseteq\!\mathfrak{B}_{2}$. Since
$\mathfrak{b}_{1}\!\cap\!\mathfrak{b}_{2}\!\subseteq\!\mathfrak{B}_{1}\!\cap\!\mathfrak{B}_{2}$,
and two blocks are either equal or disjoint, we get that $\mathfrak{B}_{1}\!=\!\mathfrak{B}_{2}$
if $\mathfrak{b}_{1}\!\cap\!\mathfrak{b}_{2}\!\not=\!\emptyset$,
and obviously $\mathfrak{B}_{1}$ contains both $\mathfrak{b}_{1}$
and $\mathfrak{b}_{2}$. As to distributivity, it already fails for
a holomorphic $\mathbb{Z}_{2}$-orbifold (e.g. the $SO(16)$ Wess-Zumino
model at level $1$).
\end{proof}
\begin{rem}
A better understanding of the lattice theoretic properties of $\lat$
would be highly desirable. We just mention that, while $\lat$ is
modular according to the above, it is usually neither atomic nor complemented.
In particular, it is unclear whether $\lat$ admits a coordinatization
in the spirit of \cite{DayPickering,cont_geometries}. Another interesting
question, inspired by the results of \cite{Palfy1995}, is to find
extra identities satisfied by $\lat$.
\end{rem}
Going back to general properties of $\fc$s, note that, according
to \prettyref{lem:blcrit2}, restricting the indices of the fusion
matrices $\fm\!\left(\alpha\right)$ to the primaries belonging to
a given $\dcl$ $\mathfrak{\mathfrak{b}\!\in\!\bl{\mathfrak{g}}}$
results in non-negative integer matrices $\fm_{\mathfrak{b}}\!\left(\alpha\right)$
that form a representation $\bfm_{\mathfrak{b}}$ of the subalgebra
$\svera{\mathfrak{g}}$. As a consequence of Eq.\prettyref{eq:ver4},
for any primary $w$ belonging to the class $\ccl\!\in\!\cl{\mathfrak{g}}$
one has
\begin{equation}
\sum_{q\in\mathfrak{b}}\fm_{\mathfrak{b}}\!\left(\alpha\right)_{p}^{q}S_{wq}=\ch w\!\left(\alpha\right)S_{wp}=\ch{\ccl}\!\left(\alpha\right)S_{wp}\label{eq:bfmeq}
\end{equation}
for all $\alpha\!\in\!\mathfrak{g}$, reflecting the fact that $\bfm_{\mathfrak{b}}$
decomposes as a direct sum of the irreducible representations $\ch{\ccl}$.
\begin{defn}
\label{def:ovdef}~The overlap $\sp{\mathfrak{b}}{\ccl}$ of the
block $\mathfrak{b}\!\in\!\bl{\mathfrak{g}}$ and the class $\ccl\!\in\!\cl{\mathfrak{g}}$
is the multiplicity of the irrep $\ch{\ccl}$ in the irreducible decomposition
of the integral representation $\bfm_{\mathfrak{b}}$.
\end{defn}
\global\long\def\minors{S_{\mathfrak{b}\ccl}}%

\begin{lem}
\label{lem:ovrk}The overlap $\sp{\mathfrak{b}}{\ccl}$ equals the
rank of the minor $\minors$ of the modular $S$-matrix obtained by
restricting the row indices to $\mathfrak{b}\!\in\!\bl{\mathfrak{g}}$
and the column indices to $\ccl\!\in\!\cl{\mathfrak{g}}$.
\end{lem}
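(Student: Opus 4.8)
The plan is to identify the overlap $\sp{\mathfrak b}{\ccl}$ with the dimension of a common eigenspace of the commuting operators $\fm_{\mathfrak b}(\alpha)$ acting on $\mathbb{C}^{\mathfrak b}$, the space of functions on the block $\mathfrak b$, and then to exhibit the columns of the minor $\minors$ as an explicit spanning set of that eigenspace. Concretely, for each class $\ccl\in\cl{\mathfrak g}$ let $E_{\ccl}$ denote the subspace of those $v\in\mathbb{C}^{\mathfrak b}$ with $\fm_{\mathfrak b}(\alpha)\,v=\alpha\!\left(\ccl\right)v$ for all $\alpha\in\chg$. Since $\svera{\mathfrak g}$ is commutative its only irreducibles are the one-dimensional characters $\ch{\ccl}$, so by \prettyref{def:ovdef} the number $\sp{\mathfrak b}{\ccl}$ counts composition factors of type $\ch{\ccl}$ in $\bfm_{\mathfrak b}$; the goal is thus to prove $\sp{\mathfrak b}{\ccl}=\dim E_{\ccl}$ and that this dimension equals $\mathrm{rank}\,\minors$. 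One must be slightly careful here, because the multiplicity a priori equals the dimension of the \emph{generalised} eigenspace, and coincides with $\dim E_{\ccl}$ only once $\bfm_{\mathfrak b}$ is known to be semisimple.

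Next I would feed in Eq.\eqref{eq:bfmeq}: reading it with $p$ as the free row index, it says precisely that for every $w\in\ccl$ the column vector $v_w:=(S_{pw})_{p\in\mathfrak b}$ satisfies $\fm_{\mathfrak b}(\alpha)\,v_w=\ch{\ccl}(\alpha)\,v_w$, i.e. $v_w\in E_{\ccl}$. By the symmetry of $S$, these vectors $v_w$, as $w$ ranges over $\ccl$, are exactly the columns of the minor $\minors$, so setting $W_{\ccl}:=\mathrm{span}\{v_w:w\in\ccl\}$ we have $\mathrm{rank}\,\minors=\dim W_{\ccl}$ and $W_{\ccl}\subseteq E_{\ccl}$. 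The one further ingredient is that these vectors collectively span enough: as $w$ ranges over \emph{all} primaries the $v_w$ are the restrictions to $\mathfrak b$ of the columns of the full $S$-matrix, which form an orthonormal basis of the space of all primaries, and restriction to the coordinates lying in $\mathfrak b$ is surjective; hence $\sum_{\ccl\in\cl{\mathfrak g}}W_{\ccl}=\mathbb{C}^{\mathfrak b}$.

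The argument then closes by a dimension count. Distinct classes yield distinct characters $\ch{\ccl}$ (that is the defining property of a class), so the eigenspaces $E_{\ccl}$ lie in direct sum. Combining $\sum_{\ccl}W_{\ccl}=\mathbb{C}^{\mathfrak b}$ with $W_{\ccl}\subseteq E_{\ccl}$ and the directness of the $E_{\ccl}$ gives
\[
\mathbb{C}^{\mathfrak b}=\sum_{\ccl\in\cl{\mathfrak g}}W_{\ccl}\subseteq\bigoplus_{\ccl\in\cl{\mathfrak g}}E_{\ccl}\subseteq\mathbb{C}^{\mathfrak b},
\]
forcing $\bigoplus_{\ccl}E_{\ccl}=\mathbb{C}^{\mathfrak b}$ and, since $W_{\ccl}\subseteq E_{\ccl}$ sits inside this direct sum, $W_{\ccl}=E_{\ccl}$ for every $\ccl$. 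This same decomposition exhibits a basis of $\mathbb{C}^{\mathfrak b}$ consisting of common eigenvectors, so $\bfm_{\mathfrak b}$ is semisimple and $\sp{\mathfrak b}{\ccl}=\dim E_{\ccl}$; therefore $\mathrm{rank}\,\minors=\dim W_{\ccl}=\dim E_{\ccl}=\sp{\mathfrak b}{\ccl}$, as claimed.

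I expect the main obstacle to be the semisimplicity point rather than any calculation: one must ensure that the composition-factor multiplicity of Definition~\ref{def:ovdef} equals the \emph{ordinary} eigenspace dimension, which is exactly what the spanning set of honest eigenvectors supplied by the unitarity of $S$ guarantees (no nilpotent part can occur). A secondary subtlety worth flagging explicitly is that the columns $v_w$ of $\minors$ are in general linearly dependent once $\FA{\ccl}$ exceeds $\dim E_{\ccl}$, which is precisely why the statement records the \emph{rank} of the minor and not its number of columns.
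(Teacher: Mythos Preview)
Your proof is correct and follows essentially the same route as the paper's: both use Eq.~\eqref{eq:bfmeq} to see that the columns of $\minors$ are eigenvectors for the irrep $\ch{\ccl}$, and then identify their span with the full isotypic component. The paper's proof is much terser, simply asserting that ``the columns of $\minors$ span the invariant subspace of $\bfm_{\mathfrak{b}}$ corresponding to the irrep $\ch{\ccl}$'' without spelling out the semisimplicity of $\bfm_{\mathfrak b}$ or the spanning argument; your version makes explicit precisely the two points (unitarity of $S$ forces $\sum_{\ccl}W_{\ccl}=\mathbb{C}^{\mathfrak b}$, hence $W_{\ccl}=E_{\ccl}$ and no nilpotent part) that the paper takes for granted.
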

\begin{proof}
Since $\fm_{\mathfrak{b}}\!\left(\alpha\right)\!\minors\!=\!\ch{\ccl}\!\left(\alpha\right)\!\minors$
by Eq.\prettyref{eq:bfmeq}, the columns of $\minors$ span the invariant
subspace of $\bfm_{\mathfrak{b}}$ corresponding to the irrep $\ch{\ccl}$.
As the latter appears with multiplicity $\sp{\mathfrak{b}}{\ccl}$
in $\bfm_{\mathfrak{b}}$, we get the assertion.
\end{proof}
\begin{cor}
\label{cor:trivoverlap} $\sp{\mathfrak{b}}{\ccl}\!=\!1$ iff the
minor $\minors$ factorizes, i.e. there exist complex functions $\map{\xi}{\ccl}{\mathbb{C}}$
and $\map{\eta}{\mathfrak{b}}{\mathbb{C}}$ such that $S_{pq}\!=\!\xi\!\left(p\right)\!\eta\!\left(q\right)$
for $p\!\in\!\ccl$ and $q\!\in\!\mathfrak{b}$. In particular, $\sp{\mathfrak{g}}{\ccl}\!=\!1$
for every $\gcl$ $\ccl\!\in\!\cl{\mathfrak{g}}$, and $\sp{\mathfrak{b}}{\du{\mathfrak{g}}}\!=\!1$
for all $\mathfrak{b}\!\in\!\bl{\mathfrak{g}}$.
\end{cor}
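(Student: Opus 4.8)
The plan is to read off everything from \prettyref{lem:ovrk}, which already identifies $\sp{\mathfrak{b}}{\ccl}$ with $\mathrm{rank}\,\minors$, and then to apply the elementary fact that a complex matrix has rank exactly one precisely when it is a nonzero outer product. Concretely, I would first record the linear-algebra statement in the form needed: a matrix $M$ satisfies $\mathrm{rank}\,M\!=\!1$ iff $M\!\neq\!0$ and every $2\!\times\!2$ minor of $M$ vanishes, which is equivalent to the existence of functions $\xi,\eta$, \emph{not identically zero}, with $M_{pq}\!=\!\xi(p)\eta(q)$. Combining this with $\sp{\mathfrak{b}}{\ccl}\!=\!\mathrm{rank}\,\minors$ yields the asserted equivalence at once; the symmetry $S_{pq}\!=\!S_{qp}$ of the modular matrix lets me present the factorization with $p\!\in\!\ccl$ and $q\!\in\!\mathfrak{b}$ exactly as stated. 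The single point to watch is the difference between $\mathrm{rank}\!\le\!1$ and $\mathrm{rank}\!=\!1$: allowing $\xi$ or $\eta$ to vanish identically would describe the empty-overlap case, where the minor is the zero matrix, so the non-vanishing of the factors must be checked whenever I want to conclude that the overlap is genuinely one.

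For the first distinguished case $\sp{\mathfrak{g}}{\ccl}$ I would use that the $\mathfrak{g}$-block containing the vacuum is $\mathfrak{g}$ itself, so the rows of the minor run over $\alpha\!\in\!\mathfrak{g}$. The defining relation $S_{\alpha p}\!=\!\alpha\!\left(\ccl\right)S_{\v p}$ of a class then displays the minor as the outer product with $\xi(p)\!=\!S_{\v p}$ on $\ccl$ and $\eta(\alpha)\!=\!\alpha\!\left(\ccl\right)$ on $\mathfrak{g}$. Neither factor is identically zero, since $\v\!\in\!\mathfrak{g}$ forces $\v\!\left(\ccl\right)\!=\!1$ while $S_{\v p}\!=\!S_{\v\v}\qd p\!>\!0$, so the rank is exactly one. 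For the companion case $\sp{\mathfrak{b}}{\du{\mathfrak{g}}}\!=\!1$ I would invoke duality: by \prettyref{cor:duality} the set $\du{\mathfrak{g}}$ is itself an $\fc$, whose classes are precisely the $\mathfrak{g}$-blocks and whose vacuum block is $\du{\mathfrak{g}}$, so applying the previous computation to $\du{\mathfrak{g}}$ gives rank one for the minor with rows in $\du{\mathfrak{g}}$ and columns in $\mathfrak{b}$. Transposing, which is legitimate because $S$ is symmetric and rank is transpose-invariant, returns the minor computing $\sp{\mathfrak{b}}{\du{\mathfrak{g}}}$, which therefore has rank one as well. Equivalently, the dual class relation $S_{wq}\!=\!w\!\left(\mathfrak{b}\right)S_{\v q}$ for $w\!\in\!\du{\mathfrak{g}}$ and $q\!\in\!\mathfrak{b}$ exhibits the factorization directly.

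There is no serious obstacle here: the entire argument is carried by \prettyref{lem:ovrk}, and what remains is the standard rank-one criterion together with careful bookkeeping of which index set labels rows and which labels columns in each minor. The only genuinely necessary care, and the thing most easily overlooked, is the non-vanishing check that separates $\mathrm{rank}\!=\!1$ from the zero matrix and so keeps the stated equivalence honest in the degenerate empty-overlap situation.
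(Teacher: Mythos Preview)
Your proposal is correct and follows essentially the same route as the paper: invoke \prettyref{lem:ovrk} to equate the overlap with the rank of $\minors$, use the rank-one/outer-product equivalence, exhibit the explicit factorization $S_{\alpha p}\!=\!\alpha(\ccl)S_{\v p}$ for $\sp{\mathfrak{g}}{\ccl}$, and deduce $\sp{\mathfrak{b}}{\du{\mathfrak{g}}}\!=\!1$ by duality. Your additional care in distinguishing $\mathrm{rank}\!=\!1$ from $\mathrm{rank}\!\le\!1$ via the non-vanishing of $\xi$ and $\eta$ is a point the paper glosses over, so your version is in fact slightly more precise.
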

\begin{proof}
Since rank $1$ matrices factorize, the first statement is a special
case of \prettyref{lem:ovrk}, and because $S_{\alpha p}\!=\!\alpha\!\left(\ccl\right)S_{\v p}$
for $p\!\in\!\ccl$ and $\alpha\!\in\!\mathfrak{g}$, this implies
at once $\sp{\mathfrak{g}}{\ccl}\!=\!1$, while $\sp{\mathfrak{b}}{\du{\mathfrak{g}}}\!=\!1$
follows from this by duality.
\end{proof}
\begin{lem}
\begin{singlespace}
\label{lem:clsize}For $\mathfrak{b}\!\in\!\bl{\mathfrak{g}}$ one
has
\begin{equation}
\sum_{\ccl\in\cl{\mathfrak{g}}}\sp{\mathfrak{b}}{\ccl}\!=\!\FA{\mathfrak{b}}\label{eq:blocksize}
\end{equation}
and for $\ccl\!\in\!\cl{\mathfrak{g}}$
\begin{equation}
\sum_{\mathfrak{b}\in\bl{\mathfrak{g}}}\sp{\mathfrak{b}}{\ccl}\!=\!\FA{\mathfrak{\ccl}}\label{eq:clsize}
\end{equation}
\end{singlespace}
\end{lem}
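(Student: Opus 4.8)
The plan is to obtain the first identity \prettyref{eq:blocksize} by a dimension count on the representation $\bfm_{\mathfrak{b}}$, and then to deduce the second identity \prettyref{eq:clsize} from it by invoking the inherent duality between $\fc$s.

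For \prettyref{eq:blocksize}, recall that $\bfm_{\mathfrak{b}}$ is the representation of $\svera{\mathfrak{g}}$ carried by the matrices $\fm_{\mathfrak{b}}\!\left(\alpha\right)$, whose rows and columns are indexed by the primaries lying in the block $\mathfrak{b}\!\in\!\bl{\mathfrak{g}}$; consequently its underlying vector space has dimension exactly $\FA{\mathfrak{b}}$. By Eq.\prettyref{eq:bfmeq} this representation splits as a direct sum of the irreps $\ch{\ccl}$, with $\ch{\ccl}$ occurring, by \prettyref{def:ovdef}, with multiplicity $\sp{\mathfrak{b}}{\ccl}$. Taking dimensions on both sides of this decomposition, and using that every $\ch{\ccl}$ is one-dimensional (the Verlinde algebra being commutative, so that all its irreps, and hence all restrictions of them to $\svera{\mathfrak{g}}$, are one-dimensional), the total dimension $\FA{\mathfrak{b}}$ equals $\sum_{\ccl\in\cl{\mathfrak{g}}}\sp{\mathfrak{b}}{\ccl}$, which is precisely \prettyref{eq:blocksize}.

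For \prettyref{eq:clsize}, I would apply \prettyref{eq:blocksize} verbatim to the dual $\fc$ $\du{\mathfrak{g}}$, using the duality dictionary $\cl{\du{\mathfrak{g}}}\!=\!\bl{\mathfrak{g}}$ and $\bl{\du{\mathfrak{g}}}\!=\!\cl{\du{\left(\du{\mathfrak{g}}\right)}}\!=\!\cl{\mathfrak{g}}$, so that a $\du{\mathfrak{g}}$-block is precisely a $\mathfrak{g}$-class $\ccl$, and a $\du{\mathfrak{g}}$-class is precisely a $\mathfrak{g}$-block $\mathfrak{b}$. The one point needing verification is that the overlap is symmetric under this exchange of roles, i.e. that the overlap (computed in $\du{\mathfrak{g}}$) of the $\du{\mathfrak{g}}$-block $\ccl$ with the $\du{\mathfrak{g}}$-class $\mathfrak{b}$ coincides with $\sp{\mathfrak{b}}{\ccl}$. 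This is where I would lean on \prettyref{lem:ovrk}: both overlaps equal the rank of a minor of the modular $S$-matrix, the relevant minors being $S_{\ccl\mathfrak{b}}$ (rows in the $\du{\mathfrak{g}}$-block $\ccl$, columns in the $\du{\mathfrak{g}}$-class $\mathfrak{b}$) and $\minors$ respectively; since $S$ is symmetric these are transposes of one another, and hence have equal rank. Substituting this into \prettyref{eq:blocksize} applied to $\du{\mathfrak{g}}$, whose left-hand side becomes $\sum_{\mathfrak{b}\in\bl{\mathfrak{g}}}\sp{\mathfrak{b}}{\ccl}$ and whose right-hand side is $\FA{\ccl}$, yields \prettyref{eq:clsize}.

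The only genuine subtlety, and thus the step I would treat most carefully, is this duality-symmetry of the overlap: one must be sure that the multiplicity datum defining $\sp{\mathfrak{b}}{\ccl}$ is truly invariant under passing from $\mathfrak{g}$ to $\du{\mathfrak{g}}$ with the roles of block and class interchanged. Anchoring the argument in the rank characterization of \prettyref{lem:ovrk}, rather than in the purely representation-theoretic definition, makes this transparent, since the rank of a matrix agrees with that of its transpose; the elementary dimension count then carries out the rest, and no further computation is required.
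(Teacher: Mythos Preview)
Your proof is correct and follows essentially the same approach as the paper: the dimension count on $\bfm_{\mathfrak{b}}$ for Eq.~\eqref{eq:blocksize}, then duality for Eq.~\eqref{eq:clsize}. The paper's proof simply asserts ``the second statement follows from this by duality'' without further comment, whereas you make the nontrivial point explicit---namely, that the overlap is symmetric under the exchange $\mathfrak{g}\leftrightarrow\du{\mathfrak{g}}$---and you justify it cleanly via \prettyref{lem:ovrk} and the symmetry of $S$; this is a welcome clarification rather than a different route.
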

\begin{proof}
\begin{singlespace}
To prove Eq.\prettyref{eq:blocksize}, observe that
\[
\sum_{\ccl\in\cl{\mathfrak{g}}}\sp{\mathfrak{b}}{\ccl}\!=\!\sum_{\ccl\in\cl{\mathfrak{g}}}\sp{\mathfrak{b}}{\ccl}\dim\ch{\ccl}\!=\!\dim\bfm_{\mathfrak{b}}\!=\!\FA{\mathfrak{b}}
\]
\end{singlespace}

\noindent since $\dim\ch{\ccl}\!=\!1$. The second statement follows
from this by duality.
\end{proof}
\begin{lem}
\begin{singlespace}
\label{lem:overlapint}
\begin{equation}
\sp{\mathfrak{b}}{\ccl}=\sum_{p\in\mathfrak{b}}\sum_{q\in\ccl}\FA{S_{pq}}^{2}\label{eq:overlapdef}
\end{equation}
\end{singlespace}
\end{lem}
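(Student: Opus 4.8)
The plan is to collapse the double sum on the right-hand side of Eq.\eqref{eq:overlapdef} into a character-theoretic expression and then finish with the second orthogonality relation Eq.\eqref{eq:ortho2}. First I would use the symmetry $S_{pq}\!=\!S_{qp}$ of the modular matrix to write $\FA{S_{pq}}^{2}\!=\!S_{pq}\overline{S_{qp}}$, so that for fixed $p$ the inner sum over $q\!\in\!\ccl$ becomes exactly the diagonal specialisation of \prettyref{lem:wmelms}. Setting the two free indices equal in Eq.\eqref{eq:wmatelms} yields
\[
\sum_{q\in\ccl}\FA{S_{pq}}^{2}=\frac{1}{\cs{\ccl}}\sum_{\alpha\in\chg}\overline{\alpha\!\left(\ccl\right)}N_{\alpha p}^{p}
\]
for every primary $p$.

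Next I would sum this identity over $p\!\in\!\mathfrak{b}$ and exchange the order of summation. The key observation is that $\sum_{p\in\mathfrak{b}}N_{\alpha p}^{p}$ is nothing but the trace $\mathrm{Tr}\,\fm_{\mathfrak{b}}\!\left(\alpha\right)$ of the block-restricted fusion matrix (which is well defined as a representation of $\svera{\mathfrak{g}}$ by \prettyref{lem:blcrit2}), giving
\[
\sum_{p\in\mathfrak{b}}\sum_{q\in\ccl}\FA{S_{pq}}^{2}=\frac{1}{\cs{\ccl}}\sum_{\alpha\in\chg}\overline{\alpha\!\left(\ccl\right)}\mathrm{Tr}\,\fm_{\mathfrak{b}}\!\left(\alpha\right)
\]
By the definition of the overlap (\prettyref{def:ovdef}) the representation $\bfm_{\mathfrak{b}}$ decomposes as $\bigoplus_{\ccl'}\sp{\mathfrak{b}}{\ccl'}\ch{\ccl'}$, and since each irrep is one-dimensional with character $\alpha\!\mapsto\!\alpha\!\left(\ccl'\right)$, the trace expands as $\mathrm{Tr}\,\fm_{\mathfrak{b}}\!\left(\alpha\right)=\sum_{\ccl'\in\cl{\mathfrak{g}}}\sp{\mathfrak{b}}{\ccl'}\alpha\!\left(\ccl'\right)$.

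Substituting this expansion and interchanging the $\alpha$- and $\ccl'$-summations leaves the inner sum $\sum_{\alpha\in\chg}\overline{\alpha\!\left(\ccl\right)}\alpha\!\left(\ccl'\right)$, which by Eq.\eqref{eq:ortho2} equals $\cs{\ccl}$ when $\ccl'\!=\!\ccl$ and vanishes otherwise. The two factors $\cs{\ccl}$ then cancel and only the term $\ccl'\!=\!\ccl$ survives, producing $\sp{\mathfrak{b}}{\ccl}$ and proving the claim. I do not anticipate a real obstacle here: the argument is a short chain of already-established identities. The only points that demand care are the bookkeeping of complex conjugates---making sure the diagonal specialisation of \prettyref{lem:wmelms} genuinely reproduces $\FA{S_{pq}}^{2}$ through the symmetry of $S$---and matching the conjugation pattern of Eq.\eqref{eq:ortho2} when that relation is applied.
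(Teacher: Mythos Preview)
Your argument is correct and is essentially the paper's proof run in the opposite direction: both rest on the trace identity $\mathrm{Tr}\,\fm_{\mathfrak{b}}\!\left(\alpha\right)=\sum_{p\in\mathfrak{b}}N_{\alpha p}^{p}=\sum_{\ccl'}\sp{\mathfrak{b}}{\ccl'}\alpha\!\left(\ccl'\right)$ together with Eq.\eqref{eq:ortho2}, while the conversion between $N_{\alpha p}^{p}$ and the $\FA{S_{pq}}^{2}$-sum that you extract from \prettyref{lem:wmelms} is exactly the Verlinde/characteristic-function computation the paper performs explicitly. The only cosmetic difference is that you start from the $S$-matrix side and invoke \prettyref{lem:wmelms} as a black box, whereas the paper starts from the multiplicity side and redoes that step by hand.
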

\begin{proof}
Since the irrep $\ch{\ccl}$ of the subalgebra $\svera{\mathfrak{g}}$
appears with multiplicity $\sp{\mathfrak{b}}{\ccl}$ in the irreducible
decomposition of $\bfm_{\mathfrak{b}}$, the matrix $\fm_{\mathfrak{b}}\!\left(\alpha\right)$
has $\sp{\mathfrak{b}}{\ccl}$ eigenvalues equal to $\!\ch{\ccl}\!\left(\alpha\right)$
for $\alpha\!\in\!\mathfrak{g}$, hence
\[
\sum_{\ccl\in\cl{\mathfrak{g}}}\sp{\mathfrak{b}}{\ccl}\,\alpha\!\left(\ccl\right)=\mathrm{Tr}(\fm_{\mathfrak{b}}\!\left(\alpha\right))=\sum_{p\in\mathfrak{b}}N_{\alpha p}^{p}
\]
i.e.
\begin{gather*}
\sp{\mathfrak{b}}{\ccl}\!=\!\frac{1}{\cs{\ccl}}\!\sum_{\alpha\in\chg}\!\overline{\alpha\!\left(\ccl\right)}\biggl\{\!\sum_{p\in\mathfrak{b}}\!N_{\alpha p}^{p}\!\biggr\}\!\!=\!\frac{1}{\cs{\ccl}}\!\sum_{\alpha\in\chg}\!\overline{\alpha\!\left(\ccl\right)}\Biggl\{\!\sum_{p\in\mathfrak{b}}\!\sum_{w}\!\frac{S_{\alpha w}S_{pw}\overline{S_{pw}}}{S_{\v w}}\!\Biggr\}\\
=\!\sum_{p\in\mathfrak{b}}\!\sum_{w}\!\FA{S_{pw}}^{2}\biggl\{\!\frac{1}{\cs{\ccl}}\!\sum_{\alpha\in\chg}\!\overline{\alpha\!\left(\ccl\right)}\frac{S_{\alpha w}}{S_{\v w}}\!\biggr\}\!=\!\sum_{p\in\mathfrak{b}}\!\sum_{w}\!\FA{S_{pw}}^{2}\delta_{\ccl}\!\left(w\right)\!=\!\sum_{p\in\mathfrak{b}}\sum_{q\in\ccl}\FA{S_{pq}}^{2}
\end{gather*}
using the orthogonality relation Eq.\prettyref{eq:ortho2}.
\end{proof}
\begin{cor}
\begin{singlespace}
$\sp{\mathfrak{b}}{\ccl}\!=\!0$ iff $S_{pq}\!=\!0$ for all $p\!\in\!\mathfrak{b}$
and $q\!\in\!\ccl$.
\end{singlespace}
\end{cor}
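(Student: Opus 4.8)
The plan is to read the statement off directly from \prettyref{lem:overlapint}, which is the only ingredient needed. That lemma expresses the overlap as
\[
\sp{\mathfrak{b}}{\ccl}=\sum_{p\in\mathfrak{b}}\sum_{q\in\ccl}\FA{S_{pq}}^{2},
\]
a finite sum of the manifestly non-negative real quantities $\FA{S_{pq}}^{2}$. The entire content of the corollary is then the elementary fact that a finite sum of non-negative reals vanishes if and only if every individual summand vanishes.

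Concretely, I would argue as follows. If $S_{pq}\!=\!0$ for all $p\!\in\!\mathfrak{b}$ and $q\!\in\!\ccl$, then every term $\FA{S_{pq}}^{2}$ in the above sum is zero, so $\sp{\mathfrak{b}}{\ccl}\!=\!0$. Conversely, suppose $\sp{\mathfrak{b}}{\ccl}\!=\!0$; since each $\FA{S_{pq}}^{2}\!\geq\!0$ and the total sum is zero, each term must individually vanish, whence $\FA{S_{pq}}\!=\!0$ and therefore $S_{pq}\!=\!0$ for every $p\!\in\!\mathfrak{b}$ and $q\!\in\!\ccl$.

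There is no genuine obstacle here: the work has already been done in establishing \prettyref{lem:overlapint}, and this corollary merely records the positivity consequence of that identity. The only thing to be careful about is that one is invoking non-negativity of the summands $\FA{S_{pq}}^{2}$ rather than of $S_{pq}$ itself, since the $S$-matrix entries are in general complex; but this is automatic from the presence of the absolute-value squared.
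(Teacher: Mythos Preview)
Your proposal is correct and matches the paper's approach exactly: the paper states this corollary immediately after \prettyref{lem:overlapint} without even supplying a proof, treating it as the obvious positivity consequence of that identity. Your write-up simply makes explicit the one-line reasoning the paper leaves implicit.
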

\begin{thm}[Reciprocity relation]
If the $\fc$s $\mathfrak{g},\mathfrak{h}\!\in\!\lat$ satisfy $\mathfrak{g}\!\subseteq\!\mathfrak{h}$,
then for every $\mathfrak{b}\!\in\!\bl{\mathfrak{h}}$ and $\ccl\!\in\!\cl{\mathfrak{g}}$
\begin{equation}
\sum_{{\textstyle {\mathfrak{b^{'}}\in\bl{\mathfrak{g}}\atop \mathfrak{b^{'}}\subseteq\mathfrak{b}}}}\sp{\mathfrak{b^{'}}}{\ccl}=\sum_{{\textstyle {\mathfrak{\ccl^{'}}\in\cl{\mathfrak{h}}\atop \mathfrak{\ccl^{'}}\subseteq\ccl}}}\sp{\mathfrak{b}}{\mathfrak{\ccl^{'}}}\label{eq:reciprocity}
\end{equation}
\end{thm}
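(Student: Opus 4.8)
The plan is to evaluate both sides of Eq.\eqref{eq:reciprocity} by means of \prettyref{lem:overlapint}, which rewrites every overlap as a sum of squared modular $S$-matrix entries, and then to observe that each side collapses to one and the same double sum $\sum_{p\in\mathfrak{b}}\sum_{q\in\ccl}\FA{S_{pq}}^{2}$.

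First I would fix the refinement directions using \prettyref{lem:inclusion} applied to the inclusion $\mathfrak{g}\!\subseteq\!\mathfrak{h}$: every $\mathfrak{g}$-class is a union of $\mathfrak{h}$-classes, and every $\mathfrak{h}$-block is a union of $\mathfrak{g}$-blocks. Consequently the $\mathfrak{h}$-classes $\ccl^{'}\!\subseteq\!\ccl$ partition the $\mathfrak{g}$-class $\ccl$, and the $\mathfrak{g}$-blocks $\mathfrak{b}^{'}\!\subseteq\!\mathfrak{b}$ partition the $\mathfrak{h}$-block $\mathfrak{b}$; this is precisely what makes the two restricted sums appearing in Eq.\eqref{eq:reciprocity} meaningful, and it is the only place where the hypothesis $\mathfrak{g}\!\subseteq\!\mathfrak{h}$ enters.

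For the left-hand side, \prettyref{lem:overlapint} applied to the $\fc$ $\mathfrak{g}$ gives $\sp{\mathfrak{b}^{'}}{\ccl}\!=\!\sum_{p\in\mathfrak{b}^{'}}\sum_{q\in\ccl}\FA{S_{pq}}^{2}$, so summing over the $\mathfrak{g}$-blocks $\mathfrak{b}^{'}\!\subseteq\!\mathfrak{b}$ and using that they tile $\mathfrak{b}$ yields $\sum_{p\in\mathfrak{b}}\sum_{q\in\ccl}\FA{S_{pq}}^{2}$. Symmetrically, for the right-hand side \prettyref{lem:overlapint} applied to $\mathfrak{h}$ gives $\sp{\mathfrak{b}}{\ccl^{'}}\!=\!\sum_{p\in\mathfrak{b}}\sum_{q\in\ccl^{'}}\FA{S_{pq}}^{2}$, and summing over the $\mathfrak{h}$-classes $\ccl^{'}\!\subseteq\!\ccl$, which tile $\ccl$, produces the same quantity $\sum_{p\in\mathfrak{b}}\sum_{q\in\ccl}\FA{S_{pq}}^{2}$. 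Equating the two expressions establishes the identity.

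There is essentially no hard step here: once \prettyref{lem:overlapint} is available, the relation is a Fubini-type rearrangement of a single nonnegative double sum. The only point requiring care is bookkeeping — making sure the two overlap symbols are computed with respect to the correct $\fc$ (namely $\mathfrak{g}$ on the left and $\mathfrak{h}$ on the right), and that the refinement relations furnished by \prettyref{lem:inclusion} point in the directions just described, so that the inner partitions of $\mathfrak{b}$ and of $\ccl$ are genuine disjoint unions.
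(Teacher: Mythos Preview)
Your proof is correct and follows essentially the same route as the paper: both invoke \prettyref{lem:inclusion} to see that the $\mathfrak{g}$-blocks $\mathfrak{b}'\!\subseteq\!\mathfrak{b}$ partition $\mathfrak{b}$ and the $\mathfrak{h}$-classes $\ccl'\!\subseteq\!\ccl$ partition $\ccl$, then use the formula of \prettyref{lem:overlapint} to reduce each side to the common double sum $\sum_{p\in\mathfrak{b}}\sum_{q\in\ccl}\FA{S_{pq}}^{2}$. Your remark about which $\fc$ each overlap is computed with respect to is exactly the bookkeeping point the paper leaves implicit.
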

\begin{proof}
This follows from \prettyref{lem:inclusion}, since
\begin{gather*}
\sum_{{\textstyle {\mathfrak{b^{'}}\in\bl{\mathfrak{g}}\atop \mathfrak{b^{'}}\subseteq\mathfrak{b}}}}\!\sp{\mathfrak{b^{'}}}{\ccl}\!=\!\sum_{{\textstyle {\mathfrak{b^{'}}\in\bl{\mathfrak{g}}\atop \mathfrak{b^{'}}\subseteq\mathfrak{b}}}}\sum_{p\in\mathfrak{b^{'}}}\Bigl(\sum_{q\in\ccl}\!\FA{S_{pq}}^{2}\Bigr)\!=\!\sum_{p\in\mathfrak{b}}\sum_{q\in\ccl}\!\FA{S_{pq}}^{2}\\
\!=\!\sum_{{\textstyle {\mathfrak{\ccl^{'}}\in\cl{\mathfrak{h}}\atop \mathfrak{\ccl^{'}}\subseteq\ccl}}}\sum_{q\in\mathfrak{\ccl^{'}}}\Bigl(\sum_{p\in\mathfrak{b}}\!\FA{S_{pq}}^{2}\Bigr)\!=\!\sum_{{\textstyle {\mathfrak{\ccl^{'}}\in\cl{\mathfrak{h}}\atop \mathfrak{\ccl^{'}}\subseteq\ccl}}}\!\sp{\mathfrak{b}}{\mathfrak{\ccl^{'}}}
\end{gather*}
as a consequence of Eq.\prettyref{eq:overlapdef}.
\end{proof}
\begin{cor}
If $\mathfrak{g}\!\subseteq\!\mathfrak{h}$, then the number of $\mathfrak{h}$-classes
contained in $\du{\mathfrak{g}}$ equals the number of $\mathfrak{g}$-blocks
contained in $\mathfrak{h}$.
\end{cor}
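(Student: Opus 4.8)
The plan is to obtain the corollary as a one-line specialization of the Reciprocity relation \eqref{eq:reciprocity}, choosing for its two free arguments the \emph{trivial} block and the \emph{trivial} class. Recall that the $\mathfrak{h}$-block containing the vacuum is $\mathfrak{h}$ itself, so $\mathfrak{h}\!\in\!\bl{\mathfrak{h}}$ is an admissible value for the block $\mathfrak{b}$ there; similarly the $\mathfrak{g}$-class containing the vacuum is the trivial class $\du{\mathfrak{g}}\!\in\!\cl{\mathfrak{g}}$, an admissible value for $\ccl$. The hypothesis $\mathfrak{g}\!\subseteq\!\mathfrak{h}$ is exactly what licenses the use of \eqref{eq:reciprocity}.

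First I would put $\mathfrak{b}\!=\!\mathfrak{h}$ and $\ccl\!=\!\du{\mathfrak{g}}$ into \eqref{eq:reciprocity}, obtaining
\[
\sum_{{\textstyle {\mathfrak{b^{'}}\in\bl{\mathfrak{g}}\atop \mathfrak{b^{'}}\subseteq\mathfrak{h}}}}\sp{\mathfrak{b^{'}}}{\du{\mathfrak{g}}}=\sum_{{\textstyle {\mathfrak{\ccl^{'}}\in\cl{\mathfrak{h}}\atop \mathfrak{\ccl^{'}}\subseteq\du{\mathfrak{g}}}}}\sp{\mathfrak{h}}{\mathfrak{\ccl^{'}}}
\]
Here the left-hand index set is precisely the collection of $\mathfrak{g}$-blocks contained in $\mathfrak{h}$, and the right-hand index set is precisely the collection of $\mathfrak{h}$-classes contained in $\du{\mathfrak{g}}$, so the remaining task is to show that every summand on both sides equals $1$.

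This is supplied verbatim by \prettyref{cor:trivoverlap}. On the left each $\mathfrak{b^{'}}$ is a $\mathfrak{g}$-block and the second argument is the trivial $\mathfrak{g}$-class, whence $\sp{\mathfrak{b^{'}}}{\du{\mathfrak{g}}}\!=\!1$; on the right $\mathfrak{h}$ is the trivial $\mathfrak{h}$-block, whence $\sp{\mathfrak{h}}{\mathfrak{\ccl^{'}}}\!=\!1$ for every $\mathfrak{h}$-class $\ccl^{'}$. The two sides therefore collapse to the number of $\mathfrak{g}$-blocks contained in $\mathfrak{h}$ and the number of $\mathfrak{h}$-classes contained in $\du{\mathfrak{g}}$ respectively, and their equality is exactly the assertion.

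I expect no real obstacle here; the whole point is the recognition that the canonical choices $\mathfrak{b}\!=\!\mathfrak{h}$, $\ccl\!=\!\du{\mathfrak{g}}$ trivialize \emph{both} families of overlaps at once. The only step requiring a moment's attention is the index bookkeeping: since $\mathfrak{g}\!\subseteq\!\mathfrak{h}$, applying \prettyref{lem:inclusion} with the roles of $\mathfrak{g}$ and $\mathfrak{h}$ interchanged shows that each $\mathfrak{h}$-class sits inside a unique $\mathfrak{g}$-class and each $\mathfrak{g}$-block sits inside a unique $\mathfrak{h}$-block, which is what ensures that ``$\mathfrak{\ccl^{'}}\!\subseteq\!\du{\mathfrak{g}}$'' and ``$\mathfrak{b^{'}}\!\subseteq\!\mathfrak{h}$'' really do enumerate the two quantities in the statement.
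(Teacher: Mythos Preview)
Your proof is correct and follows exactly the paper's own argument: the paper's proof reads in its entirety ``Apply \prettyref{cor:trivoverlap} and Eq.\prettyref{eq:reciprocity} with $\ccl\!=\!\du{\mathfrak{g}}$ and $\mathfrak{b}\!=\!\mathfrak{h}$'', which is precisely the specialization you carry out, with the added service of spelling out why both families of overlaps collapse to $1$.
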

\begin{proof}
Apply \prettyref{cor:trivoverlap} and Eq.\prettyref{eq:reciprocity}
with $\ccl\!=\!\du{\mathfrak{g}}$ and $\mathfrak{b}\!=\!\mathfrak{h}$.
\end{proof}
\begin{prop}
\begin{singlespace}
\label{prop:overlapbound}For all $\ccl\!\in\!\cl{\mathfrak{g}}$
and $\mathfrak{b}\!\in\!\bl{\mathfrak{g}}$ one has
\begin{equation}
\sp{\mathfrak{b}}{\ccl}\!\leq\!\min\left(\frac{\cs{\du{\mathfrak{g}}}}{\cs{\ccl}},\frac{\cs{\mathfrak{g}}}{\cs{\mathfrak{b}}}\right)\label{eq:ovbound}
\end{equation}
\end{singlespace}
\end{prop}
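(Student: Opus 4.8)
The plan is to imitate the proof of \prettyref{lem:sizebound2}, with the full fusion matrices $\fm\!\left(\alpha\right)$ replaced by their block-restricted versions $\fm_{\mathfrak{b}}\!\left(\alpha\right)$, and then to obtain the second estimate for free by duality. The starting point is the identity
\[
\cs{\ccl}\,\sp{\mathfrak{b}}{\ccl}=\sum_{\alpha\in\chg}\overline{\alpha\!\left(\ccl\right)}\,\mathrm{Tr}(\fm_{\mathfrak{b}}\!\left(\alpha\right)),
\]
which is exactly the relation established in the course of proving \prettyref{lem:overlapint} (there one has $\mathrm{Tr}(\fm_{\mathfrak{b}}\!\left(\alpha\right))=\sum_{p\in\mathfrak{b}}N_{\alpha p}^{p}$, and the overlap is read off from the eigenvalue multiplicities of the representation $\bfm_{\mathfrak{b}}$). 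This is the block analogue of the class-size formula used in \prettyref{lem:sizebound2}.

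Next I would bound the right-hand side term by term. Each $\fm_{\mathfrak{b}}\!\left(\alpha\right)$ is a non-negative integer matrix, so $\mathrm{Tr}(\fm_{\mathfrak{b}}\!\left(\alpha\right))\!\geq\!0$, while Eq.\eqref{eq:ineq} gives $\FA{\alpha\!\left(\ccl\right)}\!\leq\!\qd{\alpha}\!=\!\alpha\!\left(\du{\mathfrak{g}}\right)$ exactly as in \prettyref{lem:sizebound2}. The triangle inequality therefore yields
\[
\cs{\ccl}\,\sp{\mathfrak{b}}{\ccl}\!\leq\!\sum_{\alpha\in\chg}\qd{\alpha}\,\mathrm{Tr}(\fm_{\mathfrak{b}}\!\left(\alpha\right))\!=\!\cs{\du{\mathfrak{g}}}\,\sp{\mathfrak{b}}{\du{\mathfrak{g}}},
\]
the last equality being the displayed identity applied to the trivial class $\ccl\!=\!\du{\mathfrak{g}}$, where $\overline{\alpha\!\left(\du{\mathfrak{g}}\right)}\!=\!\qd{\alpha}$ is real. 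The key observation — and the step I regard as the real content of the argument — is that $\sp{\mathfrak{b}}{\du{\mathfrak{g}}}\!=\!1$ for every block by \prettyref{cor:trivoverlap}, so the weighted trace sum collapses all the way down to $\cs{\du{\mathfrak{g}}}$, giving $\sp{\mathfrak{b}}{\ccl}\!\leq\!\cs{\du{\mathfrak{g}}}/\cs{\ccl}$.

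Finally I would derive the second bound by duality. Formula \eqref{eq:overlapdef} writes $\sp{\mathfrak{b}}{\ccl}=\sum_{p\in\mathfrak{b}}\sum_{q\in\ccl}\FA{S_{pq}}^{2}$, which is manifestly symmetric under interchanging the block $\mathfrak{b}$ (a $\du{\mathfrak{g}}$-class) with the class $\ccl$; passing from $\mathfrak{g}$ to $\du{\mathfrak{g}}$ swaps classes and blocks and replaces $\cs{\du{\mathfrak{g}}}$ by $\cs{\du{(\du{\mathfrak{g}})}}\!=\!\cs{\mathfrak{g}}$. Applying the inequality just proved to $\du{\mathfrak{g}}$ — where $\mathfrak{b}$ now plays the role of the class and the relevant trivial overlap is again $1$ — thus gives $\sp{\mathfrak{b}}{\ccl}\!\leq\!\cs{\mathfrak{g}}/\cs{\mathfrak{b}}$, and taking the minimum of the two estimates proves the claim. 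I expect the only delicate point to be the bookkeeping of this duality step: one must check that the trivial-overlap input supplied by \prettyref{cor:trivoverlap} for $\du{\mathfrak{g}}$ is precisely $\sp{\mathfrak{b}}{\du{(\du{\mathfrak{g}})}}\!=\!1$, so that the collapse goes through exactly as before.
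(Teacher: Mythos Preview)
Your proof is correct and follows essentially the same approach as the paper: both start from the trace identity $\cs{\ccl}\,\sp{\mathfrak{b}}{\ccl}=\sum_{\alpha\in\chg}\overline{\alpha\!\left(\ccl\right)}\,\mathrm{Tr}(\fm_{\mathfrak{b}}\!\left(\alpha\right))$, bound term by term via $\FA{\alpha(\ccl)}\leq\qd_\alpha$ and non-negativity of the trace, collapse the resulting sum using $\sp{\mathfrak{b}}{\du{\mathfrak{g}}}=1$ from \prettyref{cor:trivoverlap}, and then invoke duality for the second bound. Your write-up is slightly more explicit about the duality bookkeeping (via the symmetry of Eq.~\eqref{eq:overlapdef}), but the argument is the same.
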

\begin{proof}
\begin{singlespace}
Since
\begin{gather*}
\sp{\mathfrak{b}}{\ccl}\!=\!\frac{1}{\cs{\ccl}}\sum_{\alpha\in\mathfrak{g}}\overline{\alpha\!\left(\ccl\right)}\mathrm{Tr}~\fm_{\mathfrak{b}}\!\left(\alpha\right)
\end{gather*}
while $\mathrm{Tr}~\fm_{\mathfrak{b}}\!\left(\alpha\right)$ is non-negative
and $\FA{\alpha\!\left(\ccl\right)\!}\!\leq\!\qd{\alpha}$, one has
\[
\sp{\mathfrak{b}}{\ccl}\!\leq\!\frac{1}{\cs{\ccl}}\!\sum_{\alpha\in\mathfrak{g}}\!\FA{\overline{\alpha\!\left(\ccl\right)}\mathrm{Tr}~\fm_{\mathfrak{b}}\!\left(\alpha\right)\!}\!\leq\!\frac{1}{\cs{\ccl}}\!\sum_{\alpha\in\mathfrak{g}}\!\qd{\alpha}\mathrm{Tr}~\fm_{\mathfrak{b}}\!\left(\alpha\right)\!=\!\frac{\cs{\du{\mathfrak{g}}}}{\cs{\ccl}}
\]
\end{singlespace}

\noindent by the triangle inequality, taking into account \prettyref{cor:trivoverlap}.
Reversing the role of $\mathfrak{g}$ and $\du{\mathfrak{g}}$ completes
the proof by duality.
\end{proof}
\begin{cor}
\label{cor:extbound}$\cs{\ccl}\!\leq\!\cs{\du{\mathfrak{g}}}$ for
$\ccl\!\in\!\cl{\mathfrak{g}}$, and $\cs{\mathfrak{b}}\!\leq\!\cs{\mathfrak{g}}$
for $\mathfrak{b}\!\in\!\bl{\mathfrak{g}}$.
\end{cor}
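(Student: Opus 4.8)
The plan is to read off both inequalities directly from the overlap estimate \prettyref{eq:ovbound} of \prettyref{prop:overlapbound}, specialized to pairs whose overlap is pinned to $1$ by \prettyref{cor:trivoverlap}. The point is that all the genuine work---the triangle inequality together with the non-negativity of $\mathrm{Tr}\,\fm_{\mathfrak{b}}\!\left(\alpha\right)$---has already been absorbed into that proposition, so what remains is a one-line specialization in each case.

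For the first inequality I would take the block to be $\mathfrak{g}$ itself, which is legitimate since $\mathfrak{g}$ is precisely the $\mathfrak{g}$-block containing the vacuum and hence $\mathfrak{g}\!\in\!\bl{\mathfrak{g}}$. By \prettyref{cor:trivoverlap} one has $\sp{\mathfrak{g}}{\ccl}\!=\!1$ for every class $\ccl\!\in\!\cl{\mathfrak{g}}$, so keeping only the first entry of the minimum in \prettyref{eq:ovbound} gives $1\!=\!\sp{\mathfrak{g}}{\ccl}\!\leq\!\cs{\du{\mathfrak{g}}}/\cs{\ccl}$. Because the extents are strictly positive real numbers---each $\cs{\ccl}$ being, by \prettyref{eq:extdef}, the reciprocal of a sum of positive squares $S_{\v p}^{2}$---clearing the denominator preserves the direction of the inequality and yields $\cs{\ccl}\!\leq\!\cs{\du{\mathfrak{g}}}$.

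For the second inequality I would dually take the class to be the trivial class $\du{\mathfrak{g}}$. Again by \prettyref{cor:trivoverlap}, $\sp{\mathfrak{b}}{\du{\mathfrak{g}}}\!=\!1$ for every block $\mathfrak{b}\!\in\!\bl{\mathfrak{g}}$, and now retaining the second entry of the minimum in \prettyref{eq:ovbound} produces $1\!\leq\!\cs{\mathfrak{g}}/\cs{\mathfrak{b}}$, whence $\cs{\mathfrak{b}}\!\leq\!\cs{\mathfrak{g}}$. Alternatively this statement is simply the image of the first under the block--class duality recorded after \prettyref{cor:duality}, obtained by interchanging the roles of $\mathfrak{g}$ and $\du{\mathfrak{g}}$. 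I do not anticipate any real obstacle here; the only step needing a moment's attention is the positivity of the extents that justifies the final division, and this is immediate from their definition.
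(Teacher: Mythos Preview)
Your argument is correct and is essentially the paper's own proof: the paper likewise specializes \prettyref{eq:ovbound} to the pair $(\mathfrak{g},\ccl)$, invokes $\sp{\mathfrak{g}}{\ccl}\!=\!1$ from \prettyref{cor:trivoverlap}, and then obtains the second inequality by duality. The only difference is that you spell out the positivity of the extents and also give the explicit dual specialization to $(\mathfrak{b},\du{\mathfrak{g}})$, neither of which changes the approach.
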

\begin{proof}
Indeed, $\cs{\ccl}\!=\!\cs{\ccl}\!\sp{\mathfrak{g}}{\ccl}\!\leq\!\cs{\du{\mathfrak{g}}}$
by \prettyref{cor:trivoverlap} and Eq.\prettyref{eq:ovbound}. The
second statement follows by duality.
\end{proof}
The properties of those classes $\ccl\!\in\!\cl{\mathfrak{g}}$ that
saturate the bound $\cs{\ccl}\!\leq\!\cs{\du{\mathfrak{g}}}$ will
be discussed in the next section.
\begin{lem}
\label{lem:sizebound1}$\FA{\ccl}\!\leq\!\cs{\mathfrak{g}}$ for $\ccl\!\in\!\cl{\mathfrak{g}}$,
and $\FA{\mathfrak{b}}\!\leq\!\cs{\du{\mathfrak{g}}}$ for $\mathfrak{b}\!\in\!\bl{\mathfrak{g}}$.
\end{lem}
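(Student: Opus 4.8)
The plan is to bound the cardinality of a class by expressing it as a sum of overlaps and controlling each term. First I would invoke Eq.\prettyref{eq:clsize}, which resolves the cardinality as a sum of overlaps with all $\mathfrak{g}$-blocks,
\[
\FA{\ccl}=\sum_{\mathfrak{b}\in\bl{\mathfrak{g}}}\sp{\mathfrak{b}}{\ccl}.
\]

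Next I would bound each summand using \prettyref{prop:overlapbound}, specifically the second argument of the minimum appearing there, namely $\sp{\mathfrak{b}}{\ccl}\!\leq\!\cs{\mathfrak{g}}/\cs{\mathfrak{b}}$. Applying this term by term converts the previous identity into the estimate $\FA{\ccl}\!\leq\!\cs{\mathfrak{g}}\sum_{\mathfrak{b}\in\bl{\mathfrak{g}}}\cs{\mathfrak{b}}^{-1}$.

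The only remaining ingredient is the value of $\sum_{\mathfrak{b}\in\bl{\mathfrak{g}}}\cs{\mathfrak{b}}^{-1}$. Since the $\mathfrak{g}$-blocks are by definition the classes of the dual $\fc$ $\du{\mathfrak{g}}$, and $\du{\mathfrak{g}}$ is itself an $\fc$ by \prettyref{cor:duality}, the extent sum rule Eq.\prettyref{eq:extsumrule} applied to $\du{\mathfrak{g}}$ gives $\sum_{\mathfrak{b}\in\bl{\mathfrak{g}}}\cs{\mathfrak{b}}^{-1}=1$. Assembling the three steps yields $\FA{\ccl}\!\leq\!\cs{\mathfrak{g}}$, which is the first assertion. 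The second inequality $\FA{\mathfrak{b}}\!\leq\!\cs{\du{\mathfrak{g}}}$ then follows from the first by the $\mathfrak{g}\!\leftrightarrow\!\du{\mathfrak{g}}$ duality already used throughout this section: exchanging the roles of classes and blocks (equivalently, running the same argument for $\du{\mathfrak{g}}$ in place of $\mathfrak{g}$, and using $\du{\left(\du{\mathfrak{g}}\right)}\!=\!\mathfrak{g}$) carries the statement just proved into the desired one.

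I do not anticipate a genuine obstacle, as the argument is a direct assembly of facts established earlier. The single point needing care is the book-keeping of the duality: one must apply the extent sum rule to $\du{\mathfrak{g}}$ rather than to $\mathfrak{g}$, observing that the block extents $\cs{\mathfrak{b}}$ are precisely the class extents of $\du{\mathfrak{g}}$, so that the normalization $\sum_{\mathfrak{b}}\cs{\mathfrak{b}}^{-1}=1$ is legitimately invoked.
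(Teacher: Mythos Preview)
Your proof is correct and follows essentially the same route as the paper's own argument: both express $\FA{\ccl}$ via Eq.\prettyref{eq:clsize}, bound each overlap by $\cs{\mathfrak{g}}/\cs{\mathfrak{b}}$ using \prettyref{prop:overlapbound}, collapse the sum with Eq.\prettyref{eq:extsumrule} applied to $\du{\mathfrak{g}}$, and then invoke duality for the second inequality.
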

\begin{proof}
\begin{singlespace}
The first inequality follows from Eqs.\prettyref{eq:ovbound} and
\prettyref{eq:clsize}, since
\[
\FA{\ccl}\!=\!\sum_{\mathfrak{b}\in\bl{\mathfrak{g}}}\sp{\mathfrak{b}}{\ccl}\!\leq\!\sum_{\mathfrak{b}\in\bl{\mathfrak{g}}}\frac{\cs{\mathfrak{g}}}{\cs{\mathfrak{b}}}\!=\!\cs{\mathfrak{g}}
\]
\end{singlespace}

\noindent taking into account Eq.\prettyref{eq:extsumrule}, and the
second one follows by duality.
\end{proof}

\section{The center\label{sec:The-center}}
\begin{defn}
\label{centerdef}The center $\zent{\mathfrak{g}}$ of the $\fc$
$\mathfrak{g}\!\in\!\lat$ is the collection of those $\mathfrak{g}$-$\gcl$es
$\zcl\!\in\!\cl{\mathfrak{g}}$ for which $\cs{\zcl}\!=\!\cs{\du{\chg}}$.
\end{defn}
Clearly $\du{\mathfrak{g}}\!\in\!\zent{\mathfrak{g}}$, hence the
center is never empty: we'll call the elements of $\zent{\mathfrak{g}}$
central $\gcl$es. It follows from \prettyref{prop:overlapbound}
that for a central $\gcl$ $\zcl\!\in\!\zent{\mathfrak{g}}$ one has
$\sp{\mathfrak{b}}{\zcl}\!\leq\!1$ for any $\dcl$ $\mathfrak{b}\!\in\!\bl{\mathfrak{g}}$,
hence $\FA{\zcl}\!\leq\!\FA{\mathfrak{\du g}}$ by Eq.\prettyref{eq:blocksize}.
\begin{lem}
\label{lem:centclchar}$\zcl\!\in\!\zent{\mathfrak{g}}$ iff $\FA{\alpha\!\left(\zcl\right)\!}\!=\!\qd{\alpha}$
for all $\alpha\!\in\!\chg$, i.e. the central character of $\zcl$
has unit modulus.
\end{lem}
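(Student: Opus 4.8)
The plan is to read off the defining condition $\cs{\zcl}\!=\!\cs{\du{\chg}}$ as the equality case of a chain of inequalities, exactly in the spirit of \prettyref{lem:omch} and \prettyref{lem:sizebound2}: I express both extents as sums over $\chg$ and use the pointwise eigenvalue bound to force termwise equality.

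First I would record the elementary bound. For any class $\ccl\!\in\!\cl{\mathfrak{g}}$ and any $\alpha\!\in\!\chg$, choosing a primary $p\!\in\!\ccl$ gives $\alpha\!\left(\ccl\right)\!=\!\ch p\!\left(\alpha\right)$, so \prettyref{eq:ineq} yields $\FA{\alpha\!\left(\ccl\right)}\!\leq\!\qd{\alpha}$; in particular $\FA{\alpha\!\left(\zcl\right)}\!\leq\!\qd{\alpha}$ for every $\alpha$, and it is precisely the equality in each instance that we must match to membership in the center. Next I would specialize the second orthogonality relation \prettyref{eq:ortho2} to $\ccl_{1}\!=\!\ccl_{2}\!=\!\zcl$, obtaining $\sum_{\alpha\in\chg}\FA{\alpha\!\left(\zcl\right)}^{2}\!=\!\cs{\zcl}$, and combine it with the spread formula \prettyref{eq:spread}, namely $\cs{\du{\chg}}\!=\!\sum_{\alpha\in\chg}\qd{\alpha}^{2}$. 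Subtracting the two gives $\cs{\du{\chg}}\!-\!\cs{\zcl}\!=\!\sum_{\alpha\in\chg}\bigl(\qd{\alpha}^{2}\!-\!\FA{\alpha\!\left(\zcl\right)}^{2}\bigr)$, in which every summand is non-negative by the bound just established. Therefore $\cs{\zcl}\!=\!\cs{\du{\chg}}$ holds if and only if the whole sum vanishes, and since the terms are non-negative this happens exactly when $\qd{\alpha}^{2}\!=\!\FA{\alpha\!\left(\zcl\right)}^{2}$, i.e. $\FA{\alpha\!\left(\zcl\right)}\!=\!\qd{\alpha}$, for every $\alpha\!\in\!\chg$. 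This establishes the stated equivalence.

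Finally, for the parenthetical reformulation I would substitute $\cs{\zcl}\!=\!\cs{\du{\chg}}$ into the definition \prettyref{eq:cechardef}, which collapses the central character to $\cech{\zcl}{\alpha}\!=\!\alpha\!\left(\zcl\right)/\qd{\alpha}$; its modulus is then $\FA{\alpha\!\left(\zcl\right)}/\qd{\alpha}$, equal to $1$ for all $\alpha$ precisely under the condition derived above. I expect no genuine obstacle here: the argument is just the orthogonality relations plus the non-negativity trick. The only points requiring care are that the bound $\FA{\alpha\!\left(\zcl\right)}\!\leq\!\qd{\alpha}$ is being used as an equality-case criterion, and that the clean formula $\cech{\zcl}{\alpha}\!=\!\alpha\!\left(\zcl\right)/\qd{\alpha}$ becomes available only after $\cs{\zcl}\!=\!\cs{\du{\chg}}$ has been imposed, so the \emph{unit modulus} phrasing is a statement about central classes specifically.
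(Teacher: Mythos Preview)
Your proposal is correct and follows essentially the same route as the paper: both directions hinge on writing $\cs{\du{\chg}}-\cs{\zcl}=\sum_{\alpha\in\chg}\bigl(\qd{\alpha}^{2}-\FA{\alpha(\zcl)}^{2}\bigr)$ via \eqref{eq:ortho2} and \eqref{eq:spread}, then invoking the pointwise bound $\FA{\alpha(\zcl)}\le\qd{\alpha}$ to force termwise vanishing. Your closing paragraph on the unit-modulus reformulation is a small addition the paper leaves implicit.
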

\begin{proof}
\begin{singlespace}
If $\FA{\alpha\!\left(\zcl\right)\!}\!=\!\qd{\alpha}$ for all $\alpha\!\in\!\chg$,
then
\[
\cs{\zcl}=\sum_{\alpha\in\mathfrak{g}}\FA{\alpha\!\left(\zcl\right)\!}^{2}=\sum_{\alpha\in\mathfrak{g}}\qd{\alpha}^{2}=\cs{\du{\chg}}
\]
 by Eq.\prettyref{eq:ortho2}. Conversely, $\cs{\zcl}\!=\!\cs{\du{\chg}}$
implies
\[
\sum_{\alpha\in\mathfrak{g}}\left(\qd{\alpha}^{2}\!-\!\FA{\alpha\!\left(\zcl\right)\!}^{2}\right)\!=\!0
\]
\end{singlespace}

\noindent Since $\FA{\alpha\!\left(\zcl\right)\!}=\FA{\ch{\zcl}\!\left(\alpha\right)\!}\leq\qd{\alpha}$,
all terms of the sum on the left-hand side are non-negative, hence
they should all vanish.
\end{proof}
\begin{cor}
If $\mathfrak{g},\mathfrak{h}\!\in\!\lat$ are $\fc$s such that $\mathfrak{h}\!\subseteq\!\mathfrak{g}$,
then any class $\ccl\!\in\!\cl{\mathfrak{h}}$ containing a central
class $\zcl\!\in\!\zent{\mathfrak{g}}$ is itself central.
\end{cor}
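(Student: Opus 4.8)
The plan is to reduce the claim to the unit-modulus characterization of centrality furnished by \prettyref{lem:centclchar}, applied once to $\mathfrak{g}$ and once to $\mathfrak{h}$. Recall that $\zcl\!\in\!\zent{\mathfrak{g}}$ means precisely $\FA{\alpha\!\left(\zcl\right)\!}\!=\!\qd{\alpha}$ for all $\alpha\!\in\!\chg$, whereas what must be shown is the analogous statement $\FA{\alpha\!\left(\ccl\right)\!}\!=\!\qd{\alpha}$ for all $\alpha\!\in\!\mathfrak{h}$, which by \prettyref{lem:centclchar} (now applied to $\mathfrak{h}$) is equivalent to $\ccl\!\in\!\zent{\mathfrak{h}}$. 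First I would pin down the meaning of the hypothesis that $\ccl$ contains $\zcl$: since $\mathfrak{h}\!\subseteq\!\mathfrak{g}$, \prettyref{lem:inclusion} guarantees that the $\mathfrak{h}$-class $\ccl$ is a union of $\mathfrak{g}$-classes, so the hypothesis is the genuine set inclusion $\zcl\!\subseteq\!\ccl$ of sets of primaries. I would then fix an arbitrary primary $p\!\in\!\zcl$, which therefore also lies in $\ccl$.

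The heart of the argument is a single identification of the two class characters on this shared primary. For $\alpha\!\in\!\mathfrak{h}\!\subseteq\!\chg$ the eigenvalue $\ch p\!\left(\alpha\right)\!=\!S_{\alpha p}/S_{\v p}$ is just a number attached to the pair $\left(p,\alpha\right)$ and is insensitive to whether $\fm\!\left(\alpha\right)$ is viewed inside $\svera{\mathfrak{g}}$ or inside $\svera{\mathfrak{h}}$. Reading it through $\svera{\mathfrak{g}}$ and using $p\!\in\!\zcl$ gives $\ch p\!\left(\alpha\right)\!=\!\alpha\!\left(\zcl\right)$, while reading it through $\svera{\mathfrak{h}}$ and using $p\!\in\!\ccl$ gives $\ch p\!\left(\alpha\right)\!=\!\alpha\!\left(\ccl\right)$. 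Hence $\alpha\!\left(\ccl\right)\!=\!\alpha\!\left(\zcl\right)$ for every $\alpha\!\in\!\mathfrak{h}$, so that $\FA{\alpha\!\left(\ccl\right)\!}\!=\!\FA{\alpha\!\left(\zcl\right)\!}\!=\!\qd{\alpha}$, the final equality holding because $\zcl$ is $\mathfrak{g}$-central and $\mathfrak{h}\!\subseteq\!\chg$. Applying \prettyref{lem:centclchar} to $\mathfrak{h}$ then yields $\ccl\!\in\!\zent{\mathfrak{h}}$, as desired.

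There is no serious obstacle here; the argument is essentially a bookkeeping exercise. The only point demanding care is keeping straight that $\alpha\!\left(\zcl\right)$ is the value of a $\svera{\mathfrak{g}}$-character while $\alpha\!\left(\ccl\right)$ is the value of a $\svera{\mathfrak{h}}$-character, and that their coincidence on $\mathfrak{h}$ rests entirely on evaluating both as one and the same eigenvalue $\ch p\!\left(\alpha\right)$ at a primary $p$ lying in $\zcl\!\subseteq\!\ccl$.
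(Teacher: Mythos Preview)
Your proof is correct and follows essentially the same route as the paper: both arguments use the inclusion $\zcl\!\subseteq\!\ccl$ to identify $\alpha\!\left(\ccl\right)$ with $\alpha\!\left(\zcl\right)$ for $\alpha\!\in\!\mathfrak{h}$, and then invoke \prettyref{lem:centclchar}. The only difference is cosmetic: the paper writes directly $\ch{\ccl}\!\left(\alpha\right)\!=\!\ch{\zcl}\!\left(\alpha\right)\!=\!\cech{\zcl}{\alpha}\qd{\alpha}$, while you make the identification explicit by evaluating both at a fixed primary $p\!\in\!\zcl$.
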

\begin{proof}
Since $\zcl\!\subseteq\!\ccl$, we have $\alpha\!\left(\ccl\right)\!=\!\ch{\ccl}\!\left(\alpha\right)\!=\!\ch{\zcl}\!\left(\alpha\right)\!=\!\cech{\zcl}{\alpha}\!\qd{\alpha}$
for $\alpha\!\in\!\mathfrak{h}$, i.e. $\FA{\alpha\!\left(\zcl\right)\!}\!=\!\qd{\alpha}$,
proving the assertion according to \prettyref{lem:centclchar}.
\end{proof}
\begin{defn}
\label{def:Abel}An $\fc$ $\mathfrak{g}\!\in\!\lat$ is Abelian if
all its classes are central.
\end{defn}
\begin{prop}
An $\fc$ $\mathfrak{g}\!\in\!\lat$ is Abelian iff $\qd{\alpha}\!=\!1$
for all $\alpha\!\in\!\mathfrak{g}$.
\end{prop}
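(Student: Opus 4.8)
The plan is to reduce the biconditional to a single pointwise identity that can be checked for each $\alpha\!\in\!\chg$ separately, using the same non-negative-sum bookkeeping that underlies \prettyref{lem:centclchar}. First I would unpack the definition: by \prettyref{def:Abel}, $\chg$ is Abelian precisely when every class $\ccl\!\in\!\cl{\mathfrak{g}}$ is central, which by \prettyref{lem:centclchar} means $\FA{\alpha\!\left(\ccl\right)}\!=\!\qd{\alpha}$ for all $\alpha\!\in\!\chg$ and all $\ccl\!\in\!\cl{\mathfrak{g}}$. Since the classes partition the set of all primaries and $\alpha\!\left(\ccl\right)\!=\!\ch p\!\left(\alpha\right)$ whenever $p\!\in\!\ccl$, this family of conditions is equivalent to requiring $\FA{\ch p\!\left(\alpha\right)}\!=\!\qd{\alpha}$ for \emph{every} primary $p$ and every $\alpha\!\in\!\chg$. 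Thus it suffices to prove, for a fixed $\alpha\!\in\!\chg$, that $\FA{\ch p\!\left(\alpha\right)}\!=\!\qd{\alpha}$ holds for all $p$ if and only if $\qd{\alpha}\!=\!1$.

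The core step is then to evaluate the single non-negative quantity $\sum_{p}\bigl(\qd{\alpha}^{2}\!-\!\FA{\ch p\!\left(\alpha\right)}^{2}\bigr)S_{\v p}^{2}$. Using $S_{\alpha p}\!=\!\ch p\!\left(\alpha\right)S_{\v p}$ from Eq.\eqref{eq:chdef} together with the unitarity of the modular matrix $S$ (which furnishes $\sum_{p}\FA{S_{\alpha p}}^{2}\!=\!1$ and $\sum_{p}S_{\v p}^{2}\!=\!1$, the latter because $S_{\v p}$ is real and positive), this sum collapses to $\qd{\alpha}^{2}\!-\!1$. On the other hand every summand is non-negative, since $S_{\v p}^{2}\!>\!0$ while $\qd{\alpha}^{2}\!-\!\FA{\ch p\!\left(\alpha\right)}^{2}\!\geq\!0$ by the inequality Eq.\eqref{eq:ineq}. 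Hence $\qd{\alpha}^{2}\!-\!1\!\geq\!0$ unconditionally, and the sum vanishes — equivalently, each summand vanishes, i.e. $\FA{\ch p\!\left(\alpha\right)}\!=\!\qd{\alpha}$ for all $p$ — exactly when $\qd{\alpha}\!=\!1$. Both implications of the per-$\alpha$ biconditional thus drop out of the same identity.

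Combining the two observations finishes the proof: $\chg$ is Abelian iff $\FA{\ch p\!\left(\alpha\right)}\!=\!\qd{\alpha}$ for all primaries $p$ and all $\alpha\!\in\!\chg$, and by the displayed computation this holds iff $\qd{\alpha}\!=\!1$ for every $\alpha\!\in\!\chg$. I do not expect a genuine obstacle; the only point requiring care is the reduction in the first paragraph, namely recognizing that the condition ``every class is central'' unpacks, via \prettyref{lem:centclchar} and Eq.\eqref{eq:chdef}, into the uniform pointwise statement $\FA{\ch p\!\left(\alpha\right)}\!=\!\qd{\alpha}$ ranging over all primaries, so that the problem decouples over $\alpha$ and each instance is settled by the same equality-in-a-sum-of-non-negative-terms argument already employed in \prettyref{lem:sizebound2} and \prettyref{lem:centclchar}.
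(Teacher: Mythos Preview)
Your argument is correct, but it takes a different route from the paper's own proof. The paper works with the extent machinery: for the forward direction it uses Eq.\eqref{eq:extsumrule} and Eq.\eqref{eq:classno} to deduce $\cs{\du{\mathfrak{g}}}\!=\!\FA{\mathfrak{g}}$, then Eq.\eqref{eq:spread} to conclude $\sum_{\alpha\in\mathfrak{g}}(\qd{\alpha}^{2}\!-\!1)\!=\!0$; for the converse it again reaches $\cs{\du{\mathfrak{g}}}\!=\!\FA{\mathfrak{g}}$ and then invokes \prettyref{cor:extbound} to force every $\cs{\ccl}$ to equal $\cs{\du{\mathfrak{g}}}$. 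You instead unpack Abelianness via \prettyref{lem:centclchar} into the pointwise condition $\FA{\ch p\!\left(\alpha\right)}\!=\!\qd{\alpha}$ for all primaries $p$, and then for each fixed $\alpha$ evaluate the single $S_{\v p}^{2}$-weighted sum $\sum_{p}(\qd{\alpha}^{2}\!-\!\FA{\ch p\!\left(\alpha\right)}^{2})S_{\v p}^{2}\!=\!\qd{\alpha}^{2}\!-\!1$ directly from unitarity of $S$. Your approach is more self-contained (it avoids Eqs.\eqref{eq:extsumrule}, \eqref{eq:classno}, \eqref{eq:spread} and \prettyref{cor:extbound}) and handles both implications with the same identity; the paper's approach, by contrast, stays within the extent formalism that is used throughout the surrounding section, so it reinforces how the class-extent bookkeeping encodes the group-theoretic analogy.
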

\begin{proof}
\noindent If all classes $\ccl\!\in\!\cl{\mathfrak{g}}$ are central,
then by Eq.\prettyref{eq:extsumrule}
\[
\cs{\du{\mathfrak{g}}}=\sum_{\ccl\in\cl{\mathfrak{g}}}\frac{\cs{\du{\mathfrak{g}}}}{\cs{\ccl}}=\FA{\cl{\mathfrak{g}}\!}=\FA{\mathfrak{g}}
\]
hence by Eq.\prettyref{eq:spread}
\[
0=\cs{\du{\mathfrak{g}}}-\FA{\mathfrak{g}}=\sum_{\alpha\in\mathfrak{g}}\left(\qd{\alpha}^{2}-1\right)
\]
Since $\qd{\alpha}\!\geq\!1$, we get the only if part. On the other
hand, if $\qd{\alpha}\!=\!1$ for all $\alpha\!\in\!\mathfrak{g}$,
then $\cs{\du{\mathfrak{g}}}\!=\!\FA{\mathfrak{g}}$, hence by Eqs.\prettyref{eq:extsumrule}
and \prettyref{eq:classno}
\[
\sum_{\ccl\in\cl{\mathfrak{g}}}\!\left(\frac{1}{\cs{\ccl}}\!-\!\frac{1}{\cs{\du{\mathfrak{g}}}}\right)\!=\!0
\]
Since all terms of the sum are non-negative by \prettyref{cor:extbound},
they must all vanish.
\end{proof}
\begin{rem}
In the language of 2D CFT, Abelian $\fc$s are groups of simple currents.
\end{rem}
\begin{lem}
\label{lem:cechprod}If $\alpha,\!\beta,\!\gamma\!\in\!\mathfrak{g}$
are such that $N_{\alpha\beta}^{\gamma}\!>\!0$ and $\zcl\!\in\!\zent{\mathfrak{g}}$
is a central class, then $\cech{\zcl}{\gamma}\!=\!\cech{\zcl}{\alpha}\!\cech{\zcl}{\beta}$. 
\end{lem}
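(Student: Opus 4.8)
The plan is to use that the restriction $\ch{\zcl}$ is an algebra homomorphism on $\svera{\mathfrak{g}}$, together with the unit-modulus property of the central character on a central class, and to finish with the real-part argument already employed in \prettyref{lem:omch}.

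First I would record two identities. Because $\mathfrak{g}$ is an $\fc$, Eq.\eqref{eq:fccrit} reads $\qd{\alpha}\qd{\beta}\!=\!\sum_{\gamma\in\mathfrak{g}}N_{\alpha\beta}^{\gamma}\qd{\gamma}$, and every $\gamma$ with $N_{\alpha\beta}^{\gamma}\!>\!0$ necessarily lies in $\mathfrak{g}$. Specializing Eq.\eqref{eq:verrep} to a primary $w\!\in\!\zcl$ and to $p\!=\!\alpha$, $q\!=\!\beta$, and noting that the sum over $r$ collapses to a sum over $\gamma\!\in\!\mathfrak{g}$ for the same reason, gives the multiplicativity
\[
\alpha\!\left(\zcl\right)\beta\!\left(\zcl\right)=\sum_{\gamma\in\mathfrak{g}}N_{\alpha\beta}^{\gamma}\,\gamma\!\left(\zcl\right).
\]

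Next, since $\zcl$ is central, \prettyref{lem:centclchar} yields $\FA{\alpha\!\left(\zcl\right)\!}\!=\!\qd{\alpha}$ for all $\alpha\!\in\!\mathfrak{g}$, while $\cs{\zcl}\!=\!\cs{\du{\mathfrak{g}}}$ by \prettyref{centerdef}; hence the definition Eq.\eqref{eq:cechardef} simplifies to $\cech{\zcl}{\alpha}\!=\!\alpha\!\left(\zcl\right)/\qd{\alpha}$, a complex number of modulus one. Writing $\alpha\!\left(\zcl\right)\!=\!\cech{\zcl}{\alpha}\qd{\alpha}$ (and likewise for $\beta$ and for each $\gamma$), substituting into the multiplicativity relation, and eliminating $\qd{\alpha}\qd{\beta}$ via the $\fc$ identity, I would arrive at
\[
\sum_{\gamma\in\mathfrak{g}}N_{\alpha\beta}^{\gamma}\qd{\gamma}\!\left(\cech{\zcl}{\alpha}\cech{\zcl}{\beta}-\cech{\zcl}{\gamma}\right)=0.
\]

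The crucial and final step is to conclude that each summand with $N_{\alpha\beta}^{\gamma}\!>\!0$ vanishes, and this is exactly where the main obstacle sits: the bracketed differences are complex, so a vanishing sum with non-negative weights $N_{\alpha\beta}^{\gamma}\qd{\gamma}$ need not be termwise zero. I would overcome this precisely as in \prettyref{lem:omch}: multiply through by the unit-modulus factor $\overline{\cech{\zcl}{\alpha}\cech{\zcl}{\beta}}$, take real parts, and invoke $\mathtt{Re}\!\left(\overline{\cech{\zcl}{\alpha}\cech{\zcl}{\beta}}\,\cech{\zcl}{\gamma}\right)\!\leq\!\FA{\cech{\zcl}{\gamma}\!}\!=\!1$. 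Then every term $\qd{\gamma}\bigl(1-\mathtt{Re}(\cdots)\bigr)$ is non-negative, so all must vanish when $N_{\alpha\beta}^{\gamma}\!>\!0$, forcing $\overline{\cech{\zcl}{\alpha}\cech{\zcl}{\beta}}\,\cech{\zcl}{\gamma}\!=\!1$ and hence $\cech{\zcl}{\gamma}\!=\!\cech{\zcl}{\alpha}\cech{\zcl}{\beta}$, which is the claim.
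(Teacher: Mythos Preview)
Your proof is correct and follows essentially the same approach as the paper: both combine the multiplicativity $\alpha(\zcl)\beta(\zcl)=\sum_{\gamma}N_{\alpha\beta}^{\gamma}\gamma(\zcl)$ with the $\fc$ identity $\qd{\alpha}\qd{\beta}=\sum_{\gamma}N_{\alpha\beta}^{\gamma}\qd{\gamma}$, rewrite everything in terms of the unit-modulus central characters, and then use the real-part argument to force termwise vanishing. The only cosmetic difference is that the paper divides by $\cech{\zcl}{\alpha}\cech{\zcl}{\beta}$ where you multiply by its conjugate, which of course coincide for a modulus-one number.
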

\begin{proof}
\begin{singlespace}
By definition of the central character
\[
\cech{\zcl}{\alpha}\!\cech{\zcl}{\beta}\!\qd{\alpha}\qd{\beta}\!=\!\alpha\!\left(\zcl\right)\!\beta\!\left(\zcl\right)\!=\!\sum_{\gamma\in\mathfrak{g}}N_{\alpha\beta}^{\gamma}\gamma\!\left(\zcl\right)\!=\!\sum_{\gamma\in\mathfrak{g}}N_{\alpha\beta}^{\gamma}\cech{\zcl}{\gamma}\!\qd{\gamma}
\]
which is equivalent to
\[
\sum_{\gamma\in\mathfrak{g}}N_{\alpha\beta}^{\gamma}\qd{\gamma}\left(1-\frac{\cech{\zcl}{\gamma}}{\cech{\zcl}{\alpha}\!\cech{\zcl}{\beta}}\right)=0
\]
\end{singlespace}

\noindent Since the real part of a complex number cannot exceed its
modulus, all terms of the sum should equal $0$.
\end{proof}
\begin{lem}
\label{lem:centralchar}If $p$ and $q$ are primaries such that $N_{\alpha p}^{q}\!>\!0$
for some $\alpha\!\in\!\mathfrak{g}$, then $p$ belongs to the central
$\gcl$ $\zcl\!\in\!\zent{\mathfrak{g}}$ iff
\begin{equation}
\frac{\om{\alpha}\om p}{\om q}\!=\!\cech{\zcl}{\alpha}\label{eq:centralchar}
\end{equation}
\end{lem}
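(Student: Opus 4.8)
The plan is to recognize \eqref{eq:centralchar} as an instance of \prettyref{lem:omch} after interchanging the roles of the two fused primaries. First I would record the elementary identity
\[
\ch{\alpha}\!\left(p\right)=\frac{\qd p}{\qd{\alpha}}\,\ch p\!\left(\alpha\right),
\]
which follows from the symmetry $S_{\alpha p}\!=\!S_{p\alpha}$ of the modular matrix together with $S_{\v\alpha}\!=\!\qd{\alpha}S_{\v\v}$ and $S_{\v p}\!=\!\qd pS_{\v\v}$. Writing $\ccl$ for the $\mathfrak{g}$-class of $p$, so that $\ch p\!\left(\alpha\right)\!=\!\alpha\!\left(\ccl\right)$, this reads $\ch{\alpha}\!\left(p\right)/\qd p=\alpha\!\left(\ccl\right)/\qd{\alpha}$. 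On the other side, since a central class satisfies $\cs{\zcl}\!=\!\cs{\du{\chg}}$, the definition \eqref{eq:cechardef} collapses to $\cech{\zcl}{\alpha}\!=\!\alpha\!\left(\zcl\right)/\qd{\alpha}$, with $\FA{\cech{\zcl}{\alpha}}\!=\!1$ by \prettyref{lem:centclchar}. Thus, under a suitable \emph{magnitude} hypothesis, the content of \eqref{eq:centralchar} is exactly the comparison of $\alpha\!\left(\ccl\right)$ with $\alpha\!\left(\zcl\right)$.

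For the implication $p\!\in\!\zcl\Rightarrow$\eqref{eq:centralchar} I would argue directly. If $p\!\in\!\zcl$ then $\ch p\!\left(\alpha\right)\!=\!\alpha\!\left(\zcl\right)$, whence $\FA{\ch{\alpha}\!\left(p\right)}\!=\!\frac{\qd p}{\qd{\alpha}}\FA{\alpha\!\left(\zcl\right)}\!=\!\qd p$ by \prettyref{lem:centclchar}. The hypothesis of \prettyref{lem:omch} (with $\alpha,p,q$ in place of its $p,q,r$) is therefore met, so $N_{\alpha p}^{q}\!>\!0$ is equivalent to $\frac{\om{\alpha}\om p}{\om q}\!=\!\ch{\alpha}\!\left(p\right)/\qd p$, and the right-hand side equals $\alpha\!\left(\zcl\right)/\qd{\alpha}\!=\!\cech{\zcl}{\alpha}$. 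Since $N_{\alpha p}^{q}\!>\!0$ is assumed, \eqref{eq:centralchar} follows.

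The converse is where the real work lies, because \prettyref{lem:omch} can only be invoked once the magnitude saturation $\FA{\ch{\alpha}\!\left(p\right)}\!=\!\qd p$ is known, whereas \eqref{eq:centralchar} is a priori only a constraint on \emph{phases}. My plan is in two stages. Granting the saturation, \prettyref{lem:omch} yields $\frac{\om{\alpha}\om p}{\om q}\!=\!\alpha\!\left(\ccl\right)/\qd{\alpha}$, and comparison with the assumed \eqref{eq:centralchar} gives $\alpha\!\left(\ccl\right)\!=\!\alpha\!\left(\zcl\right)$, i.e. the restricted characters of $\ccl$ and $\zcl$ agree at $\alpha$. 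I would then promote this single equality to equality on all of $\mathfrak{g}$ — hence $\ccl\!=\!\zcl$ and $p\!\in\!\zcl$ — by exploiting the multiplicativity of the central character (\prettyref{lem:cechprod}) together with the block structure of \prettyref{lem:blcrit2}: because $N_{\alpha p}^{q}\!>\!0$ ties $p$ and $q$ into one common block, the relation should propagate across fusions within that block.

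I expect the genuine obstacle to be establishing the magnitude saturation $\FA{\ch{\alpha}\!\left(p\right)}\!=\!\qd p$ from the phase identity alone, and — closely related — upgrading the per-element equality $\alpha\!\left(\ccl\right)\!=\!\alpha\!\left(\zcl\right)$ to the full identity of restricted characters. The natural device to force saturation is the Verlinde identity $\alpha\!\left(\ccl\right)\qd p=\sum_{r}N_{\alpha p}^{r}\qd r\,\frac{\om{\alpha}\om p}{\om r}$ coming from \eqref{eq:ver3}, read as an equality between a resultant of length $\FA{\alpha\!\left(\ccl\right)}\qd p$ and a sum of contributions of total length $\qd{\alpha}\qd p$; I would try to use the known phase of the $r\!=\!q$ term, via a real-part estimate as in the proof of \prettyref{lem:omch}, to show the contributions align and thereby force $\FA{\alpha\!\left(\ccl\right)}\!=\!\qd{\alpha}$. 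Making this alignment argument watertight, and ensuring that a single element $\alpha$ suffices to separate $\zcl$ from the other central classes, is the crux of the converse.
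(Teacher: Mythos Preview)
Your forward direction is correct and is exactly the paper's argument: once $p\in\zcl$, \prettyref{lem:centclchar} gives $\FA{\ch p(\alpha)}=\qd{\alpha}$, and \prettyref{lem:omch} (with $p,\alpha,q$ in the roles of its $p,q,r$) yields Eq.~\eqref{eq:centralchar}.

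The converse, however, is where your plan goes astray. You read the biconditional as holding for a \emph{single} pair $(\alpha,q)$ and then try to ``promote'' the resulting equality $\alpha(\ccl)=\alpha(\zcl)$ to all of $\mathfrak{g}$ via block structure and \prettyref{lem:cechprod}. That cannot work: taking $\alpha=\v$ and $q=p$ already gives $N_{\v p}^{p}=1$ and $\om{\v}\om p/\om p=1=\cech{\zcl}{\v}$ for \emph{every} central $\zcl$, so a single $(\alpha,q)$ never pins down the class of $p$. The intended reading, which is confirmed by the very next statement \prettyref{cor:trivclass}, is that $p\in\zcl$ iff Eq.~\eqref{eq:centralchar} holds for \emph{all} $\alpha\in\mathfrak{g}$ and \emph{all} $q$ with $N_{\alpha p}^{q}>0$.

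Under that quantified hypothesis the converse is immediate and requires none of your proposed machinery. For each fixed $\alpha\in\mathfrak{g}$, Eq.~\eqref{eq:ver3} gives
\[
\frac{S_{\alpha p}}{S_{\v\v}}=\sum_{q}N_{\alpha p}^{q}\qd q\,\frac{\om{\alpha}\om p}{\om q}=\cech{\zcl}{\alpha}\sum_{q}N_{\alpha p}^{q}\qd q=\cech{\zcl}{\alpha}\,\qd{\alpha}\qd p,
\]
hence $\ch p(\alpha)=\cech{\zcl}{\alpha}\qd{\alpha}=\alpha(\zcl)$ for every $\alpha\in\mathfrak{g}$, i.e.\ $p\in\zcl$. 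This is precisely the computation underlying the proof of \prettyref{lem:omch}, which is why the paper can say the result ``follows at once'' from \prettyref{lem:centclchar} and \prettyref{lem:omch}. There is no separate magnitude-saturation step to establish, no real-part estimate, and no propagation along blocks.
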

\begin{proof}
This follows at once from \prettyref{lem:centclchar} and \prettyref{lem:omch}.
\end{proof}
\begin{cor}
\begin{singlespace}
\label{cor:trivclass} 
\[
\du{\mathfrak{g}}\!=\!\set p{\om q\!=\!\om{\alpha}\!\om p\mathcal{\textrm{ \emph{if} }}N_{\alpha p}^{q}\!>\!0\mathcal{\textrm{ \emph{for} }}\alpha\!\in\!\mathfrak{g}}
\]
\end{singlespace}
\end{cor}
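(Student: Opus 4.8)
The plan is to prove the asserted equality by a double inclusion, in both cases routing the phase hypothesis through Verlinde's formula in the form of Eq.~\eqref{eq:ver3}. The first thing I would record is a reformulation of membership in the trivial class: since $S_{\alpha p}\!=\!\alpha\!\left(\ccl\right)S_{\v p}$ for $p\!\in\!\ccl$ and $\alpha\!\left(\du{\chg}\right)\!=\!\qd{\alpha}$, a primary $p$ lies in $\du{\chg}$ precisely when $S_{\alpha p}\!=\!\qd{\alpha}S_{\v p}$ for every $\alpha\!\in\!\chg$. This turns the set-theoretic claim into an identity among $S$-matrix entries that Eq.~\eqref{eq:ver3} is tailor-made to control, since for $\alpha\!\in\!\chg$ it reads
\[
\frac{S_{\alpha p}}{S_{\v\v}}=\sum_{r}N_{\alpha p}^{r}\qd r\frac{\om{\alpha}\om p}{\om r},
\]
while the multiplicativity of quantum dimensions (Eq.~\eqref{eq:verrep} with $w\!=\!\v$) supplies the companion identity $\sum_{r}N_{\alpha p}^{r}\qd r\!=\!\qd{\alpha}\qd p\!=\!\qd{\alpha}S_{\v p}/S_{\v\v}$.

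For the inclusion $\supseteq$, suppose $p$ satisfies the stated condition, i.e. $\om r\!=\!\om{\alpha}\om p$ whenever $N_{\alpha p}^{r}\!>\!0$ for some $\alpha\!\in\!\chg$. Then every surviving term of the sum above has $\om{\alpha}\om p/\om r\!=\!1$, so comparing the two displayed identities gives $S_{\alpha p}/S_{\v\v}\!=\!\qd{\alpha}\qd p$, that is $S_{\alpha p}\!=\!\qd{\alpha}S_{\v p}$ for every $\alpha\!\in\!\chg$; by the reformulation this is exactly $p\!\in\!\du{\chg}$.

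For the reverse inclusion $\subseteq$, I would run the same comparison backwards. If $p\!\in\!\du{\chg}$ then $S_{\alpha p}\!=\!\qd{\alpha}S_{\v p}$, so the two identities combine to $\sum_{r}N_{\alpha p}^{r}\qd r\bigl(1\!-\!\om{\alpha}\om p/\om r\bigr)\!=\!0$; taking real parts and using $\mathtt{Re}(z)\!\leq\!\FA z\!=\!1$ for the unit-modulus ratios $\om{\alpha}\om p/\om r$ forces every term with $N_{\alpha p}^{r}\!>\!0$ to satisfy $\om r\!=\!\om{\alpha}\om p$. This is precisely the mechanism of \prettyref{lem:omch}, applied for each $\alpha\!\in\!\chg$ using $\FA{\ch p\!\left(\alpha\right)\!}\!=\!\qd{\alpha}$; equivalently it is \prettyref{lem:centralchar} specialized to the central class $\zcl\!=\!\du{\chg}$, whose central character is identically $1$ because $\cech{\du{\chg}}{\alpha}\!=\!\bigl(\cs{\du{\chg}}/\cs{\du{\chg}}\bigr)\bigl(\qd{\alpha}/\qd{\alpha}\bigr)\!=\!1$.

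The step I expect to carry the real weight is not the algebra but the positivity argument underlying $\subseteq$: Eq.~\eqref{eq:ver3} only delivers a \emph{complex} identity, and one must invoke the non-negativity of the $N_{\alpha p}^{r}\qd r$ together with the inequality $\mathtt{Re}\!\leq\!\FA{\cdot}$ to pass from a vanishing sum of unit-modulus deviations to the vanishing of each individual phase. I would also be careful to stress that one cannot shortcut $\supseteq$ by a single application of \prettyref{lem:centralchar} to one $\alpha$ (the choice $\alpha\!=\!\v$ is vacuous), so the quantification over all of $\chg$ is essential — and it is exactly this universal statement that the summed identity Eq.~\eqref{eq:ver3} packages for free.
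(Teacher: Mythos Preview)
Your argument is correct and follows essentially the same route as the paper, which states the result as an immediate corollary of \prettyref{lem:centralchar} specialized to $\zcl=\du{\mathfrak{g}}$ (where $\cech{\du{\mathfrak{g}}}{\alpha}=1$); you have simply unpacked that lemma into its constituents, namely Eq.~\eqref{eq:ver3} and the positivity trick of \prettyref{lem:omch}. If anything, your treatment of the inclusion $\supseteq$ is more careful than the paper's bare citation: as you correctly observe, the ``iff'' in \prettyref{lem:centralchar} applied to a single $\alpha$ is not enough (the choice $\alpha=\v$ is vacuous), and it is precisely the quantification over all $\alpha\in\mathfrak{g}$---handled via Eq.~\eqref{eq:ver3} for each $\alpha$---that pins down $\ch p(\alpha)=\qd{\alpha}$ throughout $\mathfrak{g}$ and hence $p\in\du{\mathfrak{g}}$.
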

\begin{prop}
\label{prop:zentprod}If $\ccl$ is a $\mathfrak{g}$-$\gcl$ and
$\mathtt{z}\!\in\!\zent{\mathfrak{g}}$, then there exist unique $\mathfrak{g}$-$\gcl$es
$\zcl^{\pm1}\ccl\!\in\!\cl{\mathfrak{g}}$ such that
\begin{equation}
\alpha\!\left(\zcl^{\pm1}\ccl\right)=\cech{\zcl}{\alpha}^{\pm1}\!\alpha\!\left(\ccl\right)\label{eq:zentproddef}
\end{equation}
for all $\alpha\!\in\!\mathfrak{g}$; in particular, $\du{\mathfrak{g}}\ccl\!=\!\ccl$.
Moreover,
\begin{equation}
\cs{\zcl^{\pm1}\ccl}\!=\!\cs{\ccl}\label{eq:centprodext}
\end{equation}
and
\begin{equation}
\cech{\zcl\ccl}{\alpha}\!=\!\cech{\zcl}{\alpha}\cech{\ccl}{\alpha}\label{eq:cechprod}
\end{equation}
\end{prop}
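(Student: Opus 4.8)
The plan is to establish the existence and uniqueness of the classes $\zcl^{\pm1}\ccl$ by exhibiting their defining values $\alpha\mapsto\cech{\zcl}{\alpha}^{\pm1}\alpha(\ccl)$ as genuine restrictions of some irrep $\ch p$ to the subalgebra $\svera{\mathfrak{g}}$, i.e.\ as valid $\gcl$-characters. Uniqueness is immediate once existence is shown, since a $\gcl$ is by definition determined by the common restriction $\ch{\ccl}$, and two classes coinciding on all of $\chg$ are equal. So the real content is existence: I must produce, for each $\alpha\!\in\!\chg$, a value that arises as $\frac{S_{\alpha p}}{S_{\v p}}$ for a common primary $p$. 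The natural strategy is to pick a primary $p\!\in\!\ccl$ and a primary $z$ lying in the central class $\zcl$, and to locate the target primary among the constituents of the fusion product $z\!\cdot\!p$ (or $\bar z\cdot p$ for the inverse). Because $\zcl$ is central, \prettyref{lem:centclchar} gives $\FA{\alpha(\zcl)}\!=\!\qd{\alpha}$ for all $\alpha\!\in\!\chg$, which is precisely the hypothesis needed to invoke the product rule machinery.

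\textbf{First I would} fix $z\!\in\!\zcl$ and $p\!\in\!\ccl$ and consider a primary $q$ with $N_{zp}^{q}\!>\!0$. The key point is that, because $z$ lies in a central class, the fusion $z\cdot p$ should behave like multiplication by a simple current relative to $\svera{\mathfrak{g}}$: concretely, I expect $q$ to be unique (multiplicity one, single constituent) and to satisfy $S_{\alpha q}\!=\!\cech{\zcl}{\alpha}\,\alpha(\ccl)\,S_{\v q}$ for all $\alpha\!\in\!\chg$. To see the multiplicative behaviour, I would compute $\ch w(z)\ch w(p)\!=\!\sum_q N_{zp}^q\ch w(q)$ from Eq.\eqref{eq:verrep} and restrict $w$ to $\svera{\mathfrak{g}}$; the centrality condition $\FA{\alpha(\zcl)}\!=\!\qd{\alpha}$ forces the constituents $q$ to all lie in a single $\gcl$ whose character is the asserted product, by an argument parallel to the product rule (\prettyref{thm: Product rule}). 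Indeed, the centrality of $\zcl$ means $z$ plays the role that elements of $\du{\chg}$ play in that theorem, so the same triangle-inequality saturation argument applies: writing the defect $\sum_\gamma N_{zp}^\gamma\qd\gamma\bigl(1-\cdots\bigr)$ and taking real parts, all terms must vanish, pinning down the class of $q$.

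\textbf{The hard part will be} verifying that the prescribed values $\cech{\zcl}{\alpha}^{\pm1}\alpha(\ccl)$ are simultaneously realizable as $\alpha(\zcl^{\pm1}\ccl)$ for a single class — that is, that there genuinely exists a primary whose restricted character is this product, rather than merely a consistent-looking formula. The cleanest route is via \prettyref{lem:centralchar}: if $N_{\alpha p}^q\!>\!0$ then $p\!\in\!\zcl$ iff $\frac{\om\alpha\om p}{\om q}\!=\!\cech{\zcl}{\alpha}$, which ties the central character to the exponentiated conformal weights and lets me track how fusion with $z$ multiplies the $\gcl$-character by $\cech{\zcl}{\cdot}$. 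Once $\zcl\ccl$ is constructed, the inverse $\zcl^{-1}\ccl$ follows by charge conjugation (replacing $z$ by $\bar z$, noting $\cech{\zcl}{\alpha}$ has unit modulus so its inverse is its complex conjugate), and $\du{\mathfrak{g}}\ccl\!=\!\ccl$ is immediate from $\alpha(\du{\mathfrak{g}})\!=\!\qd\alpha$ giving $\cech{\du{\mathfrak{g}}}{\alpha}\!=\!1$.

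\textbf{Finally I would} dispatch the two auxiliary identities quickly. For Eq.\eqref{eq:centprodext}, since $\FA{\cech{\zcl}{\alpha}}\!=\!1$ by \prettyref{lem:centclchar}, one has $\FA{\alpha(\zcl^{\pm1}\ccl)}\!=\!\FA{\alpha(\ccl)}$ for every $\alpha$, whence $\cs{\zcl^{\pm1}\ccl}\!=\!\sum_{\alpha\in\chg}\FA{\alpha(\zcl^{\pm1}\ccl)}^2\!=\!\sum_{\alpha\in\chg}\FA{\alpha(\ccl)}^2\!=\!\cs{\ccl}$ directly from the second orthogonality relation Eq.\eqref{eq:ortho2}. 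For Eq.\eqref{eq:cechprod}, I expand using the definition \eqref{eq:cechardef}: the extents coincide by \eqref{eq:centprodext}, so $\cech{\zcl\ccl}{\alpha}\!=\!\frac{\cs{\du{\mathfrak{g}}}}{\cs{\ccl}}\frac{\alpha(\zcl\ccl)}{\qd\alpha}\!=\!\frac{\cs{\du{\mathfrak{g}}}}{\cs{\ccl}}\frac{\cech{\zcl}{\alpha}\alpha(\ccl)}{\qd\alpha}\!=\!\cech{\zcl}{\alpha}\cech{\ccl}{\alpha}$, which is exactly the claim. I expect these last steps to be routine; the genuine obstacle is the realizability argument for existence, where the centrality hypothesis must be leveraged to force the fusion constituents into a single well-defined class.
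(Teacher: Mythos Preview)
Your existence argument takes a detour that is both harder than necessary and not clearly correct as sketched. The paper's proof is purely algebraic and never touches individual primaries: by \prettyref{lem:cechprod}, whenever $N_{\alpha\beta}^{\gamma}>0$ one has $\cech{\zcl}{\gamma}=\cech{\zcl}{\alpha}\cech{\zcl}{\beta}$, and this alone shows that
\[
\sum_{\gamma\in\mathfrak{g}}N_{\alpha\beta}^{\gamma}\,\cech{\zcl}{\gamma}^{\pm1}\gamma(\ccl)
=\cech{\zcl}{\alpha}^{\pm1}\cech{\zcl}{\beta}^{\pm1}\sum_{\gamma\in\mathfrak{g}}N_{\alpha\beta}^{\gamma}\gamma(\ccl)
=\bigl(\cech{\zcl}{\alpha}^{\pm1}\alpha(\ccl)\bigr)\bigl(\cech{\zcl}{\beta}^{\pm1}\beta(\ccl)\bigr),
\]
i.e.\ that $\alpha\mapsto\cech{\zcl}{\alpha}^{\pm1}\alpha(\ccl)$ is an irrep of $\svera_{\mathfrak{g}}$. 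Since the irreps of $\svera_{\mathfrak{g}}$ are by definition in bijection with $\cl{\mathfrak{g}}$, existence and uniqueness of $\zcl^{\pm1}\ccl$ follow at once. No primary has to be exhibited.

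Your route instead tries to locate a primary $q$ in the fusion $z\cdot p$ and argue that all constituents lie in a single class with the prescribed character. That is precisely the generalized product rule \prettyref{thm:genprodrule}, which in the paper comes \emph{after} the present Proposition and \emph{uses} it (via $\cs{\zcl\ccl}=\cs{\ccl}$) in its proof; so you are attempting to establish a strictly stronger statement first. Moreover, the saturation argument you sketch does not carry over: in \prettyref{thm: Product rule} the positivity comes from $\alpha(\du{\chg})=\qd_\alpha>0$, whereas for a general $\ccl$ the values $\alpha(\ccl)$ are complex of varying modulus, and the identity one obtains from Eq.~\eqref{eq:ver4}, namely $\sum_q N_{zp}^q S_{\v q}\bigl(\ch_q(\alpha)-\cech{\zcl}{\alpha}\alpha(\ccl)\bigr)=0$, has no term-by-term sign to exploit. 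Your appeal to \prettyref{lem:centralchar} is also misplaced: that lemma concerns $N_{\alpha p}^q>0$ with $\alpha\in\mathfrak{g}$ and $p$ in the central class, not $N_{zp}^q>0$ with $z$ in the central class and $\alpha$ as probe.

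Your handling of Eqs.~\eqref{eq:centprodext} and~\eqref{eq:cechprod}, and of $\du{\mathfrak{g}}\ccl=\ccl$, is correct and essentially identical to the paper's.
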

\begin{proof}
\begin{singlespace}
According to \prettyref{lem:cechprod}, one has
\begin{gather*}
\sum_{\gamma\in\mathfrak{g}}\!N_{\alpha\beta}^{\gamma}\cech{\zcl}{\gamma}^{\pm1}\!\gamma\!\left(\ccl\right)\!=\!\left\{ \cech{\zcl}{\alpha}\!\cech{\zcl}{\beta}\right\} ^{\pm1}\!\sum_{\gamma\in\mathfrak{g}}\!N_{\alpha\beta}^{\gamma}\gamma\!\left(\ccl\right)\!\\
=\!\left\{ \cech{\zcl}{\alpha}^{\pm1}\!\alpha\!\left(\ccl\right)\right\} \!\left\{ \cech{\zcl}{\beta}^{\pm1}\!\beta\!\left(\ccl\right)\right\} 
\end{gather*}
\end{singlespace}

\noindent which means that the product $\boldsymbol{\varpi}_{\zcl}^{\pm1}\ch{\ccl}$
is itself an irrep of the algebra $\vera_{\mathfrak{g}}$, hence it
is equal to the irrep corresponding to some well defined $\mathfrak{g}$-$\gcl$,
namely $\zcl^{\pm1}\ccl$. That $\du{\mathfrak{g}}\ccl\!=\!\ccl$
follows from $\cech{\du{\mathfrak{g}}}{\alpha}\!=\!1$ for $\alpha\!\in\!\mathfrak{g}$.
Finally, Eq.\eqref{eq:ortho2} gives
\[
\cs{\zcl^{\pm1}\ccl}\!=\!\sum_{\alpha\in\mathfrak{g}}\FA{\alpha\!\left(\zcl^{\pm1}\ccl\right)\!}^{2}\!=\!\sum_{\alpha\in\mathfrak{g}}\FA{\alpha\!\left(\ccl\right)\!}^{2}\!=\!\cs{\ccl}
\]
proving Eq.\prettyref{eq:centprodext}, leading to Eq.\prettyref{eq:cechprod}
when combined with Eq.\prettyref{eq:zentproddef}.
\end{proof}
Note the following generalization of the product rule \prettyref{thm: Product rule}.
\begin{thm}
\label{thm:genprodrule}If $p$ belongs to the $\gcl$ $\ccl\!\in\!\cl{\mathfrak{g}}$
and $q$ belongs to the central $\gcl$ $\zcl\!\in\!\zent{\mathfrak{g}}$,
then $N_{pq}^{r}\!>\!0$ implies $r\!\in\!\zcl\ccl$.
\end{thm}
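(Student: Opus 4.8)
The plan is to imitate the proof of the product rule \prettyref{thm: Product rule}, of which this is the evident generalization (indeed one recovers that theorem by taking $\zcl\!=\!\du{\mathfrak{g}}$, since then $\zcl\ccl\!=\!\ccl$). Writing $\ccl$ for the $\gcl$ of $p$, I set $\mathtt{D}\!=\!\zcl\ccl$ — which is a well-defined $\gcl$ by \prettyref{prop:zentprod} — and aim to show that the entire fusion weight $\sum_{r}N_{pq}^{r}\qd r$ is carried by $\mathtt{D}$ alone. Since $\sum_{r}N_{pq}^{r}\qd r\!=\!\qd p\qd q$ always holds (Eq.\eqref{eq:verrep} at $w\!=\!\v$, i.e. multiplicativity of the quantum dimension), it suffices to prove $\sum_{r\in\mathtt{D}}N_{pq}^{r}\qd r\!=\!\qd p\qd q$; then $\sum_{r\notin\mathtt{D}}N_{pq}^{r}\qd r\!=\!0$, and as every term is non-negative with $\qd r\!>\!0$, each $N_{pq}^{r}$ with $r\!\notin\!\mathtt{D}$ must vanish.

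First I would extract the $\mathtt{D}$-part using the characteristic function Eq.\eqref{eq:classcharfun} together with $\qd r\!=\!S_{\v r}/S_{\v\v}$; the $S_{\v r}$ factors cancel, leaving
\[
\sum_{r\in\mathtt{D}}N_{pq}^{r}\qd r=\frac{1}{\cs{\mathtt{D}}S_{\v\v}}\sum_{\alpha\in\chg}\overline{\alpha\!\left(\mathtt{D}\right)}\sum_{r}N_{pq}^{r}S_{\alpha r}.
\]
The inner sum is evaluated by Verlinde's formula in the form Eq.\eqref{eq:ver4}, giving $\sum_{r}N_{pq}^{r}S_{\alpha r}\!=\!S_{\alpha p}S_{\alpha q}/S_{\v\alpha}$. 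Because $p\!\in\!\ccl$ and $q\!\in\!\zcl$ I may substitute $S_{\alpha p}\!=\!\alpha\!\left(\ccl\right)S_{\v p}$ and $S_{\alpha q}\!=\!\alpha\!\left(\zcl\right)S_{\v q}$, while $S_{\v\alpha}\!=\!S_{\v\v}\qd{\alpha}$.

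The decisive step is the collapse of the $\alpha$-sum. Here I use the two faces of centrality: by \prettyref{lem:centclchar} the central character has unit modulus, and since $\cs{\zcl}\!=\!\cs{\du{\mathfrak{g}}}$ one has $\cech{\zcl}{\alpha}\!=\!\alpha\!\left(\zcl\right)/\qd{\alpha}$, whence $\overline{\cech{\zcl}{\alpha}}\!=\!\cech{\zcl}{\alpha}^{-1}\!=\!\qd{\alpha}/\alpha\!\left(\zcl\right)$. Combining this with $\alpha\!\left(\mathtt{D}\right)\!=\!\cech{\zcl}{\alpha}\,\alpha\!\left(\ccl\right)$ from Eq.\eqref{eq:zentproddef}, every factor of $\alpha\!\left(\zcl\right)$ and of $\qd{\alpha}$ cancels and the summand reduces exactly to $\FA{\alpha\!\left(\ccl\right)}^{2}$. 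The second orthogonality relation Eq.\eqref{eq:ortho2} then gives $\sum_{\alpha}\FA{\alpha\!\left(\ccl\right)}^{2}\!=\!\cs{\ccl}$, and since $\cs{\mathtt{D}}\!=\!\cs{\zcl\ccl}\!=\!\cs{\ccl}$ by Eq.\eqref{eq:centprodext}, the prefactor $\cs{\ccl}/\cs{\mathtt{D}}$ equals $1$. This leaves $\sum_{r\in\mathtt{D}}N_{pq}^{r}\qd r\!=\!S_{\v p}S_{\v q}/S_{\v\v}^{2}\!=\!\qd p\qd q$, as required.

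I expect the main obstacle to be precisely this character-algebra cancellation: one must invoke \emph{both} defining properties of a central class — unit modulus of $\cech{\zcl}{\alpha}$ \emph{and} the identity $\cech{\zcl}{\alpha}\!=\!\alpha\!\left(\zcl\right)/\qd{\alpha}$ valid when $\cs{\zcl}\!=\!\cs{\du{\mathfrak{g}}}$ — so that the product $\overline{\alpha\!\left(\mathtt{D}\right)}\,\alpha\!\left(\ccl\right)\alpha\!\left(\zcl\right)/\qd{\alpha}$ collapses to $\FA{\alpha\!\left(\ccl\right)}^{2}$ with no residual $\zcl$-dependence; if the modulus condition is neglected the $\alpha$-sum would fail to close onto $\cs{\ccl}$ via Eq.\eqref{eq:ortho2}. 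Once this is in place the remainder is routine bookkeeping parallel to the product rule.
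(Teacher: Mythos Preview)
Your proposal is correct and follows essentially the same route as the paper's own proof: both extract the $\zcl\ccl$-part of the fusion weight via the characteristic function Eq.\eqref{eq:classcharfun}, evaluate the inner sum by Verlinde's formula Eq.\eqref{eq:ver4}, substitute $S_{\alpha p}\!=\!\alpha(\ccl)S_{\v p}$ and $S_{\alpha q}\!=\!\alpha(\zcl)S_{\v q}$, collapse the $\alpha$-sum to $\cs{\ccl}$ via the second orthogonality relation using the unit-modulus property of $\cech{\zcl}{\alpha}$, and finish with $\cs{\zcl\ccl}\!=\!\cs{\ccl}$ from Eq.\eqref{eq:centprodext}. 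The only cosmetic difference is that the paper writes the cancellation via $\overline{\alpha(\zcl\ccl)}\!=\!\overline{\alpha(\ccl)}\,\cech{\zcl}{\alpha}^{-1}$ (from Eq.\eqref{eq:cechprod}) rather than spelling out $\overline{\cech{\zcl}{\alpha}}\!=\!\cech{\zcl}{\alpha}^{-1}$ explicitly, but the computation is identical.
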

\begin{proof}
According to Eqs.\prettyref{eq:classcharfun}, \prettyref{eq:cechprod}
and \prettyref{eq:ortho2}
\begin{gather*}
\sum_{r\in\zcl\ccl}N_{pq}^{r}\qd r\!=\!\sum_{r}\!\clchar{\zcl\ccl}rN_{pq}^{r}\qd r\!=\!\frac{1}{\cs{\zcl\ccl}}\sum_{\alpha\in\mathfrak{g}}\overline{\alpha\!\left(\zcl\ccl\right)}\sum_{r}\frac{S_{\alpha r}}{S_{\v r}}N_{pq}^{r}\qd r\!=\!\\
\frac{1}{\cs{\ccl}}\sum_{\alpha\in\mathfrak{g}}\overline{\alpha\!\left(\ccl\right)}\cech{\zcl}{\alpha}^{-1}\sum_{r}N_{pq}^{r}\frac{S_{\alpha r}}{S_{\v\v}}\!=\!\frac{1}{\cs{\ccl}}\sum_{\alpha\in\mathfrak{g}}\overline{\alpha\!\left(\ccl\right)}\cech{\zcl}{\alpha}^{-1}\frac{S_{\alpha p}}{S_{\alpha\v}}\frac{S_{\alpha q}}{S_{\v\v}}\\
\!=\!\frac{1}{\cs{\ccl}}\sum_{\alpha\in\mathfrak{g}}\overline{\alpha\!\left(\ccl\right)}\cech{\zcl}{\alpha}^{-1}\frac{\alpha\!\left(\ccl\right)S_{\v p}}{S_{\alpha\v}}\frac{\alpha\!\left(\zcl\right)S_{\v q}}{S_{\v\v}}\!=\!\frac{\qd p\qd q}{\cs{\ccl}}\sum_{\alpha\in\mathfrak{g}}\overline{\alpha\!\left(\ccl\right)}\alpha\!\left(\ccl\right)\!=\!\qd p\qd q
\end{gather*}
hence
\[
\sum_{r\not\in\zcl\ccl}N_{pq}^{r}\qd r\!=\!0
\]
Since all terms on the left-hand side are non-negative, $N_{pq}^{r}\!=\!0$
for $r\not\in\zcl\ccl$.
\end{proof}
\begin{prop}
\label{prop:centergroup} If $\zcl_{1},\!\zcl_{2}\!\in\!\zent{\mathfrak{g}}$
are central classes, then $\zcl_{1}\zcl_{2}\!=\!\zcl_{2}\zcl_{1}$
is also central, and $(\zcl_{1}\zcl_{2})\ccl\!=\!\zcl_{1}(\zcl_{2}\ccl)$
for all $\ccl\!\in\!\cl{\mathfrak{g}}$.
\end{prop}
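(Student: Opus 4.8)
The plan is to exploit the fact that a $\mathfrak{g}$-class $\ccl$ is completely determined by its character function $\alpha\!\mapsto\!\alpha\!\left(\ccl\right)$ on $\chg$: distinct classes correspond to distinct irreps $\ch{\ccl}$ of the subalgebra $\svera{\mathfrak{g}}$, and such an irrep is pinned down by the values it takes on the generators $\fm\!\left(\alpha\right)$, $\alpha\!\in\!\chg$. Hence to identify two classes it will suffice to check that their character functions agree for every $\alpha\!\in\!\chg$, and all three assertions then reduce to short manipulations of central characters (exactly the device already used in the proof of \prettyref{prop:zentprod}).

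First I would record the elementary identity that, for a central class $\zcl\!\in\!\zent{\mathfrak{g}}$, one has $\alpha\!\left(\zcl\right)\!=\!\cech{\zcl}{\alpha}\qd{\alpha}$ for all $\alpha\!\in\!\chg$; this is immediate from the definition of the central character together with $\cs{\zcl}\!=\!\cs{\du{\mathfrak{g}}}$. Combining it with the defining relation $\alpha\!\left(\zcl\ccl\right)\!=\!\cech{\zcl}{\alpha}\alpha\!\left(\ccl\right)$ of \prettyref{prop:zentprod}, I compute
\[
\alpha\!\left(\zcl_{1}\zcl_{2}\right)\!=\!\cech{\zcl_{1}}{\alpha}\alpha\!\left(\zcl_{2}\right)\!=\!\cech{\zcl_{1}}{\alpha}\cech{\zcl_{2}}{\alpha}\qd{\alpha}
\]
which is manifestly symmetric in $\zcl_{1}$ and $\zcl_{2}$, so the same expression equals $\alpha\!\left(\zcl_{2}\zcl_{1}\right)$. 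Equality of the two character functions yields $\zcl_{1}\zcl_{2}\!=\!\zcl_{2}\zcl_{1}$. Centrality of this common class then follows from \prettyref{lem:centclchar}: since $\zcl_{1}$ and $\zcl_{2}$ are central, $\FA{\cech{\zcl_{1}}{\alpha}\!}\!=\!\FA{\cech{\zcl_{2}}{\alpha}\!}\!=\!1$, whence $\FA{\alpha\!\left(\zcl_{1}\zcl_{2}\right)\!}\!=\!\qd{\alpha}$ for every $\alpha\!\in\!\chg$.

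With $\zcl_{1}\zcl_{2}$ now established to be central, the product $\left(\zcl_{1}\zcl_{2}\right)\!\ccl$ is legitimately defined, and using the multiplicativity $\cech{\zcl_{1}\zcl_{2}}{\alpha}\!=\!\cech{\zcl_{1}}{\alpha}\cech{\zcl_{2}}{\alpha}$ furnished by Eq.\prettyref{eq:cechprod} (applied with the central class $\zcl_{1}$ in the first slot and the class $\zcl_{2}$ in the second), I obtain
\[
\alpha\!\left(\left(\zcl_{1}\zcl_{2}\right)\ccl\right)\!=\!\cech{\zcl_{1}}{\alpha}\cech{\zcl_{2}}{\alpha}\alpha\!\left(\ccl\right)\!=\!\cech{\zcl_{1}}{\alpha}\alpha\!\left(\zcl_{2}\ccl\right)\!=\!\alpha\!\left(\zcl_{1}\!\left(\zcl_{2}\ccl\right)\right)
\]
so that the character functions of $\left(\zcl_{1}\zcl_{2}\right)\!\ccl$ and $\zcl_{1}\!\left(\zcl_{2}\ccl\right)$ coincide, forcing the two classes to be equal.

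I do not anticipate a genuine obstacle: once the character-function description of classes is in place, the whole argument is bookkeeping. The only points requiring care are logical rather than technical — associativity cannot even be \emph{stated} before $\zcl_{1}\zcl_{2}$ has been shown central (so that the outer product in $\left(\zcl_{1}\zcl_{2}\right)\!\ccl$ makes sense), which fixes the order of the three steps; and one must note that \prettyref{eq:cechprod} applies here because a central class is in particular a class, so it may occupy the second slot of the product.
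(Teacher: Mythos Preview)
Your proof is correct and follows essentially the same approach as the paper: both arguments identify classes via their character values and reduce everything to the multiplicativity of central characters. The only cosmetic differences are that the paper establishes centrality first (via the extent identity $\cs{\zcl_{1}\zcl_{2}}\!=\!\cs{\zcl_{2}}\!=\!\cs{\du{\mathfrak{g}}}$ from Eq.\eqref{eq:centprodext}) and then commutativity, whereas you reverse the order and use \prettyref{lem:centclchar} for centrality instead; the associativity computations are virtually identical.
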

\begin{proof}
If $\zcl_{1},\!\zcl_{2}\!\in\!\zent{\mathfrak{g}}$, then $\cs{\zcl_{1}\zcl_{2}}\!=\!\cs{\zcl_{2}}\!=\!\cs{\du{\mathfrak{g}}}$
by Eq.\eqref{eq:centprodext}, proving that $\zcl_{1}\zcl_{2}\!\in\!\zent{\mathfrak{g}}$.
By Eq.\prettyref{eq:cechprod}, $\cech{\zcl_{1}\zcl_{2}}{\alpha}\!=\!\cech{\zcl_{1}}{\alpha}\!\cech{\zcl_{2}}{\alpha}\!=\!\cech{\zcl_{2}}{\alpha}\!\cech{\zcl_{1}}{\alpha}\!=\!\cech{\zcl_{2}\zcl_{1}}{\alpha}$
for $\alpha\!\in\!\mathfrak{g}$, and because central characters of
different classes differ from each other, this shows that $\zcl_{1}\zcl_{2}\!=\!\zcl_{2}\zcl_{1}$.
Finally, again by Eq.\prettyref{eq:cechprod}
\begin{gather*}
\cech{(\zcl_{1}\zcl_{2})\ccl}{\alpha}\!=\!\cech{\zcl_{1}\zcl_{2}}{\alpha}\!\cech{\ccl}{\alpha}\!=\!\cech{\zcl_{1}}{\alpha}\!\cech{\zcl_{2}}{\alpha}\!\cech{\ccl}{\alpha}\!=\!\cech{\zcl_{1}}{\alpha}\!\cech{\zcl_{2}\ccl}{\alpha}
\end{gather*}

\noindent for $\ccl\!\in\!\cl{\mathfrak{g}}$, hence $(\zcl_{1}\zcl_{2})\ccl\!=\!\zcl_{1}(\zcl_{2}\ccl)$,
finishing the proof.
\end{proof}
\begin{thm}
The center $\zent{\mathfrak{g}}$ of an $\fc$ $\mathfrak{g}\!\in\!\lat$
is an Abelian group that permutes the $\mathfrak{g}$-classes.
\end{thm}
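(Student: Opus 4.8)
The plan is to assemble the results of \prettyref{prop:zentprod} and \prettyref{prop:centergroup} into a verification of the group axioms for $\zent{\mathfrak{g}}$ under the operation $\left(\zcl_{1},\zcl_{2}\right)\!\mapsto\!\zcl_{1}\zcl_{2}$ supplied by \prettyref{prop:zentprod}, and then to read off the permutation action on $\cl{\mathfrak{g}}$. Most of the analytic labor is already behind us: \prettyref{prop:centergroup} delivers closure (the product of two central classes is central), commutativity, and the compatibility law $\left(\zcl_{1}\zcl_{2}\right)\!\ccl\!=\!\zcl_{1}\!\left(\zcl_{2}\ccl\right)$ valid for every class $\ccl$, of which associativity inside $\zent{\mathfrak{g}}$ is the special case where $\ccl$ is itself central.

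For the unit I would take the trivial class $\du{\mathfrak{g}}$: it lies in $\zent{\mathfrak{g}}$ by the very definition of the center, and $\du{\mathfrak{g}}\ccl\!=\!\ccl$ holds for all $\ccl$ by \prettyref{prop:zentprod}. The only step with genuinely fresh content is the construction of inverses. Given $\zcl\!\in\!\zent{\mathfrak{g}}$, I would take as candidate the class $\zcl^{-1}\du{\mathfrak{g}}$ obtained from the $-1$ branch of \prettyref{prop:zentprod} with $\ccl\!=\!\du{\mathfrak{g}}$, so that $\alpha\!\left(\zcl^{-1}\du{\mathfrak{g}}\right)\!=\!\cech{\zcl}{\alpha}^{-1}\qd{\alpha}$ for $\alpha\!\in\!\mathfrak{g}$ by \prettyref{eq:zentproddef}. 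Since $\zcl$ is central, $\cs{\zcl}\!=\!\cs{\du{\mathfrak{g}}}$ and hence $\cech{\zcl}{\alpha}\!=\!\alpha\!\left(\zcl\right)/\qd{\alpha}$, which has unit modulus by \prettyref{lem:centclchar}; thus $\FA{\alpha\!\left(\zcl^{-1}\du{\mathfrak{g}}\right)}\!=\!\qd{\alpha}$, and \prettyref{lem:centclchar} applied in the other direction shows the candidate is central. Feeding $\alpha\!\left(\zcl^{-1}\du{\mathfrak{g}}\right)\!=\!\cech{\zcl}{\alpha}^{-1}\qd{\alpha}$ back into \prettyref{eq:zentproddef} gives $\alpha\!\left(\zcl\!\left(\zcl^{-1}\du{\mathfrak{g}}\right)\right)\!=\!\cech{\zcl}{\alpha}\cech{\zcl}{\alpha}^{-1}\qd{\alpha}\!=\!\qd{\alpha}\!=\!\alpha\!\left(\du{\mathfrak{g}}\right)$; since the irrep $\ch{\ccl}$, i.e. the collection of values $\alpha\!\left(\ccl\right)$, determines the class $\ccl$, this forces $\zcl\!\left(\zcl^{-1}\du{\mathfrak{g}}\right)\!=\!\du{\mathfrak{g}}$, so inverses exist and $\zent{\mathfrak{g}}$ is an Abelian group.

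With the group structure established, the action on $\cl{\mathfrak{g}}$ follows by reading off axioms already proved: $\du{\mathfrak{g}}\ccl\!=\!\ccl$ says the unit acts trivially, and $\left(\zcl_{1}\zcl_{2}\right)\ccl\!=\!\zcl_{1}\!\left(\zcl_{2}\ccl\right)$ (\prettyref{prop:centergroup}) is the compatibility of the action with multiplication. Each $\zcl$ therefore acts as a bijection of $\cl{\mathfrak{g}}$ whose inverse is the action of $\zcl^{-1}\du{\mathfrak{g}}$, because associativity gives $\left(\zcl^{-1}\du{\mathfrak{g}}\right)\!\left(\zcl\ccl\right)\!=\!\left(\left(\zcl^{-1}\du{\mathfrak{g}}\right)\zcl\right)\!\ccl\!=\!\du{\mathfrak{g}}\ccl\!=\!\ccl$, and symmetrically in the other order.

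I do not expect a serious obstacle, since the hard work lives in \prettyref{prop:zentprod} and \prettyref{prop:centergroup}. The one point requiring care is checking that the candidate inverse of a central class is again \emph{central}, and not merely some class in $\cl{\mathfrak{g}}$; this is precisely where the unit-modulus criterion of \prettyref{lem:centclchar} is indispensable, converting $\FA{\cech{\zcl}{\alpha}}\!=\!1$ into the membership $\zcl^{-1}\du{\mathfrak{g}}\!\in\!\zent{\mathfrak{g}}$.
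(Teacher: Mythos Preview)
Your proof is correct and follows essentially the same route as the paper: assemble closure, commutativity and associativity from \prettyref{prop:centergroup}, take $\du{\mathfrak{g}}$ as unit, and $\zcl^{-1}\du{\mathfrak{g}}$ as inverse. The only minor difference is that you verify centrality of the inverse via the unit-modulus criterion of \prettyref{lem:centclchar}, whereas the paper simply asserts $\zcl^{-1}\du{\mathfrak{g}}\!\in\!\zent{\mathfrak{g}}$; the quickest justification is actually Eq.~\eqref{eq:centprodext}, which gives $\cs{\zcl^{-1}\du{\mathfrak{g}}}\!=\!\cs{\du{\mathfrak{g}}}$ directly.
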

\begin{proof}
Defining $\zcl_{1}\zcl_{2}$ as the product of the central $\gcl$es
$\zcl_{1},\!\zcl_{2}\!\in\!\zent{\mathfrak{g}}$, \prettyref{prop:centergroup}
implies that it is commutative and associative. Since $\du{\mathfrak{g}}\zcl\!=\!\zcl$
for every $\zcl\!\in\!\zent{\mathfrak{g}}$, $\du{\mathfrak{g}}$
is the identity element of this product, and the $\gcl$ $\zcl\inv\du{\mathfrak{g}}\!\in\!\zent{\mathfrak{g}}$
is clearly the inverse of $\zcl$, since $\zcl\left(\zcl\inv\du{\mathfrak{g}}\right)\!=\!\du{\mathfrak{g}}$,
proving that $\zent{\mathfrak{g}}$ is indeed an Abelian group. Finally,
again by \prettyref{prop:centergroup} the maps $\ccl\mapsto\zcl\ccl$
define a permutation action of $\zent{\mathfrak{g}}$ on the set $\cl{\mathfrak{g}}$
of $\mathfrak{g}$-classes.
\end{proof}

\section{Central quotients and extensions\label{sec:Central-quotients-and}}

\global\long\def\idch{\boldsymbol{\mathfrak{1}}}%
\global\long\def\usub#1{\mathbf{\boldsymbol{\cup}}#1}%
\global\long\def\zquot#1#2{\mathfrak{#1}/\!#2}%
\global\long\def\mzq#1#2{#1^{[#2]}}%

\global\long\def\extclass#1{#1\ccl}%

\begin{prop}
\begin{singlespace}
\label{prop:zentext}For an $\fc$ $\mathfrak{g}$ and a subgroup
$Z\!<\!\zent{\mathfrak{g}}$ of its center,
\begin{equation}
\zquot{\mathfrak{g}}Z=\set{\alpha\!\in\!\mathfrak{g}}{\alpha\!\left(\zcl\right)\!\!=\!\qd{\alpha}\textrm{ \emph{for} \emph{all} }\zcl\!\in\!Z}\label{eq:zentextdef}
\end{equation}
is again an $\fc$, the central quotient of $\mathfrak{g}$ by $Z$,
with dual
\begin{equation}
\du{\left(\zquot{\mathfrak{g}}Z\right)}=\bigcup_{\zcl\in Z}\zcl\label{eq:zentextdual}
\end{equation}
If $\mathfrak{h}\!\in\!\lat$ is such that $\zquot{\mathfrak{g}}Z\!\!\subseteq\!\!\mathfrak{h}\!\subseteq\!\mathfrak{g}$,
then $\mathfrak{h}\!=\!\zquot{\mathfrak{g}}H$ for some subgroup $H\!<\!Z$.
\end{singlespace}
\end{prop}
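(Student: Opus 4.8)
The plan is to prove the duality formula \eqref{eq:zentextdual} and the $\fc$ property of $\zquot{\mathfrak{g}}Z$ together, and to deduce the correspondence for intermediate $\fc$s afterwards. Write $W=\bigcup_{\zcl\in Z}\zcl$ for the union appearing in \eqref{eq:zentextdual}. Since $Z$ is a subgroup of $\zent{\mathfrak{g}}$ it contains the identity $\du{\mathfrak{g}}$ of that group, so $\du{\mathfrak{g}}\subseteq W$ and in particular $\v\in W$. First I would verify that $W$ is an $\fc$: it contains the vacuum, and if $p\in\zcl_1$, $q\in\zcl_2$ with $\zcl_1,\zcl_2\in Z$, then \prettyref{thm:genprodrule} (applied with the central class $\zcl_2$) forces every $r$ with $N_{pq}^{r}>0$ into $\zcl_2\zcl_1$, which lies in $Z$ because $Z$ is closed under the group product of \prettyref{prop:centergroup}; hence $r\in W$. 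Consequently $\du W$ is an $\fc$ by \prettyref{cor:duality}, with $\du{(\du W)}=W$.

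The key step is to recognize $\zquot{\mathfrak{g}}Z$ as $\du W$. I would spell out both sets through the modular matrix. A primary $p$ lies in the trivial $W$-class $\du W$ precisely when $\ch p(\beta)=\qd{\beta}$ for all $\beta\in W$, that is, $S_{\beta p}S_{\v\v}=S_{\v\beta}S_{\v p}$; whereas $\alpha\in\mathfrak{g}$ lies in $\zquot{\mathfrak{g}}Z$ precisely when $\alpha(\zcl)=\qd{\alpha}$ for all $\zcl\in Z$, which—evaluating $\alpha(\zcl)=S_{\alpha z}/S_{\v z}$ on representatives $z\in\zcl$ and $\qd{\alpha}=S_{\v\alpha}/S_{\v\v}$—is the identical relation $S_{\alpha z}S_{\v\v}=S_{\v\alpha}S_{\v z}$ for all $z\in W$ (using the symmetry of $S$). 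Thus both sets are carved out by one and the same condition, the only difference being that $\zquot{\mathfrak{g}}Z$ additionally demands membership in $\mathfrak{g}$, so $\zquot{\mathfrak{g}}Z=\mathfrak{g}\cap\du W$. But $\du{\mathfrak{g}}\subseteq W$ together with the monotonicity of the dual (\prettyref{lem:inclusion}) gives $\du W\subseteq\du{(\du{\mathfrak{g}})}=\mathfrak{g}$, whence $\mathfrak{g}\cap\du W=\du W$. Therefore $\zquot{\mathfrak{g}}Z=\du W$, which is an $\fc$ by the first paragraph, and dualizing yields $\du{(\zquot{\mathfrak{g}}Z)}=\du{(\du W)}=W$, establishing \eqref{eq:zentextdual}. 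I expect this symmetric reading of the defining condition, combined with the inclusion $\du W\subseteq\mathfrak{g}$, to be the crux of the argument.

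For the last assertion, suppose $\zquot{\mathfrak{g}}Z\subseteq\mathfrak{h}\subseteq\mathfrak{g}$. Dualizing via \prettyref{lem:inclusion} gives $\du{\mathfrak{g}}\subseteq\du{\mathfrak{h}}\subseteq\du{(\zquot{\mathfrak{g}}Z)}=W$. Because $\mathfrak{h}\subseteq\mathfrak{g}$, the trivial $\mathfrak{h}$-class $\du{\mathfrak{h}}$ is a union of $\mathfrak{g}$-classes (again by \prettyref{lem:inclusion}); and sitting inside the disjoint union $W$ of the central classes of $Z$, it must be exactly the union of those $\zcl\in Z$ that it contains. Setting $H=\set{\zcl\in Z}{\zcl\subseteq\du{\mathfrak{h}}}$ we thus obtain $\du{\mathfrak{h}}=\bigcup_{\zcl\in H}\zcl$.

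It then remains to see that $H$ is a subgroup of $Z$ and that $\mathfrak{h}=\zquot{\mathfrak{g}}H$. The set $H$ contains the identity $\du{\mathfrak{g}}$, since $\du{\mathfrak{g}}\subseteq\du{\mathfrak{h}}$; and if $\zcl_1,\zcl_2\in H$, then picking $z_i\in\zcl_i\subseteq\du{\mathfrak{h}}$ and using that $\du{\mathfrak{h}}$ is fusion closed, \prettyref{thm:genprodrule} places the whole class $\zcl_1\zcl_2$ inside $\du{\mathfrak{h}}$, so $\zcl_1\zcl_2\in H$. A nonempty, multiplicatively closed subset of the finite group $Z$ is a subgroup, hence $H<Z$. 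Finally $H$ is a subgroup of $\zent{\mathfrak{g}}$, so $\zquot{\mathfrak{g}}H$ is defined and $\du{(\zquot{\mathfrak{g}}H)}=\bigcup_{\zcl\in H}\zcl=\du{\mathfrak{h}}$ by \eqref{eq:zentextdual}; as the dual is a bijection on $\fc$s (its square is the identity), this forces $\mathfrak{h}=\zquot{\mathfrak{g}}H$, completing the proof.
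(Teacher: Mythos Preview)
Your argument is correct, and the last part (the classification of intermediate $\mathfrak{h}$) matches the paper almost verbatim. For the main statement, however, you take a genuinely different route. The paper first shows directly, via \prettyref{lem:cechprod}, that $\zquot{\mathfrak{g}}Z$ is fusion closed, then observes the easy inclusion $\bigcup_{\zcl\in Z}\zcl\subseteq\du{(\zquot{\mathfrak{g}}Z)}$, and finally upgrades it to an equality by an extent computation: it evaluates $\cs{\du{(\zquot{\mathfrak{g}}Z)}}=\cs{\du{\mathfrak{g}}}/\FA Z$ using the orthogonality relations, and checks that $\sum_{p}S_{\v p}^{2}$ over the difference of the two sets vanishes. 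You instead start from the other side, proving that $W=\bigcup_{\zcl\in Z}\zcl$ is an $\fc$ via \prettyref{thm:genprodrule}, and then identify $\zquot{\mathfrak{g}}Z$ with $\du W$ by reading both defining conditions through the symmetry of $S$ and using $\du{\mathfrak{g}}\subseteq W\Rightarrow\du W\subseteq\mathfrak{g}$ from \prettyref{lem:inclusion}. This avoids any extent calculation and is arguably cleaner for the proposition itself; the price is that the paper's computation yields the formula $\cs{\du{(\zquot{\mathfrak{g}}Z)}}=\cs{\du{\mathfrak{g}}}/\FA Z$ as a byproduct, which is reused later (e.g.\ in the proof of \prettyref{thm:zentextstruct} and \prettyref{lem:nilext}). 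One small point: when you argue $\zcl_1\zcl_2\in H$, \prettyref{thm:genprodrule} only tells you that those $r$ with $N_{z_1z_2}^{r}>0$ lie in $\zcl_1\zcl_2$; you should add that at least one such $r$ exists, and since $\du{\mathfrak{h}}$ is a union of $\mathfrak{g}$-classes, meeting $\zcl_1\zcl_2$ forces it to contain all of it.
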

\begin{proof}
To simplify notation, let $\mathfrak{\mathfrak{g}_{\idch}}$ denote
$\zquot{\mathfrak{g}}Z$, and $\usub Z$ the union of the classes
in $Z$. It follows from \prettyref{lem:cechprod} that $N_{\alpha\beta}^{\gamma}\!>\!0$
for $\alpha,\beta\!\in\!\mathfrak{g}_{\idch}\!=\!\set{\alpha\!\in\!\mathfrak{g}}{\cech{\zcl}{\alpha}\!=\!1\textrm{ for all }\zcl\!\in\!Z}$
implies $\gamma\!\in\!\mathfrak{g}_{\idch}$, hence $\mathfrak{g}_{\idch}\!\in\!\lat$.
Clearly, $\usub Z\!\subseteq\!\du{\mathfrak{g}_{\idch}}$ since $\alpha\!\left(\zcl\right)\!=\!\qd{\alpha}$
for $\zcl\!\in\!Z$ and $\alpha\!\in\!\mathfrak{g}_{\idch}$, while
\begin{gather*}
\cs{\du{\mathfrak{g}_{\idch}}}\!=\!\sum_{\alpha\in\mathfrak{g}_{\idch}}\!\qd{\alpha}^{2}\!=\!\sum_{\alpha\in\mathfrak{g}}\!\frac{1}{\FA Z}\sum_{\zcl\in Z}\!\cech{\zcl}{\alpha}\!\qd{\alpha}^{2}\!=\!\frac{1}{\FA Z}\sum_{\zcl\in Z}\sum_{\alpha\in\mathfrak{g}}\alpha\!\left(\zcl\right)\!\qd{\alpha}\!=\!\frac{\cs{\du{\mathfrak{g}}}}{\FA Z}
\end{gather*}

\noindent by Eqs.\prettyref{eq:spread} and \prettyref{eq:ortho2}.
Since $\cs{\zcl}\!=\!\cs{\du{\mathfrak{g}}}$ for all $\zcl\!\in\!\zent{\mathfrak{g}}$,
one gets
\begin{gather*}
\sum_{p\in\du{\mathfrak{g}_{\idch}}\setminus\usub Z}\!S_{\v p}^{2}\!=\!\sum_{p\in\du{\mathfrak{g}_{\idch}}}\!S_{\v p}^{2}\!-\!\sum_{p\in\usub Z}\!S_{\v p}^{2}\!=\!\frac{1}{\cs{\du{\mathfrak{g}_{\idch}}}}\!-\!\sum_{\zcl\in Z}\!\frac{1}{\cs{\zcl}}\!=\!\frac{\FA Z}{\cs{\du{\mathfrak{g}}}}\!-\!\FA Z\frac{1}{\cs{\du{\mathfrak{g}}}}\!=\!0
\end{gather*}
which implies that $\du{\mathfrak{g}_{\idch}}\!\setminus\!\usub Z$
is void, since $S_{\v p}^{2}\!>\!0$ for all $p$.

Finally, if $\mathfrak{g}_{\idch}\!\subseteq\!\mathfrak{h}\!\subseteq\!\mathfrak{g}$
then $\du{\mathfrak{g}}\!\subseteq\!\du{\mathfrak{h}}\!\subseteq\!\du{\mathfrak{g}_{\idch}}$
by \prettyref{lem:inclusion}, hence $\du{\mathfrak{h}}$ is a union
of $\mathfrak{g}$-classes contained in $\du{\mathfrak{g}_{\idch}}\!=\!\usub Z$;
consequently, $\du{\mathfrak{h}}\!=\!\usub H$ for some subset $H\!\subseteq\!Z$,
and because $\du{\mathfrak{h}}$ is an $\fc$, $H$ is actually a
subgroup of $Z$ such that $\du{\left(\zquot{\mathfrak{g}}H\right)}\!=\!\usub H\!=\!\du{\mathfrak{h}}$
by Eq.\prettyref{eq:zentextdual}, i.e. $\mathfrak{h}\!=\!\zquot{\mathfrak{g}}H$.
\end{proof}
It follows from the above result that there is an order reversing
one-to-one correspondence between central quotients of $\mathfrak{g}\!\in\!\lat$
and subgroups of its center $\zent{\mathfrak{g}}$. The usefulness
of central quotients rests on the following result.
\begin{thm}
\label{thm:zentextstruct}For a subgroup $Z\!<\!\zent{\mathfrak{g}}$
of the center of $\mathfrak{g}\!\in\!\lat$, let $\dg Z\!=\!\mathtt{Hom}\!\left(Z,\mathbb{C}^{\times}\right)$
denote its character group (Pontryagin dual), and let $\mathfrak{g}_{\xi}\!=\!\set{\alpha\!\in\!\mathfrak{g}}{\cech{\zcl}{\alpha}\!=\!\xi\!\left(\zcl\right)\textrm{ \emph{for} }\zcl\!\in\!Z}$
for $\xi\!\in\!\dg Z$. Then
\end{thm}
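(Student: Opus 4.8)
I expect the conclusion following \emph{Then} to assert that the sets $\mathfrak{g}_{\xi}$ furnish a grading of $\mathfrak{g}$ by the finite abelian group $\dg Z$: namely that $\mathfrak{g}=\coprod_{\xi\in\dg Z}\mathfrak{g}_{\xi}$, that the fusion product is compatible with this decomposition (so that $\mathfrak{g}_{\idch}$ is precisely the central quotient $\zquot gZ$ of \prettyref{prop:zentext}), and that each piece is non-empty with common spread $\sum_{\alpha\in\mathfrak{g}_{\xi}}\qd{\alpha}^{2}=\cs{\du{\mathfrak{g}}}/\FA Z$. The plan is to prove exactly these statements. First I would check that $\alpha\mapsto\xi_{\alpha}$, with $\xi_{\alpha}(\zcl)\!=\!\cech{\zcl}{\alpha}$ for $\zcl\!\in\!Z$, is a well-defined map $\mathfrak{g}\!\rightarrow\!\dg Z$. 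Each $\cech{\zcl}{\alpha}$ has unit modulus by \prettyref{lem:centclchar}, and since $\zcl^{\FA Z}\!=\!\du{\mathfrak{g}}$ in the group $\zent{\mathfrak{g}}$ while $\cech{\du{\mathfrak{g}}}{\alpha}\!=\!1$, the multiplicativity $\cech{\zcl_{1}\zcl_{2}}{\alpha}\!=\!\cech{\zcl_{1}}{\alpha}\cech{\zcl_{2}}{\alpha}$ of \prettyref{eq:cechprod} forces $\cech{\zcl}{\alpha}$ to be a root of unity and $\xi_{\alpha}$ to be a genuine element of $\dg Z$. Hence the $\mathfrak{g}_{\xi}\!=\!\set{\alpha\!\in\!\mathfrak{g}}{\xi_{\alpha}\!=\!\xi}$ are pairwise disjoint and exhaust $\mathfrak{g}$, and comparison with \prettyref{eq:zentextdef} identifies $\mathfrak{g}_{\idch}$ with $\zquot gZ$.

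Next I would establish compatibility with fusion. If $N_{\alpha\beta}^{\gamma}\!>\!0$ with $\alpha\!\in\!\mathfrak{g}_{\xi}$ and $\beta\!\in\!\mathfrak{g}_{\eta}$, then $\gamma\!\in\!\mathfrak{g}$ by \prettyref{eq:fccrit}, and applying \prettyref{lem:cechprod} to each $\zcl\!\in\!Z$ gives $\xi_{\gamma}(\zcl)\!=\!\cech{\zcl}{\gamma}\!=\!\cech{\zcl}{\alpha}\cech{\zcl}{\beta}\!=\!(\xi\eta)(\zcl)$, so $\gamma\!\in\!\mathfrak{g}_{\xi\eta}$. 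Thus the fusion respects the $\dg Z$-grading; in particular every $\mathfrak{g}_{\xi}$ is stable under fusion by $\mathfrak{g}_{\idch}\!=\!\zquot gZ$.

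It remains to compute the spreads and deduce non-emptiness. By orthogonality of the characters of the finite abelian group $Z$, the indicator of $\mathfrak{g}_{\xi}$ is $\tfrac{1}{\FA Z}\sum_{\zcl\in Z}\overline{\xi(\zcl)}\cech{\zcl}{\alpha}$; since $\cech{\zcl}{\alpha}\qd{\alpha}\!=\!\alpha\!\left(\zcl\right)$ and $\qd{\alpha}\!=\!\overline{\alpha\!\left(\du{\mathfrak{g}}\right)}$, this yields
\[
\sum_{\alpha\in\mathfrak{g}_{\xi}}\qd{\alpha}^{2}=\frac{1}{\FA Z}\sum_{\zcl\in Z}\overline{\xi(\zcl)}\sum_{\alpha\in\mathfrak{g}}\alpha\!\left(\zcl\right)\qd{\alpha}=\frac{1}{\FA Z}\sum_{\zcl\in Z}\overline{\xi(\zcl)}\sum_{\alpha\in\mathfrak{g}}\alpha\!\left(\zcl\right)\overline{\alpha\!\left(\du{\mathfrak{g}}\right)}.
\]
By \prettyref{eq:ortho2} the inner sum equals $\cs{\du{\mathfrak{g}}}$ when $\zcl\!=\!\du{\mathfrak{g}}$ and vanishes otherwise, and $\overline{\xi(\du{\mathfrak{g}})}\!=\!1$, so $\sum_{\alpha\in\mathfrak{g}_{\xi}}\qd{\alpha}^{2}\!=\!\cs{\du{\mathfrak{g}}}/\FA Z$ for every $\xi\!\in\!\dg Z$, which matches the value $\cs{\du{(\zquot gZ)}}$ found in \prettyref{prop:zentext}.

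The main obstacle I anticipate is the last clause: that every $\xi\!\in\!\dg Z$ is actually attained, the group-theoretic shadow being that each central character extends to an irreducible constituent. Rather than argue this by an induction/Clifford-theory argument, the clean route is the spread identity above: its right-hand side is strictly positive, so $\mathfrak{g}_{\xi}$ cannot be empty and the map $\alpha\mapsto\xi_{\alpha}$ is onto, making the grading full. The only real care needed there is recognizing $\qd{\alpha}\!=\!\alpha\!\left(\du{\mathfrak{g}}\right)$ so that the pair $\zcl,\du{\mathfrak{g}}$ feeds correctly into \prettyref{eq:ortho2}; everything else is bookkeeping with \prettyref{eq:cechprod} and \prettyref{lem:cechprod}.
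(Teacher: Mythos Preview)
Your guess at the content of the theorem is only a fragment of what the paper actually states. The full theorem has three enumerated parts: (1) each $\mathfrak{g}_{\xi}$ is a single $\zquot gZ$-block, together with an explicit cardinality formula $\FA{\mathfrak{g}_{\xi}}\!=\!\tfrac{1}{\FA Z}\sum_{\zcl\in Z}\overline{\xi(\zcl)}\,\FA{\fix{\zcl}}$; (2) $\cs{\mathfrak{g}_{\xi}}\!=\!\FA Z\cs{\mathfrak{g}}$, so each $\mathfrak{g}_{\xi}$ lies in the center of $\du{(\zquot gZ)}$, the product there satisfies $\mathfrak{g}_{\xi}\mathfrak{g}_{\eta}\!=\!\mathfrak{g}_{\xi\eta}$, and $\zquot{\du{(\zquot gZ)}}{\du Z}\!=\!\du{\mathfrak{g}}$; (3) every $\zquot gZ$-class is a $Z$-orbit $Z\ccl$ of a $\mathfrak{g}$-class, with extent $\cs{\ccl}/[Z\!:\!Z_{\ccl}]$ and an explicit overlap formula $\sp{\mathfrak{g}_{\xi}}{Z\ccl}$ in terms of the stabilizer $Z_{\ccl}$.

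For the portion you did treat, your arguments are correct and essentially coincide with the paper's: the grading via \prettyref{lem:cechprod}, and the spread computation via character orthogonality on $Z$ combined with Eq.~\eqref{eq:ortho2}, are exactly how the paper handles those points. One subtlety you glossed over: showing that fusion by $\mathfrak{g}_{\idch}$ preserves $\mathfrak{g}_{\xi}$ only gives that $\mathfrak{g}_{\xi}$ is a \emph{union} of $\zquot gZ$-blocks; to see it is a single block one must also observe (via charge conjugation and \prettyref{lem:blcrit2}) that any two elements of $\mathfrak{g}_{\xi}$ are connected by fusion with some element of $\mathfrak{g}_{\idch}$. Also, the paper's assertion $\mathfrak{g}_{\xi}\mathfrak{g}_{\eta}\!=\!\mathfrak{g}_{\xi\eta}$ is a statement about the product of central classes in $\zent{\du{(\zquot gZ)}}$, which requires the generalized product rule (\prettyref{thm:genprodrule}); your fusion-compatibility statement is an ingredient but not the whole claim.

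The cardinality formula in part~(1), and all of part~(3) --- the identification of $\zquot gZ$-classes with $Z$-orbits, the extent relation, and the overlap formula --- are entirely absent from your proposal and involve separate computations (Burnside-type counts, and direct evaluation of $\sum_{\alpha\in\mathfrak{g}_{\xi}}\sum_{p\in Z\ccl}\FA{S_{\alpha p}}^{2}$ via Eq.~\eqref{eq:ortho2}).
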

\begin{enumerate}[label=\arabic{enumi})]
\item \label{enu:zentfc} each $\mathfrak{g}_{\xi}$ is a block of $\zquot{\mathfrak{g}}Z$,
 of cardinality
\[
\FA{\mathfrak{g}_{\xi}}\!=\!\frac{1}{\FA Z}\sum_{\zcl\in Z}\overline{\xi\!\left(\zcl\right)}\!\FA{\fix{\zcl}\!}
\]
where $\fix{\zcl}\!=\!\set{\ccl\!\in\!\cl{\mathfrak{g}}\!\!}{\zcl\ccl\!=\!\ccl}$
denotes the set of fixed points of the central class $\zcl\!\in\!\zent{\mathfrak{g}}$
in its action on $\cl{\mathfrak{g}}$;
\item \label{enu:zentbl}\label{enu:gxiprod}$\cs{\mathfrak{g}_{\xi}}\!=\!\FA Z\!\cs{\mathfrak{g}}$
for $\xi\!\in\!\dg Z$, i.e. each $\mathfrak{g}_{\xi}$ belongs to
the center of $\du{\left(\zquot gZ\right)}$, and $\mathfrak{g}_{\xi}\mathfrak{g}_{\eta}\!=\!\mathfrak{g}_{\xi\eta}$
for $\xi,\eta\!\in\!\dg Z$, hence $\du Z\!=\!\set{\mathfrak{g}_{\xi}}{\xi\!\in\!\dg Z}$
is a subgroup of the center of $\du{\left(\zquot{\mathfrak{g}}Z\right)}$
 isomorphic to $Z$, and $\zquot{\du{\left(\zquot{\mathfrak{g}}{\mathit{Z}}\right)}\!}{\du Z}\!=\!\du{\mathfrak{g}}$;
\item \label{enu:zentclass}each $\zquot{\mathfrak{g}}Z$-class is of a
union $\extclass Z\!=\!\bigcup\limits _{\zcl\in Z}\!\zcl\ccl$ for
some class $\ccl\!\in\!\cl{\mathfrak{g}}$, with $\cs{\ccl}=\left[Z\!:\!Z_{\ccl}\right]\cs{Z\ccl}$
and
\begin{equation}
\sp{\mathfrak{g}_{\xi}}{\extclass Z}\!=\!\begin{cases}
1 & \textrm{if }\xi\!\left(\zcl\right)\!=\!1\textrm{ for all }\zcl\!\in\!Z_{\ccl};\\
0 & \textrm{otherwise}
\end{cases}\label{eq:ovgxi}
\end{equation}
where $Z_{\ccl}\!=\!\set{\zcl\!\in\!Z}{\zcl\ccl\!=\!\ccl}$ denotes
the stabilizer of $\ccl$.
\end{enumerate}
\begin{proof}
\prettyref{lem:cechprod} implies that if $N_{\alpha\beta}^{\gamma}\!>\!0$
for $\alpha\!\in\!\mathfrak{g}_{\xi}$ and $\beta\!\in\!\mathfrak{g}_{\eta}$,
then $\gamma\!\in\!\mathfrak{g}_{\xi\eta}$. In particular, if $\idch$
denotes the principal character of $Z$ (the identity of $\dg Z$),
then $\mathfrak{g}_{\idch}$ is an $\fc$, and it follows from \prettyref{lem:blcrit2}
that $\mathfrak{g}_{\xi}\!\in\!\bl{\mathfrak{g}_{\idch}}$ for each
$\xi\!\in\!\dg Z$. Since clearly $\mathfrak{g}_{\idch}\!=\!\zquot gZ$,
we get the first assertion. Next, notice that for $\zcl\!\in\!\zent{\mathfrak{g}}$
one has by Eq.\prettyref{eq:ortho2}
\begin{gather*}
\FA{\fix{\zcl}\!}\!=\!\sum_{\ccl\in\cl{\mathfrak{g}}}\!\delta_{\ccl,\zcl\ccl}\!=\!\sum_{\ccl\in\cl{\mathfrak{g}}}\!\frac{1}{\cs{\ccl}}\sum_{\alpha\in\mathfrak{g}}\!\alpha\!\left(\zcl\ccl\right)\overline{\alpha\!\left(\ccl\right)}\!=\!\\
\sum_{\ccl\in\cl{\mathfrak{g}}}\!\frac{1}{\cs{\ccl}}\sum_{\alpha\in\mathfrak{g}}\!\cech{\zcl}{\alpha}\!\FA{\alpha\!\left(\ccl\right)\!}^{2}\!=\!\sum_{\alpha\in\mathfrak{g}}\!\cech{\zcl}{\alpha}\!\sum_{\ccl\in\cl{\mathfrak{g}}}\!\frac{\FA{\alpha\!\left(\ccl\right)\!}^{2}}{\cs{\ccl}}\!=\!\sum_{\alpha\in\mathfrak{g}}\!\cech{\zcl}{\alpha}
\end{gather*}
leading to
\[
\FA{\mathfrak{g}_{\xi}}\!=\!\sum_{\alpha\in\mathfrak{g}}\!\frac{1}{\FA Z}\sum_{\zcl\in Z}\!\overline{\xi\!\left(\zcl\right)}\cech{\zcl}{\alpha}\!=\!\frac{1}{\FA Z}\sum_{\zcl\in Z}\!\overline{\xi\!\left(\zcl\right)}\!\sum_{\alpha\in\mathfrak{g}}\!\cech{\zcl}{\alpha}\!=\!\frac{1}{\FA Z}\sum_{\zcl\in Z}\overline{\xi\!\left(\zcl\right)}\!\FA{\fix{\zcl}\!}
\]

\begin{singlespace}
To prove \prettyref{enu:gxiprod}, note that
\begin{gather*}
\frac{\cs{\mathfrak{g}}}{\cs{\mathfrak{g}_{\xi}}}\!=\!\frac{\sum_{\alpha\in\mathfrak{g}_{\xi}}\!S_{\v\alpha}^{2}}{\sum_{\alpha\in\mathfrak{g}}\!S_{\v\alpha}^{2}}\!=\!\frac{1}{\cs{\du{\mathfrak{g}}}}\sum_{\alpha\in\mathfrak{g}}\!\qd{\alpha}^{2}\frac{1}{\FA Z}\sum_{\zcl\in Z}\overline{\xi\!\left(\zcl\right)}\cech{\zcl}{\alpha}\!\\
=\!\frac{1}{\FA Z\cs{\du{\mathfrak{g}}}}\sum_{\zcl\in Z}\overline{\xi\!\left(\zcl\right)}\sum_{\alpha\in\mathfrak{g}}\alpha\!\left(\zcl\right)\!\qd{\alpha}\!=\!\frac{1}{\FA Z}\sum_{\zcl\in Z}\overline{\xi\!\left(\zcl\right)}\delta_{\zcl,\du{\mathfrak{g}}}\!=\!\frac{1}{\FA Z}
\end{gather*}
\end{singlespace}

\noindent is independent of $\xi\!\in\!\dg Z$, where we have used
\prettyref{lem:spreadrecip}. It follows that for all $\xi\!\in\!\dg Z$
one has $\cs{\mathfrak{g}_{\xi}}\!=\!\cs{\mathfrak{g}_{\idch}}$,
which is tantamount to $\mathfrak{g}_{\xi}\!\in\!\zent{\du{\mathfrak{g}_{\idch}}}$.
That $\mathfrak{g}_{\xi}\mathfrak{g}_{\eta}\!=\!\mathfrak{g}_{\xi\eta}$
for $\xi,\eta\!\in\!\dg Z$ can be seen as follows: $N_{\alpha\beta}^{\gamma}\!>\!0$
with $\alpha\!\in\!\mathfrak{g}_{\xi}$ and $\beta\!\in\!\mathfrak{g}_{\eta}$
implies that $\gamma\!\in\!\mathfrak{g}_{\xi}\mathfrak{g}_{\eta}$
by the generalized product rule \prettyref{thm:genprodrule}, and
$\gamma\!\in\!\mathfrak{g}_{\xi\eta}$ by \prettyref{lem:cechprod}.
But this means that the map $\xi\mapsto\mathfrak{g}_{\xi}$ sets up
an isomorphism $\dg Z\!\cong\!\du Z$, and because $\dg Z\!\cong\!Z$
by Pontryagin duality, we get that $Z\!\cong\!\du Z$. Finally, it
is clear that
\[
\usub{\du Z}\!=\!\bigcup_{\xi\in\dg Z}\!\mathfrak{g}_{\xi}\!=\!\mathfrak{g}
\]
hence $\du{\mathfrak{g}}\!=\!\du{\left(\usub{\du Z}\right)}\!=\!\zquot{\du{\left(\zquot{\mathfrak{g}}{\mathit{Z}}\right)}\!}{\du Z}$
according to Eq.\prettyref{eq:zentextdual}.

As to \prettyref{enu:zentclass}, notice that (since $\mathfrak{g}_{\idch}\!\subseteq\!\mathfrak{g}$)
each $\mathfrak{g}_{\idch}$-class $\mathfrak{C}\!\in\!\cl{\mathfrak{g}_{\idch}}$
is a union of $\mathfrak{g}$-classes by \prettyref{lem:inclusion},
hence there exists some $\ccl\!\in\!\cl{\mathfrak{g}}$ contained
in $\mathfrak{C}$. But for $\zcl\!\in\!Z$ the restrictions of $\ch{\ccl}$
and $\ch{\zcl\ccl}$ to $\mathfrak{g}_{\idch}$ coincide, consequently
one has $\extclass Z\!\subseteq\!\mathfrak{C}$. To prove that this
containment is actually an equality, observe that one has
\begin{gather*}
\sum_{\alpha\in\mathfrak{g}_{\xi}}\!\sum_{p\in\extclass Z}\!\FA{S_{\alpha p}}^{2}\!=\!\frac{1}{\FA{Z_{\ccl}}}\sum_{\zcl\in Z}\!\sum_{p\in\zcl\ccl}\sum_{\alpha\in\mathfrak{g}_{\xi}}\!\FA{S_{\alpha p}}^{2}\!=\!\frac{1}{\FA{Z_{\ccl}}}\sum_{\zcl\in Z}\!\sum_{\alpha\in\mathfrak{g}_{\xi}}\!\FA{\alpha\!\left(\zcl\ccl\right)\!}^{2}\!\sum_{p\in\zcl\ccl}\!S_{\v p}^{2}\\
\!=\!\frac{\left[Z\!:\!Z_{\ccl}\right]}{\cs{\ccl}}\sum_{\alpha\in\mathfrak{g}_{\xi}}\!\FA{\alpha\!\left(\ccl\right)\!}^{2}\!=\!\frac{\left[Z\!:\!Z_{\ccl}\right]}{\cs{\ccl}}\sum_{\alpha\in\mathfrak{g}}\frac{1}{\FA Z}\sum_{\zcl\in Z}\overline{\xi\!\left(\zcl\right)}\cech{\zcl}{\alpha}\!\FA{\alpha\!\left(\ccl\right)\!}^{2}\\
\!=\!\frac{1}{\FA{Z_{\ccl}}}\sum_{\zcl\in Z}\frac{\overline{\xi\!\left(\zcl\right)}}{\cs{\ccl}}\sum_{\alpha\in\mathfrak{g}}\!\alpha\!\left(\zcl\ccl\right)\overline{\alpha\!\left(\ccl\right)}=\!\frac{1}{\FA{Z_{\ccl}}}\sum_{\zcl\in Z_{\ccl}}\overline{\xi\!\left(\zcl\right)}
\end{gather*}
 for $\xi\!\in\!\dg Z$, and in particular
\[
\sum_{\alpha\in\mathfrak{g}_{\idch}}\sum_{p\in\mathfrak{C}\setminus\extclass Z}\!\FA{S_{\alpha p}}^{2}\!=\!\sum_{\alpha\in\mathfrak{g}_{\idch}}\sum_{p\in\mathfrak{C}}\!\FA{S_{\alpha p}}^{2}\!-\!\sum_{\alpha\in\mathfrak{g}_{\idch}}\sum_{p\in\extclass Z}\!\FA{S_{\alpha p}}^{2}\!=\!\sp{\mathfrak{g}_{\idch}}{\mathfrak{C}}\!-\!1\!=\!0
\]

\noindent because $\sp{\mathfrak{g}_{\idch}}{\mathfrak{C}}\!=\!1$
for all $\mathfrak{C}\!\in\!\cl{\mathfrak{g}_{\idch}}$ according
to \prettyref{cor:trivoverlap}; since $\mathfrak{g}_{\idch}$ contains
the vacuum $\v$, this can only happen if $\mathfrak{C}\setminus\extclass Z$
is empty, proving that indeed $\mathfrak{C}\!=\!\extclass Z$. Taking
this into account, one has
\[
\frac{1}{\cs{Z\ccl}}\!=\!\frac{1}{\FA{Z_{\ccl}}}\sum_{\zcl\in Z}\sum_{p\in\zcl\ccl}\!S_{\v p}^{2}\!=\!\frac{1}{\FA{Z_{\ccl}}}\sum_{\zcl\in Z}\!\cs{\zcl\ccl}\!=\!\frac{\left[Z\!:\!Z_{\ccl}\right]}{\cs{\ccl}}
\]
Finally,
\[
\sp{\mathfrak{g}_{\xi}}{\extclass Z}\!=\!\sum_{\alpha\in\mathfrak{g}_{\xi}}\sum_{p\in\extclass Z}\FA{S_{\alpha p}}^{2}\!=\!\begin{cases}
1 & \textrm{if }Z_{\ccl}\!<\!\ker\xi;\\
0 & \textrm{otherwise }
\end{cases}
\]

\noindent according to the above, proving the last assertion.
\end{proof}
Given an $\fc$ $\mathfrak{g}$, it is natural to ask whether it is
a central quotient of another $\fc$. This leads to the following
notion.
\begin{defn}
Let $\mathfrak{g}\!\in\!\lat$ denote an $\fc$ and $A$ an Abelian
group. An $A$-extension of $\mathfrak{g}$ is an $\fc$ $\mathfrak{h}\!\in\!\lat$
such that $\zquot{\mathfrak{h}}Z\!=\!\mathfrak{g}$ for some central
subgroup $Z\!<\!\zent{\mathfrak{h}}$ isomorphic to $A$.
\end{defn}
\begin{lem}
For an Abelian group $A$ and $\mathfrak{g}\!\in\!\lat$, the different
$A$-extensions of $\mathfrak{g}$ are in one-to-one correspondence
with subgroups of $\zent{\du{\mathfrak{g}}}$ isomorphic to $A$.
\end{lem}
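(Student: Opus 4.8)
The plan is to dualise throughout and reduce the statement to the order-reversing correspondence of \prettyref{prop:zentext}. Recall that the map $\mathfrak{h}\!\mapsto\!\du{\mathfrak{h}}$ is a lattice anti-isomorphism of $\lat$ onto itself and that $\du{\left(\du{\mathfrak{h}}\right)}\!=\!\mathfrak{h}$ for every $\fc$. It therefore suffices to prove that taking duals carries the $A$-extensions of $\mathfrak{g}$ bijectively onto the central quotients $\zquot{\du{\mathfrak{g}}}{W}$ of $\du{\mathfrak{g}}$ by those subgroups $W\!<\!\zent{\du{\mathfrak{g}}}$ with $W\!\cong\!A$; the assertion then follows by composing with the one-to-one correspondence $W\!\leftrightarrow\!\zquot{\du{\mathfrak{g}}}{W}$ provided by \prettyref{prop:zentext}.

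For the forward direction, let $\mathfrak{h}$ be an $A$-extension, so that $\zquot{\mathfrak{h}}{Z}\!=\!\mathfrak{g}$ for some $Z\!<\!\zent{\mathfrak{h}}$ with $Z\!\cong\!A$. Applying the second part of \prettyref{thm:zentextstruct} to the pair $\left(\mathfrak{h},Z\right)$ yields a subgroup $\du Z\!<\!\zent{\du{\left(\zquot{\mathfrak{h}}{Z}\right)}}\!=\!\zent{\du{\mathfrak{g}}}$ isomorphic to $Z\!\cong\!A$, together with the identity $\zquot{\du{\mathfrak{g}}}{\du Z}\!=\!\du{\mathfrak{h}}$; thus $\du{\mathfrak{h}}$ is the central quotient of $\du{\mathfrak{g}}$ by $W\!=\!\du Z$. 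Conversely, given $W\!<\!\zent{\du{\mathfrak{g}}}$ with $W\!\cong\!A$, put $\mathfrak{h}\!=\!\du{\left(\zquot{\du{\mathfrak{g}}}{W}\right)}$; applying the same part of \prettyref{thm:zentextstruct} to $\left(\du{\mathfrak{g}},W\right)$ and invoking the biduality $\du{\left(\du{\mathfrak{g}}\right)}\!=\!\mathfrak{g}$ gives $\zquot{\mathfrak{h}}{\du W}\!=\!\mathfrak{g}$ with $\du W\!<\!\zent{\mathfrak{h}}$ isomorphic to $W\!\cong\!A$, so that $\mathfrak{h}$ is an $A$-extension of $\mathfrak{g}$.

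It remains to verify that these assignments are mutually inverse, which is the only point requiring care. Since $\du{\mathfrak{h}}$ is a central quotient of $\du{\mathfrak{g}}$, the subgroup $W$ with $\zquot{\du{\mathfrak{g}}}{W}\!=\!\du{\mathfrak{h}}$ is \emph{uniquely} determined by $\du{\mathfrak{h}}$, hence by $\mathfrak{h}$, owing to the one-to-one correspondence of \prettyref{prop:zentext}; the witnessing subgroup $Z$ in the definition of an $A$-extension is likewise unique, so the forward assignment $\mathfrak{h}\!\mapsto\!W$ is well defined. Substituting $\mathfrak{h}\!=\!\du{\left(\zquot{\du{\mathfrak{g}}}{W}\right)}$ into the forward direction returns $\du{\mathfrak{h}}\!=\!\zquot{\du{\mathfrak{g}}}{W}$ and hence the same $W$, while starting from an $A$-extension $\mathfrak{h}$ and feeding $W\!=\!\du Z$ into the reverse construction returns $\du{\left(\zquot{\du{\mathfrak{g}}}{W}\right)}\!=\!\du{\left(\du{\mathfrak{h}}\right)}\!=\!\mathfrak{h}$. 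The main obstacle is not computational but organisational: two dualities are in play — the lattice duality $\mathfrak{h}\!\leftrightarrow\!\du{\mathfrak{h}}$ and the Pontryagin-type duality $Z\!\leftrightarrow\!\du Z$ of \prettyref{thm:zentextstruct} — and the arrangement above keeps them apart, so that the delicate identity $\du{\left(\du Z\right)}\!=\!Z$ is never invoked directly but is replaced by the transparent biduality $\du{\left(\du{\mathfrak{g}}\right)}\!=\!\mathfrak{g}$ together with the uniqueness in \prettyref{prop:zentext}.
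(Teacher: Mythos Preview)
Your argument is correct and follows the same line as the paper's proof: both directions are obtained by invoking part~\ref{enu:gxiprod} of \prettyref{thm:zentextstruct}, once for the pair $(\mathfrak{h},Z)$ and once for $(\du{\mathfrak{g}},W)$, together with biduality. The paper's version is terser and stops after exhibiting the two assignments, whereas you go on to check explicitly that they are mutually inverse by invoking the uniqueness clause of \prettyref{prop:zentext}; this extra care is a genuine improvement, since the paper leaves that verification implicit.
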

\begin{proof}
Suppose that $\mathfrak{h}\!\in\!\lat$ is an $A$-extension of $\mathfrak{g}$,
i.e. $\mathfrak{g}\!=\!\zquot hZ$ for some subgroup $Z\!<\!\zent{\mathfrak{h}}$
isomorphic to $A$. By \prettyref{enu:gxiprod} of \prettyref{thm:zentextstruct}
$\du Z\!=\!\set{\mathfrak{h}_{\xi}}{\xi\!\in\!\dg Z}$ is a subgroup
of $\zent{\du{\mathfrak{g}}}$ isomorphic to $Z$, hence to $A$.
Conversely, for any subgroup $Z\!<\!\zent{\du{\mathfrak{g}}}$ isomorphic
to $A$, $\mathfrak{h}\!=\!\du{\left(\zquot{\du{\mathfrak{g}}}Z\right)}$
is an $\fc$, and by \prettyref{enu:gxiprod} of \prettyref{thm:zentextstruct}
one has $\zquot h{\du Z}\!=\!\zquot{\du{\left(\zquot{\du{\mathfrak{g}}}{\mathit{Z}}\right)}}{\du Z\!=\!\mathfrak{g}}$
for some subgroup $\du Z\!<\!\zent{\mathfrak{h}}$ isomorphic to $Z$,
hence to $A$ as well, i.e. $\mathfrak{h}$ is an $A$-extension of
$\mathfrak{g}$.
\end{proof}
\global\long\def\cov#1{#1^{\intercal}}%

\begin{cor}
Every $\fc$ $\mathfrak{g}\!\in\!\lat$ has a maximal central extension,
the dual of the maximal central quotient of $\du{\mathfrak{g}}$.
\end{cor}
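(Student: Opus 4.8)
The plan is to exhibit the object named in the statement and then verify that it dominates every central extension. First I would set $Z\!=\!\zent{\du{\mathfrak{g}}}$, the full center of $\du{\mathfrak{g}}$; as $\zent{\du{\mathfrak{g}}}$ is the unique largest central subgroup, the quotient $\zquot{\du{\mathfrak{g}}}{Z}$ is well defined and is exactly the maximal central quotient of $\du{\mathfrak{g}}$. Putting $\mathfrak{h}\!=\!\du{\left(\zquot{\du{\mathfrak{g}}}{Z}\right)}$, the preceding lemma (applied with $A\!=\!\zent{\du{\mathfrak{g}}}$) shows at once that $\mathfrak{h}$ is a $Z$-extension of $\mathfrak{g}$, so the candidate is genuinely a central extension of $\mathfrak{g}$ of the advertised form.

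The substantive point is maximality, namely that every central extension of $\mathfrak{g}$ sits inside $\mathfrak{h}$. Let $\mathfrak{h}'$ be an arbitrary central extension, say an $A'$-extension for some Abelian group $A'$. By the preceding lemma $\mathfrak{h}'\!=\!\du{\left(\zquot{\du{\mathfrak{g}}}{Z'}\right)}$ for a subgroup $Z'\!<\!\zent{\du{\mathfrak{g}}}$ with $Z'\!\cong\!A'$. Because $Z'\!\subseteq\!Z$, the defining formula \eqref{eq:zentextdef} immediately gives the order-reversing inclusion $\zquot{\du{\mathfrak{g}}}{Z}\!\subseteq\!\zquot{\du{\mathfrak{g}}}{Z'}$, and \prettyref{lem:inclusion}, applied to the inclusion-reversing duality map, then yields $\mathfrak{h}'\!=\!\du{\left(\zquot{\du{\mathfrak{g}}}{Z'}\right)}\!\subseteq\!\du{\left(\zquot{\du{\mathfrak{g}}}{Z}\right)}\!=\!\mathfrak{h}$. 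Hence $\mathfrak{h}$ contains every central extension of $\mathfrak{g}$, which is the asserted maximality.

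The whole argument is bookkeeping resting on the preceding lemma and \prettyref{prop:zentext}, so the real difficulty is conceptual rather than computational: one must fix the meaning of \emph{maximal} and check that the two order-reversing passages — first the subgroup/central-quotient correspondence, then the duality $\mathfrak{g}\!\mapsto\!\du{\mathfrak{g}}$ — compose so that the \emph{largest} central subgroup $\zent{\du{\mathfrak{g}}}$ produces the \emph{largest} (not the smallest) central extension. If one wants maximality in the stronger, group-theoretic sense that every central extension $\mathfrak{h}'$ is itself a central quotient of $\mathfrak{h}$, I would read the same inclusions for the full chain $\zquot{\du{\mathfrak{g}}}{Z}\!\subseteq\!\zquot{\du{\mathfrak{g}}}{Z'}\!\subseteq\!\du{\mathfrak{g}}$, dualize to $\mathfrak{g}\!\subseteq\!\mathfrak{h}'\!\subseteq\!\mathfrak{h}$, and apply the interval statement of \prettyref{prop:zentext} with $(\mathfrak{h},\du Z)$ in place of $(\mathfrak{g},Z)$ — using $\zquot{h}{\du Z}\!=\!\mathfrak{g}$ — to write $\mathfrak{h}'\!=\!\zquot{h}{H}$ for a subgroup $H\!<\!\du Z\!<\!\zent{\mathfrak{h}}$; this is the only step that would invoke the full interval part of \prettyref{prop:zentext} rather than the correspondence derived from it.
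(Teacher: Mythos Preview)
Your argument is correct and is exactly the one implicit in the paper, which states the corollary without proof: the preceding lemma identifies central extensions of $\mathfrak{g}$ with subgroups of $\zent{\du{\mathfrak{g}}}$ via $Z'\mapsto\du{\left(\zquot{\du{\mathfrak{g}}}{Z'}\right)}$, and you correctly observe that the two order-reversing maps (subgroup $\mapsto$ central quotient, then duality) compose to an order-preserving map, so the full center $Z=\zent{\du{\mathfrak{g}}}$ yields the largest extension. Your final paragraph on the stronger interval form of maximality via \prettyref{prop:zentext} is a nice addendum that goes beyond what the paper records.
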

\begin{defn}
\label{def:nilp}An $\fc$ $\mathfrak{g}\!\in\!\lat$ is nilpotent
if it can be obtained from the trivial $\fc$ by a sequence of central
extensions.
\end{defn}
The rationale of this terminology is that if $\mathfrak{g}\!\in\!\lat$
is local, hence the associated algebra $\svera{\mathfrak{g}}$ is
isomorphic to the character ring of some finite group $G$, the $\fc$
$\mathfrak{g}$ is nilpotent according to the above definition precisely
when $G$ is nilpotent.
\begin{lem}
\label{lem:nilext}If $\mathfrak{g}\!\in\!\lat$ is nilpotent, then
$\cs{\du{\mathfrak{g}}}\!\in\!\mathbb{Z}$.
\end{lem}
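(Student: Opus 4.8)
The plan is to track the quantity $\cs{\du{\mathfrak{g}}}\!=\!\sum_{\alpha\in\mathfrak{g}}\qd{\alpha}^{2}$ (Eq.\prettyref{eq:spread}) along a central series and to show that it is built up from $1$ by integer factors. By \prettyref{def:nilp} there is a chain $\left\{\v\right\}\!=\!\mathfrak{g}_{0},\mathfrak{g}_{1},\dots,\mathfrak{g}_{n}\!=\!\mathfrak{g}$ of $\fc$s in which each $\mathfrak{g}_{i+1}$ is a central extension of $\mathfrak{g}_{i}$, that is $\mathfrak{g}_{i+1}/Z_{i}\!=\!\mathfrak{g}_{i}$ for some central subgroup $Z_{i}\!<\!\zent{\mathfrak{g}_{i+1}}$. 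The base of the induction is immediate, since $\cs{\du{\left\{\v\right\}}}\!=\!\qd{\v}^{2}\!=\!1$.

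The decisive ingredient is a multiplicativity relation already established inside the proof of \prettyref{prop:zentext}: for any $\fc$ $\mathfrak{h}$ and any subgroup $Z\!<\!\zent{\mathfrak{h}}$ one has $\cs{\du{\left(\zquot hZ\right)}}\!=\!\cs{\du{\mathfrak{h}}}/\FA Z$. Read in the direction of extensions, with $\zquot hZ\!=\!\mathfrak{g}$, this says $\cs{\du{\mathfrak{h}}}\!=\!\FA Z\,\cs{\du{\mathfrak{g}}}$. Applying it at the $i$-th link of the chain yields $\cs{\du{\mathfrak{g}_{i+1}}}\!=\!\FA{Z_{i}}\,\cs{\du{\mathfrak{g}_{i}}}$.

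Since each $Z_{i}$ is a finite group, $\FA{Z_{i}}$ is a positive integer, so starting from $\cs{\du{\mathfrak{g}_{0}}}\!=\!1$ and iterating gives $\cs{\du{\mathfrak{g}}}\!=\!\prod_{i=0}^{n-1}\FA{Z_{i}}\!\in\!\mathbb{Z}$, as claimed. I expect no serious obstacle here: the entire content of the lemma is carried by that single extent relation, and the only points requiring care are extracting it from \prettyref{prop:zentext} and applying it in the extension rather than the quotient direction, together with the elementary observation that each factor $\FA{Z_{i}}$ is an integer because $Z_{i}$ is a finite group.
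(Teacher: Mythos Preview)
Your proof is correct and follows exactly the same induction-on-a-central-series strategy as the paper; the only cosmetic difference is that the paper cites \prettyref{thm:zentextstruct} for the key relation $\cs{\du{\mathfrak{h}}}\!=\!\FA Z\,\cs{\du{\left(\zquot hZ\right)}}$, while you (equally correctly) extract it from the proof of \prettyref{prop:zentext}.
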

\begin{proof}
According to \prettyref{thm:zentextstruct}, if $\mathfrak{g}\!\in\!\lat$
is a central extension of $\mathfrak{h}\!\in\!\lat$, then $\cs{\du{\mathfrak{g}}}$
is an integer multiple of $\cs{\du{\mathfrak{h}}}$. The claim follows
by induction.
\end{proof}
We'll see in \prettyref{cor:intspread} that $\cs{\du{\mathfrak{g}}}\!\in\!\mathbb{Z}$
implies that the quantum dimension of any element of $\mathfrak{g}$
is either an integer or the square root of an integer. That the latter
possibility can occur is exemplified by the maximal $\fc$ of the
Ising model (the minimal Virasoro model of central charge $\tfrac{1}{2}$),
which is nilpotent while having a primary of dimension $\sqrt{2}$.
We conjecture that many results about (finite) nilpotent groups carry
over to nilpotent $\fc$s, like the following property, which is known
to be equivalent to nilpotency for finite groups.
\begin{conjecture}
If $\mathfrak{g}$ is nilpotent and $d$ is an integer dividing $\cs{\du{\mathfrak{g}}}$,
then there exists an $\fc$ $\mathfrak{h}\!\subseteq\!\mathfrak{g}$
such that $\cs{\du{\mathfrak{h}}}\!=\!d$.
\end{conjecture}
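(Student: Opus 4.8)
The plan is to transport to $\fc$s the classical fact that a finite nilpotent group has a normal subgroup of every order dividing its own. The engine is the observation, implicit in the extent computation inside the proof of \prettyref{prop:zentext}, that for any subgroup $Z\!<\!\zent{\mathfrak{g}}$ the central quotient $\zquot gZ$ is an $\fc$ with $\zquot gZ\!\subseteq\!\mathfrak{g}$ and $\cs{\du{\left(\zquot gZ\right)}}\!=\!\cs{\du{\mathfrak{g}}}/\FA Z$. Thus, writing $\cs{\du{\mathfrak{g}}}/d\!=\!p_{1}\cdots p_{r}$ as a product of (not necessarily distinct) primes, it is enough to establish the single reduction step: \emph{every nilpotent $\mathfrak{g}$ with $p\!\mid\!\cs{\du{\mathfrak{g}}}$ carries a central class $\zcl\!\in\!\zent{\mathfrak{g}}$ of order $p$ whose quotient $\zquot g{\langle\zcl\rangle}$ is again nilpotent.} Iterating this step along $p_{1},\dots,p_{r}$ then yields a chain of nilpotent $\fc$s $\mathfrak{g}\!\supseteq\!\cdots\!\supseteq\!\mathfrak{h}$ with $\cs{\du{\mathfrak{h}}}\!=\!\cs{\du{\mathfrak{g}}}/(p_{1}\cdots p_{r})\!=\!d$, as desired.

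The reduction step splits into two assertions: (i) $p\!\mid\!\cs{\du{\mathfrak{g}}}$ forces $p\!\mid\!\FA{\zent{\mathfrak{g}}}$, so that an order-$p$ central class exists by Cauchy's theorem in the Abelian group $\zent{\mathfrak{g}}$; and (ii) central quotients of nilpotent $\fc$s are nilpotent. Fix a central series $\left\{ \v\right\} \!=\!\mathfrak{g}_{0}\!\subseteq\!\cdots\!\subseteq\!\mathfrak{g}_{n}\!=\!\mathfrak{g}$ witnessing \prettyref{def:nilp}, with $\mathfrak{g}_{i-1}\!=\!\mathfrak{g}_{i}/\!W_{i}$ and $W_{i}\!<\!\zent{\mathfrak{g}_{i}}$, so that $\cs{\du{\mathfrak{g}}}\!=\!\prod_{i}\FA{W_{i}}$ by \prettyref{lem:nilext} and its proof. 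Assertion (i) is immediate once $p\!\mid\!\FA{W_{n}}$, for then $W_{n}\!<\!\zent{\mathfrak{g}}$ already contains an element of order $p$; and (ii) I would attack by intersecting the $\mathfrak{g}_{i}$ with $\zquot gZ$ (intersections of $\fc$s are $\fc$s) and verifying, through the subgroup correspondence of \prettyref{prop:zentext}, that the intersections again form a central series.

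The genuine difficulty — the step I expect to be the main obstacle — is assertion (i) in the complementary case $p\!\nmid\!\FA{W_{n}}$, where $p\!\mid\!\cs{\du{\mathfrak{g}_{n-1}}}$ and I must lift an order-$p$ central class from $\mathfrak{g}_{n-1}\!=\!\zquot g{W_{n}}$ back up to $\mathfrak{g}$. There is a natural homomorphism $\zent{\mathfrak{g}}\!\to\!\zent{\mathfrak{g}_{n-1}}$ sending a central $\mathfrak{g}$-class to the (necessarily central) $\mathfrak{g}_{n-1}$-class containing it, with kernel exactly $W_{n}$ since $\du{\mathfrak{g}_{n-1}}\!=\!\bigcup_{\zcl\in W_{n}}\zcl$ by \prettyref{eq:zentextdual}; but, precisely as the analogous map $Z(G)\!\to\!Z(G/Z)$ for groups, it need not be surjective, so a direct lift is unavailable. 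The classical cure is the decomposition of a nilpotent group into the direct product of its Sylow subgroups, and the heart of a full proof would be the corresponding structural statement for $\fc$s: that a nilpotent $\fc$ is a ``coprime product'' of primary $\fc$s of prime-power dual extent, with $\zent{\mathfrak{g}}$ and the central-series data distributing over the factors. For a primary $\fc$ the top term $W_{n}$ of any central series is a nontrivial $p$-group, so (i) holds by the easy case above, and within primary pieces the prime-peeling proceeds without obstruction; establishing this decomposition in the absence of an ambient group is what the conjecture really demands.

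It is worth recording that the statement is already unconditional when $\mathfrak{g}$ is local. In that case the identification discussed after \prettyref{def:local}, resting on the result of Deligne \cite{Deligne1990}, realizes $\svera{\mathfrak{g}}$ as the character ring of a finite group $G$ which is nilpotent exactly when $\mathfrak{g}$ is, and with $\FA G\!=\!\cs{\du{\mathfrak{g}}}\!=\!\sum_{\alpha\in\mathfrak{g}}\qd{\alpha}^{2}$. The character rings of the quotients $G/N$ embed as local $\fc$s $\mathfrak{h}\!\subseteq\!\mathfrak{g}$ of dual extent $\left[G\!:\!N\right]$, and since a finite nilpotent group possesses a normal subgroup of every index dividing its order, one obtains an $\mathfrak{h}$ with $\cs{\du{\mathfrak{h}}}\!=\!d$ for each $d\!\mid\!\cs{\du{\mathfrak{g}}}$. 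The real content of the conjecture is therefore to dispense with locality, i.e. to supply the primary decomposition — or at least assertion (i) — for nilpotent $\fc$s not known to arise from a group.
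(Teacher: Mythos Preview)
The paper offers no proof of this statement: it is explicitly labelled a \emph{conjecture}, introduced as one of several ``conjectural results that seem to hold in full generality''. There is therefore nothing against which to compare your attempt.

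Your write-up is honest about its status. The reduction to peeling off one prime at a time via central quotients is sound, and the extent formula $\cs{\du{(\zquot gZ)}}=\cs{\du{\mathfrak{g}}}/\FA Z$ you extract from the proof of \prettyref{prop:zentext} is correct. But you then isolate two assertions, (i) that $p\mid\cs{\du{\mathfrak{g}}}$ forces $p\mid\FA{\zent{\mathfrak{g}}}$, and (ii) that nilpotency survives central quotients, and you prove neither. For (i) you treat only the trivial case $p\mid\FA{W_n}$ and correctly observe that the natural map $\zent{\mathfrak{g}}\to\zent{\mathfrak{g}_{n-1}}$ need not surject, leaving the complementary case open; for (ii) you only sketch an approach (``intersect the $\mathfrak{g}_i$ with $\zquot gZ$'') without checking that the resulting chain is again a central series in the sense of \prettyref{def:nilp}. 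The primary decomposition you propose as a cure is itself unproven for $\fc$s. So what you have written is a reasonable programme that locates the obstruction, not a proof --- which matches the paper's own assessment in calling the statement a conjecture.

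Your closing paragraph on the local case is correct and worth keeping: via Deligne's theorem a local nilpotent $\mathfrak{g}$ corresponds to a finite nilpotent group $G$ with $\FA G=\cs{\du{\mathfrak{g}}}$, the characters of $G/N$ form an $\fc$ $\mathfrak{h}\subseteq\mathfrak{g}$ with $\cs{\du{\mathfrak{h}}}=[G\!:\!N]$, and nilpotent groups have normal subgroups of every admissible index. This confirms that the open content of the conjecture lies entirely outside the local regime.
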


\section{The Galois action\label{sec:The-Galois-action}}

\global\long\def\eps#1#2{\boldsymbol{\upepsilon}_{#1}(#2)}%

Let us briefly recall the basics of the Galois action in RCFT \cite{Boere1991,Coste1994,Bantay2003b}.
It is known that, denoting by $N$ the least common multiple of the
denominators of the conformal weights $\cw p$, the field obtained
by adjoining to the rationals $\mathbb{Q}$ the quantum dimensions
$\qd p$ and the exponentiated conformal weights $\om p$ is the cyclotomic
field $\cyc N$ of conductor $N$, generated by a primitive root of
unity $\zeta_{N}\!=\!\exi[][2]N$. The Galois group of $\cyc N$ is
isomorphic to the group $\zn N$ of prime residues mod $N$, with
each $\ell\!\in\!\zn N$ corresponding to a Galois transformation
$\gal{\ell}$ mapping $\zeta_{N}$ to $\zeta_{N}^{\ell}$ (while leaving
all rationals fixed). Since the conformal weights of primaries are
rational numbers, one has $\gal{\ell}\circ\omega\!=\!\omega^{\ell}$
for the exponentiated conformal weights.

Because fusion matrices have rational integer matrix elements, the
irreducible representations of the Verlinde algebra $\vera$ (and
of its subalgebras) are permuted between themselves by the Galois
transformations $\gal{\ell}$. In other words, for each $\ell\!\in\!\zn N$
there exists a permutation $\uppi\!\left(\ell\right)\!:\!p\!\mapsto\!\galpi{\ell}p$
of the primaries such that
\begin{equation}
\gal{\ell}\!\circ\!\ch p=\ch{\galpi{\ell}p}\label{eq:galact}
\end{equation}
and the mapping $\uppi\!:\!\ell\!\mapsto\!\uppi\!\left(\ell\right)$
is clearly a homomorphism. Furthermore,
\begin{equation}
\gal{\ell}\!\left(\qd p\right)\!=\!\eps{\ell}p\frac{\qd{\galpi{\ell}p}}{\qd{\galpi{\ell}\v}}\label{eq:galact3}
\end{equation}
where $\eps{\ell}p=\pm1$ depending on the sign of $\gal{\ell}\!\left(\qd p\right)$.
Note that it follows from Eqs.\prettyref{eq:ineq} and \prettyref{eq:galact}
that $\FA{\gal{\ell}\!\left(\qd p\right)\!}\!=\!\FA{\ch{\galpi{\ell}\v}\!\left(p\right)\!}\!\leq\!\qd p$,
i.e. $\qd{\galpi{\ell}p}\!\leq\!\qd p\qd{\galpi{\ell}\v}$. Finally,
as a consequence of the fact that the modular representation has as
kernel a congruence subgroup of level $N$, one has for all primaries
$p$ and any $\ell\!\in\!\zn N$

\noindent 
\begin{equation}
\cw{\galpi{\ell}p}-\cw{\galpi{\ell}\v}\!\in\!\ell^{2}\cw p\!+\!\mathbb{Z}\label{eq:omgalpi}
\end{equation}

\global\long\def\mint#1{\boldsymbol{\Theta}_{#1}}%

\global\long\def\munt{\pd{\mint{\ell}}}%

\begin{lem}
\label{lem:intdef}For $\ell\!\in\!\zn N$ let $\mint{\ell}\!=\!\set p{\gal{\ell}\!\left(\qd p\right)\!=\!\eps{\ell}p\qd p}$.
Then $p,q\!\in\!\mint{\ell}$ and $N_{pq}^{r}\!>\!0$ implies $r\!\in\!\mint{\ell}$
and $\eps{\ell}r\!=\!\eps{\ell}p\eps{\ell}q$. As a consequence, both
$\mint{\ell}$ and $\munt\!=\!\set p{\gal{\ell}\!\left(\qd p\right)\!=\!\qd p}$
are $\fc$s.
\end{lem}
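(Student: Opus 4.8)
The plan is to recast membership in $\mint\ell$ as the saturation of a known inequality and then exploit a triangle inequality. Observe first that, by definition, $\eps\ell p$ is the sign of the real number $\gal\ell\!\left(\qd p\right)$, so that $\gal\ell\!\left(\qd p\right)\!=\!\eps\ell p\FA{\gal\ell\!\left(\qd p\right)\!}$; moreover \eqref{eq:galact} and \eqref{eq:ineq} give $\FA{\gal\ell\!\left(\qd p\right)\!}\!=\!\FA{\ch{\galpi\ell\v}\!\left(p\right)\!}\!\leq\!\qd p$. Hence the defining condition $\gal\ell\!\left(\qd p\right)\!=\!\eps\ell p\qd p$ of $\mint\ell$ is \emph{equivalent} to the saturation $\FA{\gal\ell\!\left(\qd p\right)\!}\!=\!\qd p$. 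This reformulation is what links $\mint\ell$ to the fusion rules.

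Next I would apply $\gal\ell$ to the identity $\qd p\qd q\!=\!\sum_r N_{pq}^r\qd r$, which is \eqref{eq:verrep} specialized to $w\!=\!\v$ together with $\qd p\!=\!\ch\v\!\left(p\right)$ and which holds for \emph{arbitrary} primaries $p,q$ (so the not-yet-established fusion-closure of $\mint\ell$ is not needed here). As the fusion coefficients are rational integers, hence fixed by $\gal\ell$, this yields
\[
\gal\ell\!\left(\qd p\right)\gal\ell\!\left(\qd q\right)=\sum_r N_{pq}^r\gal\ell\!\left(\qd r\right).
\]
For $p,q\!\in\!\mint\ell$ the left-hand side equals $\eps\ell p\eps\ell q\,\qd p\qd q$, a real quantity of modulus $\qd p\qd q$.

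The key step is then the chain
\[
\qd p\qd q=\left|\sum_r N_{pq}^r\gal\ell\!\left(\qd r\right)\right|\leq\sum_r N_{pq}^r\FA{\gal\ell\!\left(\qd r\right)\!}\leq\sum_r N_{pq}^r\qd r=\qd p\qd q,
\]
in which the two extreme terms agree, forcing both inequalities to be equalities. Equality in the rightmost inequality is term-by-term (all summands being non-negative), so $N_{pq}^r\!>\!0$ forces $\FA{\gal\ell\!\left(\qd r\right)\!}\!=\!\qd r$, i.e. $r\!\in\!\mint\ell$ by the reformulation above; this is precisely fusion-closure of $\mint\ell$. Equality in the genuine triangle inequality then forces the nonzero reals $\gal\ell\!\left(\qd r\right)\!=\!\eps\ell r\qd r$ (for $N_{pq}^r\!>\!0$) to share a common sign $\eps$, whence $\sum_r N_{pq}^r\gal\ell\!\left(\qd r\right)\!=\!\eps\,\qd p\qd q$; comparing with the left-hand side $\eps\ell p\eps\ell q\,\qd p\qd q$ pins down $\eps\!=\!\eps\ell p\eps\ell q$, giving $\eps\ell r\!=\!\eps\ell p\eps\ell q$ for every such $r$.

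The remaining claims are bookkeeping. The vacuum satisfies $\gal\ell\!\left(\qd\v\right)\!=\!1\!=\!\eps\ell\v\qd\v$, so $\v\!\in\!\mint\ell$; combined with the fusion-closure just shown, the characterization of $\fc$s recorded after \eqref{eq:fccrit} gives that $\mint\ell$ is an $\fc$. Since $\munt$ is exactly the set of $p\!\in\!\mint\ell$ with $\eps\ell p\!=\!+1$, and since it contains $\v$, the multiplicativity $\eps\ell r\!=\!\eps\ell p\eps\ell q$ shows $\munt$ is fusion-closed as well, hence also an $\fc$. I expect the one point needing care to be the sign extraction from the triangle-inequality equality: one must note that, the summands $\gal\ell\!\left(\qd r\right)$ being real and nonzero once $r\!\in\!\mint\ell$, ``common phase'' degenerates to ``common sign,'' and that this common sign is fixed by the already-real left-hand side.
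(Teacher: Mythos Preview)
Your proof is correct and follows essentially the same approach as the paper: both apply $\gal{\ell}$ to the dimension identity $\qd p\qd q=\sum_r N_{pq}^r\qd r$, use $\FA{\gal{\ell}(\qd r)}\le\qd r$, and force equality term by term. The only cosmetic difference is that the paper rearranges to $\sum_r N_{pq}^r\bigl(\qd r-\eps{\ell}p\,\eps{\ell}q\,\gal{\ell}(\qd r)\bigr)=0$ and takes real parts, whereas you phrase the same saturation via the chain of absolute-value inequalities; your explicit reformulation of membership in $\mint{\ell}$ as saturation of $\FA{\gal{\ell}(\qd p)}\le\qd p$ is a nice touch but not a genuinely different route.
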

\begin{proof}
Applying $\gal{\ell}$ to both sides of ${\displaystyle \sum\limits _{r}}\!N_{pq}^{r}\qd r\!=\!\qd p\qd q$
gives
\[
\eps{\ell}p\eps{\ell}q\sum_{r}\!N_{pq}^{r}\gal{\ell}\!\left(\qd r\right)\!=\!\eps{\ell}p\eps{\ell}q\gal{\ell}\!\left(\qd p\qd q\right)\!=\!\qd p\qd q\!=\!\sum_{r}\!N_{pq}^{r}\qd r
\]
for $p,q\!\in\!\mint{\ell}$. After rearrangement and taking real
parts, one gets
\[
\sum_{r}\!N_{pq}^{r}\,\mathtt{Re}\Bigl\{\qd r-\eps{\ell}p\eps{\ell}q\gal{\ell}\!\left(\qd r\right)\Bigr\}=0
\]
Because $\FA{\gal{\ell}\!\left(\qd r\right)\!}\!\leq\!\qd r$, and
the real part of a complex number cannot exceed its modulus, we conclude
that all terms on the left-hand side are non-negative, hence they
should all vanish, that is $N_{pq}^{r}\!=\!0$ unless $\gal{\ell}\!\left(\qd r\right)\!=\!\eps{\ell}p\eps{\ell}q\qd r\!=\!\eps{\ell}r\qd r$.
Since $\munt\!=\!\set{p\!\in\!\mint{\ell}}{\eps{\ell}p\!=\!1}$, the
assertion follows.
\end{proof}
\begin{rem}
Note that, as a consequence of $\eps{\ell}p=\pm1$, the $\fc$ $\mint{\ell}$
either coincides with $\munt$ or is a $\mathbb{Z}_{2}$-extension
of it.
\end{rem}
\begin{cor}
\label{cor:intfc}Both $\pd{\mint{}}\!=\!\set p{\qd p\!\in\!\mathbb{Z}}$
and $\mint{}\!=\!\set p{\qd p^{2}\!\in\!\mathbb{Z}}$ are $\fc$s.
\end{cor}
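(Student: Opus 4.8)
The plan is to express each of the two sets as an intersection, over all $\ell\!\in\!\zn N$, of the $\fc$s furnished by \prettyref{lem:intdef}, and then to invoke the elementary fact (used already to make $\lat$ a lattice) that any intersection of $\fc$s is again an $\fc$.

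First I would record the integrality input: by the Preliminaries every quantum dimension $\qd p$ is an algebraic integer, hence so is $\qd p^2$, and both lie in the cyclotomic field $\cyc N$. Since $\Gal{\cyc N}\!\cong\!\zn N$ acts through the $\gal{\ell}$, a number in $\cyc N$ is rational iff it is fixed by every $\gal{\ell}$, and a rational algebraic integer is an ordinary integer. Therefore $\qd p\!\in\!\mathbb{Z}$ iff $\gal{\ell}\!\left(\qd p\right)\!=\!\qd p$ for all $\ell$, which is exactly the condition $p\!\in\!\munt$ for every $\ell$; consequently
\[
\pd{\mint{}}=\bigcap_{\ell\in\zn N}\munt,
\]
an intersection of $\fc$s, hence itself an $\fc$.

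For the squared dimensions I would exploit that the Galois conjugates $\gal{\ell}\!\left(\qd p\right)$ are real: by Eq.~\eqref{eq:galact3} each equals $\eps{\ell}p$ times a positive ratio of quantum dimensions. Thus $\qd p^2\!\in\!\mathbb{Z}$ iff $\gal{\ell}\!\left(\qd p^2\right)\!=\!\qd p^2$ for all $\ell$, and since $\bigl(\gal{\ell}\!\left(\qd p\right)\bigr)^{2}\!=\!\qd p^2$ forces the real number $\gal{\ell}\!\left(\qd p\right)$ to equal $\pm\qd p\!=\!\eps{\ell}p\qd p$, this says precisely $p\!\in\!\mint{\ell}$ for every $\ell$. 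Hence
\[
\mint{}=\bigcap_{\ell\in\zn N}\mint{\ell},
\]
again an intersection of $\fc$s, which finishes the argument.

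The one step that needs genuine care is the integrality characterization: one must pair the algebraic-integrality of $\qd p$ and $\qd p^2$ with the fact that a rational algebraic integer lies in $\mathbb{Z}$, and in the squared case correctly absorb the sign ambiguity $\gal{\ell}\!\left(\qd p\right)\!=\!\pm\qd p$ into the already-defined sign $\eps{\ell}p$. Once these identifications are in place the result is immediate from \prettyref{lem:intdef} and the closure of $\fc$s under intersection.
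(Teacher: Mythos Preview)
Your proof is correct and follows essentially the same route as the paper: both express $\pd{\mint{}}$ and $\mint{}$ as the intersections $\bigcap_{\ell}\munt$ and $\bigcap_{\ell}\mint{\ell}$ respectively, invoking the algebraic integrality of quantum dimensions together with the Galois characterization of rationality, and then appeal to \prettyref{lem:intdef} plus closure of $\fc$s under intersection. You are simply more explicit than the paper about the reality of $\gal{\ell}\!\left(\qd p\right)$ and the absorption of the sign into $\eps{\ell}p$, which the paper leaves as ``a similar argument\ldots exploiting the fact that $\eps{\ell}p\!=\!\pm1$.''
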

\begin{proof}
Because quantum dimensions are algebraic integers, one has $p\!\in\!\pd{\mint{}}$
iff $\qd p\!\in\!\mathbb{Q}$, i.e. $\gal{\ell}\!\left(\qd p\right)\!=\!\qd p$
for all $\ell\!\in\!\zn N$, hence
\[
\pd{\mint{}}=\bigcap_{\ell\in\zn N}\!\pd{\mint{\ell}}
\]

\noindent This implies at once $\pd{\mint{}}\!\in\!\lat$ by \prettyref{lem:intdef}.
A similar argument works for $\mint{}$, exploiting the fact that
$\eps{\ell}p\!=\!\pm1$.
\end{proof}
From now on, we shall consider a fixed $\fc$ $\mathfrak{g}\!\in\!\lat$.
\begin{prop}
\noindent \label{prop:galact}The Galois permutations $\galpi{\uppi\!\left(\ell\right)}$
map $\mathfrak{g}$-$\gcl$es to $\mathfrak{g}$-$\gcl$es, i.e. $\galpi{\ell}\ccl\!=\!\set{\galpi{\ell}p}{p\!\in\!\ccl}\!\in\!\cl{\mathfrak{g}}$
for every $\gcl$ $\ccl\!\in\!\cl{\mathfrak{g}}$, in such a way that
$\galpi{\ell}\!\left(\zcl\ccl\right)\!=\!\zcl^{\ell}\!\left(\galpi{\ell}\ccl\right)$
for $\zcl\!\in\!\zent{\mathfrak{g}}$. Moreover, overlaps are left
invariant, $\sp{\mathfrak{b}}{\galpi{\ell}\ccl}\!=\!\sp{\mathfrak{b}}{\ccl}$
for all $\mathfrak{b}\!\in\!\bl{\mathfrak{g}}$, while $\cs{\galpi{\ell}\ccl}=\gal{\ell}\!\left(\cs{\ccl}\right)$.
\end{prop}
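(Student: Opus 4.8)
The plan is to reduce every assertion to a single covariance identity and then read off the four claims in turn. First I would record the master identity. For $p\!\in\!\ccl$ and $\alpha\!\in\!\chg$ one has $\alpha\!\left(\ccl\right)\!=\!\ch p\!\left(\alpha\right)$, so applying $\gal{\ell}$ to Eq.\prettyref{eq:galact} gives
\[
\alpha\!\left(\galpi{\ell}\ccl\right)=\ch{\galpi{\ell}p}\!\left(\alpha\right)=\gal{\ell}\!\left(\ch p\!\left(\alpha\right)\right)=\gal{\ell}\!\left(\alpha\!\left(\ccl\right)\right)
\]
for every $\alpha\!\in\!\chg$. In particular, if $p$ and $q$ lie in the same $\mathfrak{g}$-class then so do $\galpi{\ell}p$ and $\galpi{\ell}q$ (apply $\gal{\ell}$ to the equalities $\ch p\!=\!\ch q$ on $\svera{\mathfrak{g}}$), so $\uppi\!\left(\ell\right)$ sends each class into a single class; since $\uppi$ is a homomorphism, $\galpi{\ell^{-1}}$ inverts this assignment, whence $\galpi{\ell}\ccl\!\in\!\cl{\mathfrak{g}}$. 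Everything below is a corollary of the master identity $\alpha\!\left(\galpi{\ell}\ccl\right)\!=\!\gal{\ell}\!\left(\alpha\!\left(\ccl\right)\right)$.

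For the extent I would start from the diagonal case $\cs{\ccl}\!=\!\sum_{\alpha\in\chg}\FA{\alpha\!\left(\ccl\right)}^{2}$ of the second orthogonality relation Eq.\prettyref{eq:ortho2}. Since the Galois group $\zn N$ is abelian, complex conjugation (itself a Galois automorphism) commutes with $\gal{\ell}$, so $\FA{\gal{\ell}\!\left(x\right)}^{2}\!=\!\gal{\ell}\!\left(\FA{x}^{2}\right)$; combined with the master identity this gives $\cs{\galpi{\ell}\ccl}\!=\!\sum_{\alpha\in\chg}\FA{\gal{\ell}\!\left(\alpha\!\left(\ccl\right)\right)}^{2}\!=\!\gal{\ell}\!\left(\cs{\ccl}\right)$.

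The central-twist relation is the step I expect to be the main obstacle, because it must reconcile the integer power $\zcl^{\ell}$ taken inside the group $\zent{\mathfrak{g}}$ with the Galois exponent $\ell$. Using the master identity, Eq.\prettyref{eq:zentproddef} in the form $\alpha\!\left(\zcl\ccl\right)\!=\!\cech{\zcl}{\alpha}\alpha\!\left(\ccl\right)$, and multiplicativity of $\gal{\ell}$, I would write $\alpha\!\left(\galpi{\ell}\!\left(\zcl\ccl\right)\right)\!=\!\gal{\ell}\!\left(\cech{\zcl}{\alpha}\right)\alpha\!\left(\galpi{\ell}\ccl\right)$, so the claim reduces to $\gal{\ell}\!\left(\cech{\zcl}{\alpha}\right)\!=\!\cech{\zcl^{\ell}}{\alpha}$. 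The key observation is that, by Eq.\prettyref{eq:cechprod}, the map $\zcl\!\mapsto\!\cech{\zcl}{\alpha}$ is a character of the finite abelian group $\zent{\mathfrak{g}}$, so its values are roots of unity; as these lie in $\cyc N$, the automorphism $\gal{\ell}$ acts on them by the $\ell$-th power map, giving $\gal{\ell}\!\left(\cech{\zcl}{\alpha}\right)\!=\!\cech{\zcl}{\alpha}^{\ell}\!=\!\cech{\zcl^{\ell}}{\alpha}$. Feeding this back and applying Eq.\prettyref{eq:zentproddef} once more yields $\alpha\!\left(\galpi{\ell}\!\left(\zcl\ccl\right)\right)\!=\!\alpha\!\left(\zcl^{\ell}\!\left(\galpi{\ell}\ccl\right)\right)$ for all $\alpha\!\in\!\chg$, hence the two classes coincide.

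Finally, for the overlaps I would use the formula $\sp{\mathfrak{b}}{\ccl}\!=\!\frac{1}{\cs{\ccl}}\sum_{\alpha\in\chg}\overline{\alpha\!\left(\ccl\right)}\,\mathrm{Tr}\,\fm_{\mathfrak{b}}\!\left(\alpha\right)$ established in the proof of \prettyref{prop:overlapbound}, together with $\mathrm{Tr}\,\fm_{\mathfrak{b}}\!\left(\alpha\right)\!=\!\sum_{p\in\mathfrak{b}}N_{\alpha p}^{p}$, which is a non-negative integer and hence fixed by $\gal{\ell}$. Replacing $\ccl$ by $\galpi{\ell}\ccl$ and using the master identity together with $\cs{\galpi{\ell}\ccl}\!=\!\gal{\ell}\!\left(\cs{\ccl}\right)$ from the extent step, every factor on the right becomes the $\gal{\ell}$-image of the corresponding factor for $\ccl$, so $\sp{\mathfrak{b}}{\galpi{\ell}\ccl}\!=\!\gal{\ell}\!\left(\sp{\mathfrak{b}}{\ccl}\right)$. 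But $\sp{\mathfrak{b}}{\ccl}$ is a non-negative integer (a multiplicity), so it is fixed by $\gal{\ell}$, yielding $\sp{\mathfrak{b}}{\galpi{\ell}\ccl}\!=\!\sp{\mathfrak{b}}{\ccl}$ and completing the argument.
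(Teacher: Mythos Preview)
Your proof is correct and follows essentially the same route as the paper's: the master identity $\alpha\!\left(\galpi{\ell}\ccl\right)\!=\!\gal{\ell}\!\left(\alpha\!\left(\ccl\right)\right)$, the root-of-unity argument for $\cech{\zcl}{\alpha}$, and the use of Eq.\prettyref{eq:ortho2} for the extent all match the paper verbatim. The only divergence is in the overlap step: the paper argues directly that the integral representation $\bfm_{\mathfrak{b}}$ equals its own Galois conjugates, so $\ch{\ccl}$ and $\ch{\galpi{\ell}\ccl}\!=\!\gal{\ell}\!\circ\!\ch{\ccl}$ occur with equal multiplicity, whereas you unpack this via the trace formula and then invoke that the multiplicity is a rational integer---a slightly more computational but equally valid version of the same idea.
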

\begin{proof}
\begin{singlespace}
That $\galpi{\ell}\ccl\!\in\!\cl{\mathfrak{g}}$ follows at once from
Eq.\eqref{eq:galact}, while for $\!\zcl\!\in\!\zent{\mathfrak{g}}$
\begin{gather*}
\alpha\!\left(\galpi{\ell}\!\left(\zcl\ccl\right)\right)\!=\!\gal{\ell}\!\left(\alpha\!\left(\zcl\ccl\right)\right)\!=\!\gal{\ell}\!\left(\cech{\zcl}{\alpha}\!\alpha\!\left(\ccl\right)\right)\!=\!\\
\!=\!\cech{\zcl}{\alpha}^{\ell}\gal{\ell}\!\left(\alpha\!\left(\ccl\right)\right)\!=\!\cech{\zcl^{\ell}}{\alpha}\alpha\!\left(\galpi{\ell}\ccl\right)\!=\!\alpha\!\left(\zcl^{\ell}\!\left(\galpi{\ell}\ccl\right)\right)
\end{gather*}
by Eqs.\prettyref{eq:galact} and \prettyref{eq:zentproddef}, since
$\cech{\zcl}{\alpha}$ is a root of unity. As to the rest, 
\end{singlespace}

\noindent remember that the overlap $\sp{\mathfrak{b}}{\ccl}$ is
the multiplicity of the irrep $\ch{\ccl}$ in the irreducible decomposition
of the representation $\bfm_{\mathfrak{b}}$ associated to the block
$\mathfrak{b}$. Since $\bfm_{\mathfrak{b}}$ is integral, it equals
all its Galois conjugates, hence it contains the irreps $\ch{\galpi{\ell}\ccl}\!=\!\gal{\ell}\!\circ\!\ch{\ccl}$
and $\ch{\ccl}$ with the same multiplicity, i.e. $\sp{\mathfrak{b}}{\galpi{\ell}\ccl}\!=\!\sp{\mathfrak{b}}{\ccl}$.
Finally,
\[
\gal{\ell}\!\left(\cs{\ccl}\right)\!=\!\sum_{\alpha\in\mathfrak{g}}\!\gal{\ell}\!\left(\FA{\alpha\!\left(\ccl\right)\!}^{2}\right)\!=\!\sum_{\alpha\in\mathfrak{g}}\!\FA{\alpha\!\left(\galpi{\ell}\ccl\right)\!}^{2}\!=\!\cs{\galpi{\ell}\ccl}
\]

\noindent by Eq.\prettyref{eq:ortho2}, proving the assertion.
\end{proof}
\begin{cor}
\noindent \label{cor:galactbl}The Galois permutations $\galpi{\uppi\!\left(\ell\right)}$
map $\mathfrak{g}$-$\dcl$s to $\mathfrak{g}$-$\dcl$s, i.e. $\galpi{\ell}\mathfrak{b}\!=\!\set{\galpi{\ell}p}{p\!\in\!\mathfrak{b}}\!\in\!\bl{\mathfrak{g}}$
for any $\dcl$ $\mathfrak{b}\!\in\!\bl{\mathfrak{g}}$, in such a
way that $\cs{\galpi{\ell}\mathfrak{b}}=\gal{\ell}\!\left(\cs{\mathfrak{b}}\right)$
and $\sp{\galpi{\ell}\mathfrak{b}}{\ccl}\!=\sp{\mathfrak{b}}{\ccl}$
for all $\ccl\!\in\!\cl{\mathfrak{g}}$. As a consequence, every $\fc$
$\mathfrak{g}\!\in\!\lat$ is self-conjugate, i.e.the charge conjugate
$\overline{\alpha}$ of any primary $\alpha\!\in\!\mathfrak{g}$ also
belongs to $\mathfrak{g}$.
\end{cor}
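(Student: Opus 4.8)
The plan is to derive everything from the block--class duality emphasized after \prettyref{lem:spreadrecip}. Since $\du{\mathfrak{g}}$ is itself an FC set by \prettyref{cor:duality}, and the $\mathfrak{g}$-blocks are by definition exactly the $\du{\mathfrak{g}}$-classes, the first two assertions are nothing but \prettyref{prop:galact} read for $\du{\mathfrak{g}}$ in place of $\mathfrak{g}$. Concretely, applying \prettyref{prop:galact} to the FC set $\du{\mathfrak{g}}$ immediately yields that each Galois permutation $\galpi{\ell}$ carries $\du{\mathfrak{g}}$-classes to $\du{\mathfrak{g}}$-classes, i.e. $\mathfrak{g}$-blocks to $\mathfrak{g}$-blocks, and that the extent transforms as $\cs{\galpi{\ell}\mathfrak{b}}\!=\!\gal{\ell}\!\left(\cs{\mathfrak{b}}\right)$, since the extent of Eq.\eqref{eq:extdef} is defined identically for the classes of any FC set.

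For the overlap invariance $\sp{\galpi{\ell}\mathfrak{b}}{\ccl}\!=\!\sp{\mathfrak{b}}{\ccl}$ I would exploit the manifest symmetry of the expression in \prettyref{lem:overlapint}: the overlap $\sp{\mathfrak{b}}{\ccl}\!=\!\sum_{p\in\mathfrak{b}}\sum_{q\in\ccl}\FA{S_{pq}}^{2}$ is unchanged under interchanging the roles of the block $\mathfrak{b}$ and the class $\ccl$, so it is precisely the quantity computed by \prettyref{lem:overlapint} for the dual FC set $\du{\mathfrak{g}}$, where $\mathfrak{b}$ plays the role of a $\du{\mathfrak{g}}$-class and $\ccl$ that of a $\du{\mathfrak{g}}$-block. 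Applying the overlap-invariance clause of \prettyref{prop:galact} to $\du{\mathfrak{g}}$ yields invariance of this quantity under $\galpi{\ell}$ acting on the \emph{class} index $\mathfrak{b}$ of the dual theory, which is exactly $\sp{\galpi{\ell}\mathfrak{b}}{\ccl}\!=\!\sp{\mathfrak{b}}{\ccl}$.

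The only genuinely new ingredient is the self-conjugacy, and the key observation is that charge conjugation coincides with the Galois permutation at $\ell\!=\!-1$. Since $\gal{-1}$ acts as complex conjugation on $\cyc N$, and the entries $\ch p\!\left(q\right)\!=\!S_{qp}/S_{\v p}$ lie in $\cyc N$ with $S_{\v p}$ real and positive, the standard $S$-matrix identities $\overline{S_{qp}}\!=\!S_{q\overline{p}}$ and $S_{\v p}\!=\!S_{\v\overline{p}}$ give $\gal{-1}\!\left(\ch p\!\left(q\right)\right)\!=\!\overline{S_{qp}}/S_{\v p}\!=\!S_{q\overline{p}}/S_{\v\overline{p}}\!=\!\ch{\overline{p}}\!\left(q\right)$, so by Eq.\eqref{eq:galact} one has $\galpi{-1}p\!=\!\overline{p}$. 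I would then invoke the first part of the corollary with $\ell\!=\!-1$: the image $\galpi{-1}\mathfrak{g}\!=\!\overline{\mathfrak{g}}$ is again a $\mathfrak{g}$-block, and it contains the vacuum because $\overline{\v}\!=\!\v$. As $\mathfrak{g}$ is the unique $\mathfrak{g}$-block containing $\v$ (the corollary following \prettyref{lem:blcrit2}), we conclude $\overline{\mathfrak{g}}\!=\!\mathfrak{g}$, i.e. $\overline{\alpha}\!\in\!\mathfrak{g}$ for every $\alpha\!\in\!\mathfrak{g}$.

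I expect the main obstacle to be the identification $\galpi{-1}p\!=\!\overline{p}$, as it is the one place where the purely formal duality argument must be supplemented by concrete arithmetic of the modular $S$-matrix (the reality and positivity of $S_{\v p}$ and the behaviour of $S$ under charge conjugation). Everything else is a mechanical transcription of \prettyref{prop:galact} and \prettyref{lem:overlapint} through the block--class duality, with no further computation required.
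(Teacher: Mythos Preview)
Your proposal is correct and follows essentially the same line as the paper's proof: the first assertions are obtained by applying \prettyref{prop:galact} to $\du{\mathfrak{g}}$ via duality, and self-conjugacy follows from identifying $\uppi(-1)$ with charge conjugation and noting that the $\mathfrak{g}$-block containing $\v$ must be $\mathfrak{g}$ itself. You are merely more explicit than the paper about the $S$-matrix identities underlying $\galpi{-1}p=\overline{p}$ and about invoking the symmetry of Eq.\eqref{eq:overlapdef} for the overlap invariance.
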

\begin{proof}
The first claim follows from \prettyref{prop:galact} by duality.
As to the second, note that $\gal{\textrm{-}1}$ is complex conjugation,
hence $\galpi{\uppi\!\left(\textrm{-}1\right)}$ is charge conjugation.
Because $\galpi{\uppi\!\left(\textrm{-}1\right)}$ leaves the vacuum
$\v\!\in\!\mathfrak{g}$ invariant, it should map $\mathfrak{g}$
onto itself.
\end{proof}
\global\long\def\galg#1#2{#2#1}%

\begin{lem}
\label{lem:galdef}$\mathfrak{g}\!\subseteq\!\munt$ iff $\galg{\mathfrak{\du g}}{\ell}\!=\!\du{\mathfrak{g}}$.
\end{lem}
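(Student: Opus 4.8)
The plan is to exploit the double role of $\du{\mathfrak{g}}$: it is at once the dual $\fc$ and the trivial $\mathfrak{g}$-$\gcl$, namely the class containing the vacuum $\v$. Because of this, the set-theoretic identity $\galpi{\ell}\du{\mathfrak{g}}\!=\!\du{\mathfrak{g}}$ appearing on the right is really an equality of $\mathfrak{g}$-$\gcl$es: by \prettyref{prop:galact} the permutation $\uppi\!\left(\ell\right)$ carries the class $\du{\mathfrak{g}}$ to a single class $\galpi{\ell}\du{\mathfrak{g}}\!\in\!\cl{\mathfrak{g}}$, so the image of $\du{\mathfrak{g}}$ as a set of primaries is precisely this class, and the right-hand condition holds iff $\galpi{\ell}\du{\mathfrak{g}}\!=\!\du{\mathfrak{g}}$ in $\cl{\mathfrak{g}}$.

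First I would compute the class character of the image. Since $\ch{\galpi{\ell}p}\!=\!\gal{\ell}\!\circ\!\ch p$ by Eq.\eqref{eq:galact}, evaluating on an arbitrary $\alpha\!\in\!\mathfrak{g}$ and using the defining property $\alpha\!\left(\du{\mathfrak{g}}\right)\!=\!\qd{\alpha}$ of the trivial class gives $\alpha\!\left(\galpi{\ell}\du{\mathfrak{g}}\right)\!=\!\gal{\ell}\!\left(\alpha\!\left(\du{\mathfrak{g}}\right)\right)\!=\!\gal{\ell}\!\left(\qd{\alpha}\right)$. Distinct $\gcl$es are separated by their characters on $\svera{\mathfrak{g}}$, so $\galpi{\ell}\du{\mathfrak{g}}\!=\!\du{\mathfrak{g}}$ precisely when $\gal{\ell}\!\left(\qd{\alpha}\right)\!=\!\alpha\!\left(\du{\mathfrak{g}}\right)\!=\!\qd{\alpha}$ for every $\alpha\!\in\!\mathfrak{g}$. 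But this last condition says exactly that each $\alpha\!\in\!\mathfrak{g}$ lies in $\munt\!=\!\set p{\gal{\ell}\!\left(\qd p\right)\!=\!\qd p}$, i.e. $\mathfrak{g}\!\subseteq\!\munt$; since both sides of the claimed equivalence have thereby been reduced to the same reformulation, the two implications follow at once.

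The argument is a direct unwinding of definitions, so I do not anticipate a genuine obstacle. The only subtlety worth flagging is the passage where equality of sets is identified with equality of classes, which rests on the injectivity of the class-to-character correspondence that is built into the definition of a $\gcl$; one must also be sure that $\uppi\!\left(\ell\right)$ sends the trivial class to a single class rather than scattering it, so that its character is literally $\gal{\ell}\!\left(\qd{\alpha}\right)$ and not the character of some translate of $\du{\mathfrak{g}}$. Both points are guaranteed by \prettyref{prop:galact}.
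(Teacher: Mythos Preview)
Your argument is correct and is essentially the same as the paper's own proof: both compute $\ch{\galpi{\ell}\du{\mathfrak{g}}}\!\left(\alpha\right)\!=\!\gal{\ell}\!\left(\qd{\alpha}\right)$ via Eq.\eqref{eq:galact} and then use that distinct $\mathfrak{g}$-classes are separated by their characters. The paper's version is merely terser, omitting the explicit appeal to \prettyref{prop:galact} that you spell out.
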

\begin{proof}
Since $\qd{\alpha}\!=\!\ch{\du{\mathfrak{g}}}\!\left(\alpha\right)$,
one has $\ch{\galpi{\ell}\du{\mathfrak{g}}}\!\left(\alpha\right)\!=\!\left(\gal{\ell}\!\circ\!\ch{\du{\mathfrak{g}}}\right)\!\left(\alpha\right)\!=\!\gal{\ell}\!\left(\qd{\alpha}\right)$
for $\alpha\!\in\!\mathfrak{g}$, hence $\galg{\mathfrak{\du g}}{\ell}\!=\!\du{\mathfrak{g}}$
exactly when $\gal{\ell}\!\left(\qd{\alpha}\right)\!=\!\qd{\alpha}$
for all $\alpha\!\in\!\mathfrak{g}$.
\end{proof}
\begin{lem}
\label{lem:galpi}If\emph{ }$\mathfrak{g}\!\subseteq\!\munt$ and
$\mathfrak{b}\!\in\!\bl{\mathfrak{g}}$, then the ratio $\dfrac{\gal{\ell}\left(\qd p\right)}{\qd p}$
is independent of $p\!\in\!\mathfrak{b}$.
\end{lem}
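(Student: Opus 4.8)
The plan is to compute $\gal{\ell}\!\left(\qd p\right)$ by transferring the Galois action onto a \emph{fixed} index, rather than through the transformation law Eq.\prettyref{eq:galact3}. That law expresses $\gal{\ell}\!\left(\qd p\right)$ via the permuted primary $\galpi{\ell}p$ and the sign $\eps{\ell}p$, both of which vary with $p$ and are hard to control on a block. The cleaner route is to read the quantum dimension as a \emph{character value}, $\qd p\!=\!\ch{\v}\!\left(p\right)$, and to apply Eq.\prettyref{eq:galact} in the form $\gal{\ell}\!\circ\!\ch{\v}\!=\!\ch{\galpi{\ell}\v}$. This yields, for every primary $p$,
\[
\gal{\ell}\!\left(\qd p\right)\!=\!\gal{\ell}\!\left(\ch{\v}\!\left(p\right)\right)\!=\!\ch{\galpi{\ell}\v}\!\left(p\right)\!=\!\frac{S_{\galpi{\ell}\v,p}}{S_{\v,\galpi{\ell}\v}}
\]
using Eq.\prettyref{eq:chdef} and the symmetry of $S$.

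Second, I would form the ratio and peel off the $p$-independent factor:
\[
\frac{\gal{\ell}\!\left(\qd p\right)}{\qd p}\!=\!\frac{S_{\v\v}}{S_{\v,\galpi{\ell}\v}}\cdot\frac{S_{\galpi{\ell}\v,p}}{S_{\v p}}\!=\!\frac{1}{\qd{\galpi{\ell}\v}}\cdot\frac{S_{\galpi{\ell}\v,p}}{S_{\v p}},
\]
where $\qd{\galpi{\ell}\v}$ does not depend on $p$, so only the last ratio must be shown constant on $\mathfrak{b}$.

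Third, I would bring in the hypothesis. Since $\mathfrak{g}\!\subseteq\!\munt$, \prettyref{lem:galdef} gives $\galpi{\ell}\du{\mathfrak{g}}\!=\!\du{\mathfrak{g}}$, and as $\v\!\in\!\du{\mathfrak{g}}$ this forces $\galpi{\ell}\v\!\in\!\du{\mathfrak{g}}$. Now $\du{\mathfrak{g}}$ is itself an $\fc$ by \prettyref{cor:duality}, whose classes are precisely the $\mathfrak{g}$-blocks; hence the defining property of classes, applied to $\du{\mathfrak{g}}$, reads $S_{wp}\!=\!w\!\left(\mathfrak{b}\right)S_{\v p}$ for every $w\!\in\!\du{\mathfrak{g}}$ and every $p$ in the block $\mathfrak{b}$. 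Taking $w\!=\!\galpi{\ell}\v$ shows that $S_{\galpi{\ell}\v,p}/S_{\v p}$ depends only on $\mathfrak{b}$, so that $\gal{\ell}\!\left(\qd p\right)/\qd p$ is constant on $\mathfrak{b}$, as claimed.

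The only genuine obstacle is the very first step: one must resist using Eq.\prettyref{eq:galact3} and instead recognize $\qd p$ as the single character value $\ch{\v}\!\left(p\right)$, so that the automorphism $\gal{\ell}$ acts on the vacuum label and replaces it by $\galpi{\ell}\v$. Once that substitution is made, everything else is bookkeeping: \prettyref{lem:galdef} guarantees $\galpi{\ell}\v$ falls back into $\du{\mathfrak{g}}$, and the ratios $S_{w\,\cdot}/S_{\v\,\cdot}$ with $w\!\in\!\du{\mathfrak{g}}$ are by definition the block characters, hence constant along each $\mathfrak{g}$-block.
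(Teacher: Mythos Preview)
Your proof is correct and takes a genuinely different route from the paper's. The paper argues linear-algebraically: it observes that the vector $\mathfrak{D}\!=\!(\qd p)_{p\in\mathfrak{b}}$ satisfies $\fm_{\mathfrak{b}}(\alpha)\,\mathfrak{D}\!=\!\qd{\alpha}\mathfrak{D}$ for $\alpha\!\in\!\mathfrak{g}$; since the $\fm_{\mathfrak{b}}(\alpha)$ are integer matrices and $\gal{\ell}(\qd{\alpha})\!=\!\qd{\alpha}$ by hypothesis, applying $\gal{\ell}$ gives another eigenvector with the same eigenvalues, and because $\sp{\mathfrak{b}}{\du{\mathfrak{g}}}\!=\!1$ (\prettyref{cor:trivoverlap}) this eigenspace is one-dimensional, forcing $\gal{\ell}(\mathfrak{D})$ to be a scalar multiple of $\mathfrak{D}$. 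Your argument instead rewrites $\gal{\ell}(\qd p)$ via Eq.\eqref{eq:galact} as $\ch_{\galpi{\ell}\v}(p)$, invokes \prettyref{lem:galdef} to place $\galpi{\ell}\v$ inside $\du{\mathfrak{g}}$, and then reads off constancy from the very definition of $\du{\mathfrak{g}}$-classes (i.e.\ $\mathfrak{g}$-blocks). The two approaches use the hypothesis in dual ways---the paper through $\gal{\ell}(\qd{\alpha})\!=\!\qd{\alpha}$, you through $\galpi{\ell}\du{\mathfrak{g}}\!=\!\du{\mathfrak{g}}$---and both ultimately rest on $\sp{\mathfrak{b}}{\du{\mathfrak{g}}}\!=\!1$, though yours accesses it in the more elementary guise of the factorization $S_{wp}\!=\!w(\mathfrak{b})S_{\v p}$. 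A bonus of your route is that it makes the constant explicit, namely $(\galpi{\ell}\v)(\mathfrak{b})/\qd{\galpi{\ell}\v}$.
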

\begin{proof}
Let $\mathfrak{D}$ denote the vector whose components are the quantum
dimensions $\qd p$ for $p\!\in\!\mathfrak{b}$. Eq.\prettyref{eq:ver4}
implies that $\fm_{\mathfrak{b}}\!\left(\alpha\right)\!\mathfrak{D}\!=\!\qd{\alpha}\mathfrak{D}$
for all $\alpha\!\in\!\mathfrak{g}$, and applying $\gal{\ell}$ to
both sides gives $\fm_{\mathfrak{b}}\!\left(\alpha\right)\!\gal{\ell}\!\left(\mathfrak{D}\right)\!=\!\qd{\alpha}\gal{\ell}\!\left(\mathfrak{D}\right)$,
taking into account that the $\fm_{\mathfrak{b}}\!\left(\alpha\right)$
are integer matrices and $\gal{\ell}\!\left(\qd{\alpha}\right)\!=\!\qd{\alpha}$.
But this means that both $\mathfrak{D}$ and $\gal{\ell}\!\left(\mathfrak{D}\right)$
belong to the common eigenspace of the matrices $\fm_{\mathfrak{b}}\!\left(\alpha\right)$
corresponding to the irrep $\!\ch{\du{\mathfrak{g}}}$ of $\svera{\mathfrak{g}}$,
and because $\sp{\mathfrak{b}}{\du{\mathfrak{g}}}\!=\!1$ by \prettyref{cor:trivoverlap},
this eigenspace has dimension $1$, hence $\gal{\ell}\!\left(\mathfrak{D}\right)$
and $\mathfrak{D}$ are proportional to each other.
\end{proof}
It follows from \prettyref{cor:intfc} that $\pd{\intlat}\!=\!\set{\mathfrak{g}\!\subseteq\!\lat}{\mathfrak{g}\!\subseteq\!\pd{\mint{}}}$
is a sub\-lattice of $\lat$ consisting of those $\fc$s all of whose
elements have integer quantum dimension. Such $\fc$s have special
properties, as exemplified by the following result.
\begin{lem}
If $\mathfrak{g}\!\in\!\pd{\intlat}$, then for every block $\mathfrak{b}\!\in\!\bl{\mathfrak{g}}$
there exists an algebraic integer $\qd{\mathfrak{b}}$ such that the
quantum dimension of the primaries contained in $\mathfrak{b}$ are
rational multiples of $\qd{\mathfrak{b}}$.
\end{lem}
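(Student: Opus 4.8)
The plan is to reduce the statement to a Galois-invariance argument. It suffices to find, for each block $\mathfrak{b}\!\in\!\bl{\mathfrak{g}}$, a single primary whose quantum dimension can serve as $\qd{\mathfrak{b}}$; concretely, I would show that $\qd p/\qd q\!\in\!\mathbb{Q}$ for every pair $p,q\!\in\!\mathfrak{b}$, and then pick any $q_{0}\!\in\!\mathfrak{b}$ and set $\qd{\mathfrak{b}}\!=\!\qd{q_{0}}$, which is an algebraic integer since all quantum dimensions are.

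To access the Galois action, I first observe that the hypothesis $\mathfrak{g}\!\in\!\pd{\intlat}$ means $\qd{\alpha}\!\in\!\mathbb{Z}$ for every $\alpha\!\in\!\mathfrak{g}$. Such a quantum dimension is rational, hence fixed by every Galois transformation $\gal{\ell}$, so $\mathfrak{g}\!\subseteq\!\munt$ for all $\ell\!\in\!\zn N$. This is exactly the hypothesis of \prettyref{lem:galpi}, which I would then apply to the block $\mathfrak{b}$: for each $\ell\!\in\!\zn N$ the ratio $\gal{\ell}\!\left(\qd p\right)/\qd p$ takes a value $\lambda_{\ell}$ common to all $p\!\in\!\mathfrak{b}$.

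The key consequence is that for any $p,q\!\in\!\mathfrak{b}$ the quotient $\qd p/\qd q$ is Galois-invariant, since $\gal{\ell}\!\left(\qd p/\qd q\right)\!=\!\lambda_{\ell}\qd p/\left(\lambda_{\ell}\qd q\right)\!=\!\qd p/\qd q$ for every $\ell\!\in\!\zn N$. As $\qd p/\qd q$ lies in $\cyc N$ and the full Galois group $\Gal{\cyc N/\mathbb{Q}}$ is exhausted by the transformations $\gal{\ell}$ (as recalled at the start of this section), an element fixed by all of them must be rational. Thus $\qd p/\qd q\!\in\!\mathbb{Q}$, and with the choice above $\qd p/\qd{\mathfrak{b}}\!\in\!\mathbb{Q}$ for all $p\!\in\!\mathfrak{b}$, as required.

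There is no serious obstacle once \prettyref{lem:galpi} is invoked; the only delicate point is the passage from invariance under every $\gal{\ell}$ to membership in $\mathbb{Q}$, which rests precisely on the identification of $\Gal{\cyc N/\mathbb{Q}}$ with $\zn N$ and on the quantum dimensions belonging to $\cyc N$.
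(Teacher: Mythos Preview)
Your proof is correct and follows essentially the same route as the paper's: both invoke \prettyref{lem:galpi} to conclude that all ratios $\qd p/\qd q$ with $p,q\!\in\!\mathfrak{b}$ are fixed by every $\gal{\ell}$, hence rational. The only cosmetic difference is that the paper takes $\qd{\mathfrak{b}}\!=\!\min\{\qd p:p\!\in\!\mathfrak{b}\}$ rather than an arbitrary $\qd{q_0}$, but either choice works.
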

\begin{proof}
Using the notation $\qd{\mathfrak{b}}\!=\!\min\!\set{\qd p}{p\!\in\!\mathfrak{b}}$,
one has
\[
\gal{\ell}\!\left(\frac{\qd p}{\qd{\mathfrak{b}}}\right)\!=\!\frac{\qd p}{\qd{\mathfrak{b}}}
\]
for all $\ell\!\in\!\zn N$ by \prettyref{lem:galpi}. Since $\qd{\mathfrak{b}}$
is an algebraic integer, and an algebraic number fixed by all $\gal{\ell}$
is rational, the result follows.
\end{proof}
\begin{prop}
\label{prop:intspread}The following statements are equivalent:
\end{prop}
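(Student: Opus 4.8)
The plan is to prove the equivalence by a cycle of implications that passes through the Galois action on the trivial class. Given the forthcoming \prettyref{cor:intspread}, the three conditions that should be shown equivalent are: (i) $\cs{\du{\mathfrak{g}}}\!\in\!\mathbb{Z}$; (ii) the Galois image $\galpi{\ell}\du{\mathfrak{g}}$ is a central class for every $\ell\!\in\!\zn N$; and (iii) $\qd{\alpha}^{2}\!\in\!\mathbb{Z}$ for every $\alpha\!\in\!\mathfrak{g}$, i.e. $\mathfrak{g}\!\subseteq\!\mint{}$. The organizing remark is that, by Eq.\prettyref{eq:spread}, $\cs{\du{\mathfrak{g}}}$ is a sum of squares of quantum dimensions and hence an algebraic integer, so (i) is equivalent to its being fixed by the entire Galois group; \prettyref{prop:galact} then trades Galois-invariance of the extent for information about the Galois orbit of $\du{\mathfrak{g}}$.

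For (i)$\Rightarrow$(ii) I would argue that a rational algebraic integer is fixed by every $\gal{\ell}$, so \prettyref{prop:galact} gives $\cs{\galpi{\ell}\du{\mathfrak{g}}}\!=\!\gal{\ell}\!\left(\cs{\du{\mathfrak{g}}}\right)\!=\!\cs{\du{\mathfrak{g}}}$. Since $\cs{\du{\mathfrak{g}}}$ is the largest extent attained by any class (\prettyref{cor:extbound}), the class $\galpi{\ell}\du{\mathfrak{g}}$ realizes this maximum and therefore lies in $\zent{\mathfrak{g}}$ by the very definition of the center.

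For (ii)$\Rightarrow$(iii), centrality of $\galpi{\ell}\du{\mathfrak{g}}$ gives $\FA{\alpha\!\left(\galpi{\ell}\du{\mathfrak{g}}\right)\!}\!=\!\qd{\alpha}$ for all $\alpha\!\in\!\mathfrak{g}$ by \prettyref{lem:centclchar}, while Eq.\prettyref{eq:galact} identifies $\alpha\!\left(\galpi{\ell}\du{\mathfrak{g}}\right)\!=\!\gal{\ell}\!\left(\qd{\alpha}\right)$. The decisive input is Eq.\prettyref{eq:galact3}, which writes $\gal{\ell}\!\left(\qd{\alpha}\right)\!=\!\eps{\ell}{\alpha}\,\qd{\galpi{\ell}\alpha}/\qd{\galpi{\ell}\v}$ as a \emph{signed ratio of positive quantum dimensions}, hence a real number; a real number of modulus $\qd{\alpha}$ must equal $\pm\qd{\alpha}$, so $\gal{\ell}\!\left(\qd{\alpha}^{2}\right)\!=\!\gal{\ell}\!\left(\qd{\alpha}\right)^{2}\!=\!\qd{\alpha}^{2}$ for every $\ell$. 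Thus the algebraic integer $\qd{\alpha}^{2}$ is Galois-fixed, hence rational, hence in $\mathbb{Z}$, which is (iii) by the characterization of $\mint{}$ in \prettyref{cor:intfc}. Finally (iii)$\Rightarrow$(i) is immediate from Eq.\prettyref{eq:spread}, $\cs{\du{\mathfrak{g}}}\!=\!\sum_{\alpha\in\mathfrak{g}}\qd{\alpha}^{2}$, a finite sum of rational integers.

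The step I expect to be the real obstacle is (ii)$\Rightarrow$(iii). The generic Galois estimate $\FA{\gal{\ell}\!\left(\qd{\alpha}\right)\!}\!\leq\!\qd{\alpha}$ controls only moduli, and a sum $\sum_{\alpha}\qd{\alpha}^{2}$ could a priori be an integer even if the individual $\qd{\alpha}^{2}$ are not; what rescues the argument is precisely the reality of $\gal{\ell}\!\left(\qd{\alpha}\right)$ furnished by Eq.\prettyref{eq:galact3}, which upgrades the modulus identity $\FA{\gal{\ell}\!\left(\qd{\alpha}\right)\!}\!=\!\qd{\alpha}$ to the sign statement $\gal{\ell}\!\left(\qd{\alpha}\right)\!=\!\pm\qd{\alpha}$ and so pins down $\qd{\alpha}^{2}$ term by term. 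The Ising example, where $\qd{\sigma}\!=\!\sqrt{2}$ yet $\cs{\du{\mathfrak{g}}}\!=\!4$, shows this is as strong a conclusion as one can hope for.
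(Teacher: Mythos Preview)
You have misidentified the conditions. Because the enumerated list sits outside the proposition environment, you saw only the header and inferred the content from \prettyref{cor:intspread}; but the actual Proposition is a \emph{pointwise} statement for a single fixed $\ell\!\in\!\zn N$, asserting the equivalence of four conditions: a)~$\mathfrak{g}\!\subseteq\!\mint{\ell}$; b)~$\gal{\ell}\!\left(\qd{\alpha}^{2}\right)\!=\!\qd{\alpha}^{2}$ for all $\alpha\!\in\!\mathfrak{g}$; c)~$\gal{\ell}\!\left(\cs{\du{\mathfrak{g}}}\right)\!=\!\cs{\du{\mathfrak{g}}}$; d)~$\ell\du{\mathfrak{g}}\!\in\!\zent{\mathfrak{g}}$. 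What you prove is the conjunction of these over all $\ell$, which is precisely the Corollary, not the Proposition.

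That said, your argument already contains the paper's proof once you drop the universal quantifier over $\ell$. Your (i)$\Rightarrow$(ii) is c)$\Rightarrow$d), via $\cs{\ell\du{\mathfrak{g}}}\!=\!\gal{\ell}\!\left(\cs{\du{\mathfrak{g}}}\right)\!=\!\cs{\du{\mathfrak{g}}}$ from \prettyref{prop:galact}. Your (ii)$\Rightarrow$(iii) is d)$\Rightarrow$a): centrality gives $\FA{\gal{\ell}\!\left(\qd{\alpha}\right)\!}\!=\!\qd{\alpha}$ by \prettyref{lem:centclchar}, and reality from Eq.\prettyref{eq:galact3} forces $\gal{\ell}\!\left(\qd{\alpha}\right)\!=\!\pm\qd{\alpha}$, i.e.\ $\alpha\!\in\!\mint{\ell}$. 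Your (iii)$\Rightarrow$(i) is the chain a)$\Rightarrow$b)$\Rightarrow$c) via Eq.\prettyref{eq:spread}. The paper runs the same cycle in the order a)$\Rightarrow$b)$\Rightarrow$c)$\Rightarrow$d)$\Rightarrow$a), and its d)$\Rightarrow$a) step is identical to yours: write $\cech{\ell\du{\mathfrak{g}}}{\alpha}\qd{\alpha}\!=\!\gal{\ell}\!\left(\qd{\alpha}\right)\!=\!\eps{\ell}{\alpha}\,\qd{\galpi{\ell}\alpha}/\qd{\galpi{\ell}\v}$ and read off $\cech{\ell\du{\mathfrak{g}}}{\alpha}\!=\!\eps{\ell}{\alpha}$ from positivity of quantum dimensions and unit modulus of the central character. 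So the mathematics is correct; you only need to restate and prove the fixed-$\ell$ version, after which \prettyref{cor:intspread} follows by intersecting over all $\ell$ exactly as the paper does.
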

\begin{enumerate}[label=\alph*)]
\item $\mathfrak{g}\!\subseteq\!\mint{\ell}$;
\item $\gal{\ell}\!\left(\qd{\alpha}^{2}\right)\!=\!\qd{\alpha}^{2}$ for
all $\alpha\!\in\!\mathfrak{g}$;
\item $\gal{\ell}\!\left(\cs{\du{\mathfrak{g}}}\right)\!=\!\cs{\du{\mathfrak{g}}}$;
\item $\ell\du{\mathfrak{g}}\!\in\!\zent{\mathfrak{g}}$.
\end{enumerate}
\begin{proof}
For one thing, $\mathfrak{g}\!\subseteq\!\mint{\ell}$, i.e. $\gal{\ell}\!\left(\qd{\alpha}\right)\!=\!\eps{\ell}{\alpha}\qd{\alpha}\!=\!\pm\qd{\alpha}$
for all $\alpha\!\in\!\mathfrak{g}$ implies $\gal{\ell}\!\left(\qd{\alpha}^{2}\right)\!=\!\qd{\alpha}^{2}$,
which in turn implies $\gal{\ell}\!\left(\cs{\du{\mathfrak{g}}}\right)\!=\!\cs{\du{\mathfrak{g}}}$
by Eq.\prettyref{eq:spread}. But $\cs{\du{\mathfrak{g}}}\!=\!\gal{\ell}\!\left(\cs{\du{\mathfrak{g}}}\right)\!=\!\cs{\du{\ell\mathfrak{g}}}$
gives at once $\ell\du{\mathfrak{g}}\!\in\!\zent{\mathfrak{g}}$ and
\[
\cech{\ell\du{\mathfrak{g}}}{\alpha}\qd{\alpha}\!=\!\ch{\ell\du{\mathfrak{g}}}\!\left(\alpha\right)\!=\!\left(\gal{\ell}\!\circ\!\ch{\du{\mathfrak{g}}}\right)\!\left(\alpha\right)\!=\!\gal{\ell}\!\left(\qd{\alpha}\right)\!=\!\eps{\ell}{\alpha}\frac{\qd{\galpi{\ell}\alpha}}{\qd{\galpi{\ell}\v}}
\]
for $\alpha\!\in\!\mathfrak{g}$, as a consequence of Eq.\prettyref{eq:galact3}.
Since quantum dimensions are positive numbers while $\cech{\ell\du{\mathfrak{g}}}{\alpha}$
has unit modulus, one concludes that $\cech{\ell\du{\mathfrak{g}}}{\alpha}\!=\!\eps{\ell}{\alpha}$
and $\mathfrak{g}\!\subseteq\!\mint{\ell}$, completing the proof.
\end{proof}
\begin{cor}
\label{cor:intspread}$\cs{\du{\mathfrak{g}}}\!\in\!\mathbb{Z}$ iff
$\mathfrak{g}\!\subseteq\!\mint{}$.
\end{cor}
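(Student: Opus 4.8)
The plan is to reduce the integrality of $\cs{\du{\mathfrak{g}}}$ to its invariance under the Galois group, and then to feed the individual Galois transformations into \prettyref{prop:intspread}. First I would record that $\cs{\du{\mathfrak{g}}}$ is an algebraic integer: by Eq.\prettyref{eq:spread} one has $\cs{\du{\mathfrak{g}}}\!=\!\sum_{\alpha\in\mathfrak{g}}\qd{\alpha}^{2}$, and since each quantum dimension $\qd{\alpha}$ is an algebraic integer, so is this finite sum of squares. Hence $\cs{\du{\mathfrak{g}}}\!\in\!\mathbb{Z}$ holds precisely when $\cs{\du{\mathfrak{g}}}$ is rational, i.e. precisely when $\gal{\ell}\!\left(\cs{\du{\mathfrak{g}}}\right)\!=\!\cs{\du{\mathfrak{g}}}$ for every $\ell\!\in\!\zn N$.

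Next I would write $\mint{}$ as an intersection over the Galois group, exactly as in the proof of \prettyref{cor:intfc}: a primary $p$ lies in $\mint{}$ iff $\qd p^{2}\!\in\!\mathbb{Q}$ (using once more that $\qd p^{2}$ is an algebraic integer), which happens iff $\gal{\ell}\!\left(\qd p^{2}\right)\!=\!\qd p^{2}$ for all $\ell\!\in\!\zn N$, so that $\mint{}\!=\!\bigcap_{\ell\in\zn N}\mint{\ell}$. Consequently $\mathfrak{g}\!\subseteq\!\mint{}$ if and only if $\mathfrak{g}\!\subseteq\!\mint{\ell}$ for every $\ell\!\in\!\zn N$.

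The decisive step is then to invoke the equivalence of conditions a) and c) in \prettyref{prop:intspread} for each fixed $\ell$: one has $\mathfrak{g}\!\subseteq\!\mint{\ell}$ iff $\gal{\ell}\!\left(\cs{\du{\mathfrak{g}}}\right)\!=\!\cs{\du{\mathfrak{g}}}$. Combining this with the two preceding observations, $\mathfrak{g}\!\subseteq\!\mint{}$ holds iff $\cs{\du{\mathfrak{g}}}$ is fixed by every $\gal{\ell}$, which by the first step is exactly the assertion $\cs{\du{\mathfrak{g}}}\!\in\!\mathbb{Z}$.

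The argument is little more than careful bookkeeping of Galois invariance, so I do not expect a serious obstacle; the only point needing care is that one obtains integrality rather than mere rationality of $\cs{\du{\mathfrak{g}}}$, and this rests entirely on the fact that squared quantum dimensions — and hence $\cs{\du{\mathfrak{g}}}$ itself — are algebraic integers. With that in hand, the corollary is just the transcription of \prettyref{prop:intspread} across the whole of $\zn N$.
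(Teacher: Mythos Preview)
Your proof is correct and follows the same approach as the paper: reduce $\cs{\du{\mathfrak{g}}}\!\in\!\mathbb{Z}$ to Galois invariance of $\cs{\du{\mathfrak{g}}}$ and then apply \prettyref{prop:intspread} for each $\ell\!\in\!\zn N$, together with $\mint{}\!=\!\bigcap_{\ell}\mint{\ell}$. The paper's proof is a one-liner that leaves implicit the point you make explicit, namely that $\cs{\du{\mathfrak{g}}}$ is an algebraic integer so that rationality is equivalent to integrality.
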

\begin{proof}
By \prettyref{prop:intspread}, $\cs{\du{\mathfrak{g}}}\!\in\!\mathbb{Z}$
iff $\mathfrak{g}\!\subseteq\!\mint{\ell}$ for all $\ell\!\in\!\zn N$,
i.e. $\mathfrak{g}\!\subseteq\!\mint{}$.
\end{proof}
By the above, $\intlat\!=\!\set{\mathfrak{g}\!\in\!\lat\!}{\!\cs{\du{\mathfrak{g}}}\!\in\!\mathbb{Z}}$
is a sublattice of $\lat$ containing $\pd{\intlat}$. Note that,
according to \prettyref{lem:nilext}, every nilpotent $\fc$ belongs
to $\intlat$, but the converse need not be true.
\begin{lem}
\begin{singlespace}
\label{lem:dimratio}If\emph{ }$\mathfrak{g}\!\subseteq\!\mint{\ell}$
and $\mathfrak{b}\!\in\!\bl{\mathfrak{g}}$, then for all $p\!\in\!\mathfrak{b}$
\begin{equation}
\frac{\qd{\galpi{\ell}p}}{\qd p}=\sqrt{\frac{\cs{\mathfrak{b}}}{\cs{\galpi{\ell}\mathfrak{b}}}}\label{eq:dimratio}
\end{equation}
\end{singlespace}
\end{lem}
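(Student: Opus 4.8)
The plan is to reduce the statement to the assertion that the ratio $\qd{\galpi{\ell}p}/\qd p$ is \emph{independent} of $p\!\in\!\mathfrak{b}$. Granting this, write $\lambda$ for the common value. Since $S_{\v p}\!=\!\qd pS_{\v\v}$ and $\galpi{\ell}$ restricts to a bijection of $\mathfrak{b}$ onto the block $\galpi{\ell}\mathfrak{b}$ (indeed a block by \prettyref{cor:galactbl}), the defining formula \prettyref{eq:extdef} gives
\[
\frac{1}{\cs{\galpi{\ell}\mathfrak{b}}}\!=\!S_{\v\v}^{2}\!\sum_{p\in\mathfrak{b}}\qd{\galpi{\ell}p}^{2}\!=\!\lambda^{2}S_{\v\v}^{2}\!\sum_{p\in\mathfrak{b}}\qd p^{2}\!=\!\frac{\lambda^{2}}{\cs{\mathfrak{b}}},
\]
whence $\lambda\!=\!\sqrt{\cs{\mathfrak{b}}/\cs{\galpi{\ell}\mathfrak{b}}}$, the positive root being forced by positivity of quantum dimensions. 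So everything rests on the constancy of the ratio.

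To establish constancy I would work with the strictly positive vector $x\!=\!\left(x_{p}\right)_{p\in\mathfrak{b}}$ given by $x_{p}\!=\!\FA{\gal{\ell}\!\left(\qd p\right)}$; by \prettyref{eq:galact3} one has $x_{p}\!=\!\qd{\galpi{\ell}p}/\qd{\galpi{\ell}\v}$, so constancy of $\qd{\galpi{\ell}p}/\qd p$ is \emph{equivalent} to the proportionality $x\!\propto\!\mathfrak{D}$, where $\mathfrak{D}\!=\!\left(\qd p\right)_{p\in\mathfrak{b}}$ is the Perron--Frobenius vector of the block. As in the proof of \prettyref{lem:galpi}, \prettyref{eq:ver4} gives $\fm_{\mathfrak{b}}\!\left(\alpha\right)\mathfrak{D}\!=\!\qd{\alpha}\mathfrak{D}$ for $\alpha\!\in\!\mathfrak{g}$, i.e. $\sum_{q\in\mathfrak{b}}N_{\alpha p}^{q}\qd q\!=\!\qd{\alpha}\qd p$. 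Applying $\gal{\ell}$ (which fixes the integers $N_{\alpha p}^{q}$) and then the triangle inequality, while using $\FA{\gal{\ell}\!\left(\qd{\alpha}\right)}\!=\!\qd{\alpha}$ (valid because $\mathfrak{g}\!\subseteq\!\mint{\ell}$), yields the one-sided estimate
\[
\bigl(\fm_{\mathfrak{b}}\!\left(\alpha\right)x\bigr)_{p}\!=\!\sum_{q\in\mathfrak{b}}N_{\alpha p}^{q}x_{q}\!\geq\!\FA{\gal{\ell}\!\left(\qd{\alpha}\right)}x_{p}\!=\!\qd{\alpha}x_{p}
\]
for every $p\!\in\!\mathfrak{b}$. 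Summing over $\alpha\!\in\!\mathfrak{g}$ and setting $M\!=\!\sum_{\alpha\in\mathfrak{g}}\fm_{\mathfrak{b}}\!\left(\alpha\right)$ produces the sub-invariance $\bigl(Mx\bigr)_{p}\!\geq\!\bigl(\sum_{\alpha\in\mathfrak{g}}\qd{\alpha}\bigr)x_{p}$ for all $p$.

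The \textbf{main obstacle} is to upgrade this inequality to an equality. By \prettyref{lem:blcrit2}, for any $p,q\!\in\!\mathfrak{b}$ there is some $\alpha\!\in\!\mathfrak{g}$ with $N_{\alpha p}^{q}\!>\!0$; hence all entries of $M$ are strictly positive, so $M$ is irreducible with simple Perron--Frobenius eigenvalue $\sum_{\alpha\in\mathfrak{g}}\qd{\alpha}$, right eigenvector $\mathfrak{D}$, and a strictly positive left eigenvector $u$. Pairing $Mx\!-\!\bigl(\sum_{\alpha\in\mathfrak{g}}\qd{\alpha}\bigr)x\!\geq\!0$ with $u$ and using $uM\!=\!\bigl(\sum_{\alpha\in\mathfrak{g}}\qd{\alpha}\bigr)u$ gives $\sum_{p\in\mathfrak{b}}u_{p}\bigl(Mx\!-\!(\sum_{\alpha}\qd{\alpha})x\bigr)_{p}\!=\!0$; as $u$ is strictly positive and the paired vector is non-negative, the latter vanishes identically. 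Thus $x$ is itself a top eigenvector and simplicity forces $x\!\propto\!\mathfrak{D}$, the required constancy. It is precisely this Perron--Frobenius squeeze that is delicate: the Galois relation only delivers the one-sided bound, and the strict positivity of $M$ furnished by \prettyref{lem:blcrit2} is what converts it into proportionality — this replaces, and strengthens, the one-dimensional-eigenspace argument of \prettyref{lem:galpi}, which is available only under the stronger hypothesis $\mathfrak{g}\!\subseteq\!\munt$.
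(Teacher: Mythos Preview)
Your argument is correct, and it is genuinely different from the paper's. The paper proceeds by a case split: when $\mathfrak{g}\!\subseteq\!\munt$ it simply invokes \prettyref{lem:galpi} (applying $\gal{\ell}$ leaves the eigenvalue $\qd{\alpha}$ unchanged, so $\gal{\ell}(\mathfrak{D})$ lands in the one-dimensional $\ch{\du{\mathfrak{g}}}$-eigenspace and is thus proportional to $\mathfrak{D}$), and then deduces \prettyref{eq:dimratio} from the extent computation exactly as you do; for the remaining case $\mathfrak{g}\!\subseteq\!\mint{\ell}$ but $\mathfrak{g}\!\not\subseteq\!\munt$, it observes that $\mathfrak{g}$ is a $\mathbb{Z}_{2}$-extension of $\gp\!=\!\mathfrak{g}\cap\munt$, splits each $\mathfrak{g}$-block into at most two $\gp$-blocks via \prettyref{thm:zentextstruct}, and reduces to the previous case. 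Your Perron--Frobenius squeeze bypasses this structural detour entirely: by passing to $x_{p}\!=\!\FA{\gal{\ell}(\qd p)}$ you absorb the sign $\eps{\ell}{\alpha}$ at the outset, obtain the sub-invariance $Mx\!\geq\!\bigl(\sum_{\alpha}\qd{\alpha}\bigr)x$, and let strict positivity of $M$ (furnished by \prettyref{lem:blcrit2}) together with the left eigenvector force equality. The payoff is a uniform proof with no case analysis and no appeal to the central-extension machinery; the paper's route, on the other hand, makes explicit why the obstacle is precisely a $\mathbb{Z}_{2}$ phenomenon, which ties in with the surrounding discussion of $\mint{\ell}$ versus $\munt$.
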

\begin{proof}
\begin{singlespace}
\global\long\def\gp{\mathfrak{g}_{{\scriptscriptstyle +}}}%
If\emph{ }$\mathfrak{g}\!\subseteq\!\munt$, then it follows from
Eq.\prettyref{eq:galact3} and \prettyref{lem:galpi} that
\[
\frac{S_{\v\ell p}^{2}}{S_{\v p}^{2}}\!=\!\left(\frac{\qd{\ell p}}{\qd p}\right)^{2}\!=\!\left(\frac{\gal{\ell}\!\left(\qd p\right)}{\qd p}\right)^{2}\eps{\ell}p^{2}\qd{\ell\v}^{2}\!=\!A
\]
is independent of $p\!\in\!\mathfrak{b}$, hence
\[
\cs{\mathfrak{b}}\!=\!\sum_{p\in\mathfrak{b}}\frac{1}{S_{\v p}^{2}}\!=\!A\sum_{p\in\mathfrak{b}}\frac{1}{S_{\v\ell p}^{2}}\!=\!A\cs{\ell\mathfrak{b}}
\]
from which one concludes
\[
\left(\frac{\qd{\ell p}}{\qd p}\right)^{2}=\frac{\cs{\mathfrak{b}}}{\cs{\galpi{\ell}\mathfrak{b}}}
\]
implying Eq.\prettyref{eq:dimratio}, since quantum dimensions are
positive numbers.
\end{singlespace}

If $\mathfrak{g}\!\subseteq\!\mint{\ell}$ is not contained in $\munt$,
then $\mathfrak{g}$ is a $\mathbb{Z}_{2}$-extension of $\gp\!=\!\mathfrak{g}\!\cap\!\munt\!\subseteq\!\munt$,
hence any block $\mathfrak{b}\!\in\!\bl{\mathfrak{g}}$ is either
itself a $\gp$-block, or $\mathfrak{b}\!=\!\mathfrak{b}_{{\scriptscriptstyle +}}\!\cup\!\mathfrak{b}_{{\scriptscriptstyle -}}$
with $\mathfrak{b}_{{\scriptscriptstyle \pm}}\!\in\!\bl{\gp}$ and
$\cs{\mathfrak{b}_{{\scriptscriptstyle \pm}}}\!=\!2\!\cs{\mathfrak{b}}$
(cf. \prettyref{thm:zentextstruct}). In either case Eq.\prettyref{eq:dimratio}
follows by the above argument, since $\ell\mathfrak{b}\!=\!\ell\mathfrak{b}_{{\scriptscriptstyle +}}\!\cup\!\ell\mathfrak{b}_{{\scriptscriptstyle -}}$
and $\cs{\ell\mathfrak{b}_{{\scriptscriptstyle \pm}}}\!=\!\cs{\mathfrak{b}_{{\scriptscriptstyle \pm}}}\!=\!2\!\cs{\mathfrak{b}}\!=\!2\!\cs{\ell\mathfrak{b}}$
by \prettyref{cor:galactbl}.
\end{proof}
\prettyref{lem:dimratio} gives a fairly precise description of the
distribution of quantum dimensions (counted with multiplicity) in
blocks related by Galois permutations, and a similar result for classes
would be most desirable. Supported by extensive computational evidence,
the following seems to hold.
\begin{conjecture}
\label{conj:spect}If $\mathfrak{g}\!\subseteq\!\mint{\ell}$ and
$\ccl\!\in\!\cl{\mathfrak{g}}$ is a $\mathfrak{g}$-class, then the
distribution of quantum dimensions (counted with multiplicity) is
the same in $\ccl$ and $\galg{\ccl}{\ell}$. Put differently, the
univariate polynomial
\begin{equation}
\spp{\ccl}x=\prod_{p\in\ccl}\!\left(x-S_{\v p}^{-2}\right)\label{eq:sppdef}
\end{equation}
satisfies $\spp{\ell\ccl}x\!=\!\spp{\ccl}x$.
\end{conjecture}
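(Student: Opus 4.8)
The plan is to first strip the statement of its analytic content and recast it as a purely Galois-theoretic assertion, by exhibiting the numbers $S_{\v p}^{-2}$ as the eigenvalues of a single \emph{integral} element of the fusion ring. Using Verlinde's formula \eqref{eq:ver2} together with the relation $S^2=C$ (charge conjugation), a short computation yields the identity $S_{\v p}^{-2}=\sum_w\mathrm{Tr}\,\fm(w)\,\ch p(w)$, the sum running over all primaries $w$, whose coefficients $\mathrm{Tr}\,\fm(w)=\sum_r N_{wr}^r$ are non-negative integers. Hence $\Phi=\sum_w\mathrm{Tr}\,\fm(w)\,\fm(w)$ is a matrix with non-negative integer entries whose eigenvalue on the primary $p$ is exactly $S_{\v p}^{-2}$, and since $\gal\ell\!\circ\!\ch p=\ch{\galpi\ell p}$ by \eqref{eq:galact}, applying $\gal\ell$ to the identity gives the covariance $\gal\ell(S_{\v p}^{-2})=S_{\v\galpi\ell p}^{-2}$.

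From this covariance the argument splits into an unconditional part and a conditional heart. Unconditionally, $\spp{\galpi\ell\ccl}x=\prod_{p\in\ccl}(x-S_{\v\galpi\ell p}^{-2})=\prod_{p\in\ccl}(x-\gal\ell(S_{\v p}^{-2}))=\gal\ell(\spp\ccl x)$, with $\gal\ell$ acting on the coefficients; equivalently, the characteristic polynomial of $\Phi$ (which lies in $\mathbb{Z}[x]$) factors over the $\mathfrak g$-classes into Galois-conjugate pieces. Thus the desired equality $\spp{\galpi\ell\ccl}x=\spp\ccl x$ is \emph{equivalent} to the statement that, under the hypothesis $\mathfrak g\!\subseteq\!\mint\ell$, the coefficients of $\spp\ccl x$ lie in the fixed field of $\gal\ell$. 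This is the reformulation I would target, and it explains why a hypothesis is needed at all: without one, $\spp\ccl x$ is merely conjugate to $\spp{\galpi\ell\ccl}x$.

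To bring in $\mathfrak g\!\subseteq\!\mint\ell$ I would feed the covariance into the block version \prettyref{lem:dimratio}. Combining $\gal\ell(S_{\v p}^{-2})=S_{\v\galpi\ell p}^{-2}$ with $S_{\v\galpi\ell p}^{2}=(\cs{\mathfrak b}/\cs{\galpi\ell\mathfrak b})S_{\v p}^{2}$ for $p$ in a block $\mathfrak b\!\in\!\bl{\mathfrak g}$, and with $\cs{\galpi\ell\mathfrak b}=\gal\ell(\cs{\mathfrak b})$ from \prettyref{cor:galactbl}, one finds that the ratio $r_p:=S_{\v p}^{-2}/\cs{\mathfrak b}$ (where $\mathfrak b$ is the block of $p$) is \emph{fixed} by $\gal\ell$ for every primary $p$. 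Consequently $\spp\ccl x=\prod_{p\in\ccl}\bigl(x-r_p\,\cs{\mathfrak b(p)}\bigr)$ with all $r_p$ Galois-fixed, so $\gal\ell$ acts on $\spp\ccl x$ purely by replacing each block extent $\cs{\mathfrak b}$ that occurs in it by its conjugate $\cs{\galpi\ell\mathfrak b}$. The whole problem thereby reduces to showing that this substitution leaves the product invariant, i.e. that the multiset $\{\,r_p\,\cs{\mathfrak b(p)}:p\!\in\!\ccl\,\}$ equals $\{\,r_p\,\cs{\galpi\ell\mathfrak b(p)}:p\!\in\!\ccl\,\}$.

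The main obstacle sits precisely here, which is why I expect the statement to be genuinely hard (matching its conjectural status). When $\galpi\ell\ccl=\ccl$ the claim is trivial; the difficulty is when $\galpi\ell$ sends $\ccl$ to a \emph{different} class, for then one must match the refined, dimension-weighted distribution of $\ccl$ over the blocks $\mathfrak b$ against that of $\galpi\ell\ccl$ over the blocks $\galpi\ell\mathfrak b$. The tools from the earlier sections control only coarser data: the overlap invariance $\sp{\mathfrak b}{\galpi\ell\ccl}=\sp{\mathfrak b}{\ccl}$ from \prettyref{prop:galact} pins down the quadratic sums $\sum_{p\in\mathfrak b,\,q\in\ccl}|S_{pq}|^2$ but not the individual multiset $\{r_p\}$ on $\ccl\cap\mathfrak b$, while \prettyref{lem:dimratio} governs dimensions block-by-block rather than class-by-class. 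A natural extra input is the central class $\zcl=\galpi\ell\du{\mathfrak g}\in\zent{\mathfrak g}$ supplied by \prettyref{prop:intspread}, since central multiplication preserves extents ($\cs{\zcl\ccl}=\cs\ccl$) and, one hopes, the dimension multiset; but because $\galpi\ell\ccl\neq\zcl\ccl$ in general, relating the Galois image to a central translate still requires a dimension-preserving correspondence between $\ccl$ and $\galpi\ell\ccl$ that is presently missing. Constructing such a correspondence — equivalently, proving a class-level refinement of \prettyref{lem:dimratio} — is the crux I would have to settle.
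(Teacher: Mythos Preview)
The statement you are addressing is labeled a \emph{Conjecture} in the paper and is left unproven there; the paper offers only a remark explaining why it would be enough to treat the case $\mathfrak g=\mint\ell$ and what its truth would imply about the integrality of the coefficients of $\spp{\ccl}x$. So there is no ``paper's own proof'' to compare against: your proposal is an attack on an open problem, and you are right to flag its conjectural status.

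Your reductions are sound. The identity $S_{\v p}^{-2}=\sum_w\mathrm{Tr}\,\fm(w)\,\ch p(w)$ is correct (it follows from Verlinde and $S^2=C$ exactly as you indicate), and from it the covariance $\gal\ell(S_{\v p}^{-2})=S_{\v\,\ell p}^{-2}$ and the unconditional equality $\spp{\ell\ccl}x=\gal\ell(\spp{\ccl}x)$ drop out cleanly. Your use of \prettyref{lem:dimratio} together with \prettyref{cor:galactbl} to show that $r_p=S_{\v p}^{-2}/\cs{\mathfrak b(p)}$ is $\gal\ell$-fixed is also correct and is a genuinely useful reformulation: it pins the whole question on the behavior of the multiset of block extents $\cs{\mathfrak b(p)}$ as $p$ runs over $\ccl$.

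Where you stop is exactly where the paper stops. The missing ingredient is a class-level analogue of \prettyref{lem:dimratio}, i.e.\ a statement controlling the multiset $\{S_{\v p}^{-2}:p\in\ccl\}$ rather than $\{S_{\v p}^{-2}:p\in\mathfrak b\}$. The overlap invariance $\sp{\mathfrak b}{\ell\ccl}=\sp{\mathfrak b}{\ccl}$ and the central class $\ell\du{\mathfrak g}\in\zent{\mathfrak g}$ from \prettyref{prop:intspread} give constraints, but, as you note, neither yields the dimension-preserving bijection $\ccl\to\ell\ccl$ that would finish the argument. This is precisely the gap the paper leaves open; your proposal does not close it, but it isolates it accurately.
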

\begin{rem}
Note that, as a consequence of \prettyref{lem:inclusion}, it would
be enough to prove \prettyref{conj:spect} for $\mathfrak{g}\!=\!\mint{\ell}$,
since this would imply the general case. Moreover, since $S_{\v p}^{-2}$
is an algebraic integer for each primary $p$, and because $\gal{\ell}\!\left(\spp{\ccl}x\right)\!=\!\spp{\galpi{\ell}\ccl}x$,
the truth of \prettyref{conj:spect} would imply that all the coefficients
of $\spp{\ccl}x$ are rational integers for $\mathfrak{g}\!\in\!\intlat$.
But $\cs{\ccl}\!\in\!\mathbb{Z}$ in case $\spp{\ccl}x\!\in\!\mathbb{Z}\!\left[x\right]$:
in conjunction with \prettyref{conj:algint} this would lead to the
conclusion that $\cs{\ccl}$ is an integer divisor of $\cs{\du{\mathfrak{g}}}$. 
\end{rem}
Finally, let's note that Eq.\prettyref{eq:sppdef} should be contrasted
with the following consequence of \prettyref{lem:dimratio} for $\du{\mathfrak{g}}\!\subseteq\!\mint{\ell}$
(with $\mathfrak{g}$-classes viewed as $\du{\mathfrak{g}}$-blocks)
\begin{equation}
\spp{\galpi{\ell}\ccl}x=\left(\!\frac{\cs{\galpi{\ell}\ccl}}{\cs{\ccl}}\!\right)^{\!\FA{\ccl}}\spp{\ccl}{\!\frac{\cs{\ccl}}{\cs{\galpi{\ell}\ccl}}x\!}\label{eq:spptrans}
\end{equation}

\section{Local sets and twisters\label{sec:Local-sets-and}}

Remember that the $\fc$ $\mathfrak{g}\!\in\!\lat$ is local if $\mathfrak{g}\!\subseteq\!\du{\mathfrak{g}}$.
We'll denote by $\loc$ the set of local $\fc$s; note that, while
the intersection of local $\fc$s is clearly local, this is not necessarily
the case for their join, i.e. $\loc$ is generally not a sublattice
of $\lat$, because it may have several maximal elements. Actually,
$\loc$ is itself a lattice precisely when it has a unique maximal
element.
\begin{lem}
$\mathfrak{g}\!\in\!\lat$ is local iff each $\mathfrak{g}$-class
is a union of $\mathfrak{g}$-blocks, or equivalently, each $\mathfrak{g}$-block
is contained in a well-defined $\mathfrak{g}$-class.
\end{lem}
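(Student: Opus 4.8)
The plan is to derive both directions from \prettyref{lem:inclusion}, supplemented in the converse by the special role of the vacuum. First I would settle the equivalence of the two phrasings. Both $\cl{\mathfrak{g}}$ and $\bl{\mathfrak{g}}$ partition the set of primaries, so the statement that every $\mathfrak{g}$-class is a union of $\mathfrak{g}$-blocks is nothing but the statement that the block partition refines the class partition; and this refinement property is visibly the same as saying that each $\mathfrak{g}$-block meets, hence is contained in, exactly one $\mathfrak{g}$-class. This is a routine remark about partitions and requires no computation.

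For the forward implication, I would assume $\mathfrak{g}$ local, i.e. $\mathfrak{g}\!\subseteq\!\du{\mathfrak{g}}$. Recalling that $\mathfrak{g}$-blocks are by definition the classes of $\du{\mathfrak{g}}$, I apply \prettyref{lem:inclusion} with the smaller $\fc$ equal to $\mathfrak{g}$ and the larger one equal to $\du{\mathfrak{g}}$: the conclusion is that every $\mathfrak{g}$-class is a union of $\du{\mathfrak{g}}$-classes, that is, a union of $\mathfrak{g}$-blocks, exactly as required.

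The converse is where a little more is needed, since it does not come straight out of \prettyref{lem:inclusion}. The observation I would exploit is that the vacuum $\v$ sits simultaneously in the trivial class $\du{\mathfrak{g}}\!\in\!\cl{\mathfrak{g}}$ and in the block $\mathfrak{g}\!\in\!\bl{\mathfrak{g}}$, the $\mathfrak{g}$-block containing the vacuum being $\mathfrak{g}$ itself. Assuming each $\mathfrak{g}$-class is a union of $\mathfrak{g}$-blocks, the trivial class $\du{\mathfrak{g}}$ is in particular such a union; since it contains $\v$, it must contain the whole block $\mathfrak{g}$ in which $\v$ lies, yielding $\mathfrak{g}\!\subseteq\!\du{\mathfrak{g}}$, i.e. locality.

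The main, and essentially only, obstacle is this converse step: one has to notice that it reduces to the coincidence that the vacuum pins down both the trivial class and the block $\mathfrak{g}$. Once that coincidence is invoked the inclusion $\mathfrak{g}\!\subseteq\!\du{\mathfrak{g}}$ follows at once, and the lemma is proved.
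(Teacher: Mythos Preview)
Your proof is correct and follows the same route as the paper, which simply states that the claim ``is a direct consequence of \prettyref{lem:inclusion}, keeping in mind that $\mathfrak{g}$-blocks are nothing but $\du{\mathfrak{g}}$-classes.'' You have made explicit the vacuum argument for the converse (namely $\mathfrak{g}=\du{(\du{\mathfrak{g}})}\subseteq\du{\mathfrak{g}}$ once the block partition refines the class partition), which the paper leaves to the reader.
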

\begin{proof}
This is a direct consequence of \prettyref{lem:inclusion}, keeping
in mind that $\mathfrak{g}$-blocks are nothing but $\du{\mathfrak{g}}$-classes.
\end{proof}
From the point of view of orbifold deconstruction \cite{Bantay2018a,Bantay2018b},
this is the basic property of local $\fc$s. The point is that the
vacuum block of an orbifold model (the set of primaries originating
in the vacuum primary) is an $\fc$ whose classes correspond to the
different twisted sectors, i.e. collections of twisted modules with
twist elements belonging to the same conjugacy class, while its blocks
correspond to orbits of twisted modules. Since the conjugacy class
of a twist element is the same for all twisted modules on the same
orbit, every block should be included in a well-defined class, hence
the vacuum block should be a local $\fc$ by the above.
\begin{lem}
If $\mathfrak{g}\!\in\!\loc$ and $\bbl{\mathfrak{g}}{\ccl}\!=\!\set{\mathfrak{b}\!\in\!\bl{\mathfrak{g}}}{\mathfrak{b}\!\subseteq\!\ccl}$
denotes the set of $\mathfrak{g}$-blocks contained in the class $\ccl\!\in\!\cl{\mathfrak{g}}$,
then 
\begin{equation}
\FA{\bbl{\mathfrak{g}}{\ccl}\!}\!=\!\sum_{\mathfrak{b}\subseteq\ccl}\!\sp{\mathfrak{b}}{\du{\mathfrak{g}}}\!=\!\sum_{\mathfrak{b}\subseteq\du{\mathfrak{g}}}\!\sp{\mathfrak{b}}{\ccl}\label{eq:blockcount}
\end{equation}
\end{lem}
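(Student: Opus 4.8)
The plan is to dispatch the two equalities separately, since both reduce to facts already available for overlaps. For the first equality I would invoke \prettyref{cor:trivoverlap}, which asserts that $\sp{\mathfrak{b}}{\du{\mathfrak{g}}}\!=\!1$ for \emph{every} block $\mathfrak{b}\!\in\!\bl{\mathfrak{g}}$. Hence each summand in $\sum_{\mathfrak{b}\subseteq\ccl}\sp{\mathfrak{b}}{\du{\mathfrak{g}}}$ equals $1$, so the sum merely counts the $\mathfrak{g}$-blocks contained in $\ccl$, which is $\FA{\bbl{\mathfrak{g}}{\ccl}}$ by the definition of $\bbl{\mathfrak{g}}{\ccl}$. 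That settles the left-hand equality with no computation.

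For the second equality the decisive tool is \prettyref{lem:overlapint}, which expresses any overlap as the symmetric double sum $\sp{\mathfrak{b}}{\ccl}\!=\!\sum_{p\in\mathfrak{b}}\sum_{q\in\ccl}\FA{S_{pq}}^{2}$ (and the identical formula holds with $\du{\mathfrak{g}}$ playing the role of the class, since $\du{\mathfrak{g}}\!\in\!\cl{\mathfrak{g}}$). I would combine this with locality of $\mathfrak{g}$: by \prettyref{lem:inclusion} (as recorded just after \prettyref{def:local}), $\mathfrak{g}\!\subseteq\!\du{\mathfrak{g}}$ is exactly the condition that every $\mathfrak{g}$-class be a disjoint union of the $\mathfrak{g}$-blocks it contains. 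Since $\ccl$ is a class and $\du{\mathfrak{g}}$ is itself the trivial class, both are partitioned by their constituent blocks, so the block-sums collapse: $\sum_{\mathfrak{b}\subseteq\ccl}\sum_{p\in\mathfrak{b}}$ becomes $\sum_{p\in\ccl}$, and $\sum_{\mathfrak{b}\subseteq\du{\mathfrak{g}}}\sum_{p\in\mathfrak{b}}$ becomes $\sum_{p\in\du{\mathfrak{g}}}$.

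Carrying this out yields $\sum_{\mathfrak{b}\subseteq\ccl}\sp{\mathfrak{b}}{\du{\mathfrak{g}}}=\sum_{p\in\ccl}\sum_{q\in\du{\mathfrak{g}}}\FA{S_{pq}}^{2}$ on one side and $\sum_{\mathfrak{b}\subseteq\du{\mathfrak{g}}}\sp{\mathfrak{b}}{\ccl}=\sum_{p\in\du{\mathfrak{g}}}\sum_{q\in\ccl}\FA{S_{pq}}^{2}$ on the other. These two double sums coincide because the modular matrix $S$ is symmetric, so $\FA{S_{pq}}^{2}\!=\!\FA{S_{qp}}^{2}$; relabelling the summation indices identifies both with the manifestly symmetric quantity $\sum_{p\in\ccl,\,q\in\du{\mathfrak{g}}}\FA{S_{pq}}^{2}$, which is the desired equality. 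This is, in effect, the same symmetric double-counting of $\FA{S_{pq}}^{2}$ that underlies the Reciprocity relation.

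I do not anticipate a genuine obstacle: once \prettyref{lem:overlapint} is in hand the argument is bookkeeping. The one point demanding care is the reduction of the block-sums to sums over primaries, which is legitimate \emph{only} because locality guarantees that the blocks contained in a class partition it; for a non-local $\fc$ this step would fail, since a class need not decompose into blocks at all.
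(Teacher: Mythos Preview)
Your argument is correct and matches the paper's approach. The paper dispatches the first equality via \prettyref{cor:trivoverlap} exactly as you do, and for the second it simply cites the Reciprocity relation Eq.~\eqref{eq:reciprocity} with $\mathfrak{h}\!=\!\mathfrak{b}\!=\!\du{\mathfrak{g}}$; your direct appeal to \prettyref{lem:overlapint} and the symmetry of $\FA{S_{pq}}^{2}$ is precisely the computation underlying that reciprocity, as you yourself observe.
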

\begin{proof}
The first equality follows from \prettyref{cor:trivoverlap}, while
the second one from Eq.\prettyref{eq:reciprocity} with $\mathfrak{h}\!=\!\mathfrak{b}\!=\!\du{\mathfrak{g}}$.
\end{proof}
\begin{lem}
\label{lem:localcrit}$\mathfrak{g}\!\in\!\lat$ is local iff $\om{\gamma}\!=\!\om{\alpha}\!\om{\beta}$
for all $\alpha,\beta,\gamma\!\in\!\mathfrak{g}$ such that $N_{\alpha\beta}^{\gamma}\!>\!0$.
\end{lem}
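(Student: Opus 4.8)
The plan is to read the assertion off directly from the description of the trivial class in \prettyref{cor:trivclass}, combined with the defining closure property of an $\fc$. Recall that by \prettyref{def:local} the set $\mathfrak{g}$ is local precisely when $\mathfrak{g}\!\subseteq\!\du{\mathfrak{g}}$, and that \prettyref{cor:trivclass} identifies the trivial class as
\[
\du{\mathfrak{g}}=\set p{\om q\!=\!\om{\alpha}\om p\text{ if }N_{\alpha p}^{q}\!>\!0\text{ for }\alpha\!\in\!\mathfrak{g}}.
\]
Thus membership of a primary $p$ in $\du{\mathfrak{g}}$ is exactly the requirement that the exponentiated conformal weight be multiplicative along every fusion channel $N_{\alpha p}^{q}\!>\!0$ with $\alpha\!\in\!\mathfrak{g}$. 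Locality asks that every $\beta\!\in\!\mathfrak{g}$ pass this test, so the whole argument reduces to matching that membership criterion against the multiplicativity condition in the statement.

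First I would treat the forward direction. Assuming $\mathfrak{g}\!\subseteq\!\du{\mathfrak{g}}$, take any $\alpha,\beta,\gamma\!\in\!\mathfrak{g}$ with $N_{\alpha\beta}^{\gamma}\!>\!0$. Since $\beta\!\in\!\mathfrak{g}\!\subseteq\!\du{\mathfrak{g}}$, applying the description above with $p\!=\!\beta$ and $q\!=\!\gamma$ gives $\om{\gamma}\!=\!\om{\alpha}\om{\beta}$, which is the desired identity. For the converse I would fix $\beta\!\in\!\mathfrak{g}$ and check the membership criterion of \prettyref{cor:trivclass}: for every $\alpha\!\in\!\mathfrak{g}$ and every primary $q$ with $N_{\alpha\beta}^{q}\!>\!0$ I must show $\om q\!=\!\om{\alpha}\om{\beta}$. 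The crucial observation is that, because $\mathfrak{g}$ is fusion closed, $\alpha,\beta\!\in\!\mathfrak{g}$ together with $N_{\alpha\beta}^{q}\!>\!0$ force $q\!\in\!\mathfrak{g}$; hence $q$ is one of the $\gamma$'s to which the hypothesis applies, yielding $\om q\!=\!\om{\alpha}\om{\beta}$. Therefore $\beta\!\in\!\du{\mathfrak{g}}$, and as $\beta$ was arbitrary, $\mathfrak{g}\!\subseteq\!\du{\mathfrak{g}}$, i.e. $\mathfrak{g}$ is local.

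There is essentially no obstacle beyond bookkeeping, and the only point that genuinely requires care is the quantifier shift in the converse: \prettyref{cor:trivclass} tests the multiplicativity of $\omega$ against all primaries $q$ occurring in the fusion of $\alpha$ and $\beta$, whereas the hypothesis of the lemma only controls the outcomes lying in $\mathfrak{g}$. What rescues the argument is precisely fusion closure, which guarantees that no outcome of a fusion product of two elements of $\mathfrak{g}$ can escape $\mathfrak{g}$, so the two ranges of quantification coincide. Once \prettyref{cor:trivclass} and \prettyref{def:local} are invoked, I expect the proof to collapse to a two-line deduction.
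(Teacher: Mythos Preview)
Your proposal is correct and follows exactly the route the paper takes: the paper's proof is the single sentence ``This follows from the containment $\mathfrak{g}\!\subseteq\!\du{\mathfrak{g}}$ and \prettyref{cor:trivclass},'' and you have simply unpacked that sentence. Your observation that fusion closure is what makes the quantifier ranges match in the converse direction is precisely the implicit content of that one-line argument.
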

\begin{proof}
This follows from the containment $\mathfrak{g\!\subseteq\!\du{\mathfrak{g}}}$
and \prettyref{cor:trivclass}.
\end{proof}
\begin{cor}
\label{cor:localweights}If the $\fc$ $\mathfrak{g}$ is local, then
$\cw{\alpha}\!\in\!\frac{1}{2}\mathbb{Z}$ for $\alpha\!\in\!\chg$.
\end{cor}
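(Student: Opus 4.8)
The plan is to apply the locality criterion of \prettyref{lem:localcrit} to the fusion of an element with its charge conjugate. Fix $\alpha\!\in\!\mathfrak{g}$. By \prettyref{cor:galactbl} the $\fc$ $\mathfrak{g}$ is self-conjugate, so the charge conjugate $\overline{\alpha}$ also lies in $\mathfrak{g}$. The defining property of charge conjugation in a rational model is that $\overline{\alpha}$ is the unique primary with $N_{\alpha\overline{\alpha}}^{\v}\!>\!0$ (in fact $N_{\alpha\overline{\alpha}}^{\v}\!=\!1$), so the triple $\alpha,\overline{\alpha},\v$ satisfies the hypothesis $N_{\alpha\beta}^{\gamma}\!>\!0$ required by the locality criterion.

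Next I would invoke \prettyref{lem:localcrit}: since $\mathfrak{g}$ is local and $\alpha,\overline{\alpha},\v\!\in\!\mathfrak{g}$ with $N_{\alpha\overline{\alpha}}^{\v}\!>\!0$, one obtains the multiplicativity relation $\om{\v}\!=\!\om{\alpha}\,\om{\overline{\alpha}}$. Two standard facts then close the computation: first, $\cw{\v}\!=\!0$ gives $\om{\v}\!=\!1$; second, the conformal weight of a primary equals that of its charge conjugate, $\cw{\overline{\alpha}}\!=\!\cw{\alpha}$, whence $\om{\overline{\alpha}}\!=\!\om{\alpha}$. Substituting yields $\om{\alpha}^{2}\!=\!1$, i.e.\ $\exp\!\left(4\pi\mathtt{i}\,\cw{\alpha}\right)\!=\!1$, so that $2\cw{\alpha}\!\in\!\mathbb{Z}$ and therefore $\cw{\alpha}\!\in\!\tfrac{1}{2}\mathbb{Z}$, as claimed.

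The argument is short, and the only ingredients beyond the quoted results are the two standard facts about rational models used above—that fusion with the charge conjugate contains the vacuum with positive multiplicity, and that charge conjugation preserves conformal weights. Neither is established in the excerpt, so the main (and rather minor) obstacle is merely to confirm that these are admissible as background; both are immediate consequences of the unitarity and modular invariance already assumed for the model. No estimate, induction, or further appeal to the orthogonality relations is needed.
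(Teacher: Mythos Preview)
Your argument is correct and matches the paper's own proof essentially line for line: invoke \prettyref{cor:galactbl} for $\overline{\alpha}\!\in\!\mathfrak{g}$, use $N_{\alpha\overline{\alpha}}^{\v}\!=\!1$ and $\om{\overline{\alpha}}\!=\!\om{\alpha}$, then apply \prettyref{lem:localcrit} to get $\om{\alpha}^{2}\!=\!\om{\v}\!=\!1$. The two ``standard facts'' you flag are exactly the ones the paper also takes as background without proof.
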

\begin{proof}
According to \prettyref{cor:galactbl}, $\alpha\!\in\!\mathfrak{g}$
implies $\overline{\alpha}\!\in\!\mathfrak{g}$. Since $N_{\alpha\overline{\alpha}}^{\v}\!=\!1$
and $\om{\overline{\alpha}}\!=\!\om{\alpha}$, \prettyref{lem:localcrit}
implies that $\om{\alpha}^{2}\!=\!\om{\v}\!=\!1$, i.e. $\cw{\alpha}\!\in\!\frac{1}{2}\mathbb{Z}$
for $\alpha\!\in\!\chg$.
\end{proof}
Note that the converse is not true: there are many $\fc$s in which
all conformal weights belong to $\frac{1}{2}\mathbb{Z}$, but are
nevertheless not local. On the other hand, the integrality of conformal
weights implies locality by \prettyref{lem:localcrit}, leading to
the following notion \cite{Bantay2018a,Bantay2018b}.
\begin{defn}
A $\CHG$ is an $\fc$ all of whose elements have integer conformal
weight.
\end{defn}
\global\long\def\rcl{\mathtt{\mathsf{R}}}%
\global\long\def\ramcl{\textrm{Ramond~class}}%

\begin{lem}
\label{lem:loccent}Every local $\fc$ $\mathfrak{g}\!\in\!\loc$
has a central class $\rcl\!\in\!\zent{\mathfrak{g}}$ such that $\cech{\rcl}{\alpha}=\om{\alpha}$
for $\alpha\!\in\!\mathfrak{g}$.
\end{lem}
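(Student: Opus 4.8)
The plan is to produce $\rcl$ as the $\mathfrak{g}$-class whose restricted irrep $\ch{\rcl}$ of the subalgebra $\svera{\mathfrak{g}}$ assigns to each $\alpha\!\in\!\mathfrak{g}$ the value $\om{\alpha}\qd{\alpha}$. The crux is to check that this prescription actually defines a one-dimensional representation of $\svera{\mathfrak{g}}$, which is precisely where locality enters. Concretely, I would verify that the assignment $\fm\!\left(\alpha\right)\!\mapsto\!\om{\alpha}\qd{\alpha}$ respects the structure constants of Eq.\eqref{eq:vgdef}, i.e. that for $\alpha,\beta\!\in\!\mathfrak{g}$
\[
\sum_{\gamma\in\mathfrak{g}}N_{\alpha\beta}^{\gamma}\om{\gamma}\qd{\gamma}=\om{\alpha}\om{\beta}\sum_{\gamma\in\mathfrak{g}}N_{\alpha\beta}^{\gamma}\qd{\gamma}=\om{\alpha}\om{\beta}\qd{\alpha}\qd{\beta}.
\]
Here $N_{\alpha\beta}^{\gamma}\!>\!0$ forces $\gamma\!\in\!\mathfrak{g}$ (so the full fusion sum reduces to the displayed one, as $\mathfrak{g}$ is an $\fc$), while locality supplies $\om{\gamma}\!=\!\om{\alpha}\om{\beta}$ via \prettyref{lem:localcrit}; the last equality is just the fusion-closure criterion Eq.\eqref{eq:fccrit}. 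Since also $\om{\v}\qd{\v}\!=\!1$, this exhibits $\alpha\!\mapsto\!\om{\alpha}\qd{\alpha}$ as an algebra homomorphism $\svera{\mathfrak{g}}\!\to\!\mathbb{C}$.

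Next I would invoke the fact established in the discussion preceding Eq.\eqref{eq:classno}, namely that the irreps of $\svera{\mathfrak{g}}$ are exactly the distinct restrictions $\ch{\ccl}$ of the irreps of $\vera$, one for each $\mathfrak{g}$-class $\ccl\!\in\!\cl{\mathfrak{g}}$. Because the homomorphism just constructed is such an irrep, there is a unique class $\rcl\!\in\!\cl{\mathfrak{g}}$ with $\alpha\!\left(\rcl\right)\!=\!\om{\alpha}\qd{\alpha}$ for all $\alpha\!\in\!\mathfrak{g}$. To see that $\rcl$ is central, note that $\om{\alpha}$ has unit modulus, so $\FA{\alpha\!\left(\rcl\right)\!}\!=\!\qd{\alpha}$ for every $\alpha\!\in\!\mathfrak{g}$; by \prettyref{lem:centclchar} this is exactly the condition $\rcl\!\in\!\zent{\mathfrak{g}}$.

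Finally, being central, $\rcl$ satisfies $\cs{\rcl}\!=\!\cs{\du{\mathfrak{g}}}$, so the defining formula Eq.\eqref{eq:cechardef} collapses to
\[
\cech{\rcl}{\alpha}=\frac{\cs{\du{\mathfrak{g}}}}{\cs{\rcl}}\frac{\alpha\!\left(\rcl\right)}{\qd{\alpha}}=\frac{\om{\alpha}\qd{\alpha}}{\qd{\alpha}}=\om{\alpha},
\]
which is the claimed identity. The only genuinely substantive step is the multiplicativity check in the first paragraph; the remainder is bookkeeping with results already in hand. The one point deserving care is the assertion that every character of $\svera{\mathfrak{g}}$ is realized by an actual class, which I expect to be the main (though mild) obstacle: it rests on $\svera{\mathfrak{g}}$ sitting inside the semisimple commutative Verlinde algebra $\vera$, so that its characters exhaust the restrictions of the $\ch p$ — the same fact already used to count classes in Eq.\eqref{eq:classno}.
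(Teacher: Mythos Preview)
Your argument is correct and follows essentially the same route as the paper: both show that $\alpha\mapsto\om{\alpha}\qd{\alpha}$ is an irrep of $\svera{\mathfrak{g}}$ via \prettyref{lem:localcrit}, identify it with some class $\rcl$, and then invoke \prettyref{lem:centclchar} to conclude centrality. The only cosmetic difference is that the paper cites \prettyref{cor:localweights} to get $\om{\alpha}\!=\!\pm1$, whereas you use directly that $\om{\alpha}$ has unit modulus (which suffices for \prettyref{lem:centclchar} and is immediate from Eq.\eqref{eq:omdef}).
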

\begin{proof}
By \prettyref{lem:localcrit}
\[
\sum_{\gamma\in\mathfrak{g}}N_{\alpha\beta}^{\gamma}\om{\gamma}\qd{\gamma}=\sum_{\gamma\in\mathfrak{g}}N_{\alpha\beta}^{\gamma}\om{\alpha}\om{\beta}\qd{\gamma}=\om{\alpha}\om{\beta}\qd{\alpha}\qd{\beta}
\]
in case $\alpha,\beta\!\in\!\mathfrak{g}$, i.e. the map $\alpha\mapsto\om{\alpha}\!\qd{\alpha}$
is an irrep of $\svera{\mathfrak{g}}$, hence there does exist a class
$\rcl\!\in\!\cl{\mathfrak{g}}$ such that $\ch{\rcl}\!\left(\alpha\right)\!=\!\om{\alpha}\!\qd{\alpha}$.
Since, according to \prettyref{cor:localweights}, $\om{\alpha}\!=\!\pm1$
for elements of a local $\fc$, \prettyref{lem:centclchar} implies
that the class $\rcl$ is central with $\cech{\rcl}{\alpha}\!=\!\om{\alpha}$.
\end{proof}
\begin{lem}
$\rcl^{2}\!=\!\du{\mathfrak{g}}$ for $\mathfrak{g}\!\in\!\loc$,
and $\mathfrak{g}$ is a twister iff $\rcl\!=\!\du{\mathfrak{g}}$.
\end{lem}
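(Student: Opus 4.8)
The plan is to reduce both claims to statements about central characters, exploiting that the center $\zent{\mathfrak{g}}$ is an Abelian group and that a central class is completely determined by its central character. Indeed, for a central class $\zcl$ one has $\cs{\zcl}\!=\!\cs{\du{\mathfrak{g}}}$, so Eq.~\eqref{eq:cechardef} reads $\alpha\!\left(\zcl\right)\!=\!\cech{\zcl}{\alpha}\qd{\alpha}$; thus the central character $\alpha\mapsto\cech{\zcl}{\alpha}$ recovers the restriction of $\ch{\zcl}$ to $\svera{\mathfrak{g}}$, and hence the class $\zcl$ itself. I would record this faithfulness at the outset, together with the two facts I will lean on: that $\cech{\rcl}{\alpha}\!=\!\om{\alpha}$ by \prettyref{lem:loccent}, and that the trivial class has constant central character $\cech{\du{\mathfrak{g}}}{\alpha}\!=\!1$.

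For $\rcl^{2}\!=\!\du{\mathfrak{g}}$ I would apply Eq.~\eqref{eq:cechprod} with both arguments equal to $\rcl$, giving $\cech{\rcl^{2}}{\alpha}\!=\!\cech{\rcl}{\alpha}^{2}\!=\!\om{\alpha}^{2}$ for $\alpha\!\in\!\mathfrak{g}$. By \prettyref{cor:localweights} every element of a local $\fc$ satisfies $\om{\alpha}\!=\!\pm1$, so $\om{\alpha}^{2}\!=\!1\!=\!\cech{\du{\mathfrak{g}}}{\alpha}$. Since the two central classes $\rcl^{2}$ and $\du{\mathfrak{g}}$ then share the same central character, the faithfulness noted above forces $\rcl^{2}\!=\!\du{\mathfrak{g}}$.

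For the second claim I would unwind the definition: $\mathfrak{g}$ is a twister exactly when $\cw{\alpha}\!\in\!\mathbb{Z}$ for every $\alpha\!\in\!\mathfrak{g}$, i.e. when $\om{\alpha}\!=\!1$ throughout. Using $\cech{\rcl}{\alpha}\!=\!\om{\alpha}$ and $\cech{\du{\mathfrak{g}}}{\alpha}\!=\!1$, the condition that $\om{\alpha}\!=\!1$ for all $\alpha$ is literally the equality of the central characters of $\rcl$ and $\du{\mathfrak{g}}$, which by faithfulness is equivalent to $\rcl\!=\!\du{\mathfrak{g}}$; both implications then follow simultaneously. The argument is short precisely because \prettyref{lem:loccent} has already done the essential work of realizing $\om{\alpha}$ as a central character; the only point deserving explicit care is the faithfulness of the central-character assignment on $\zent{\mathfrak{g}}$, and I anticipate no real obstacle beyond keeping the chain of equivalences in the second part correctly oriented.
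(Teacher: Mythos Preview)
Your proposal is correct and follows essentially the same route as the paper: both compute $\cech{\rcl^{2}}{\alpha}\!=\!\om{\alpha}^{2}\!=\!1$ via \prettyref{lem:loccent} and \prettyref{cor:localweights}, and both reduce the twister criterion to $\om{\alpha}\!=\!\cech{\rcl}{\alpha}\!=\!1$ for all $\alpha$. The only difference is that you make explicit the faithfulness of the central-character assignment on $\zent{\mathfrak{g}}$, which the paper uses silently (it was noted earlier, in the proof of \prettyref{prop:centergroup}, that ``central characters of different classes differ from each other'').
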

\begin{proof}
$\cech{\rcl}{\alpha}\!=\!\om{\alpha}$ for $\alpha\!\in\!\mathfrak{g}$
by \prettyref{lem:loccent}, hence $\cech{\rcl^{2}}{\alpha}\!=\!\om{\alpha}^{2}\!=\!1$
because of \prettyref{cor:localweights}, proving that indeed $\rcl^{2}\!=\!\du{\mathfrak{g}}$.
On the other hand, $\rcl\!=\!\du{\mathfrak{g}}$ precisely when $\om{\alpha}\!=\!\cech{\rcl}{\alpha}\!=1$
for all $\alpha\!\in\!\mathfrak{g}$, i.e. when $\mathfrak{g}$ is
a $\CHG$.
\end{proof}
\begin{cor}
\label{cor:twistext}Every local $\fc$ $\mathfrak{g}\!\in\!\loc$
is either a $\CHG$ or a $\mathbb{Z}_{2}$-extension of a $\CHG$.
\end{cor}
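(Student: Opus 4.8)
The plan is to read off the result from the central class $\rcl\!\in\!\zent{\mathfrak{g}}$ supplied by \prettyref{lem:loccent} together with the dichotomy recorded in the preceding lemma. The starting observation is that, by \prettyref{cor:localweights}, $\cech{\rcl}{\alpha}\!=\!\om{\alpha}\!=\!\pm1$ for every $\alpha\!\in\!\mathfrak{g}$, so that $\rcl^{2}\!=\!\du{\mathfrak{g}}$; hence $\rcl$ has order $1$ or $2$ in the Abelian group $\zent{\mathfrak{g}}$, and these two possibilities will correspond precisely to the two alternatives of the statement.

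In the first case, $\rcl\!=\!\du{\mathfrak{g}}$, the preceding lemma already gives that $\mathfrak{g}$ is a $\CHG$, so nothing remains to be done. In the second case $\rcl\!\neq\!\du{\mathfrak{g}}$, I would form the subgroup $Z\!=\!\left\{ \du{\mathfrak{g}},\rcl\right\}$ of $\zent{\mathfrak{g}}$; since $\rcl$ then has order exactly $2$, one has $Z\!\cong\!\mathbb{Z}_{2}$, and \prettyref{prop:zentext} guarantees that the central quotient $\zquot gZ$ is again an $\fc$.

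The crux is to identify $\zquot gZ$ explicitly. By \prettyref{prop:zentext} it equals $\set{\alpha\!\in\!\mathfrak{g}}{\alpha\!\left(\zcl\right)\!=\!\qd{\alpha}\textrm{ for all }\zcl\!\in\!Z}$. The condition attached to $\zcl\!=\!\du{\mathfrak{g}}$ holds automatically because $\alpha\!\left(\du{\mathfrak{g}}\right)\!=\!\qd{\alpha}$, while the condition attached to $\zcl\!=\!\rcl$ reads $\alpha\!\left(\rcl\right)\!=\!\cech{\rcl}{\alpha}\qd{\alpha}\!=\!\om{\alpha}\qd{\alpha}\!=\!\qd{\alpha}$, i.e. $\om{\alpha}\!=\!1$ (using that $\cs{\rcl}\!=\!\cs{\du{\mathfrak{g}}}$ for the central class $\rcl$, so that indeed $\alpha\!\left(\rcl\right)\!=\!\cech{\rcl}{\alpha}\qd{\alpha}$). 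Thus $\zquot gZ\!=\!\set{\alpha\!\in\!\mathfrak{g}}{\cw{\alpha}\!\in\!\mathbb{Z}}$ is exactly the set of integer-weight elements of $\mathfrak{g}$, hence a $\CHG$. Since $Z\!\cong\!\mathbb{Z}_{2}$ is a central subgroup of $\mathfrak{g}$ with $\zquot gZ$ a $\CHG$, the definition of a $\mathbb{Z}_{2}$-extension then exhibits $\mathfrak{g}$ as a $\mathbb{Z}_{2}$-extension of a $\CHG$, which is what I want.

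I do not expect a genuine obstacle here: the argument is essentially bookkeeping on top of \prettyref{lem:loccent}, the preceding lemma, and the structure theory of central quotients in \prettyref{prop:zentext}. The only point meriting care is that in the case $\rcl\!\neq\!\du{\mathfrak{g}}$ the element $\rcl$ has order \emph{exactly} $2$, so that $Z$ is genuinely a copy of $\mathbb{Z}_{2}$ and not the trivial group; this is immediate from $\rcl^{2}\!=\!\du{\mathfrak{g}}$, the identity of $\zent{\mathfrak{g}}$.
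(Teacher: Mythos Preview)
Your argument is correct and follows essentially the same route as the paper: both use that $\rcl$ has order $1$ or $2$ in $\zent{\mathfrak{g}}$, and in the nontrivial case pass to the central quotient by $\left\{\du{\mathfrak{g}},\rcl\right\}$, identifying it with $\set{\alpha\!\in\!\mathfrak{g}}{\om{\alpha}\!=\!1}$. Your version simply spells out the computation of the central quotient via \prettyref{prop:zentext} in more detail than the paper's one-line argument.
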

\begin{proof}
If $\mathfrak{g}\!\in\!\loc$ is not a $\CHG$, then the class $\rcl$
generates a central subgroup  of order $2$, and the corresponding
central quotient $\set{\alpha\!\in\!\mathfrak{g}}{\om{\alpha}\!=\!1}$
is clearly a $\CHG$, proving the assertion.
\end{proof}
We shall call the class $\rcl\!\in\!\zent{\mathfrak{g}}$, whose existence
is guaranteed by \prettyref{lem:loccent}, the $\ramcl$ of the local
$\fc$ $\mathfrak{g}\!\in\!\loc$. The rationale for this nomenclature
is that, in case $\rcl$ differs from the trivial class $\du{\mathfrak{g}}$,
there exists a suitable fermionic generalization of orbifold deconstruction
in which the blocks contained in the trivial class account for the
Neveu-Schwarz (bosonic) sector of the deconstructed model, while those
in the $\ramcl$ describe the fermionic (Ramond) sector.
\begin{lem}
\label{lem:trivblocks}If $\mathfrak{g}\!\in\!\loc$ and $\mathfrak{b}\!\in\!\bl{\mathfrak{g}}$
is a block contained in the central class $\zcl\!\in\!\zent{\mathfrak{g}}$,
then the conformal weights of its elements multiplied by the order
of the product $\zcl\rcl$ differ by integers.
\end{lem}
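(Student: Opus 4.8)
The plan is to prove that, for any two primaries $p,q\!\in\!\mathfrak{b}$, the ratio $\om q/\om p$ is a root of unity whose order divides the order $n$ of the central class $\zcl\rcl$ in the Abelian group $\zent{\mathfrak{g}}$. Since $\om q/\om p\!=\!\exp\!\left(2\pi\mathtt{i}\!\left(\cw q\!-\!\cw p\right)\right)$ by Eq.\prettyref{eq:omdef}, this is exactly the statement that $n\!\left(\cw q\!-\!\cw p\right)\!\in\!\mathbb{Z}$, i.e. that the conformal weights of the elements of $\mathfrak{b}$, multiplied by $n$, differ by integers.

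First I would link $p$ and $q$ through the fusion rules. As they belong to the same block $\mathfrak{b}\!\in\!\bl{\mathfrak{g}}$, \prettyref{lem:blcrit2} furnishes an element $\alpha\!\in\!\mathfrak{g}$ with $N_{\alpha p}^{q}\!>\!0$. Because $\mathfrak{b}\!\subseteq\!\zcl$, the primary $p$ lies in the central class $\zcl$, so the hypotheses of \prettyref{lem:centralchar} are met and it gives $\om{\alpha}\om p/\om q\!=\!\cech{\zcl}{\alpha}$, that is
\[
\frac{\om q}{\om p}\!=\!\frac{\om{\alpha}}{\cech{\zcl}{\alpha}}.
\]

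Next I would recast the right-hand side entirely in terms of central characters. By \prettyref{lem:loccent} one has $\om{\alpha}\!=\!\cech{\rcl}{\alpha}$, and since $\rcl^{2}\!=\!\du{\mathfrak{g}}$ the values of $\cech{\rcl}{\cdot}$ are $\pm1$, whence $\cech{\rcl}{\alpha}\!=\!\cech{\rcl}{\alpha}^{-1}$. Feeding this into the displayed identity and using the multiplicativity Eq.\prettyref{eq:cechprod} of central characters (with $\ccl\!=\!\rcl$) yields
\[
\frac{\om q}{\om p}\!=\!\cech{\rcl}{\alpha}\,\cech{\zcl}{\alpha}^{-1}\!=\!\left(\cech{\zcl}{\alpha}\cech{\rcl}{\alpha}\right)^{-1}\!=\!\cech{\zcl\rcl}{\alpha}^{-1}.
\]

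Finally, with $n$ the order of $\zcl\rcl$, the relation $(\zcl\rcl)^{n}\!=\!\du{\mathfrak{g}}$ combined with iterated application of Eq.\prettyref{eq:cechprod} and $\cech{\du{\mathfrak{g}}}{\alpha}\!=\!1$ forces $\cech{\zcl\rcl}{\alpha}^{n}\!=\!1$, so $\left(\om q/\om p\right)^{n}\!=\!1$ and the conclusion follows. I expect no genuine obstacle here: every ingredient, namely the block criterion \prettyref{lem:blcrit2}, the central-weight relation \prettyref{lem:centralchar}, the Ramond class of \prettyref{lem:loccent} with $\rcl^{2}\!=\!\du{\mathfrak{g}}$, and the group law Eq.\prettyref{eq:cechprod} on the center, is already in hand. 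The only point needing care is the bookkeeping in $\zent{\mathfrak{g}}$: one must use $\rcl^{2}\!=\!\du{\mathfrak{g}}$ to absorb the inverse so that the weight ratio collapses to the single central character $\cech{\zcl\rcl}{\alpha}$, after which its order-$n$ vanishing is automatic.
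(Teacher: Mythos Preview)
Your proof is correct and follows essentially the same route as the paper's: link $p$ and $q$ via \prettyref{lem:blcrit2}, apply \prettyref{lem:centralchar} to get $\om q/\om p\!=\!\om{\alpha}\cech{\zcl}{\alpha}^{-1}$, rewrite $\om{\alpha}$ as $\cech{\rcl}{\alpha}^{-1}$ (using \prettyref{lem:loccent} and $\rcl^{2}\!=\!\du{\mathfrak{g}}$) so that the ratio becomes $\cech{\zcl\rcl}{\alpha}^{-1}$, and conclude from $(\zcl\rcl)^{n}\!=\!\du{\mathfrak{g}}$. The paper compresses the middle step into one line, but the argument is the same.
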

\begin{proof}
Let $n$ denote the order of the class $\zcl\rcl$ viewed as an element
of the Abelian group $\zent{\mathfrak{g}}$. According to \prettyref{lem:blcrit2},
for any two primaries $p$ and $q$ contained in the block $\mathfrak{b}\!\subseteq\!\zcl$
there exists $\alpha\!\in\!\mathfrak{g}$ such that $N_{\alpha p}^{q}\!>\!0$.
By \prettyref{lem:centralchar} this implies that $\om q\!=\!\om{\alpha}\!\om p\!\cech{\zcl}{\alpha}\inv\!=\!\om p\cech{\zcl\rcl}{\alpha}\inv$,
and because $\cech{\zcl\rcl}{\alpha}^{n}\!=\!\cech{\du{\mathfrak{g}}}{\alpha}\!=\!1$,
this means that $\om q^{n}\!=\!\om p^{n}$, i.e. $n(\cw q\!-\!\cw p)\!\in\!\mathbb{Z}$.
\end{proof}
\begin{cor}
A $\dcl$ is contained in the $\ramcl$ $\rcl$ precisely when the
conformal weights of its elements differ by integers.
\end{cor}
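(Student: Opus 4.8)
The plan is to reduce the statement to a pointwise description of the $\ramcl$ purely in terms of conformal weights, and then read off both implications. Recall from \prettyref{lem:loccent} that $\rcl$ is the central class singled out by $\cech{\rcl}{\alpha}\!=\!\om{\alpha}$ for $\alpha\!\in\!\mathfrak{g}$, equivalently $\ch{\rcl}\!\left(\alpha\right)\!=\!\om{\alpha}\qd{\alpha}$; hence a primary $p$ lies in $\rcl$ precisely when $\frac{S_{\alpha p}}{S_{\v p}}\!=\!\om{\alpha}\qd{\alpha}$ for every $\alpha\!\in\!\mathfrak{g}$. So the first task is to convert this $S$-matrix equality into a condition on the $\om r$.

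To do so I would feed Verlinde's formula \eqref{eq:ver3} into the left-hand side: dividing $\frac{S_{\alpha p}}{S_{\v\v}}\!=\!\sum_{r}N_{\alpha p}^{r}\qd r\frac{\om{\alpha}\om p}{\om r}$ by $\frac{S_{\v p}}{S_{\v\v}}\!=\!\qd p$ and comparing with the requirement $\frac{S_{\alpha p}}{S_{\v p}}\!=\!\om{\alpha}\qd{\alpha}$, the membership $p\!\in\!\rcl$ becomes equivalent to the family of identities $\sum_{r}N_{\alpha p}^{r}\qd r\bigl(1-\frac{\om p}{\om r}\bigr)\!=\!0$, one for each $\alpha\!\in\!\mathfrak{g}$, after subtracting the trivial relation $\qd{\alpha}\qd p\!=\!\sum_{r}N_{\alpha p}^{r}\qd r$. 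Since every $\frac{\om p}{\om r}$ has unit modulus, passing to real parts renders each summand non-negative, so a given identity holds iff $\om r\!=\!\om p$ whenever $N_{\alpha p}^{r}\!>\!0$. Letting $\alpha$ range over $\mathfrak{g}$ and invoking \prettyref{lem:blcrit2}, which characterises the primaries $r$ admitting some $\alpha$ with $N_{\alpha p}^{r}\!>\!0$ as exactly those lying in the same $\mathfrak{g}$-block as $p$, I arrive at the criterion: $p\!\in\!\rcl$ iff $\om r\!=\!\om p$, i.e. $\cw r\!-\!\cw p\!\in\!\mathbb{Z}$, for every $r$ in the block of $p$. (This is just \prettyref{lem:centralchar} specialised to the central class $\rcl$.)

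Both implications then fall out at once. If the block $\mathfrak{b}$ is contained in $\rcl$, I fix any $p\!\in\!\mathfrak{b}$; every element of $\mathfrak{b}$ is then in the block of $p$, so the criterion forces all their conformal weights to differ from $\cw p$ by integers. Conversely, if the conformal weights of the elements of $\mathfrak{b}$ differ by integers, then $\om r\!=\!\om p$ throughout $\mathfrak{b}$ for any chosen $p\!\in\!\mathfrak{b}$; the criterion gives $p\!\in\!\rcl$, and since $p$ was arbitrary, $\mathfrak{b}\!\subseteq\!\rcl$.

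The delicate point — and the one I would treat most carefully — is this converse: one must ensure that integral weight differences force $\mathfrak{b}$ into the particular central class $\rcl$, not merely into some central class. This is exactly what the Verlinde computation secures, for when all $\om r$ coincide with $\om p$ across the block the weighted sum collapses and $\frac{S_{\alpha p}}{S_{\v p}}$ evaluates to precisely $\om{\alpha}\qd{\alpha}\!=\!\ch{\rcl}\!\left(\alpha\right)$, pinning the class of $p$ down to $\rcl$ and no other. I note in passing that the forward implication alternatively follows from \prettyref{lem:trivblocks} applied with $\zcl\!=\!\rcl$, since $\rcl^{2}\!=\!\du{\mathfrak{g}}$ is the identity of $\zent{\mathfrak{g}}$ and hence has order $1$.
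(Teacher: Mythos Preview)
Your argument is correct. The characterisation you derive, that $p\in\rcl$ exactly when $\om r=\om p$ for every $r$ in the $\mathfrak{g}$-block of $p$, is precisely the content of \prettyref{lem:centralchar} specialised to $\zcl=\rcl$ (as you observe), and both implications of the corollary follow immediately from it.

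The paper offers no explicit proof, placing the statement as an unproved corollary directly after \prettyref{lem:trivblocks}. The forward implication is indeed an instant consequence of that lemma with $\zcl=\rcl$ (since $\rcl\rcl=\du{\mathfrak{g}}$ has order $1$), exactly as you note in your closing remark. For the converse, however, \prettyref{lem:trivblocks} says nothing: it presupposes that the block already lies in some central class and only bounds the denominators of the weight differences. Your Verlinde computation, which unwinds \prettyref{lem:centralchar}, is what is actually needed to pin the block to $\rcl$ in the converse direction, and you have supplied this step explicitly where the paper leaves it to the reader. So your route is the same circle of ideas (\prettyref{lem:omch} $\Rightarrow$ \prettyref{lem:centralchar} $\Rightarrow$ \prettyref{lem:trivblocks}), just entered one level lower and carried through more carefully on the harder half of the biconditional.
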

We note that the above results can be generalized further: there is
an intrinsic way to define Adams operations and power maps for local
$\fc$s \cite{Bantay2018a,Bantay2018b}, which lead to a well-defined
notion of the order of classes, and for central classes this coincides
with their multiplicative order (as elements of the center).
\begin{lem}
If $\ccl\!\in\!\cl{\mathfrak{g}}$ is a class of a local $\fc$ $\mathfrak{g}\!\in\!\loc$,
then $\sp{\mathfrak{b}}{\rcl\ccl}\!\geq\!1$ for all blocks $\mathfrak{b}\!\in\!\bl{\mathfrak{g}}$
contained in $\ccl$.
\end{lem}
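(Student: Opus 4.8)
The plan is to reduce the statement to the non-vanishing of a minor of the modular $S$-matrix and then to produce an explicit nonzero entry using the ribbon structure together with the defining property of the Ramond class. Since $\sp{\mathfrak{b}}{\rcl\ccl}=\sum_{p\in\mathfrak{b}}\sum_{q\in\rcl\ccl}\FA{S_{pq}}^{2}$ is a non-negative integer by \prettyref{lem:overlapint}, the inequality $\sp{\mathfrak{b}}{\rcl\ccl}\!\geq\!1$ is equivalent to exhibiting a single pair $p\!\in\!\mathfrak{b}$, $q\!\in\!\rcl\ccl$ with $S_{pq}\!\neq\!0$, i.e.\ to showing that the irrep $\ch{\rcl\ccl}$ occurs in the representation $\bfm_{\mathfrak{b}}$ of $\svera{\mathfrak{g}}$.

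First I would localize the candidate elements of $\rcl\ccl$ by fusion. Fix $p\in\mathfrak{b}\subseteq\ccl$ and a primary $z$ in the central class $\rcl$. Because $\rcl\in\zent{\mathfrak{g}}$ and $p\in\ccl$, the generalized product rule \prettyref{thm:genprodrule} guarantees that every $r$ with $N_{pz}^{r}\!>\!0$ lies in $\rcl\ccl$, and at least one such $r$ exists since $\sum_{r}N_{pz}^{r}\qd r=\qd p\qd z>0$. Thus $\rcl\ccl$ is precisely the class reached from the block $\mathfrak{b}$ by fusing with the Ramond class.

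The heart of the argument is to convert this fusion information into a statement about $S$. Here I would start from the trivial overlap $\sp{\mathfrak{b}}{\du{\mathfrak{g}}}=1$ of \prettyref{cor:trivoverlap}, which furnishes $p_{0}\in\mathfrak{b}$ and $r_{0}\in\du{\mathfrak{g}}$ with $S_{p_{0}r_{0}}\neq0$, together with the ribbon form of Verlinde's formula read off from Eq.\eqref{eq:ver3}, namely $S_{ab}=\om a\om b\,S_{\v\v}\sum_{r}N_{ab}^{r}\om r^{-1}\qd r$. Feeding $S_{p_{0}r_{0}}\neq0$ into the component identity $\sum_{q}\om q\,S_{p_{0}q}S_{qr_{0}}\propto\om{p_{0}}^{-1}\om{r_{0}}^{-1}S_{p_{0}r_{0}}$ (the modular relation underlying Eq.\eqref{eq:ver3}) produces a nonzero conformal-weight-weighted sum over all primaries $q$. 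Since for $r_{0}\in\du{\mathfrak{g}}$ the ratio $S_{qr_{0}}/S_{\v q}$ depends only on the $\mathfrak{g}$-block of $q$ (the dual of the identity $S_{\alpha p}=\alpha\!\left(\ccl\right)S_{\v p}$), this sum organizes along blocks, while the defining relation $\cech{\rcl}{\alpha}=\om\alpha$ of \prettyref{lem:loccent} identifies the $\omega$-twist with the passage $\ccl\mapsto\rcl\ccl$; tracking which blocks and classes survive should then force a nonzero entry $S_{p'q'}$ with $p'\in\mathfrak{b}$ and $q'\in\rcl\ccl$.

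The main obstacle I anticipate is exactly this last bookkeeping: because the phases $\om q$ are not constant on $\mathfrak{g}$-classes, the weighted sums do not split cleanly along the partition into classes, and one must rule out a cancellation of the $\rcl\ccl$-contributions. To control the signs I would use that $\mathfrak{g}$ is local, so every $\om\alpha=\pm1$ (\prettyref{lem:localcrit}, \prettyref{cor:twistext}), and invoke \prettyref{lem:centralchar} to pin the factor $\om\alpha\om p/\om q=\cech{\rcl}{\alpha}$ attached to each fusion $N_{\alpha p}^{q}>0$ via Eq.\eqref{eq:centralchar}. It is convenient to separate the case $\rcl\ccl=\ccl$, where the claim reads $\sp{\mathfrak{b}}{\ccl}\!\geq\!1$, from $\rcl\ccl\neq\ccl$, where $\ccl\sqcup\rcl\ccl$ is a single class of the twister quotient $\mathfrak{g}_{+}=\set{\alpha\in\mathfrak{g}}{\om\alpha=1}$: in the latter case additivity of $\sum\FA{S_{pq}}^{2}$ over this partition, combined with \prettyref{thm:zentextstruct}, ties the two overlaps together, so that the genuinely new content is the strict conformal-weight non-vanishing in the Ramond-fixed case $\rcl\ccl=\ccl$.
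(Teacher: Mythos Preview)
Your reduction to showing that $\ch{\rcl\ccl}$ occurs in $\bfm_{\mathfrak{b}}$ is correct, but the route you propose afterwards---producing a nonzero $S$-matrix entry from the modular relation underlying Eq.\eqref{eq:ver3}---is indirect, and you yourself flag the central obstacle (the phases $\om q$ do not respect the class partition) without resolving it. The case split on $\rcl\ccl=\ccl$ versus $\rcl\ccl\neq\ccl$ does not obviously help either: even in the fixed case, the claim $\sp{\mathfrak{b}}{\ccl}\geq 1$ is not a priori easier, since $\mathfrak{b}\subseteq\ccl$ does not by itself furnish a nonzero entry of $S$ with both indices in $\ccl$, and the reduction to the twister quotient $\mathfrak{g}_{+}$ in the other case only relates two overlaps without bounding either from below.

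The paper's argument sidesteps all of this by writing down an explicit nonzero eigenvector of $\bfm_{\mathfrak{b}}$ with eigenvalue $\ch{\rcl\ccl}$. Set $D^{\omega}_{p}=\om p^{-1}\qd p$ for $p\in\mathfrak{b}$. Rearranging Eq.\eqref{eq:ver3} gives
\[
\sum_{q\in\mathfrak{b}}\fm_{\mathfrak{b}}\!\left(\alpha\right)_{p}^{q}D^{\omega}_{q}=\sum_{q}N_{\alpha p}^{q}\,\om q^{-1}\qd q=\om\alpha^{-1}\om p^{-1}\frac{S_{\alpha p}}{S_{\v\v}}=\om\alpha^{-1}\alpha\!\left(\ccl\right)D^{\omega}_{p},
\]
and since $\mathfrak{g}$ is local one has $\om\alpha^{-1}=\om\alpha=\cech{\rcl}{\alpha}$, so the right-hand side is $\alpha\!\left(\rcl\ccl\right)D^{\omega}_{p}$. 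As $D^{\omega}\neq0$ (quantum dimensions are positive), the multiplicity of $\ch{\rcl\ccl}$ in $\bfm_{\mathfrak{b}}$ is at least one. This is a three-line computation with no case analysis and no need to control global $S$-matrix sums; the single idea you were missing is that the overlap can be witnessed by an eigenvector built from $\om p$ and $\qd p$ directly, rather than by locating a specific nonzero $S$-entry.
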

\begin{proof}
Let $D^{\omega}$ denote the vector with components $D_{p}^{\omega}\!=\!\om p\inv\qd p$
for $p\!\in\!\mathfrak{b}$. According to Eq.\prettyref{eq:verrep},
one has for $\alpha\!\in\!\mathfrak{g}$ and $\mathfrak{b}\!\subseteq\!\ccl$
\begin{gather*}
\sum_{q\in\mathfrak{b}}\fm_{\mathfrak{b}}\!\left(\alpha\right)_{p}^{q}D_{q}^{\omega}\!=\!\sum_{q}N_{\alpha p}^{q}\om q\inv\qd q\!=\!\om{\alpha}\inv\om p\inv\frac{S_{\alpha p}}{S_{\v\v}}\\
\!=\!\om{\alpha}\inv\frac{S_{\alpha p}}{S_{\v p}}\om p\inv\frac{S_{\v p}}{S_{\v\v}}\!=\!\ch{\rcl\ccl}\!\left(\alpha\right)\!D_{p}^{\omega}
\end{gather*}
in case $p\!\in\!\mathfrak{b}$. In other words, $D^{\omega}$ is
a common eigenvector of the matrices $\fm_{\mathfrak{b}}\!\left(\alpha\right)$,
with eigenvalue $\ch{\rcl\ccl}\!\left(\alpha\right)$. Since $D_{p}^{\omega}\!\neq\!0$,
one gets that $\sp{\mathfrak{b}}{\rcl\ccl}$, the multiplicity of
$\ch{\rcl\ccl}$ in $\bfm_{\mathfrak{b}}$, is at least $1$.
\end{proof}
\begin{cor}
The number of blocks contained in the $\ramcl$ equals the number
of blocks contained in the trivial class.
\end{cor}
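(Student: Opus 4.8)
The plan is to count the blocks contained in the Ramond class $\rcl$ by means of the block-counting formula Eq.\prettyref{eq:blockcount}, and then to pin down the individual overlaps by squeezing them between $1$ and $1$. First I would apply Eq.\prettyref{eq:blockcount} with $\ccl\!=\!\rcl$, which expresses the number $\FA{\bbl{\mathfrak{g}}{\rcl}\!}$ of blocks contained in the Ramond class as the sum $\sum_{\mathfrak{b}\subseteq\du{\mathfrak{g}}}\sp{\mathfrak{b}}{\rcl}$ taken over exactly those blocks $\mathfrak{b}$ that lie in the trivial class $\du{\mathfrak{g}}$. The same formula with $\ccl\!=\!\du{\mathfrak{g}}$, combined with $\sp{\mathfrak{b}}{\du{\mathfrak{g}}}\!=\!1$ from \prettyref{cor:trivoverlap}, shows that $\FA{\bbl{\mathfrak{g}}{\du{\mathfrak{g}}}\!}$ is just the number of blocks contained in $\du{\mathfrak{g}}$, i.e. the number of summands in that sum. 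Hence the whole statement reduces to proving that $\sp{\mathfrak{b}}{\rcl}\!=\!1$ for every block $\mathfrak{b}\!\subseteq\!\du{\mathfrak{g}}$.

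The lower bound I would take straight from the preceding lemma: applying it with the class there set equal to $\du{\mathfrak{g}}$, so that $\rcl\du{\mathfrak{g}}\!=\!\rcl$ (recall $\du{\mathfrak{g}}$ is the identity of $\zent{\mathfrak{g}}$), it gives $\sp{\mathfrak{b}}{\rcl}\!\geq\!1$ for each block $\mathfrak{b}$ contained in $\du{\mathfrak{g}}$. For the matching upper bound I would invoke \prettyref{prop:overlapbound}: since $\rcl\!\in\!\zent{\mathfrak{g}}$ is central we have $\cs{\rcl}\!=\!\cs{\du{\mathfrak{g}}}$ by \prettyref{centerdef}, so the first entry of the minimum in Eq.\prettyref{eq:ovbound} yields $\sp{\mathfrak{b}}{\rcl}\!\leq\!\cs{\du{\mathfrak{g}}}/\cs{\rcl}\!=\!1$ for every block $\mathfrak{b}\!\in\!\bl{\mathfrak{g}}$. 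Putting the two estimates together forces $\sp{\mathfrak{b}}{\rcl}\!=\!1$ on each $\mathfrak{b}\!\subseteq\!\du{\mathfrak{g}}$, whence $\sum_{\mathfrak{b}\subseteq\du{\mathfrak{g}}}\sp{\mathfrak{b}}{\rcl}$ equals the number of such blocks, and the two counts coincide.

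I do not anticipate a genuine obstacle: the argument is short once one sees that the freshly proved lower bound $\sp{\mathfrak{b}}{\rcl}\!\geq\!1$ should be paired with the upper bound that centrality of $\rcl$ (through the equality of extents $\cs{\rcl}\!=\!\cs{\du{\mathfrak{g}}}$) feeds into \prettyref{prop:overlapbound}. The only delicate point is the bookkeeping in Eq.\prettyref{eq:blockcount}, namely verifying that the summation index ``$\mathfrak{b}\!\subseteq\!\du{\mathfrak{g}}$'' on both sides refers to the very same set of blocks, so that the numerical identity $\FA{\bbl{\mathfrak{g}}{\rcl}\!}\!=\!\FA{\bbl{\mathfrak{g}}{\du{\mathfrak{g}}}\!}$ drops out termwise once each overlap is known to be $1$.
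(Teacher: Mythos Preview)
Your proposal is correct and follows essentially the same route as the paper: use Eq.\prettyref{eq:blockcount} with $\ccl=\rcl$, then squeeze $\sp{\mathfrak{b}}{\rcl}$ between the lower bound $1$ from the preceding lemma (applied with $\ccl=\du{\mathfrak{g}}$) and the upper bound $1$ from \prettyref{prop:overlapbound} via centrality of $\rcl$. Your write-up is in fact slightly more explicit than the paper's about why the summation ranges match.
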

\begin{proof}
Since $\sp{\mathfrak{b}}{\rcl}\!\leq\!1$ by \prettyref{prop:overlapbound}
(since $\rcl$ is central), while $\sp{\mathfrak{b}}{\rcl}\!\geq\!1$
for $\mathfrak{b}\!\subseteq\!\du{\mathfrak{g}}$ by the above Lemma,
we get $\sp{\mathfrak{b}}{\rcl}\!=\!1$ and
\[
\FA{\bbl{\mathfrak{g}}{\rcl}\!}\!=\!\sum_{\mathfrak{b}\subseteq\du{\mathfrak{g}}}\!\sp{\mathfrak{b}}{\rcl}\!=\!\FA{\bbl{\mathfrak{g}}{\du{\mathfrak{g}}}\!}
\]
by Eq.\prettyref{eq:blockcount}, proving the claim.
\end{proof}
Roughly speaking, the above Corollary says that, in case the $\ramcl$
is non-trivial, there is an equal number of bosonic and fermionic
degrees of freedom in the corresponding deconstructed model.
\begin{thm}
\begin{singlespace}
\label{thm:main} Every local $\fc$ belongs to $\pd{\intlat}$, i.e.
$\qd{\alpha}\!\in\!\mathbb{Z}$ for all $\alpha\!\in\!\mathfrak{g}$.
\end{singlespace}
\end{thm}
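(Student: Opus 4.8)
The plan is to convert the local condition into the statement that the fusion subcategory carried by $\svera{\mathfrak{g}}$ is \emph{symmetric}, and then to invoke the result of Deligne already advertised in the text, under which integrality of quantum dimensions reduces to the elementary fact that a representation of a finite group has integer degree.

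The first step is to observe that locality makes the modular data factorize on $\mathfrak{g}$. Indeed, $\mathfrak{g}\!\subseteq\!\du{\mathfrak{g}}$ means every $\alpha\!\in\!\mathfrak{g}$ lies in the trivial class, so $S_{\alpha\beta}\!=\!\qd{\beta}S_{\v\alpha}\!=\!\qd{\alpha}\qd{\beta}S_{\v\v}$ for all $\alpha,\beta\!\in\!\mathfrak{g}$; equivalently the double braiding (monodromy) of any two elements of $\mathfrak{g}$ is trivial. By \prettyref{lem:localcrit} the twist is simultaneously multiplicative on $\mathfrak{g}$ (that is, $\om{\gamma}\!=\!\om{\alpha}\om{\beta}$ whenever $N_{\alpha\beta}^{\gamma}\!>\!0$), which is exactly the compatibility needed for the trivial monodromy to assemble into a symmetric braiding. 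Thus the braided category generated by $\mathfrak{g}$ is symmetric.

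The second step applies Deligne's theorem: a symmetric fusion category over $\mathbb{C}$ is equivalent to the category $\mathtt{Rep}(G,z)$ of super-representations of a finite supergroup $(G,z)$, where $z\!\in\!\zent G$ is an involution encoding the parity. Under this equivalence the elements $\alpha\!\in\!\mathfrak{g}$ are the simple objects, the fusion coefficients $N_{\alpha\beta}^{\gamma}$ are the tensor-product multiplicities, and the Perron-Frobenius dimension $\qd{\alpha}\!=\!\ch{\v}(\alpha)$ is the \emph{ungraded} dimension of the underlying super vector space. As that dimension is a positive rational integer, one concludes $\qd{\alpha}\!\in\!\mathbb{Z}$ for every $\alpha\!\in\!\mathfrak{g}$, i.e. $\mathfrak{g}\!\in\!\pd{\intlat}$.

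The dichotomy between a genuine group and a proper supergroup is precisely that of \prettyref{cor:twistext}. When $\mathfrak{g}$ is a $\CHG$ one has $\rcl\!=\!\du{\mathfrak{g}}$, the involution $z$ is trivial, $\mathtt{Rep}(G,z)\!=\!\mathtt{Rep}(G)$ is Tannakian, and $\qd{\alpha}$ is literally the degree $\chi_{\alpha}(1)$ of an irreducible character. When $\mathfrak{g}$ is instead a $\mathbb{Z}_2$-extension of a $\CHG$, the Ramond class $\rcl$ plays the role of $z$ (recall $\cech{\rcl}{\alpha}\!=\!\om{\alpha}\!=\!\pm1$ by \prettyref{lem:loccent}), and the odd simple objects carry negative categorical dimension but still integer Perron-Frobenius dimension. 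I expect this super case to be the only real obstacle: the estimates internal to the present framework give no more than $\qd{\alpha}^{2}\!\in\!\mathbb{Z}$ for an odd $\alpha$ (since $\alpha\overline{\alpha}$ decomposes entirely into the integral even part $\set{\beta\!\in\!\mathfrak{g}}{\om{\beta}\!=\!1}$), so excluding the spurious value $\qd{\alpha}\!=\!\sqrt{m}$ with $m$ non-square rests squarely on the integrality of the ordinary dimension of the corresponding odd $G$-module furnished by Deligne's classification.
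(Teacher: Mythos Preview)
Your argument is correct, but it is precisely the route the paper chose \emph{not} to take in the proof itself: the text gives an internal, arithmetic argument and only afterwards remarks that Deligne's theorem would explain the result ``from a vantage point of view''. The paper's proof works entirely within the framework of Sections~\ref{sec:The-Galois-action} and~\ref{sec:Local-sets-and}: it shows, using the congruence relation Eq.~\eqref{eq:omgalpi} together with \prettyref{lem:trivblocks}, that the Galois image $\ell\mathfrak{g}$ lands inside $\du{\mathfrak{g}}$ for every $\ell\!\in\!\zn N$, and then invokes \prettyref{lem:galdef} to conclude $\mathfrak{g}\!\subseteq\!\munt$ for all $\ell$, hence $\mathfrak{g}\!\subseteq\!\pd{\mint{}}$; the split into the twister case and the $\mathbb{Z}_2$-extension case of \prettyref{cor:twistext} is handled by tracking whether the Ramond class $\rcl$ is trivial. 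What your approach buys is conceptual transparency and brevity, at the cost of importing Deligne's classification as a black box; what the paper's approach buys is self-containment and a demonstration that the modular/Galois machinery developed here suffices on its own, which is in line with the paper's stated aim of keeping the exposition close to classical character theory rather than category theory.
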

\begin{proof}
We shall prove that $\ell\mathfrak{g}\!\subseteq\!\du{\mathfrak{g}}$
for all $\ell\!\in\!\zn N$. Since $\ell\mathfrak{g}\!\subseteq\!\ell\du{\mathfrak{g}}$
as a consequence of locality, and two classes are either equal or
disjoint, this will imply the assertion by \prettyref{lem:galdef}.

In case $\mathfrak{g}$ is a $\CHG$, Eq.\prettyref{eq:omgalpi} gives
$\cw{\galg{\alpha}{\ell}}\!-\!\cw{\galg{\v}{\ell}}\!\in\!\mathbb{Z}$
since $\cw{\alpha}\!\in\!\mathbb{Z}$ for all $\alpha\!\in\!\mathfrak{g}$.
But this is tantamount to $\ell\mathfrak{g}\!\subseteq\!\rcl\!$ by
\prettyref{lem:trivblocks}, and since the $\ramcl$ is trivial for
a $\CHG$, we get that $\ell\mathfrak{g}\!\subseteq\!\du{\mathfrak{g}}$.

If $\mathfrak{g}$ is not a $\CHG$, then $\mathfrak{g}_{{\scriptscriptstyle +}}\!=\!\set{\alpha\!\in\!\mathfrak{g}}{\om{\alpha}\!=\!1}$
is a $\CHG$ by \prettyref{cor:twistext}, and the above argument
shows that $\galpi{\ell}\mathfrak{g}_{{\scriptscriptstyle +}}\!\subseteq\!\du{\mathfrak{g}_{{\scriptscriptstyle +}}}$.
Since $\du{\mathfrak{g}_{{\scriptscriptstyle +}}}\!=\!\du{\mathfrak{g}}\cup\rcl$
by \prettyref{prop:zentext}, this means that either $\galpi{\ell}\mathfrak{g}_{{\scriptscriptstyle +}}\!\subseteq\!\du{\mathfrak{g}}$
or $\galpi{\ell}\mathfrak{g}_{{\scriptscriptstyle +}}\!\subseteq\!\rcl$.
But $\mathfrak{g}_{{\scriptscriptstyle +}}\!\subseteq\!\mathfrak{g}$
implies that $\galpi{\ell}\mathfrak{g}_{{\scriptscriptstyle +}}$
is contained in the $\mathfrak{g}$-block $\ell\mathfrak{g}$, and
because every block of a local $\fc$ is contained in precisely one
class, one has either $\galpi{\ell}\mathfrak{g}\!\subseteq\!\du{\mathfrak{g}}$
or $\galpi{\ell}\mathfrak{g}\!\subseteq\!\rcl$. Since $\mathfrak{g}$
is not a $\CHG$, there exists some $\alpha\!\in\!\mathfrak{g}$ of
half-integral conformal weight, which implies that the conductor $N$
is even, and $\cw{\ell\alpha}\!-\!\cw{\ell\v}$ belongs to $\mathbb{Z}\!+\!\frac{1}{2}$
according to Eq.\prettyref{eq:omgalpi}: this contradicts $\galpi{\ell}\mathfrak{g}\!\subseteq\!\rcl$
as a consequence of \prettyref{lem:trivblocks}, and thus proves that
$\galpi{\ell}\mathfrak{g}\!\subseteq\!\du{\mathfrak{g}}$ in this
case as well.
\end{proof}
As surprising as this result may look at first sight, it is actually
easy to understand from a vantage point of view. Indeed, it follows
from \prettyref{lem:localcrit} that the elements of a local $\fc$
$\mathfrak{g}\!\in\!\loc$ are the simple objects of a symmetric monoidal
subcategory of the modular tensor category associated to the conformal
model. By a result of Deligne \cite{Deligne1990}, such categories
can be identified with the representation category of some (finite)
group $G$, hence the algebra $\svera{\mathfrak{g}}$ associated to
$\mathfrak{g}$ is isomorphic to the character ring of the group $G$,
and in particular the (quantum) dimension of its elements are rational
integers. But this is not the end of the story, for all general properties
of character rings should apply to $\svera{\mathfrak{g}}$ in this
case, so the following statements \cite{Isaacs,Lux-Pahlings} should
hold for $\mathfrak{g}\!\in\!\loc$ and $\alpha\!\in\!\mathfrak{g}$:
\begin{enumerate}
\item the extent of any $\mathfrak{g}$-class is a rational integer dividing
$\cs{\du{\mathfrak{g}}}$;
\item $\alpha\!\left(\ccl\right)\!=\!0$ for some class $\ccl\!\in\!\cl{\mathfrak{g}}$
iff $\qd{\alpha}\!>\!1$;
\item $\FA{\zent{\mathfrak{g}}\!}^{2}\qd{\alpha}^{2}$ is an integer divisor
of $\cs{\du{\mathfrak{g}}}^{2}$;
\item if $\qd{\alpha}^{2}$ is coprime to the ratio ${\displaystyle \frac{\cs{\du{\mathfrak{g}}}}{\cs{\ccl}}}$
for some class $\ccl\!\in\!\cl{\mathfrak{g}}$, then either $\FA{\alpha\!\left(\ccl\right)\negmedspace}\!=\!\qd{\alpha}$
or $\alpha\!\left(\ccl\right)\!=\!0$.
\end{enumerate}
All these assertions are well-known properties of character rings,
e.g. the first one just states that the size of a conjugacy class
is an integer dividing the order of the group, while the third one
is equivalent to Ito's famous theorem \cite{Isaacs}. What is really
amazing is that, as suggested by extensive computational evidence,
they seem to hold for all members of $\intlat$, even in cases when
there is no finite group with a suitable character ring. From this
point of view, it seems fair to say that elements of $\intlat$ describe
``character rings'' of some natural generalization of the group
concept. This interpretation seems the more reasonable as a host of
group theoretical notions may be generalized to arbitrary elements
of $\intlat$: we have already encountered Abelian (\prettyref{def:Abel})
and nilpotent $\fc$s (\prettyref{def:nilp}), but the notion of (super)solvability
can also be generalized to $\intlat$.
\begin{defn}
\label{def:solv}An $\fc$ $\mathfrak{g}\!\in\!\intlat$ is solvable
(supersolvable) if there exists a chain
\[
\left\{ \v\right\} \!=\!\mathfrak{g}_{0}\!\subseteq\!\mathfrak{g}_{1}\!\subseteq\!\cdots\!\subseteq\!\mathfrak{g}_{n}\!=\!\mathfrak{g}
\]
of $\fc$s $\mathfrak{g}_{i}\!\in\!\lat$ such that  $\cs{\mathfrak{g}_{i}}$
equals $\cs{\mathfrak{g}_{i-1}}$ times a prime power (a prime number)
for $i\!=\!1,\ldots,n$.
\end{defn}
For local $\fc$s this is clearly equivalent to the (super)solvability
of the corresponding group, and one may speculate whether some kind
of analogue of the Feit-Thompson theorem holds, i.e. whether $\mathfrak{g}\!\in\!\intlat$
is solvable provided $\cs{\mathfrak{g}}$ is odd. 

\section{Summary and outlook}

As we have seen, fusion closed sets of primaries of a conformal model
(or modular tensor category ) have a fairly deep structure, generalizing
many aspects of the character theory of finite groups. Of course,
this is no accident, since vacuum blocks of orbifold models, which
correspond on general grounds to the character ring of the twist group,
form a special class of $\fc$s. But it turns out that the parallel
with character theory goes much further, even for $\fc$s that have
no group theoretic origin. Many classical notions from group theory
(like nilpotency, solubility, etc.) may be generalized to arbitrary
$\fc$s, and the corresponding properties go over almost verbatim
to this more general setting. In this respect, a major goal of the
present work is to illustrate this close analogy with classical group
theory, but it should be stressed that $\fc$s are more than just
some fancy generalization of the group concept, since they possess
genuinely new properties, as exemplified by the reciprocity relations
Eq.\prettyref{eq:reciprocity} or \prettyref{lem:dimratio}. 

Of course, the results presented fall short of giving due account
of all important aspects of $\fc$s relevant to orbifold deconstruction.
In particular, the $\lambda$-ring structure of local $\fc$s, which
is of utmost importance for the identification of twist groups \cite{Bantay2018a,Bantay2018b},
has not been treated, nor the questions related to inertia groups
of blocks. While fundamental, we felt that the presentation of these
issues could obscure the overall pattern, and have consequently decided
to relegate their discussion to some future work.

Finally, a few words about the mathematics involved. While many of
the results presented might be formulated in the language of (unitary)
modular tensor categories \cite{Turaev,Bakalov-Kirillov,FFRS,Frohlich2009},
in our opinion this could obscure the analogies with group theory,
which were among the major motivations of this work. Besides this,
some of the more interesting results and conjectures, like \prettyref{lem:galpi}
or \prettyref{conj:spect}, seem difficult to formulate using category
theory solely. For these reasons, we opted for a mode of exposition
closer to that of classical texts \cite{Isaacs,Serre} on 
representation theory.

\end{document}